\documentclass[11pt,a4paper]{amsart}
\usepackage[margin=26mm]{geometry}
\usepackage{amsmath,amssymb,mathtools,enumitem}
\usepackage[colorlinks=true,linkcolor=blue,citecolor=blue,urlcolor=blue]{hyperref}
\newtheorem{theorem}{Theorem}
\newtheorem{lemma}[theorem]{Lemma}
\newtheorem{proposition}[theorem]{Proposition}
\newtheorem{corollary}[theorem]{Corollary}
\theoremstyle{definition}
\newtheorem{definition}[theorem]{Definition}
\newtheorem{remark}[theorem]{Remark}
\newcommand{\R}{\mathbb R}
\newcommand{\C}{\mathbb C}
\newcommand{\N}{\mathbb N}
\newcommand{\AP}{\operatorname{AP}}
\newcommand{\Sch}{\mathcal S}
\newcommand{\T}{\mathcal T}
\newcommand{\norm}[1]{\left\|#1\right\|}
\newcommand{\abs}[1]{\left|#1\right|}
\newcommand{\br}[1]{\langle #1\rangle}
\newcommand{\dd}{\,d}
\newcommand{\diag}{\operatorname{diag}}
\DeclareMathOperator{\supp}{supp}
\title{Global flows for the cubic nonlinear Schrödinger equation without decay assumptions}
\author{Shinichi Kotani$^{1}$}
\author{Yiqian Wang$^{2}$}
\author{Jiahao Xu$^{3}$}
\author{Shuo Zhang$^{4}$}
\date{}
\begin{document}
\begin{abstract}
For each positive integer $N$, we construct global spectral group actions
for the defocusing and focusing nonlinear Schr\"odinger hierarchies on
initial-data classes containing $W^{2N+1,\infty}(\mathbb R)$.
For $N\ge2$, the quadratic flow gives global
classical cubic NLS solutions. The construction extends the
Sato--Segal--Wilson and Kotani frameworks to matrix Dirac systems; a
homogeneous tau-function identity supplies the focusing positivity.
No decay, smallness, periodicity, arithmetic condition, or prescribed
spectral background is required for the initial-data inclusion.
As a principal application, we prove global well-posedness on
$M_{\infty,1}^{5}(\mathbb R)$ for both signs, with finite-time bounds and
locally Lipschitz dependence on initial-data norm balls. The passage to this
Banach-space flow uses near-real-axis Weyl asymptotics and continuity of
normalized Toeplitz operators on compact initial-data sets to obtain uniform
spatial bounds from five bounded derivatives. In the defocusing case, the flow extends to
$M_{\infty,1}^{5}(\mathbb R)+H^1(\mathbb R)$.
Further consequences include quantitative approximation, preservation of
spatial Bohr almost periodicity and its frequency module, conjugacy of
spatial translation hulls, and preservation of the full complex Dirac
spectrum and spatial transfer growth rates.
\end{abstract}
\maketitle

\section{Introduction}
\footnotetext[1]{Dept. of Math. \ \ Osaka Univ.
	\ Toyonaka Japan \ skotani@outlook.com.}  \footnotetext[2]{School of
	Math. Nanjing Univ. Nanjing China yiqianw@nju.edu.cn.} \footnotetext[3]{School of
	Math. Nanjing Univ. Nanjing China xjhao512@outlook.com.} \footnotetext[4]{School of Math. Nanjing Univ. Nanjing China
	shuozhang@smail.nju.edu.cn.}
The nonlinear Schr\"odinger equation (NLS) is one of the basic models for the
evolution of slowly modulated dispersive wave packets. It appears in nonlinear
optics, water-wave theory, plasma physics, and Bose--Einstein condensation, and
it exhibits a characteristic competition between dispersion and nonlinearity.
Among its most prominent coherent structures are bright solitons in the
focusing regime and dark solitons on a nonzero background in the defocusing
regime.

We consider the initial value problem for the one-dimensional cubic NLS
equation on the real line,
\begin{equation}
	\begin{cases}
		i\partial_{t}q(t,x) = -\dfrac{1}{2}\partial_{x}^{2}q(t,x)
		+\kappa |q(t,x)|^{2}q(t,x),\\
		q(0,x) = q_{0}(x),
	\end{cases}
	\qquad (t,x)\in\mathbb{R}^{2}, \label{EE2.1}
\end{equation}
where $q$ is complex-valued and $\kappa\in\{1,-1\}$. With this convention,
$\kappa=1$ is the defocusing case and $\kappa=-1$ is the focusing case. These
two reductions have markedly different spectral features. The defocusing
Zakharov--Shabat operator has a self-adjoint structure, whereas its focusing
counterpart is non-self-adjoint and may possess nonreal discrete spectrum and
spectral singularities.

\subsection{Global spectral flows}
Our main construction gives global spectral group actions for both the
defocusing and focusing NLS reductions. For each positive integer $N$, let
$\mathcal Q_{N,d}$ and $\mathcal Q_{N,f}$ denote the spectral initial-data
classes defined in Section~\ref{sec:spectral-setup}, and let
$\Gamma_N^{\mathrm{real}}$ be the real multiplier group defined there.
Both reductions admit the polynomial subgroup
\begin{equation}
 \Gamma_N^{\operatorname{sub}}
 =\{e^{ih};\ h\in\mathbb R[z],\ \deg h\leq N\}
 \subset\Gamma_N^{\mathrm{real}}.
 \label{E1.2}
\end{equation}
The corresponding transformations $\operatorname{dNLS}(g)$ and
$\operatorname{fNLS}(g)$ are obtained by multiplying spectral symbols and
reconstructing the potential; their precise definitions are given in
Section~\ref{sec:spectral-setup}. The following two theorems state the global
construction and its concrete range of initial data.

\begin{theorem}
	\label{t1}
	The transformations are independent of the representing symbol and
	contour. The following families define group actions on the indicated spaces:
	\[
\begin{aligned}
	\{\operatorname{dNLS}(g)\}_{g\in\Gamma_N^{\operatorname{real}}}
	&\quad\text{on }\mathcal Q_{N,d},\\
	\{\operatorname{fNLS}(g)\}_{g\in\Gamma_N^{\operatorname{sub}}}
	&\quad\text{on }\mathcal Q_{N,f}.
	\end{aligned}
\]
	For $\alpha\in\{d,f\}$ and $N\geq2$, set
	\[
q(t,x)=\bigl(\mathop{\alpha\mathrm{NLS}}(e^{itz^{2}})q\bigr)(x),
	\qquad q\in\mathcal Q_{N,\alpha}.
\]
	Then $q(t,x)$ solves
	\[
\begin{cases}
	i\partial_tq(t,x)
	=-\dfrac12\partial_x^2q(t,x)\pm|q(t,x)|^2q(t,x),\\
	q(0,x)=q(x),
	\end{cases}
\]
	where the upper sign corresponds to $\alpha=d$ and the lower sign to
	$\alpha=f$. Moreover, $q(t,\cdot)\in C^N(\mathbb R)$ for each fixed
	$t\in\mathbb R$, and
	$q(\cdot,x)\in C^{\lfloor N/2\rfloor}(\mathbb R)$ for each fixed
	$x\in\mathbb R$. More precisely, all mixed derivatives
	$\partial_x^j\partial_t^kq$ with $j+2k\leq N$ exist and are jointly
	continuous on $\mathbb R^2$. Thus, for $N\geq2$, the solution is classical.
\end{theorem}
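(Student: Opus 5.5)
The proof follows the Sato--Segal--Wilson/Kotani scheme adapted to $2\times2$ Dirac (Zakharov--Shabat) systems. It splits into three parts: well-definedness, the group law, and regularity together with the equation.

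\textbf{Well-definedness.} To each $q\in\mathcal Q_{N,\alpha}$ we attach, as in Section~\ref{sec:spectral-setup}, the space $W_q$ spanned by the $m$-function (Weyl) data on a contour around the spectrum. For $g\in\Gamma_N^{\mathrm{real}}$ we form the Toeplitz operator $T(g)$ with symbol $g$ acting on $W_q$. The potential $\alpha\mathrm{NLS}(g)q$ is reconstructed from the tau-function $\tau_{q}(g)=\det\bigl(T(g^{-1})T(g)\bigr)$, or from a Fredholm determinant of the normalized Toeplitz operator, through the usual logarithmic-derivative formulas for the off-diagonal entries.

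Independence of the representing symbol comes from two facts. Two representatives differ by a factor that is holomorphic and invertible on the relevant side of the contour. Such a factor acts on $W_q$ by an invertible operator commuting with the projection, so it changes $\tau$ only by a nonvanishing factor independent of $x$, which drops out after logarithmic differentiation. Independence of the contour follows from Cauchy's theorem, since deforming the contour within the region of holomorphy leaves the Toeplitz operator unchanged up to conjugation.

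\textbf{Group law.} We show $g_1g_2W_q=g_1(g_2W_q)$ and that the reconstructed potential of $g_2W_q$ lies again in $\mathcal Q_{N,\alpha}$. The second point is the invariance of the class: the Weyl asymptotics up to order $N$ must be preserved, which we check from the fact that $g$ multiplies Baker--Akhiezer functions by a factor of controlled growth.

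In the defocusing case invertibility of $T(g)$ comes from self-adjointness, i.e.\ the positivity $\tau>0$ for real $g$. In the focusing case we restrict to $\Gamma_N^{\mathrm{sub}}$ and invoke the homogeneous tau-function identity mentioned in the abstract. This identity should express $\tau$ for the focusing reduction as a sum of squares, such as $|\tau_1|^2+|\tau_2|^2$, and the main work is to show the two terms never vanish simultaneously. This nonvanishing of $\tau$ for all $x$ is where global existence, as opposed to blow-up, is decided, and I expect it to be the main obstacle.

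\textbf{Equation and regularity.} For $g=e^{itz^2}$ we differentiate the Baker--Akhiezer function $\Psi(t,x,z)$. The $x$-flow corresponds to the factor $e^{ixz}$ and the $t$-flow to $e^{itz^2}$. Acting on $W_q$, both derivatives are given by multiplication by $z$ and $z^2$ followed by projection. This yields the Lax pair $\partial_x\Psi=U\Psi$ and $\partial_t\Psi=V\Psi$ with $V$ quadratic in $z$, and their compatibility is exactly NLS with the sign fixed by the reduction $\kappa=\pm1$.

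For regularity we differentiate the Fredholm determinant. Each $\partial_x$ costs one power of $z$ and each $\partial_t$ costs two, and the symbol is trace-class in $z$ only up to the order $N$ guaranteed by the class $\mathcal Q_{N,\alpha}$. Hence $\partial_x^j\partial_t^kq$ exists and is jointly continuous whenever $j+2k\le N$, by dominated convergence on the contour. Taking $N\ge2$ gives one time derivative and two space derivatives, so the solution is classical.
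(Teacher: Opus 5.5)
Your proposal has two genuine gaps, and they are exactly where the theorem's content lies.

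\textbf{Independence of the representative.} You assume that two symbols representing the same $q$ ``differ by a factor that is holomorphic and invertible on the relevant side of the contour.'' Your definition of $W_q$ directly from $q$ likewise presupposes that $q$ determines its spectral data. This is precisely what must be proved, and the mechanism you attach to it does not work.

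\begin{itemize}
\item Multiplication by a function holomorphic on one side of $C$ does not in general commute with $\mathfrak p_+$. The commutator is a Hankel-type operator, and such commutators are exactly what makes $G(g)$ act nontrivially (cf.\ Lemma~\ref{l9}).
\item $q=2\alpha_{21}$ is an entry of the first Laurent coefficient of the exterior factor $I+\Phi_{G(e_x)A}$. It is not a logarithmic derivative of a scalar determinant.
\item For non-rational $g$ the perturbation is only Hilbert--Schmidt. The determinant must therefore be the regularized $\det_2$ of $G(g)^{-1}T(G(g)A)T(A)^{-1}$, not $\det(T(g^{-1})T(g))$.
\end{itemize}

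The paper instead works with the projective coordinates $m_A$ of $I+\Phi_A$, in three steps.
\begin{enumerate}
\item \emph{Direct uniqueness}, $q_A=q_B\Rightarrow m_A=m_B$. In the defocusing case (Proposition~\ref{p32}), $J=|f_{11}|^2-|f_{12}|^2>0$ and $J'=2\operatorname{Im}z\,(|f_{11}|^2+|f_{12}|^2)$ make the Baker--Akhiezer row square integrable at one end, so $m_A$ is a half-line Weyl function. In the focusing case (Proposition~\ref{p36}), $\Psi_A\Psi_B^{-1}$ is $x$-independent, and a Hardy-space support argument forces it to be diagonal.
\item \emph{Transport}: $m_{G(g)A}$ depends only on $m_A$ and $g$. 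This comes from the explicit one-factor formulas (\ref{E2.29})--(\ref{E2.30}), iteration over real rational products, and the limiting argument of Lemma~\ref{l24} with the approximants of Lemma~\ref{l23}.
\item \emph{Inverse uniqueness}: Weyl functions determine the potential. In the focusing case this is the local Borg--Marchenko theorem for finite-interval Weyl disks.
\end{enumerate}
Your Cauchy-deformation argument for contour independence is available only in the defocusing class, which builds in the analytic-contour condition (Lemma~\ref{l19}). A focusing symbol is just a function on one contour, so representatives on different contours can only be compared through $m$ on the common exterior.

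\textbf{Invariance of the class.} Showing $\alpha\mathrm{NLS}(g)q\in\mathcal Q_{N,\alpha}$ means showing $G(g)A\in\mathcal A^{inv}_{N,\alpha}$. That is invertibility of $T(G(g)A)$ on every $\boldsymbol H_n(D_+)$, $1\le n\le N+1$, equivalently $\tau^{(2)}_A(g)\neq0$ (Lemma~\ref{l11}). Preservation of Weyl asymptotics is not the relevant criterion.

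In the defocusing case, self-adjointness alone does not give $\tau>0$ for arbitrary non-decaying data. The paper proceeds as follows.
\begin{itemize}
\item It uses the positivity condition (iii) built into the class.
\item It proves $\tau_A(r_\zeta)>0$ by Schwarz--Pick applied to $m_A$ (Lemma~\ref{l29}).
\item It proves $\tau_A(r_{\zeta_1}r_{\zeta_2})\ge0$ (Lemma~\ref{l30}). For points in one half-plane, the determinant in (\ref{E2.25}) reduces to a Gram determinant built from the Herglotz representation of $i(1-m)/(1+m)$. For points in opposite half-planes, it uses a Schur complement.
\item Hence (iii) propagates, and induction covers all real rational multipliers.
\item It reaches $e^{ih}$ through Lemmas~\ref{l23} and~\ref{l13} and the cocycle identity (\ref{E2.8}), upgrading $\ge0$ to $>0$ via $g=(r_k^{-1}g)\,r_k$.
\end{itemize}

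In the focusing case you leave the identity as a conjecture. The actual identity (Lemma~\ref{l33}) is
$\tau_A(r_\zeta)=\bigl[(|u|^2+|v|^2)^2+4(\operatorname{Im}\zeta)^2|uv'-u'v|^2\bigr]/|\Delta|^2$ for $I+\Phi_A=\begin{pmatrix}u&v\\-\overline v&\overline u\end{pmatrix}$. No simultaneous-vanishing analysis is needed, because the first term alone is positive. The row $(u,v)$ cannot vanish, since $\det(I+\Phi_A)\neq0$ by Lemma~\ref{l20}. The same approximation and cocycle passage, with real trace corrections, then handles $e^{ih}$.

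Your Lax-pair derivation of the equation is fine in outline. So is your weight count for regularity: one Hardy weight per $\partial_x$ and two per $\partial_t$, as in Lemma~\ref{l15} with $n=0$, $\ell=1$. Both, however, presuppose the two missing steps above.
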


The Weyl asymptotics of S. Zhang \cite{SZ}, combined with the spectral
construction below, give the following sufficient condition for membership
in $\mathcal Q_{N,d}$ and $\mathcal Q_{N,f}$.

\begin{theorem}
	\label{t2}
	If $q\in W^{2N+1,\infty}(\mathbb R)$, then
	$q\in\mathcal Q_{N,d}\cap\mathcal Q_{N,f}$.
\end{theorem}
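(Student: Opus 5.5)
The plan is to check, for $q\in W^{2N+1,\infty}(\mathbb R)$, each defining requirement of $\mathcal Q_{N,d}$ and $\mathcal Q_{N,f}$ from Section~\ref{sec:spectral-setup}. These are, schematically: (i) existence of Weyl functions $m_\pm(z)$ for the Zakharov--Shabat/Dirac system with potential $q$ (defocusing and focusing reductions) on a suitable region; (ii) an asymptotic expansion of $m_\pm$ to order $N$ in $z^{-1}$, valid near the real axis, with a controlled remainder; (iii) the resulting normalized symbols lie in the class on which the multiplier group $\Gamma_N^{\mathrm{real}}$ (respectively $\Gamma_N^{\mathrm{sub}}$) acts; (iv) the associated Toeplitz/Sato--Segal--Wilson operators are invertible, or the tau function is nonvanishing, for all relevant $g$.

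\textbf{Step 1: Weyl functions.} For a bounded $q$ the Dirac operator is in the limit-point case at $\pm\infty$. Hence, for $\abs{\operatorname{Im}z}$ larger than a constant times $\norm{q}_\infty$, there are unique $L^2$ solutions at $\pm\infty$. This holds in the focusing case as well, where the operator is non-self-adjoint, by a Gronwall/dichotomy argument, since the spectrum lies in the strip $\abs{\operatorname{Im}z}\le\norm{q}_\infty$. This gives $m_\pm$ holomorphic off that strip, uniformly in the translation parameter $x$.

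\textbf{Step 2: asymptotics.} I would invoke the Weyl asymptotics of \cite{SZ}. Given $2N+1$ bounded derivatives, $m_\pm(z,x)$ admits an expansion $\sum_{k\le 2N}c_k^\pm(x)z^{-k}+O(\abs z^{-2N-1})$ in sectors approaching the real axis, uniformly in $x$. The coefficients are the differential polynomials of the NLS hierarchy, so they are bounded by $\norm{q}_{W^{2N+1,\infty}}$. I would match this expansion with the decay/regularity condition in the definition of $\mathcal Q_{N,\alpha}$. The loss of $N+1$ derivatives ($2N+1$ derivatives versus $N$ orders of the group) should be exactly what makes $g=e^{ih}$ with $\deg h\le N$ produce integrable, Hilbert--Schmidt-type perturbations on the chosen contour.

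\textbf{Step 3: invertibility/positivity.} In the defocusing case, self-adjointness makes the relevant Toeplitz operator a positive perturbation of the identity, so it is invertible for every real $g$, following Kotani's argument. In the focusing case, I would use the homogeneous tau-function identity mentioned in the abstract. It expresses $\abs{\tau_g}^2$, or $\tau_g\overline{\tau_g}$, as a determinant of $I+A^*A$, which is positive for $g\in\Gamma_N^{\mathrm{sub}}$; this gives nonvanishing of the tau function. Continuity in $g$ then lets the whole group act.

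The main obstacle is Step 2 in the focusing case: the Weyl asymptotics must hold near the real axis even though nonreal spectrum may lie in the strip. I would first try to deform the contour to $\abs{\operatorname{Im}z}=C\norm{q}_\infty$ and use Step 1 there, reducing to the estimates of \cite{SZ}.
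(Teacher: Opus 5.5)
Your proposal has a genuine gap: it never shows that $q$ is \emph{reconstructed} by a symbol. By definition, $q\in\mathcal Q_{N,\alpha}$ means that some $A\in\mathcal A_{N,\alpha}^{inv}$ satisfies $q(x)=2\alpha_{21}(G(e_x)A)$ for every $x$. Your checklist (i)--(iv) at best certifies that a symbol built from the Weyl data of $q$ is admissible. That shows it reconstructs \emph{some} potential $q_A$, not that $q_A=q$, and this identification is the heart of the paper's proof.

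The paper proceeds in four steps. It takes the explicit canonical symbol $M=\begin{pmatrix}1&m\\ \kappa\overline m&1\end{pmatrix}$, with $m=m_-^q(0,\cdot)$ on $\C_+$ and $m_+^q(0,\cdot)$ on $\C_-$. Lemmas~\ref{l27} and~\ref{l35} then give $T(M)^{-1}=T(M^{-1})$ and $T(M)I=I$, hence $m_M=m$. Next it shows that the Sato $m$-function of an admissible symbol is the Weyl function of its own reconstructed potential. In the self-adjoint case this uses the identity $J'=2\operatorname{Im}z\,(|f_{11}|^2+|f_{12}|^2)$ for the Baker--Akhiezer row (Proposition~\ref{p32}); in the focusing case it uses finite-interval Weyl disks (Proposition~\ref{p36}). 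Finally it applies inverse uniqueness. For $\kappa=1$ this is the global Weyl-function/potential correspondence. For $\kappa=-1$ it is the local Borg--Marchenko theorem \cite[Theorem~3.15]{FKRS}, applied on $[0,\ell]$ and, for the reflected potential $-\overline{q(-s)}$, on $[-\ell,0]$. None of this appears in your plan, and without it the inclusion is not proved.

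Several of your steps also aim at the wrong target or would fail.

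\emph{Step 3.} This re-proves the group action, which is Propositions~\ref{p31} and~\ref{p34}; those hold for every element of $\mathcal A_{N,\alpha}^{inv}$ and are simply quoted. Membership instead requires invertibility of $T(M)$ itself. That comes from the factorization above and needs $\det M\neq0$ on $\overline{D_-}$. For $\kappa=1$ it also requires condition (iii), $\tau_M(r_\zeta)\ge0$, which the paper obtains from Schwarz's lemma applied to $m$ through \eqref{E3.1}. Neither a ``positive perturbation of the identity'' nor the unproved identity $|\tau|^2=\det(I+A^*A)$ supplies this. The paper's homogeneous identity is the explicit one-factor formula \eqref{E4.1}. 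For $\kappa=1$ you also need $m$ analytic on all of $\C\setminus\R$ with $|m|<1$ (the Weyl-disk property), not merely off a strip.

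\emph{Step 2.} The relevant region is not a sector but the thin exterior $|\operatorname{Im}z|>\omega(\operatorname{Re}z)$ with $\omega\asymp\br{\operatorname{Re}z}^{-(N-1)}$. There \cite{SZ}, with regularity index $2N+1$ and boundary index $N-1$, gives $N+1$ terms and remainder $O(z^{-N-2})$, exactly the $\mathcal A_{N+2}$ decay needed. Your $2N$-term sector expansion does not control the symbol on $C$.

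\emph{Focusing remedy.} Moving the contour to $|\operatorname{Im}z|=C\norm{q}_\infty$ fails for $N\ge2$. Such a strip violates \eqref{E2.1}, and $|e^{itz^2}|=e^{-2t\operatorname{Re}z\operatorname{Im}z}$ is unbounded on it, so $\Gamma_N^{\operatorname{sub}}$ would not act. The paper instead uses \cite[Theorem~4]{SZ} for analyticity and asymptotics in the thin high-energy region, and absorbs the bounded part by a compact enlargement of $D_+$. It then enlarges once more to exclude the finitely many zeros of $1+m^f\overline{m^f}=1+O(z^{-2})$. Off the real axis this equals $1+m_-(z)\overline{m_+(\bar z)}$, not $1+|m|^2$, so it can vanish. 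Those zeros must be removed for $M^{-1}$ to be analytic on $\overline{D_-}$.
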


In particular, taking $N=2$ gives a global classical solution for every
$q_0\in W^{5,\infty}(\mathbb R)$, for either sign of the cubic NLS.
The inclusion requires no spatial decay, smallness, periodicity, arithmetic
condition, or prescribed spectral-gap structure. It covers all sufficiently
smooth periodic, quasi-periodic, and Bohr almost-periodic potentials, as well
as general initial data with five bounded derivatives. The next result equips a concrete
subclass with an invariant Banach-space topology and quantitative stability.

\subsection{Global well-posedness in a non-decaying phase space}
The spectral construction has a concrete realization as a global flow on a
Banach space. We choose a modulation space that supports a strongly continuous
Schr\"odinger group and the cubic product estimates needed for local
well-posedness. Uniform bounds for spectral reconstruction then give global
continuation and stability in this space.

Choose a real, even $\chi\in C_c^\infty((-1,1))$ such that
$\sum_{k\in\mathbb Z}\chi(\xi-k)=1$, and put
\[
\Box_k f=\mathcal F^{-1}\bigl(\chi(\xi-k)\widehat f(\xi)\bigr),
 \qquad \br{k}=(1+k^2)^{1/2}.
\]
The Fourier convention is $\widehat f(\xi)=\int e^{-ix\xi}f(x)\dd x$.
\begin{definition}
For $s\ge0$, define the modulation space
\[
X_s=M_{\infty,1}^{s}(\R)
 =\left\{f\in\Sch'(\R):
 \norm f_s:=\sum_{k\in\mathbb Z}\br{k}^s\norm{\Box_k f}_\infty<\infty
 \right\}.
\]
We use the Banach phase space $X=X_5$.
\end{definition}
The block series converges uniformly; in particular its sum is a bounded,
uniformly continuous representative of $f$. Different such partitions
give equivalent norms. The elementary estimates needed below are proved
in Section~\ref{sec:modulation}. The following theorem is a principal
application of the spectral construction.

\begin{theorem}\label{t3}
For either sign in (\ref{EE2.1}) and every $q_0\in X$, there is a global
solution
\[
q\in C(\R;X)\cap C^1(\R;X_3).
\]
It solves (\ref{EE2.1}) classically. Write $\Phi_t(q_0)=q(t)$.
For every $T,R<\infty$ there are finite
constants $F_T(R)$ and $L_T(R)$ such that
\begin{align}
 \sup_{|t|\le T}\norm{q(t)}_5&\le F_T(R)
 &&(\norm{q_0}_5\le R),\label{E1.3}\\
 \sup_{|t|\le T}\norm{\Phi_t(q_0)-\Phi_t(p_0)}_5
 &\le L_T(R)\norm{q_0-p_0}_5
 &&(\norm{q_0}_5,\norm{p_0}_5\le R).\label{EE2.2}
\end{align}
The maps $\Phi_t$ form a continuous group on $X$ and commute with spatial
translations. The solution is unique among classical solutions $v$ such
that $v,v_x$ are bounded on every finite time strip.

If $q_0\in X_s$ for some $s\ge5$, then $q\in C(\R;X_s)$, with bounds
and local Lipschitz dependence in $X_s$ on bounded initial-data sets.
In particular,
\[
q_0\in C_b^\infty(\R)
 \quad\Longrightarrow\quad q\in C^\infty(\R;C_b^\infty(\R)),
\]
with continuous dependence for the Fr\'echet topology of uniform
derivative seminorms.
\end{theorem}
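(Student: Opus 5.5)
The plan is to combine a standard local theory in $X_s$ with the global spectral flow of Theorems~\ref{t1}--\ref{t2}, which supplies a priori bounds. First I would prove local well-posedness in $X_s$ for $s\ge5$ (in fact for $s\ge0$). The free group $e^{\frac{it}{2}\partial_x^2}$ acts on each block $\Box_k f$ with a multiplier whose localized kernel has bounded $L^1$ norm, at most $C\br{t}^{1/2}$, uniformly in $k$. Hence $\norm{e^{\frac{it}{2}\partial_x^2}f}_s\le C\br{t}^{1/2}\norm f_s$, and the group is strongly continuous on $X_s$. Since $X_s$ is a Banach algebra under pointwise multiplication and closed under conjugation, the Duhamel map
\[
q(t)=e^{\frac{it}{2}\partial_x^2}q_0\mp i\int_0^t e^{\frac{i(t-\tau)}{2}\partial_x^2}\bigl(|q|^2q\bigr)(\tau)\dd\tau
\]
is a contraction on a ball of $C([-\delta,\delta];X_s)$. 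Here $\delta$ depends only on $\norm{q_0}_5$, because the tame product estimate $\norm{fg}_s\lesssim\norm f_s\norm g_0+\norm f_0\norm g_s$ lets the existence time depend only on the $X_5$ (even $X_0$) norm. From the Duhamel formula one gets Lipschitz dependence on short intervals, persistence of higher regularity, and $C^1(\,\cdot\,;X_{s-2})$ regularity; the choice $X_3$ in the statement allows for the loss of two derivatives. Uniqueness among classical solutions with $v,v_x$ bounded on strips follows by an energy/Gronwall argument on the difference of two solutions, localized with a weight $\br{x}^{-2}$ to avoid any need for decay.

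The global step, and the main obstacle, is the a priori bound \eqref{E1.3}. We have $X_5\subset W^{5,\infty}$, so by Theorem~\ref{t2} every $q_0\in X$ lies in $\mathcal Q_{2,\alpha}$, and Theorem~\ref{t1} gives a global classical solution $\alpha\mathrm{NLS}(e^{itz^2})q_0$. By uniqueness it agrees with the local $X$-solution on the latter's interval of existence. We then need $\sup_{|t|\le T}\norm{q(t)}_0$ (or even just $\norm{q(t)}_\infty$ together with a few derivatives) bounded uniformly for $q_0$ in a ball. My first attempt would go as follows. The $X_5$-ball is compact in the topology of locally uniform $C^4$ convergence modulo translations. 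Near-real-axis Weyl asymptotics (from \cite{SZ}) hold uniformly on this compact set. Continuity of the normalized Toeplitz operators in the spectral reconstruction then gives uniform bounds on $\sup_x|\partial_x^jq(t,x)|$ for $j\le4$ and $|t|\le T$. Translation invariance of the construction removes the noncompactness in $x$.

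To upgrade these derivative bounds to the $X_5$ norm, I would use a persistence argument. Since the local existence time depends only on a weaker norm (say $\norm{q}_{W^{1,\infty}}$ controlled via the $X_0$ algebra with a Gronwall step), a Gronwall estimate in $X_s$ on the Duhamel formula gives $\norm{q(t)}_s\le C\br{t}^{1/2}\norm{q_0}_s\exp\bigl(C\int_0^t\norm{q}_0^2\bigr)$. Controlling $\norm q_0$ from spectral $C^k$ bounds is delicate, since $L^\infty$ bounds do not control $M_{\infty,1}$. I would handle this by working in the Duhamel formula with $\norm{q}_{0}\le \norm{e^{\frac{it}{2}\partial_x^2}q_0}_0+\int\norm{|q|^2q}_0$ and bootstrapping with the algebra estimate over short steps. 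The step length is chosen from the uniform spectral bound on $\norm{q}_{W^{2,\infty}}$, which dominates $\norm{\cdot}_{0-}$ after sacrificing regularity via the embedding $W^{2,\infty}\subset X_{1-\epsilon}$ (up to logarithm). This yields $F_T(R)$.

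Given \eqref{E1.3}, iterating the local Lipschitz estimate over $O(T/\delta(F_T(R)))$ steps gives \eqref{EE2.2}. Continuity of $t\mapsto q(t)$ in $X$, together with the group law inherited from the flow and uniqueness, gives a continuous group. Commutation with translations follows from translation invariance of the equation and uniqueness. The $X_s$ statements follow from the same tame estimates. For $C_b^\infty$ data, time derivatives are obtained from the equation, which gives $q\in C^\infty(\R;C_b^\infty)$ with Fréchet-continuous dependence.
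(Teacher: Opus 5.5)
Your architecture is the paper's: local theory in $X_s$ via the tame trilinear estimate, a uniform spectral bound on a compact, translation-invariant $W^{5,\infty}$ ball, identification of the local and spectral solutions by uniqueness, and Gronwall in $X_5$. Two steps, however, do not work as written.

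\textbf{Uniqueness.} Your argument with the fixed weight $\rho=\br{x}^{-2}$ does not close. The dispersive term contributes $\operatorname{Im}\int\rho'\,\overline w\,w_x$. Since only $D=\norm{w_x}_\infty$ is available, you get $|E'|\le cDE^{1/2}+6M^2E$. From $E(0)=0$ this yields only $E(t)\lesssim D^2t^2e^{Ct}$, not $E\equiv0$. A weighted $H^1$ energy does not rescue this, because it requires $w_{xx}$, which lies outside the uniqueness class. You need weights with small logarithmic derivative, e.g.\ $\rho_L=e^{-\sqrt{1+x^2}/L}$, for which $|\rho_L'|\le\rho_L/L$ and $\int\rho_L\le 2L$. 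Then $E_L(t)\le C_{M,T}D^2/L$, and letting $L\to\infty$ gives $w=0$ (Lemma~\ref{l38}).

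This uniqueness class also applies to the spectral solution only once you know $Q,Q_x$ are bounded on time strips, which Theorem~\ref{t1} does not provide. The identification must therefore come after the uniform spectral estimate, not before it as in your second paragraph.

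\textbf{The spectral step.} It gives $j\le2$, not $j\le4$. Theorem~\ref{t2} applies to $X_5\subset W^{5,\infty}$ data only with $N=2$. The reconstruction coefficient can then be differentiated only through total spectral degree $2$ (Lemma~\ref{l15}); $j\le4$ would need $N=4$, i.e.\ $W^{9,\infty}$ data.

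This is harmless, because a $W^{2,\infty}$ bound is all you need, and controlling $\norm{q}_0$ is not delicate. The estimate $\norm{\Box_kf}_\infty\le C\br{k}^{-2}\norm{f''}_\infty$ for $|k|\ge3$ gives $W^{2,\infty}\hookrightarrow X_0$ directly (Lemma~\ref{l39}). Hence $\norm{q(t)}_0\le C\,B_T(CR)$ at every time of the lifespan. The Gronwall bound $\norm{q(t)}_5\le C_T\norm{q_0}_5\exp\bigl(C_T\int_0^t\norm{q(a)}_0^2\dd a\bigr)$ then yields $F_T(R)$ and excludes blow-up at once. Your short-step Duhamel bootstrap of $\norm q_0$ is superfluous.

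Finally, the real content of the theorem is the uniform spectral estimate (Proposition~\ref{p37}), which you only name. It requires:
\begin{itemize}
\item a single contour valid for the whole ball, with $|\det M_\kappa(v)|$ bounded below for both signs;
\item continuity of the Weyl data in the local $C^4$ topology;
\item joint norm continuity of the normalized Toeplitz operators in a weighted symbol norm.
\end{itemize}
Only after these do compactness and invertibility give uniform bounds on the inverses.
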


\begin{remark}
The estimates here are finite-time estimates. The compactness argument
gives a finite function of the initial norm, but does not give an explicit
numerical growth law for that function. The five-derivative input
is used to control the reconstruction in the weighted symbol topology
of Proposition~\ref{p37}. The inclusions
\[
W^{7,\infty}(\R)\subset X_5\subset W^{5,\infty}(\R)
\]
will be proved below. The theorem does not assert invariance of
$W^{5,\infty}$.
\end{remark}

The additional estimate connecting Theorems~\ref{t1}--\ref{t2} to
Theorem~\ref{t3} is Proposition~\ref{p37}. For the canonical
spectral solution $Q_\kappa[v]$ with $v\in W^{5,\infty}$, it gives
\[
\sup_{p_5(v)\le R}\sup_{|t|\le T}
 p_2\bigl(Q_\kappa[v](t,\cdot)\bigr)\le B_T(R)<\infty.
\]
where $p_j(f)=\max_{0\le a\le j}\norm{\partial_x^a f}_{\infty}$. Together with $W^{2,\infty}\hookrightarrow X_0$, uniqueness and the local
$X_5$ theory, this bound yields global continuation and the stability
estimates in Theorem~\ref{t3}.

\subsection{Further consequences}
The global Banach-space flow also gives dynamics for finite-energy
perturbations and preserves spatial almost periodicity. We first state the
defocusing extension to a sum space.

\begin{definition}
For $s\ge5$, define the Banach sum
\begin{equation}\label{EE2.3}
 Z_s=X_s+H^1(\R),\qquad
 \norm f_{Z_s}
 =\inf_{f=a+b}\bigl(\norm a_s+\norm b_{H^1}\bigr),
 \quad a\in X_s,\quad b\in H^1(\R).
\end{equation}
\end{definition}

\begin{theorem}
\label{t7}
Let $\kappa=1$ and $s\ge5$. The flow $\Phi_t$ extends to a
continuous group on $Z_s$ and commutes with spatial translations. For every $u_0\in Z_s$, its trajectory
is a global mild solution of \eqref{EE2.1} belonging to
$C(\R;Z_s)$, and is unique among mild solutions in $C(\R;X_0)$.
For every $T,R<\infty$, there are finite constants
$G_{s,T}(R)$ and $K_{s,T}(R)$ such that
\begin{align}
 \sup_{|t|\le T}\norm{\Phi_t(u_0)}_{Z_s}
 &\le G_{s,T}(R),
 &&\norm{u_0}_{Z_s}\le R,\label{E1.6}\\
 \sup_{|t|\le T}
 \norm{\Phi_t(u_0)-\Phi_t(v_0)}_{Z_s}
 &\le K_{s,T}(R)\norm{u_0-v_0}_{Z_s},
 &&\norm{u_0}_{Z_s},\norm{v_0}_{Z_s}\le R.
 \label{E1.7}
\end{align}
For every decomposition $u_0=a+b$ with $a\in X_s$ and
$b\in H^1(\R)$,
\begin{equation}\label{EE2.4}
 t\longmapsto\Phi_t(a+b)-\Phi_t(a)
 \quad\hbox{belongs to } C(\R;H^1(\R)).
\end{equation}
On each finite time interval this map depends locally Lipschitz
continuously on $(a,b)\in X_s\times H^1(\R)$, with bounds on
every initial-data norm ball. In particular,
\[
\Phi_t\bigl(a+H^1(\R)\bigr)=\Phi_t(a)+H^1(\R).
\]
\end{theorem}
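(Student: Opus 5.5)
The plan is to treat $u=\Phi_t(a+b)$ as $u=q+w$, where $q(t)=\Phi_t(a)$ is the global $X_s$ flow supplied by Theorem~\ref{t3}, and $w$ solves the perturbed equation
\[
 i\partial_t w=-\tfrac12\partial_x^2 w+\bigl(|q+w|^2(q+w)-|q|^2q\bigr),\qquad w(0)=b,
\]
in $C(\R;H^1(\R))$. The background $q$ lies in $C(\R;X_s)\subset C(\R;W^{5,\infty})$, with bounds $F_T(R)$ on $[-T,T]$. I then define $\Phi_t(a+b):=\Phi_t(a)+w(t)$. The key steps, in order, are as follows.

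\textbf{Step 1: local theory in $H^1$.} The difference nonlinearity is a polynomial in $w,\bar w$ whose coefficients are $q,\bar q$, of degree one, two and three in $w$. Since $H^1$ is an algebra, and multiplication by $W^{1,\infty}$ functions is bounded on $H^1$, Duhamel's formula with the unitary group $e^{it\partial_x^2/2}$ on $H^1$ gives local existence. The existence time depends only on $\norm{w(0)}_{H^1}$ and $\sup_{|t|\le T}p_1(q(t))$. It also gives Lipschitz dependence on $b$ and, through the coefficients, on $a$.

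\textbf{Step 2: a priori global bound for $w$.} This is where $\kappa=1$ enters. Differentiating $\norm{w}_{L^2}^2$ in time, the cubic term $|w|^2w$ contributes nothing, and the remaining terms are bounded by $C(\norm{q}_{W^{1,\infty}})(\norm{w}_{L^2}^2+\norm{w}_{L^3}^3)$. The cubic term here is the difficulty, so I instead use a modified energy
\[
 E(w)=\int \tfrac12|w_x|^2+\tfrac12\bigl(|q+w|^2-|q|^2\bigr)^2\,dx,
\]
which is the renormalized defocusing energy relative to the background. This quantity is well defined for $w\in H^1$, and its quartic part is nonnegative. I will compute $\frac{d}{dt}E$ together with $\frac{d}{dt}\norm{w}_2^2$. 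Since $q$ solves NLS exactly, the terms without $q_t$-errors cancel, and the remainder is controlled by $C(p_3(q))(1+E+\norm{w}_2^2)$, using $q_t\in X_3$ from Theorem~\ref{t3}. Gronwall then yields an $H^1$ bound on $[-T,T]$ depending only on $R$. I expect Step~2 to be the main obstacle. The delicate points are closing the $L^2$ estimate for $w$ using the positive quartic term, via a Gagliardo--Nirenberg absorption, and justifying the computation at $H^1$ regularity. For the latter I will first regularize $b$ to $H^3$ and then pass to the limit by the Step~1 continuity.

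\textbf{Step 3: assembling the flow.} Global $H^1$ continuation together with the Lipschitz bounds gives \eqref{EE2.4} and its local Lipschitz dependence on $(a,b)$. Independence of the decomposition follows from uniqueness of mild solutions in $C(\R;X_0)$. If $a+b=a'+b'$, then $a-a'\in X_s\cap H^1$, and both candidate solutions are mild solutions in $C(\R;X_0)$, because $H^1\subset X_0$ via $\norm{\Box_k f}_\infty\lesssim\norm{\Box_k f}_2$ and Cauchy--Schwarz with the $\br{k}$ weight. Uniqueness itself follows from a Gronwall argument in $X_0$, using the algebra property of $X_0$ and the boundedness of $e^{it\partial_x^2/2}$ on $X_0$ on finite intervals.

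The bounds \eqref{E1.6} and \eqref{E1.7} follow by taking near-optimal decompositions and combining \eqref{E1.3} and \eqref{EE2.2} with the $H^1$ estimates. For \eqref{E1.7}, given $u_0,v_0$, I decompose $v_0=a+(b+(v_0-u_0))$ after splitting $v_0-u_0$ appropriately, using the Lipschitz dependence in both $a$ and $b$. The group property, translation commutation and the identity $\Phi_t(a+H^1)=\Phi_t(a)+H^1$ then follow from uniqueness, with the reverse inclusion coming from $\Phi_{-t}$.
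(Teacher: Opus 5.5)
Apart from Step~2, your plan is the paper's proof. The splitting $u=\Phi_t(a)+w$, the $H^1$ Duhamel theory for the perturbation, regularization of $b$, independence of the decomposition via $H^1\hookrightarrow X_0$ and uniqueness of $X_0$ mild solutions, and near-optimal splittings for the quotient-norm estimates all match.

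The gap is in Step~2, which you flagged yourself. Your functional $E=\int\tfrac12|w_x|^2+\tfrac12(|q+w|^2-|q|^2)^2$ is neither conserved up to $q_t$-terms nor coercive.

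\emph{Not conserved up to $q_t$-terms.} Write $R(q,w)=|q+w|^4-|q|^4-4\operatorname{Re}(|q|^2q\overline w)$. Then $(|q+w|^2-|q|^2)^2=R(q,w)-2|q|^2|w|^2$. So the real $w$-gradient of your $E$ is $-w_{xx}+2\mathcal N(q,w)-2|q|^2w$, not $2iw_t$. Besides the $q_t$-terms, $E'$ therefore contains
$-\int|q|^2\partial_t|w|^2=-\int(|q|^2)_x\operatorname{Im}(\overline w w_x)\dd x-2\int|q|^2\operatorname{Im}(\overline w\,\mathcal N(q,w))\dd x$,
which does not cancel.

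\emph{Not coercive.} With $\alpha=\operatorname{Re}(q/w)$, the density of your quartic term is $(1+2\alpha)^2|w|^4$. It vanishes wherever $|q+w|=|q|$ (e.g.\ $w=-2q$), so it does not control $\norm w_{L^4}^4$.

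This matters because $M'$ and $E'$ both contain $\int|w|^3$. A global bound for arbitrary $\norm b_{H^1}$ needs the \emph{linear} estimate $\int|w|^3\le C(M+E)$. Gagliardo--Nirenberg, $\norm w_{L^3}^3\lesssim\norm w_{L^2}^{5/2}\norm{w_x}_{L^2}^{1/2}$, gives only $\mathcal H'\lesssim\mathcal H+\mathcal H^{3/2}$ for $\mathcal H=M+E$. That yields bounds only on a time interval shrinking with the data, no better than the local theory.

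The paper instead uses $E=\tfrac12\norm{w_x}_{L^2}^2+\tfrac12\int R(q,w)$. Put $Q_T=\sup_{|t|\le T}\norm{q(t)}_\infty$ and $P_T=\sup_{|t|\le T}\norm{q_t(t)}_\infty$.
\begin{itemize}
\item The $w$-variation contributes exactly zero, so $|E'|\le 2P_T\int|w|^3+6Q_TP_TM$.
\item For $q/w=\alpha+i\beta$ one has $R/|w|^4=\tfrac13+6(\alpha+\tfrac13)^2+2\beta^2$, so $E\ge\tfrac16\norm w_{L^4}^4$.
\item H\"older then gives $\int|w|^3\le M^{1/2}\norm w_{L^4}^2\le\tfrac12M+3E$.
\end{itemize}

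Your functional can also be rescued. Use the pointwise bound $|w|^4\le2(|q+w|^2-|q|^2)^2+8|q|^2|w|^2$. Estimate the extra term above by $C(p_1(q))\bigl(M+\norm w_{L^2}\norm{w_x}_{L^2}+\int|w|^3\bigr)$.

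Either way $M+E$ satisfies a linear Gronwall inequality, and the rest of your plan goes through.

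One further point in the regularization step: you need persistence of $H^3$ (or $H^2$) regularity controlled by the $H^1$ norm. A tame bound such as $\norm{\mathcal N(q,w)}_{H^2}\le C(\norm q_{W^{2,\infty}}+\norm w_{H^1})^2\norm w_{H^2}$ suffices. It keeps the regularized solutions alive on all of $[-T,T]$ before you pass to the limit.
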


\begin{theorem}\label{t8}
For either sign in \eqref{EE2.1}, let $q_0\in X\cap\AP(\R)$,
where $\AP$ denotes Bohr almost periodicity.
Then $q(t)\in X\cap\AP(\R)$ for every $t\in\R$. If
$\Gamma(f)$ is the additive subgroup of $\R$ generated by the frequencies
at which $f$ has a nonzero Bohr Fourier coefficient, then
\[
\Gamma(q(t))=\Gamma(q_0)\qquad(t\in\R).
\]
The map $\Phi_t$ restricts to a homeomorphism between the spatial
translation hulls of $q_0$ and $q(t)$ in $X$, and intertwines translations.
\end{theorem}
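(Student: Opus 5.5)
The plan is to combine three ingredients: translation equivariance of $\Phi_t$ (Theorem~\ref{t3}), continuity of $\Phi_t$ on $X$, and a characterization of Bohr almost periodicity through compactness of the translation orbit. For $f\in X$ write $\tau_a f=f(\cdot+a)$. Since $X\subset C_b(\R)$ with uniformly continuous representatives, Bochner's criterion says $f\in\AP$ iff $\{\tau_af\}_{a\in\R}$ is relatively compact in $L^\infty$. The first step is to upgrade this to the $X$-topology. The claim is that $f\in X\cap\AP$ iff the orbit $\{\tau_a f\}$ is relatively compact in $X$.

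For the forward direction, note that each block $\Box_k f$ is almost periodic, being a convolution of an AP function with an $L^1$ kernel. Also $\norm{\tau_a f-\tau_b f}_5=\sum_k\br{k}^5\norm{\tau_a\Box_kf-\tau_b\Box_kf}_\infty$. The tail $\sum_{|k|>K}$ is uniformly small because the norm is translation invariant. On the finitely many remaining blocks, Bochner compactness applies. Together these give total boundedness in $X$.

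For the converse, relative compactness in $X$ implies relative compactness in $L^\infty$, since $X\hookrightarrow C_b$. Bochner's criterion then gives $f\in\AP$.

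With this characterization, the first assertion follows quickly. By equivariance, $\tau_a\Phi_t(q_0)=\Phi_t(\tau_aq_0)$, so the orbit of $q(t)$ is the image under the continuous map $\Phi_t$ of a relatively compact set. Its closure is therefore compact, and $q(t)\in X\cap\AP$.

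For the hull statement, let $H(q_0)$ be the closure in $X$ of the orbit of $q_0$; it is compact. The continuous map $\Phi_t$ sends $H(q_0)$ onto a compact set containing the orbit of $q(t)$ and contained in its closure, so the image is exactly $H(q(t))$. Since $\Phi_{-t}$ is a continuous inverse, $\Phi_t\colon H(q_0)\to H(q(t))$ is a homeomorphism, and it intertwines $\tau_a$ by equivariance.

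The frequency module takes the most care. The plan is to use the standard characterization, for $f\in\AP$, of $\Gamma(f)$ through the hull. Specifically, $\lambda\notin\Gamma(f)$ for the subgroup generated in the statement iff there exist sequences $a_n$ with $\tau_{a_n}f\to f$ uniformly and $e^{i\lambda a_n}\not\to1$. Equivalently, $\Gamma(f)$ is determined by the set of sequences $(a_n)$ along which $\tau_{a_n}f\to f$: $\lambda\in\Gamma(f)$ iff $e^{i\lambda a_n}\to1$ for every such sequence. This is Favard's/Bohr's module-containment criterion, namely that $\Gamma(g)\subset\Gamma(f)$ iff every sequence $a_n$ with $\tau_{a_n}f\to f$ uniformly also has $\tau_{a_n}g\to g$ uniformly.

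The key step is to show that $\tau_{a_n}q_0\to q_0$ in $X$ iff $\tau_{a_n}q(t)\to q(t)$ in $X$. This follows from equivariance and bicontinuity of $\Phi_{\pm t}$.

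We also need to pass between convergence in $X$ and uniform convergence. The direction from $X$ to $L^\infty$ is trivial. For the other direction, suppose $\tau_{a_n}f\to f$ uniformly with $f\in X\cap\AP$. The sequence $\tau_{a_n}f$ lies in the compact set $H(f)$, so every subsequence has an $X$-convergent sub-subsequence. Its limit must equal the uniform limit $f$, so $\tau_{a_n}f\to f$ in $X$.

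Applying the criterion in both directions then gives $\Gamma(q(t))=\Gamma(q_0)$. The main obstacle is stating the module criterion precisely for the subgroup generated by the exponents as defined in the statement; I would cite Fink's \emph{Almost periodic differential equations}, Theorem 4.5.
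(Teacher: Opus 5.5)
Your proposal is correct. The almost-periodicity and hull parts are essentially the paper's argument (Lemma~\ref{l43} and the first paragraph of the proof of Theorem~\ref{t8}). The frequency-module step, however, takes a genuinely different route.

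The paper does not use return sequences at all. It notes that the point evaluations $E_a(p)=p(a)$ and the map $p\mapsto\Phi_t(p)(0)$ are continuous on the $X$-compact hull. Stone--Weierstrass then approximates the latter uniformly by polynomials in finitely many $E_a,\overline{E_a}$. Restricting to $p=\T_xq_0$ shows that $q(t)$ is a uniform limit of polynomials in translates of $q_0$ and $\overline{q_0}$. The Bohr approximation theorem and continuity of Bohr coefficients then give $\Gamma(q(t))\subseteq\Gamma(q_0)$, and $\Phi_{-t}$ gives the reverse inclusion.

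You instead invoke Favard's module-containment criterion. This forces your extra compactness step, which upgrades the uniform return $\T_{a_n}q_0\to q_0$ to return in $X$; that step is needed because $\Phi_t$ is only known to be continuous in the $X$-topology. The paper avoids this step, since its functions are continuous on the hull in the $X$-topology from the start.

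Your route is shorter and treats $q_0$ and $q(t)$ symmetrically. Its cost is that the substance is outsourced to Favard's theorem, whose relevant direction needs a Kronecker-type construction of return sequences. The paper's route uses only elementary facts. It also yields the extra structural information that $q(t)$ is a continuous hull function of $q_0$.
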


Corollary~\ref{c44} gives approximation in $X$ by smooth data
and, for almost-periodic initial values, by trigonometric polynomials,
with solution convergence uniform on compact time intervals.

Cutoff and periodic approximations of non-decaying data need not
converge in the global phase-space norm, motivating a local stability
estimate. Proposition~\ref{p45} and
Corollary~\ref{c46} give convergence of cutoff and
long-period solutions in $H^1$ and the uniform norm on fixed spatial
windows, with error $O(L^{-1/2})$ on finite time intervals under a
uniform $W^{5,\infty}$ bound on the initial data. This identifies the
general non-decaying evolution as a local limit of both finite-mass
and periodic dynamics.

For arbitrary $X$ data, spatial translation orbits need not be
precompact in the global norm, so we consider their closures in
$C^4_{\mathrm{loc}}$. Propositions~\ref{p47}
and~\ref{p48} establish continuity of the flow in this
local topology on bounded $X$ sets and show that the resulting compact
translation hulls are mapped homeomorphically onto their evolved
hulls, with translations intertwined. This extends the hull dynamics
of Theorem~\ref{t8} to general non-decaying profiles and allows
their spatially invariant distributions to be transported by the flow.

The finite-time spatial bounds also allow the Lax compatibility
relations to act through bounded invertible transformations on the
full line. Theorem~\ref{t49} consequently gives
preservation of the full complex Dirac spectrum and the upper and
lower spatial transfer growth rates for both reductions, while
Corollary~\ref{c50} proves existence and
preservation of averaged transfer growth under the corresponding
translation-invariant hull measures. These results describe the
spectral and spatial statistical invariants of the constructed global
flow without requiring almost periodicity or ergodicity.

\subsection{Background and comparison}
Equation (\ref{EE2.1}) is completely integrable. More precisely, it is
equivalent to the Lax equation
\[
\partial_{t}D=[A,D],
\]
where
\[
D=i\sigma_3\partial_{x}+Q(x,t)
\]
and
\[
A=i\sigma_3\partial_{x}^{2}+Q\partial_{x}
  +\frac12\partial_{x}Q-\frac{i}{2}\sigma_3Q^2,
\]
with
\[
\sigma_3=\begin{pmatrix}1&0\\0&-1\end{pmatrix}, \qquad
	Q(x,t)=\begin{pmatrix}0&q(t,x)\\
	\kappa\overline{q(t,x)}&0\end{pmatrix}.
\]
The spectral equation $D\boldsymbol{f}=z\boldsymbol{f}$ is the
Zakharov--Shabat system. The inverse scattering solution of the focusing
equation was introduced by Zakharov and Shabat \cite{ZS}. Rigorous direct and
inverse scattering theories for first-order systems were subsequently
developed by Beals and Coifman \cite{BC}; Zhou's formulation also permits
arbitrary spectral singularities \cite{Zhou}.

For rapidly decreasing initial data, the direct scattering map replaces
$q_0$ by a reflection coefficient together with discrete eigenvalues and
norming constants when discrete spectrum is present. The scattering data have
an elementary time evolution, and the solution is reconstructed from a
Marchenko equation or a Riemann--Hilbert problem. This framework gives both
pure soliton solutions and the interaction of solitons with radiation. In the
defocusing case, Deift and Zhou obtained long-time asymptotics for data in a
weighted Sobolev space \cite{DZ}. For focusing NLS, Borghese, Jenkins, and
McLaughlin derived cone-wise asymptotics whose leading term is a modulated
multi-soliton \cite{BJM}. Thus the rapidly decreasing theory describes in
considerable detail the soliton and dispersive components of the flow, but its
scattering coordinates are not directly available for general bounded,
non-decaying functions.

Inverse scattering has also been adapted to nonzero backgrounds. For focusing
NLS with constant boundary values, see Biondini and Kova\v{c}i\v{c}
\cite{BK}. For defocusing finite-density data, Cuccagna and Jenkins proved
long-time asymptotics and asymptotic stability of $N$-dark-soliton solutions
\cite{CJ}. These theories
are particularly relevant to coherent structures, but the prescribed
asymptotic background remains an essential part of their spectral
formulation.

There is, in parallel, a substantial PDE theory which does not use the
scattering transform. In one dimension the cubic equation is mass
subcritical, and the classical $L^2(\mathbb R)$ theory gives global solutions
for both signs; see, for example, Tsutsumi \cite{Tsutsumi}. Bourgain's Fourier
restriction method established a basic low-regularity theory for periodic NLS
\cite{Bourgain93}, while his invariant-measure construction provided global
dynamics on sets of full Gibbs measure \cite{Bourgain94}. At negative
regularity, Koch and Tataru obtained a priori estimates in $H^s(\mathbb R)$
\cite{KT}, and Killip, Vi{\c{s}}an, and Zhang constructed low-regularity
conservation laws simultaneously on the line and the circle \cite{KVZ}.
Building on this integrable-PDE viewpoint, Harrop-Griffiths, Killip, and
Vi{\c{s}}an proved sharp global well-posedness for both focusing and
defocusing cubic NLS in
\[
H^s(\mathbb R),\qquad s>-\frac12
\]
\cite{HG}. Related global theories in modulation and Fourier--Lebesgue spaces
were obtained by Oh and Wang \cite{OW}. Randomization gives another extension
of deterministic theory: Colliander and Oh proved almost-sure well-posedness
below $L^2(\mathbb T)$ for the Wick-ordered periodic cubic NLS equation \cite{CO}. These results
are sharp or nearly sharp in their respective finite-mass and periodic
settings, but a general bounded function on $\mathbb R$ need not belong to any
of the underlying Sobolev or modulation spaces.

Periodic initial data form a second classical integrable regime. The
finite-gap method produces explicit theta-function solutions and a
constructive solution of the periodic Cauchy problem for finite-gap data; see
Kotlyarov and Its \cite{KI}. McKean and Vaninsky developed action--angle
variables for the periodic cubic Schr\"odinger equation \cite{MV}. For the
defocusing equation this leads to a global integrable normal-form picture,
whereas in the focusing case the non-self-adjoint spectrum makes the geometry
more delicate; local Birkhoff coordinates near the zero solution were
constructed by Kappeler, Lohrmann, Topalov, and Zung \cite{KLTZ}. Periodic
inverse spectral methods and the PDE methods mentioned above are
complementary, but both make essential use of the compact spatial geometry or
of periodic spectral data.

Quasi-periodicity occurs in two distinct senses that should not be
conflated. KAM theory for infinite-dimensional systems constructs specially selected
solutions which are quasi-periodic in time on a compact spatial domain; among
the foundational NLS results are those of Kuksin and P\"oschel \cite{KP} and
Eliasson and Kuksin \cite{EK}. This is different from prescribing a spatially
quasi-periodic function on the real line and solving its Cauchy problem.
For the latter problem, Damanik, Li, and Xu proved local existence and
uniqueness for the standard NLS under a power-law upper bound on the Fourier
coefficients \cite{DD1}; their companion work treats derivative NLS
\cite{DD2}. More recently, Fillman, Li, Luki\'c, and Zhou used direct and
inverse Dirac spectral theory to obtain defocusing cubic NLS solutions that
are almost periodic in both space and time for small analytic quasi-periodic
data satisfying Diophantine conditions \cite{FLLZ}.

The almost-periodic Cauchy problem is still less amenable to the usual
compactness and dispersive tools. Oh proved local well-posedness in the
Wiener-type algebra of almost-periodic functions with absolutely convergent
Fourier series \cite{OhAP}, and established global existence for a regular
class of limit-periodic data in the defocusing case \cite{OhLP}. From the
inverse spectral side, Boutet de Monvel and Egorova constructed NLS solutions
with Cantor-type spectrum \cite{BME}, while Boutet de Monvel and Marchenko
developed a spectral construction for a class of bounded initial data obtained
from limits of reflectionless Dirac operators \cite{BMM}. These results show
that non-decaying and almost-periodic dynamics can be treated beyond the
finite-gap setting, but the available constructions typically impose
Fourier-summability, approximation, smallness, arithmetic, or spectral
conditions.

For general bounded smooth data, global existence on the line remains
a separate problem from the finite-mass theory; see Dodson--Soffer--Spencer
\cite{DSS}. Klaus \cite{Klaus} singles out a global theory in a modulation
space with spatial exponent $\infty$ as an open endpoint problem.
Zhao \cite{ZhaoBounded} establishes local well-posedness for bounded data,
including local spatial almost-periodicity and frequency-module control.
Theorems~\ref{t1} and~\ref{t2}, with $N=2$, give global classical
solutions for arbitrary $W^{5,\infty}$ initial data.
Theorem~\ref{t3} realizes this construction as a globally well-posed
flow on $M_{\infty,1}^{5}$ and, in particular, on $C_b^\infty$.

Finite-energy perturbations of non-decaying backgrounds provide another
natural class of initial data. Klaus and Kunstmann \cite{KlausKunstmann}
proved global well-posedness for the defocusing cubic NLS in
$H^1(\mathbb R)+H^r(\mathbb T)$, $r>3/2$, using a time-dependent
relative Hamiltonian. Theorem~\ref{t7} treats arbitrary
backgrounds in $M_{\infty,1}^{5}(\mathbb R)$ and their
$H^1(\mathbb R)$ perturbations, with a flow defined independently of the
decomposition and locally Lipschitz in the sum norm. This removes the
periodic background assumption while requiring higher regularity of
the background.

\subsection{Proof strategy and organization}
Our construction of global NLS evolutions is based on the
Sato--Segal--Wilson theory.
Sato realized integrable hierarchies as flows on an infinite-dimensional
Grassmann manifold \cite{MS}, and Segal and Wilson gave an analytic
Hardy-space formulation in which the hierarchy is encoded by multiplication
operators and tau-functions \cite{GS}. Kotani recently adapted this framework
to construct KdV flows for general non-decaying potentials, including a large
class of almost-periodic potentials \cite{SK} \cite{Kotani2024}. We extend this construction
from the scalar Schr\"odinger operator underlying KdV to the matrix Dirac
operator underlying NLS.

Two issues are specific to the present matrix problem. First, the same
potential may be represented by different Grassmannian symbols, so the group
action must be shown to be independent of this choice. We compute explicitly
the one-factor transformation of the two projective $m$-coordinates, iterate
it for rational group elements, and pass to arbitrary group elements by
rational approximation. Second, the defocusing argument has access to
self-adjoint Weyl--Titchmarsh theory, whereas focusing NLS does not. We
therefore compare the projective $m$-coordinates furnished by the Sato
factorization with the finite-interval Weyl disks of the skew-self-adjoint
Dirac system.  Direct uniqueness follows from the exterior Hardy
normalization, while inverse uniqueness is reduced on every compact interval
to the local Borg--Marchenko theorem.

The non-vanishing of the tau-function makes these spectral transformations
global. In the focusing case, a homogeneous determinant identity yields the
positivity needed for Toeplitz invertibility. Near-real-axis Weyl asymptotics
then place every $W^{2N+1,\infty}$ potential in the spectral classes of
Theorem~\ref{t2}.

To obtain Theorem~\ref{t3}, we establish a further uniformity property
of the reconstruction. On a $W^{5,\infty}$ initial-data ball, a common
spectral contour and a weighted symbol norm make the normalized Toeplitz
operators continuous in the local topology of the potentials. Compactness
and invertibility give uniform bounds for their inverses. Translation
covariance converts the resulting bounds at one spatial point into the
$W^{2,\infty}$ estimate of Proposition~\ref{p37}. We identify the
local $X_5$ solution with the spectral solution by uniqueness, and use the
low-order bound in a tame product estimate to continue it globally.

Section~\ref{sec:group-action} develops the spectral data, tau-functions and
group transformations. Sections~\ref{sec:defocusing-construction}
and~\ref{sec:focusing-construction} prove Theorems~\ref{t1} and~\ref{t2}
for the two reductions. Section~\ref{sec:phase} establishes the uniform
spectral estimates, proves global well-posedness, and derives the further
dynamical consequences. The defocusing sum-space theorem combines the
resulting background bounds with a coercive relative energy for
$H^1(\mathbb R)$ perturbations.

\section{Spectral construction and group action}\label{sec:group-action}

\subsection{Admissible contours and spectral data}\label{sec:spectral-setup}
In the spectral construction, fix a positive integer $N$. Let
\[
C=\{x+i\omega(x):x\in\mathbb{R}\}\cup\{x-i\omega(x):x\in\mathbb{R}\},
\]
where $\omega$ is a smooth positive function on $\mathbb{R}$ satisfying
\begin{equation}
	\begin{array}
		[c]{l}%
		\omega'\in L^{\infty},\qquad
		c_0(1+|x|)^{-(N-1)}\leq \omega(x)\leq C_0(1+|x|)^{-(N-1)}
		\quad (x\in\mathbb R)%
	\end{array}
	\text{.} \label{E2.1}
\end{equation}
Here and below, the positive constants $c_0$ and $C_0$ may depend on the
contour. Compact modifications of such a contour are allowed. We call every
contour satisfying (\ref{E2.1}) \textbf{admissible}. This contour will be used
throughout. We denote
\[
D_{+}=\{z\in\mathbb{C}:|\operatorname{Im}z|<\omega(\operatorname{Re}z)\},
\qquad
D_{-}=\{z\in\mathbb{C}:|\operatorname{Im}z|>\omega(\operatorname{Re}z)\}.
\]
In the rational closures below, only rational functions with boundary
restrictions in $L^2(C)$ are included. The basic Hilbert space $L^{2}(C)$
is the direct sum of the closed subspaces
$H(D_{\pm})$:
\[
\begin{array}
	[c]{ll}%
	H(D_{+})=L^{2}(C)\text{-closure of }\left\{  \text{all rational functions
	with no poles in }D_{+} \right\} \\
	H(D_{-})=L^{2}(C)\text{-closure of }\left\{  \begin{array}
		[c]{l}%
		\text{all rational functions $r$ with no poles} \\
		\text{in $D_{-}$ and } r(z) = o(1) \text{ as } z \to \infty
	\end{array} \right\}%
\end{array} \text{.}
\]
The projections $\mathfrak{p}_{\pm}$ from $L^{2}(C)$ onto $H(D_{\pm})$ are
obtained by
\[
\begin{array}
	[c]{ll}%
	\left(\mathfrak{p}_{+}f\right)(z)
	=\dfrac{1}{2\pi i}{\displaystyle\int_{C}}
	\dfrac{f(\lambda)}{\lambda-z}\,d\lambda
	& \text{for }z\in D_{+},\\
	\left(\mathfrak{p}_{-}f\right)(z)
	=\dfrac{1}{2\pi i}{\displaystyle\int_{C}}
	\dfrac{f(\lambda)}{z-\lambda}\,d\lambda
	& \text{for }z\in D_{-},
\end{array}
\]
The lower graph is oriented from left to right and the upper graph
from right to left, as the positively oriented boundary of the strip. The operators
$\mathfrak{p}_{\pm}$ are bounded on $L^{2}(C)$ (see~\cite{JV}) and satisfy
\[
\mathfrak{p}_{\pm}^{2}=\mathfrak{p}_{\pm},\qquad
\mathfrak{p}_{+}+\mathfrak{p}_{-}=I_{L^{2}(C)}.
\]
We use boldface for the vector-valued spaces:
\[
\boldsymbol{L}^{2}(C)=L^{2}(C)\times L^{2}(C),\qquad
\boldsymbol{H}(D_{\pm})=H(D_{\pm})\times H(D_{\pm}),
\]
and use the same notation $\mathfrak{p}_{\pm}$ for the componentwise
projections from $\boldsymbol{L}^{2}(C)$ onto
$\boldsymbol{H}(D_{\pm})$. They are bounded and satisfy
\[
\mathfrak{p}_{\pm}^{2}=\mathfrak{p}_{\pm},\qquad
\mathfrak{p}_{+}+\mathfrak{p}_{-}=I_{\boldsymbol L^{2}(C)}.
\]

We enlarge $H(D_{+})$ so as to include polynomials. For
$n\in\mathbb{Z}_{\geq1}$, set
\[
H_{n}(D_{+})=(z-b)^{n}H(D_{+}),
\]
where $b\in D_{-}$, and equip $H_{n}(D_{+})$ with the norm
\[
\left\Vert u \right\Vert_{n}
=\left(\displaystyle\int_{C}|u(\lambda)|^{2}|\lambda|^{-2n}
|d\lambda|\right)^{\frac12}.
\]
The space $H_{n}(D_{+})$ is independent of the choice of $b$, with
equivalent norms, and $z^{m}\in H_{n}(D_{+})$ whenever
$0\leq m\leq n-1$. Its vector-valued analogue is denoted by
$\boldsymbol{H}_{n}(D_{+})$.

Multiplication by $z$ acts between successive weighted spaces:
\[
z:\boldsymbol H_n(D_+)\longrightarrow\boldsymbol H_{n+1}(D_+).
\]
We use this scale and the following weighted Toeplitz operators to realize
the Sato--Segal--Wilson construction.

For $L\in\mathbb Z_{+}$, define the symbol class on the fixed contour $C$ by
\[
\mathcal{A}_{L}(C)=\left\{
\begin{array}
	[c]{c}%
	A(\lambda)\in GL(2,\mathbb C)\text{; }A(\lambda)
	\text{ is bounded on }C\text{ and there exists}\\
	\text{an analytic $2\times2$ matrix function $F$ on $D_{+}$ such that}\\
	\sup\limits_{z\in\overline D_{+}}\|F(z)\|<\infty,\qquad
	\sup\limits_{z\in C}\|z^{L}(A(z)-F(z))\|<\infty
\end{array}
\right\}.
\]
For $A\in\mathcal{A}_{L}(C)$, define the Toeplitz operator
\begin{equation}
	\left(T(A)\boldsymbol u\right)(z)
	=F(z)\boldsymbol u(z)
	+\mathfrak p_{+}\left((A-F)\boldsymbol u\right)(z),
	\qquad \boldsymbol u\in\boldsymbol H_{n}(D_{+}), \label{E2.2}
\end{equation}
for $n\leq L$. The definition is independent of the choice of $F$, and
$T(A)$ is bounded on $\boldsymbol H_{n}(D_{+})$.

All subsequent constructions use these operators on the indicated
weighted spaces.

Sato introduced the tau-function to represent flows on the Grassmannian
\cite{MS}. In the present setting, define
\begin{equation}
\mathcal A_N^{inv}(C)=
\left\{A\in\mathcal A_{N+2}(C);\
\begin{array}{c}
T(A)\text{ is invertible on }\boldsymbol H_n(D_+)\\
\text{for }1\leq n\leq N+1
\end{array}\right\}, \label{E2.3}
\end{equation}
The two extra orders of symbol decay ensure that the first asymptotic
coefficient can be differentiated through total spectral degree $N$;
see Lemma~\ref{l15}. Only the weighted spaces up to
$\boldsymbol H_{N+1}(D_+)$ are required for these derivatives.
The contour in (\ref{E2.1}) is unchanged.
We introduce the multiplier group $\Gamma_m(C)$ by
\[
\Gamma_m(C)=\left\{
\begin{array}
	[c]{c}%
	g=re^{ih}\text{; }h\in\mathbb R[z],\ \deg h\le m,\\
	r\text{ is rational, has no zeros or poles in }\overline D_+,\\
	\text{and both }r\text{ and }r^{-1}\text{ are bounded on }C
\end{array}
\right\}.
\]
Every $g\in\Gamma_N$ is bounded on $C$. For $g\in\Gamma_m(C)$, set
\[
G(g)=\begin{pmatrix}g&0\\0&g^{-1}\end{pmatrix}.
\]
Let $m\leq N$. For $A\in\mathcal A_N^{inv}(C)$ and
$g\in\Gamma_m$, multiplication by $G(g)$ defines an action on the symbols. As shown below,
\[
G(g)^{-1}T(G(g)A)T(A)^{-1}-I
\]
is Hilbert--Schmidt on $\boldsymbol H_n(D_+)$ for $1\leq n\leq N+1$. We may
therefore define the \textbf{tau-function} by
\[
\tau_A^{(2)}(g)=\operatorname{det}_{2}
\left(G(g)^{-1}T(G(g)A)T(A)^{-1}\right),
\]
where $\operatorname{det}_{2}$ denotes the modified Fredholm determinant
\cite{BS}. Lemma~\ref{l10} shows that $\tau_A^{(2)}(g)$ is independent
of the choice of $\boldsymbol H_n(D_+)$.

Let $r$ be a rational function with distinct poles
$\{\zeta_j\}_{j=1}^{k}$ and distinct zeros $\{\eta_j\}_{j=1}^{k}$ in
$D_-$. Then $r\in\Gamma_0(C)$ and
\[
G(r)^{-1}T(G(r)A)T(A)^{-1}-I
\]
is finite rank and hence trace class. Thus
\[
\tau_A(r)=\det\left(G(r)^{-1}T(G(r)A)T(A)^{-1}\right)
\]
is well defined. The ordinary and modified determinants are related by
\[
\operatorname{det}_{2}(I+A)=\det(I+A)e^{-\operatorname{tr}A}
\]
for a trace-class operator $A$.

To construct an NLS flow on a subclass of $\mathcal A_N^{inv}(C)$, it is
essential to ensure that $G(g)A\in\mathcal A_N^{inv}(C)$; this is equivalent
to $\tau_A^{(2)}(g)\neq0$. The main issue is therefore to identify classes of
symbols for which the tau-function does not vanish along the relevant group
action. The defocusing reality reduction is expressed by
$\sigma_2^{-1}\overline A\sigma_2=A$, where
$\overline f(z)=\overline{f(\overline z)}$ and
\[
\sigma_2=\begin{pmatrix}0&1\\1&0\end{pmatrix},\qquad
\sigma_4=\begin{pmatrix}0&1\\-1&0\end{pmatrix}.
\]
Then $\tau_A^{(2)}(g)\in\mathbb R$ whenever
$\sigma_2^{-1}\overline A\sigma_2=A$ and
$g\in\Gamma_m^{\mathrm{real}}(C)$, where
\[
\Gamma_m^{\mathrm{real}}(C)
=\{g\in\Gamma_m(C);\ g(z)\overline g(z)=1\}.
\]

A positivity condition on the symbols will be used to prove non-vanishing.
For $\zeta\in\mathbb C\setminus\mathbb R$, set
\[
r_\zeta(z)=\frac{\zeta}{\overline\zeta}
\frac{z-\overline\zeta}{z-\zeta}.
\]
Thus $r_\zeta$ has a pole at $\zeta$, a zero at $\overline\zeta$, and
$r_{\overline\zeta}=r_\zeta^{-1}$. For $N\geq1$, define
\begin{equation}
\left\{
\begin{array}
	[c]{ll}%
	\Gamma_m^{\mathrm{real}}
	=\{g;\ g\in\Gamma_m^{\mathrm{real}}(C)
	\text{ for some curve $C$ satisfying (\ref{E2.1})}\},\\
	\mathcal A_{N,d}^{inv}
	=\left\{A;\begin{array}{ll}
	\text{(i) }\sigma_2^{-1}\overline A\sigma_2=A,\\
	\text{(ii) for some admissible $C$, $A$ satisfies the}\\
	\quad\text{analytic-contour condition below, and}\\
	\quad A|_{\widetilde C}\in\mathcal A_N^{inv}(\widetilde C)
	\text{ for every admissible}\\
	\quad \widetilde C\subset\overline D_+,\\
	\text{(iii) }\tau_A(r_\zeta)\geq0
	\text{ for every }\zeta\in\mathbb C\setminus\mathbb R
	\end{array}\right\},\\
	\mathcal A_{N,f}^{inv}
	=\left\{A;\begin{array}{ll}
	\text{(i) }\sigma_4^{-1}\overline A\sigma_4=A,\\
	\text{(ii) }A\in\mathcal A_N^{inv}(C)\text{ for some $C$}
	\end{array}\right\}\text{.}
\end{array}
\right. \label{E2.4}
\end{equation}
The analytic-contour condition in the defocusing definition means that
$A$ is a single holomorphic $GL(2,\mathbb C)$-valued function on
$\overline D_+\setminus\mathbb R$ (holomorphic in a neighborhood of each
point), and that there is one bounded analytic matrix function $F$ on
$D_+$ such that, for every admissible $\widetilde C\subset\overline D_+$,
\begin{equation}
 \sup_{z\in\overline D_+}\|F(z)\|<\infty,\qquad
 \sup_{z\in\overline D_+\setminus\widetilde D_+}
       \|z^{N+2}(A(z)-F(z))\|<\infty.
 \label{E2.5}
\end{equation}

Recall the polynomial subgroup $\Gamma_N^{\operatorname{sub}}$ from
\eqref{E1.2}.
Every element of $\Gamma_N^{\operatorname{sub}}$ belongs to
$\Gamma_N^{\mathrm{real}}(C)$ for every admissible $C$: indeed,
$|\operatorname{Im}h(x+iy)|
 \leq C_h|y|(1+|x|+|y|)^{N-1}$ is bounded on $\overline D_+$.
We shall prove that $\tau_A^{(2)}(g)>0$ for
$A\in\mathcal A_{N,d}^{inv}$ and $g\in\Gamma_N^{\mathrm{real}}$,
and, on a fixed focusing contour, for
$A\in\mathcal A_{N,f}^{inv}$ and $g\in\Gamma_N^{\mathrm{real}}(C)$.
In particular, all $g\in\Gamma_N^{\operatorname{sub}}$ are allowed in the focusing case.
This guarantees the required Toeplitz invertibility. Set
$\Phi_A=AT(A)^{-1}I-I$. This is a $2\times2$ matrix-valued holomorphic
function on $D_-$, normalized at high imaginary infinity, with the
weighted Hardy expansion
\[
\Phi_A(z)=\sum_{k=1}^{N+1}A_kz^{-k}+\Psi_A^{(0)}(z)z^{-N-1},
\qquad
A_1=\begin{pmatrix}
\alpha_{11}(A)&\alpha_{12}(A)\\
\alpha_{21}(A)&\alpha_{22}(A)
\end{pmatrix}.
\]
Put $e_x(z)=e^{ixz}$ for $x\in\mathbb R$ and define the spaces of initial data by
\[
\left\{
\begin{array}{ll}
\mathcal Q_{N,d}
=\{q(x);\ q(x)=2\alpha_{21}(G(e_x)A)
\text{ for some }A\in\mathcal A_{N,d}^{inv}\},\\
\mathcal Q_{N,f}
=\{q(x);\ q(x)=2\alpha_{21}(G(e_x)A)
\text{ for some }A\in\mathcal A_{N,f}^{inv}\},
\end{array}
\right.
\]
and, with $g\in\Gamma_N^{\mathrm{real}}$ in the defocusing case and
$g\in\Gamma_N^{\operatorname{sub}}$ in the focusing case, define
\[
\left\{
\begin{array}{ll}
(\operatorname{dNLS}(g)q)(x)
=2\alpha_{21}(G(ge_x)A)
\quad\text{for }A\in\mathcal A_{N,d}^{inv},\\
(\operatorname{fNLS}(g)q)(x)
=2\alpha_{21}(G(ge_x)A)
\quad\text{for }A\in\mathcal A_{N,f}^{inv}.
\end{array}
\right.
\]

Theorem~\ref{t1} asserts that these transformations descend to the
potential classes independently of their spectral representation. We now
develop the identities used to prove this assertion and the inclusion in
Theorem~\ref{t2}.

\subsection{Tau-function}\label{sec:tau-function}
The invertibility of $T(G(g)A)$ will be established through the tau-function associated with
\[
G(g)^{-1} T(G(g)A) T(A)^{-1} \text{.}
\]
We first verify that the relevant perturbation of the identity is Hilbert--Schmidt.

\begin{lemma}
	\label{l9} For $A \in \mathcal{A}_{N}^{inv}\left(  C\right)$ and $g \in \Gamma_N(C)$, $G(g)^{-1}T(G(g)A) - T(A)$ is of Hilbert--Schmidt class on $\boldsymbol{H}_{n}(D_{+})$ for $1 \leq n \leq N+1$.
\end{lemma}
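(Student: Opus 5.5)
The plan is to reduce the difference $G(g)^{-1}T(G(g)A)-T(A)$ to a Hankel-type operator with a smooth commutator kernel, and then to estimate that kernel in the weighted $L^2$ norms.

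\textbf{Step 1: $G(g)$ is a bounded analytic multiplier on $D_+$.} Write $g=re^{ih}$ with $\deg h\le N$. As noted after \eqref{E2.5}, $|\operatorname{Im}h|$ is bounded on $\overline D_+$, so $e^{\pm ih}$ is bounded and holomorphic there. The factor $r$ is rational, with no zeros or poles in $\overline D_+$. Since $r$ and $r^{-1}$ are bounded on $C$, $r$ has degree zero, so $r^{\pm1}$ is bounded and holomorphic on $\overline D_+$. Hence $G(g)^{\pm1}$ are bounded holomorphic matrix functions on $D_+$. Multiplication by them therefore preserves $\boldsymbol H(D_+)$ and each $\boldsymbol H_n(D_+)$, because the factor $(z-b)^n$ commutes with $G$.

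\textbf{Step 2: reduction to a Hankel operator.} Let $F$ be the bounded analytic approximant of $A$ in the definition of $\mathcal A_{N+2}(C)$. Then $G(g)F$ is an admissible approximant for $G(g)A$, since $G(g)(A-F)=O(|z|^{-N-2})$ on $C$. By \eqref{E2.2}, with $w=(A-F)\boldsymbol u$,
\[
G^{-1}T(GA)\boldsymbol u-T(A)\boldsymbol u
 =G^{-1}\mathfrak p_+(Gw)-\mathfrak p_+w .
\]
Split $w=\mathfrak p_+w+\mathfrak p_-w$. Step 1 gives $\mathfrak p_+(G\mathfrak p_+w)=G\mathfrak p_+w$, so
\[
G^{-1}T(GA)-T(A)=G^{-1}\,\mathfrak p_+G\,\mathfrak p_-\,M_{A-F}.
\]
The factor $G^{-1}$ is bounded on $\boldsymbol H_n(D_+)$. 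Moreover, $M_{A-F}$ maps $\boldsymbol H_n(D_+)$ boundedly into $\boldsymbol L^2(C,|\lambda|^{2(N+2-n)}|d\lambda|)$, because $|\lambda|^{N+2}|A-F|$ is bounded. Since $\mathfrak p_-$ is bounded on the corresponding weighted space (one may also just bound the Hankel kernel directly against $w$), it suffices to show the following. Because $G$ is diagonal, this reduces to the scalar functions $g^{\pm1}$: the operator
\[
\mathfrak p_+ g\,\mathfrak p_- : L^2(C,|\lambda|^{2(N+2-n)})\to H_n(D_+)
\]
is Hilbert--Schmidt.

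\textbf{Step 3: the kernel.} For $f\in H(D_-)$ the Cauchy formula gives
\[
(\mathfrak p_+gf)(z)=\frac1{2\pi i}\int_C\frac{g(\lambda)-g(z)}{\lambda-z}f(\lambda)\,d\lambda .
\]
So the Hilbert--Schmidt norm in the weighted spaces is bounded by
\[
\int_C\!\int_C\Bigl|\frac{g(\lambda)-g(z)}{\lambda-z}\Bigr|^2
|z|^{-2n}|\lambda|^{2n-2N-4}\,|dz|\,|d\lambda|.
\]
Two bounds are available for the difference quotient. First, it is at most $2\sup_{\overline D_+}|g|/|\lambda-z|$. Second, when $z,\lambda$ are joined by a path in $\overline D_+$ of length comparable to $|\lambda-z|$, it is at most $\sup|g'|\lesssim(1+|z|)^{N-1}$ along that path, using $|h'|\lesssim|z|^{N-1}$ and the boundedness of $e^{ih}$ on $\overline D_+$.

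Near the diagonal, on the window $|z-\lambda|\lesssim\omega(\operatorname{Re}\lambda)\sim|\lambda|^{-(N-1)}$, the derivative bound applies and contributes about
\[
|\lambda|^{-2n}|\lambda|^{2N-2}|\lambda|^{-(N-1)}\cdot|\lambda|^{2n-2N-4}=|\lambda|^{-N-5}.
\]
Off that window we use the $|\lambda-z|^{-2}$ bound. Since $2n-2N-4\le-2$ for $n\le N+1$, both regions give integrable contributions; the far region is controlled by $\int|z|^{-2n}|\lambda-z|^{-2}|dz|\lesssim\omega(\operatorname{Re}\lambda)^{-1}$ times $|\lambda|^{-2}$, which again decays.

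\textbf{Main obstacle.} The delicate point is the near-diagonal region. The contour is thin, $\omega\sim|x|^{-(N-1)}$, and $g'$ grows like $|z|^{N-1}$, so pairs of points lying one on each graph must be joined by vertical segments inside $\overline D_+$, where $g$ stays bounded, rather than by straight segments through $D_-$. The Lipschitz bound on $g$ and the length of the near-diagonal window must then balance exactly. I would handle this by parametrizing both graphs by $x$, splitting into the region $|x-x'|\le\omega(x)$ and its complement, and checking that the two extra orders of decay in $\mathcal A_{N+2}$ absorb the weights $|\lambda|^{2n}$ for every $1\le n\le N+1$.
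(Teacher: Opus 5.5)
Your framework is the paper's. You write $G(g)^{-1}T(G(g)A)-T(A)$ as $G(g)^{-1}$ times the commutator $\mathfrak p_+G(g)-G(g)\mathfrak p_+$ applied to $(A-F)\boldsymbol u$. You then control the Hilbert--Schmidt norm by $\int\!\!\int |(g(\lambda)-g(z))/(\lambda-z)|^2|\lambda|^{2n-2N-4}|z|^{-2n}\,|dz|\,|d\lambda|$, and the same for $g^{-1}$.

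\textbf{The far-region estimate fails as written.} The integral $\int_{|z-\lambda|\gtrsim\omega}|\lambda-z|^{-2}|dz|$ produces a factor $\omega(\operatorname{Re}\lambda)^{-1}\sim|\lambda|^{N-1}$. You pair it with only $|\lambda|^{-2}$, which leaves the $\lambda$-integrand $|\lambda|^{N-3}$. That is not integrable on $C$ for $N\ge2$. Keeping the full weight $|\lambda|^{2n-2N-4}$ but bounding the $z$-integral by $\omega^{-1}|\lambda|^{-2}$ still gives $|\lambda|^{2n-N-7}$, which diverges at $n=N+1$ once $N\ge4$.

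The repair is a three-zone split:
\begin{itemize}
\item The $\omega^{-1}$ comes only from the intermediate zone $|z-\lambda|\le|\lambda|/2$. There $|z|\asymp|\lambda|$, so it multiplies $|\lambda|^{-2n}\cdot|\lambda|^{2n-2N-4}=|\lambda|^{-2N-4}$ and contributes $|\lambda|^{-N-5}$.
\item On $|z-\lambda|\ge|\lambda|/2$, use $|\lambda-z|^{-2}\lesssim|\lambda|^{-2}$ together with $\int_C|z|^{-2n}|dz|<\infty$; this is where $n\ge1$ enters. The result is $|\lambda|^{2n-2N-6}\le|\lambda|^{-4}$; this is where $n\le N+1$ enters.
\end{itemize}
With this bookkeeping your proof closes.

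\textbf{Two smaller points.}

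First, $\mathfrak p_-$ is not bounded on $L^2(C,|\lambda|^{2(N+2-n)}|d\lambda|)$, because that weight is not $A_2$. For compactly supported $w$ with $\int_Cw\,d\lambda\ne0$, $\mathfrak p_-w$ decays only like $1/z$. You do not need this claim: $\mathfrak p_+g\mathfrak p_+=g\mathfrak p_+$ gives $\mathfrak p_+g\mathfrak p_-=\mathfrak p_+g-g\mathfrak p_+$, whose kernel representation holds on all of $L^2(C)$. This is exactly the identity the paper starts from.

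Second, the near-diagonal region is not where the difficulty lies, and no exact balance occurs. The paper uses the relative window $|z-\lambda|<\varepsilon|\lambda|$. Chords between such points lie in a thickened strip $U\supset\overline D_+$ of width $\asymp\langle\operatorname{Re}z\rangle^{-(N-1)}$. On $U$, $\operatorname{Im}h$ is still bounded, after shrinking $\varepsilon$ on the compact part to avoid the finitely many zeros and poles of $r$. So $g^{\pm1}$ stay bounded there, with derivatives $O(\langle z\rangle^{N-1})$. The difference quotient is then $O(|\lambda|^{N-1})$ on the whole window, which contributes $\int_C|\lambda|^{-5}|d\lambda|$; the complement contributes $\int_C|\lambda|^{2n-2N-6}|d\lambda|$. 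Your paths inside $\overline D_+$ are a legitimate substitute for $U$, since Lipschitz $\omega$ makes their length comparable to $|\lambda-z|$. But your thin window is what forces the extra intermediate zone above.
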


\begin{proof}
    Put $E=A-F$. We use the decay $E(\lambda)=O(|\lambda|^{-N-2})$
    supplied by (\ref{E2.3}), with the same contour parameter $N$.
	(\ref{E2.2}) implies
	\begin{align*}
		&T(G(g)A) \boldsymbol{u}(z) = \left( G(g)F\boldsymbol{u} \right)(z) + \dfrac{1}{2\pi i} \displaystyle\int_{C} \dfrac{G(g)(\lambda) \left( A(\lambda)-F(\lambda) \right) \boldsymbol{u}(\lambda)}{\lambda-z} d\lambda \\
		& = \left( G(g)T(A)\boldsymbol{u} \right)(z) + \dfrac{1}{2\pi i} \displaystyle\int_{C} \dfrac{ \left( G(g)(\lambda)-G(g)(z) \right) \left( A(\lambda)-F(\lambda) \right) \boldsymbol{u}(\lambda)}{\lambda-z} d\lambda %
	\end{align*}
	for $\boldsymbol{u} \in \boldsymbol{H}_{n}(D_{+})$. Then $G(g)^{-1}T(G(g)A) - T(A)$ is of Hilbert--Schmidt class on $\boldsymbol{H}_{n}(D_{+})$ if
	\[
\Delta \equiv \displaystyle\int_{C^{2}} \left\Vert G(g)(z)^{-1} \dfrac{G(g)(\lambda) - G(g)(z)}{\lambda-z} \right\Vert^{2} |\lambda|^{-2N+2n-4} |z|^{-2n} |d\lambda||dz| < \infty.
\]
	Choose a neighborhood $U$ as in \eqref{E2.11} on which
$g,g^{-1}$ are bounded and analytic.
Such a choice is possible: outside a fixed compact set, all chords
joining relatively close points of $C$ lie in a strip
$|\operatorname{Im}z|\le C\langle\operatorname{Re}z\rangle^{-(N-1)}$.
The real polynomial in $g=re^{ih}$ has bounded imaginary part
there. On the compact portion, decrease the relative-distance
constant in \eqref{E2.11} and choose the neighborhood to avoid the
finitely many zeros and poles of $r$.
The same choice gives
$|g'|+|(g^{-1})'|\le C\langle z\rangle^{N-1}$ on $U$.
Since
	\[
\dfrac{g(\lambda)-g(z)}{\lambda-z} = \int_{0}^{1} g'((\lambda-z)t+z) dt
\]
	for $\lambda,z\in C$ with $|z-\lambda|<\varepsilon|\lambda|$, and since
	\[
g'(z)g(z)^{-1},\; (g^{-1})'(z)g(z)=O(|z|^{N-1})
\]
	on $U$, the difference quotient in (\ref{E2.2}) is $O(|\lambda|^{N-1})$ in this region. Moreover, $|z|\asymp|\lambda|$. Hence
	\begin{align*}
		\Delta_{1} & \equiv \displaystyle\int_{|z-\lambda|<\varepsilon |\lambda|} \left\Vert G(g)(z)^{-1} \dfrac{G(g)(\lambda) - G(g)(z)}{\lambda-z} \right\Vert^{2} |\lambda|^{-2N+2n-4} |z|^{-2n} |d\lambda||dz| \\
		& \leq C \displaystyle\int_{\substack{(z,\lambda)\in C^{2}\\|z-\lambda|<\varepsilon|\lambda|}} |\lambda|^{2n-6}|z|^{-2n}|d\lambda||dz| < \infty. %
	\end{align*}
	Indeed, the integrand in the last expression is $O(|\lambda|^{-6})$, while the length of $\{z\in C:|z-\lambda|<\varepsilon|\lambda|\}$ is $O(|\lambda|)$; the bounded part of $C$ causes no difficulty. On the complementary region, boundedness of $G(g)$ and $G(g)^{-1}$ on $C$ gives
	\begin{align*}
		\Delta_{2} & \equiv \displaystyle\int_{|z-\lambda| \geq \varepsilon |\lambda|} \left\Vert G(g)(z)^{-1} \dfrac{G(g)(\lambda) - G(g)(z)}{\lambda-z} \right\Vert^{2} |\lambda|^{-2N+2n-4} |z|^{-2n} |d\lambda||dz| \\
		& \leq C \displaystyle\int_{C^{2}} |\lambda|^{-2N+2n-6} |z|^{-2n} |d\lambda||dz| < \infty. %
	\end{align*}
	Because $n\leq N+1$, both factors in the last integral are integrable at infinity. Consequently,
	\[
\Delta = \Delta_{1} + \Delta_{2} < \infty,
\]
	which proves the assertion. \bigskip
\end{proof}

We shall also use the following invariance result.

\begin{lemma}\label{l10}
Let $A\in\mathcal A_N^{inv}(C)$ and
$g\in\Gamma_N(C)$. For $1\le n\le N+1$, set
\[
K_n=\bigl(G(g)^{-1}T(G(g)A)-T(A)\bigr)T(A)^{-1}
 \quad\text{on }\boldsymbol H_n(D_+).
\]
Then $\det_2(I+K_n)$ is independent of $n$.
If $g$ is rational, $\det(I+K_n)$ and $\operatorname{tr}K_n$
are also independent of $n$.
\end{lemma}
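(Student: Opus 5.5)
The plan is to compare the operators on the nested scale $\boldsymbol H_1(D_+)\subset\boldsymbol H_2(D_+)\subset\cdots\subset\boldsymbol H_{N+1}(D_+)$. Each inclusion is continuous and dense: polynomial multiples of $H(D_+)$ functions lie in all levels, and $(z-b)^{-1}$ maps $\boldsymbol H_{n+1}$ boundedly into $\boldsymbol H_n$. The operators $T(A)$ and $T(G(g)A)$ are defined by the same formula \eqref{E2.2} on every level, so $T(A)$ on $\boldsymbol H_{n+1}$ restricts to $T(A)$ on $\boldsymbol H_n$. Since $T(A)$ is invertible on both spaces by \eqref{E2.3}, the inverse on $\boldsymbol H_{n+1}$ also restricts to the inverse on $\boldsymbol H_n$. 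Consequently $K_{n+1}|_{\boldsymbol H_n}=K_n$, and by Lemma~\ref{l9} both $K_n$ and $K_{n+1}$ are Hilbert--Schmidt on their respective spaces. It therefore suffices to prove $\det_2(I+K_n)=\det_2(I+K_{n+1})$.

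For this I will use the standard principle that regularized determinants of compatible operators on a dense continuous embedding $\iota:\mathcal H\to\mathcal H'$ agree. The argument runs as follows. The eigenvalues of a Hilbert--Schmidt operator, with algebraic multiplicities, determine $\det_2(I+K)=\prod(1+\mu_j)e^{-\mu_j}$. So I will show that $K_n$ and $K_{n+1}$ have the same nonzero eigenvalues with the same algebraic multiplicities. One inclusion is immediate: a generalized eigenvector of $K_n$ is one of $K_{n+1}$. For the converse, I will use a concrete identity on the nose instead of abstract regularity. From the formula in the proof of Lemma~\ref{l9},
\[
\bigl(G(g)^{-1}T(G(g)A)-T(A)\bigr)\boldsymbol u(z)=\frac1{2\pi i}G(g)(z)^{-1}\int_C\frac{(G(g)(\lambda)-G(g)(z))E(\lambda)\boldsymbol u(\lambda)}{\lambda-z}\,d\lambda ,
\]
with $E=O(|\lambda|^{-N-2})$. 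This operator maps $\boldsymbol H_{N+1}$ into $\boldsymbol H_1$: the kernel decay makes the output an $H(D_+)$-type function up to bounded factors, using the estimates $\Delta_1,\Delta_2$ with $n=1$ on the output side. Composing with $T(A)^{-1}$, which preserves each $\boldsymbol H_n$, we get that $K_{n+1}$ maps $\boldsymbol H_{n+1}$ into $\boldsymbol H_n$. Hence if $(K_{n+1}-\mu)^k v=0$ with $\mu\neq0$, expanding shows $v=\mu^{-1}(\ldots)\in\operatorname{ran}K_{n+1}\subset\boldsymbol H_n$. This gives equality of the generalized eigenspaces, so the eigenvalue lists coincide and the $\det_2$ values agree.

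An alternative route avoids the eigenvalue argument: approximate $K_{n+1}$ by finite-rank operators of the form $P K_{n+1}$, with $P$ finite-rank and built from functions in $\boldsymbol H_1$, and use Hilbert--Schmidt continuity of $\det_2$ on both levels. I will keep this in reserve.

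For rational $g$, $K_n$ is finite rank (as noted before the lemma), with range spanned by explicit rational vectors determined by the poles and zeros of $g$ in $D_-$. This range lies in $\boldsymbol H_1$ and is the same for all $n$. Then $\operatorname{tr}K_n$ is the trace of the same finite matrix acting on this common finite-dimensional range, so it is independent of $n$. Likewise $\det(I+K_n)$ reduces to the determinant of $I+K$ on that range, and the relation $\det=\det_2\cdot e^{\operatorname{tr}}$ is consistent with this.

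The main obstacle is the mapping claim $K_{n+1}(\boldsymbol H_{n+1})\subset\boldsymbol H_n$, in particular controlling the growth of $G(g)(z)^{-1}$ on the output side for non-rational $g=re^{ih}$. I will handle it by the same near/far chord splitting as in Lemma~\ref{l9}, using the bound $|g'|\le C\langle z\rangle^{N-1}$ on $U$ together with the two extra orders of decay of $E$.
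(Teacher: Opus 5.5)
Your skeleton matches the paper's. It uses compatibility of $T(A)$, $T(A)^{-1}$ and $K_n$ on the nested scale, the fact that a generalized eigenvector for $\mu\ne0$ lies in $\operatorname{ran}K$, equality of the nonzero eigenvalue lists with algebraic multiplicities, and the canonical product for $\det_2$. Your rational case, via the trace and determinant restricted to a common finite-dimensional range, is also fine.

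The weak point is the step you flag as the main obstacle, the range improvement, and your proposed route does not work as described.
\begin{itemize}
\item[(a)] Use the near-diagonal bound $\|(G(g)(\lambda)-G(g)(z))/(\lambda-z)\|\lesssim|\lambda|^{N-1}$ from $\Delta_1$, with input weight $N+1$ and output weight $1$. The integrand is then of size $|\lambda|^{2N-2}\cdot|\lambda|^{-2}|z|^{-2}\asymp|\lambda|^{2N-6}$ on a $z$-set of length $O(|\lambda|)$. The resulting bound $\int^\infty|\lambda|^{2N-5}\,|d\lambda|$ diverges for every $N\ge2$, so the estimate does not close.
\item[(b)] A one-step shift $\boldsymbol H_{n+1}\to\boldsymbol H_n$ does close. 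But a kernel bound on $C$ only controls boundary values in a weighted $L^2(C)$. You would still owe a Smirnov-type argument that the output lies in the Hardy subspace $\boldsymbol H_n(D_+)$.
\item[(c)] There is no growth of $G(g)^{-1}$ to control: $g^{\pm1}$ are bounded on $\overline D_+$ for $g\in\Gamma_N(C)$.
\end{itemize}

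The missing idea is an exact cancellation of the analytic parts. The analytic part of $G(g)A$ is $G(g)F$, so
\[
\bigl(G(g)^{-1}T(G(g)A)-T(A)\bigr)\boldsymbol u
 =G(g)^{-1}\mathfrak p_+\bigl(G(g)E\boldsymbol u\bigr)-\mathfrak p_+(E\boldsymbol u).
\]
Here $E\boldsymbol u\in\boldsymbol L^2(C)$ for $\boldsymbol u\in\boldsymbol H_n(D_+)$ with $n\le N+1$, because $E=O(|\lambda|^{-N-2})$. Moreover, $\mathfrak p_+$ is bounded on $\boldsymbol L^2(C)$, and $G(g)^{\pm1}$ are bounded analytic multipliers preserving $\boldsymbol H(D_+)$. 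Hence every $K_n$ maps into $\boldsymbol H(D_+)\subset\boldsymbol H_1(D_+)$.

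This is how the paper argues. With it, your eigenvector argument applies to all $n$ at once, and no near/far chord splitting or Hardy-membership argument is needed.
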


\begin{proof}
Write $G=G(g)$, $E=A-F$, and
$\boldsymbol H_0=\boldsymbol H(D_+)$. Since $E\boldsymbol u\in
\boldsymbol L^2(C)$ for $\boldsymbol u\in\boldsymbol H_n(D_+)$,
\[
\bigl(G^{-1}T(GA)-T(A)\bigr)\boldsymbol u
 =G^{-1}\mathfrak p_+(GE\boldsymbol u)
   -\mathfrak p_+(E\boldsymbol u)
 \in\boldsymbol H_0.
\]
Here $G^{\pm1}$ are bounded analytic multipliers on $D_+$.
Both terms define bounded maps $\boldsymbol H_n\to\boldsymbol H_0$.
The Toeplitz operators agree on the nested weighted spaces, and their
inverses agree there by uniqueness. Thus the operators $K_n$ agree on
intersections and map every $\boldsymbol H_n$ into $\boldsymbol H_0$.

For $\lambda\ne0$, every generalized $\lambda$-eigenvector lies in
$\boldsymbol H_0$. Indeed, the assertion for an eigenvector follows
from $\boldsymbol u=\lambda^{-1}K_n\boldsymbol u$; induction follows
from
\[
\boldsymbol u=\lambda^{-1}
 \bigl(K_n\boldsymbol u-(K_n-\lambda)\boldsymbol u\bigr).
\]
Consequently all $K_n$ have the same nonzero eigenvalues with the same
algebraic multiplicities. Lemma~\ref{l9} and the canonical product
formula for $\det_2$ prove the first assertion. For rational $g$ the
corrections are finite rank, and the ordinary determinant and trace
are respectively the product of $1+\lambda$ and the sum of $\lambda$
over this same nonzero eigenvalue list, counted with algebraic
multiplicity. This proves the remaining assertions.
\end{proof}

It follows that the tau-function
\[
\tau^{(2)}_{A}(g) = \text{det}_{2} \left(  G(g)^{-1}T\left(
G(g)A \right)  T\left(  A \right)^{-1} \right)
\]
is well defined independently of $n$. We next record its basic properties. Since $g$ is analytic and bounded on $D_{+}$,
\[
A \in \mathcal{A}_{N}^{inv}\left(  C\right) \text{, } g \in \Gamma_{N}^{\text{real}}(C) \Rightarrow G(g)A \in \mathcal{A}_{N+2}\left(  C\right) \text{,}
\]
In what follows, assume that
\[
A \in \mathcal{A}_{N}^{inv}\left(  C\right) \text{, } g_{1} \text{, }g_{2} \in \Gamma_{N}^{\text{real}}(C) \text{.}
\]
If $G(g_{1})A\in\mathcal{A}_{N}^{inv}(C)$, all three tau-functions appearing in the cocycle identity below are defined.

For notational convenience, set
\begin{equation}
	E_{A}(g_{1},g_{2}) = \text{tr} \left( (G(g_{1})G(g_{2}))^{-1} R_{A}(g_{1},g_{2}) T(G(g_{1})A)^{-1} R_{A}(1,g_{1}) T(A)^{-1} \right) \text{,} \label{E2.6} %
\end{equation}
where%
\begin{equation}
	R_{A}(g_{1},g_{2})  = T\left( G(g_{2}) G(g_{1}) A \right) - G(g_{2}) T\left( G(g_{1}) A \right) \text{.} \label{E2.7}
\end{equation}

\begin{lemma}
	\label{l11} Assume $1\leq n\leq N+1$. Then the following statements hold.
	\begin{enumerate}
	\item[(i)] The map $T(G(g_{1})A)$ is bijective on $\boldsymbol{H}_{n}(D_{+})$ if and only if $\tau^{(2)}_{A}(g_{1}) \neq 0$.
	\item[(ii)] If $\tau_{A}^{(2)}(g_{1}) \neq 0$, then%
	\begin{equation}
		\tau^{(2)}_{A}(g_{1}g_{2}) = \tau^{(2)}_{A}(g_{1}) \tau^{(2)}_{G(g_{1})A}(g_{2}) \exp \left( -E_{A}(g_{1},g_{2}) \right) \text{.} \label{E2.8}
	\end{equation}
	\end{enumerate}
\end{lemma}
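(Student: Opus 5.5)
For (i), write $M(g_1)=G(g_1)^{-1}T(G(g_1)A)T(A)^{-1}=I+K_n$. By Lemma~\ref{l9}, $K_n$ is Hilbert--Schmidt on $\boldsymbol H_n(D_+)$. Since $T(A)$ is invertible there and $G(g_1)^{\pm1}$ are bounded analytic multipliers on $D_+$, $T(G(g_1)A)$ is bijective exactly when $I+K_n$ is bijective. For a Hilbert--Schmidt perturbation of the identity, $I+K_n$ is Fredholm of index zero. It is therefore bijective iff it is injective, iff $-1$ is not an eigenvalue of $K_n$. By the canonical product formula, $\det_2(I+K_n)=\prod(1+\lambda_j)e^{-\lambda_j}$, and this vanishes iff $-1$ is an eigenvalue. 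This proves (i). Bijectivity for one $n$ is then equivalent to bijectivity for all $1\le n\le N+1$, by Lemma~\ref{l10}.

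For (ii), I will factor the operator associated with $g_1g_2$. Since $G(g_1g_2)=G(g_2)G(g_1)$ (the matrices are diagonal, hence commute), put $B=G(g_1)A$. By (i), $T(B)$ is invertible, so we may insert $T(B)^{-1}T(B)$:
\[
G(g_1g_2)^{-1}T(G(g_2)B)T(A)^{-1}
=G(g_1)^{-1}\bigl[G(g_2)^{-1}T(G(g_2)B)T(B)^{-1}\bigr]G(g_1)\cdot
G(g_1)^{-1}T(B)T(A)^{-1}.
\]
Call the two factors $I+K'$ (the conjugate of the $g_2$-operator for $B$) and $I+K$ (the $g_1$-operator for $A$). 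Both $K$ and $K'$ are Hilbert--Schmidt, since conjugation by the bounded invertible $G(g_1)$ preserves the class. Conjugation also preserves $\det_2$, because it preserves the eigenvalue list. Hence $\det_2(I+K')=\tau^{(2)}_{B}(g_2)$.

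The key tool is the product rule for regularized determinants: for Hilbert--Schmidt $K,K'$,
\[
\det\nolimits_2\bigl((I+K')(I+K)\bigr)=\det\nolimits_2(I+K')\det\nolimits_2(I+K)\,e^{-\operatorname{tr}(K'K)}.
\]
Here $K'K$ is trace class as a product of two Hilbert--Schmidt operators. I would prove this identity by approximating with finite-rank operators and using $\det_2(I+X)=\det(I+X)e^{-\operatorname{tr}X}$, then pass to the limit using continuity of $\det_2$ in the Hilbert--Schmidt norm.

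It then remains to show that $\operatorname{tr}(K'K)=E_A(g_1,g_2)$. From the definitions,
\[
K=G(g_1)^{-1}\bigl(T(G(g_1)A)-G(g_1)T(A)\bigr)T(A)^{-1}=G(g_1)^{-1}R_A(1,g_1)T(A)^{-1},
\]
and similarly
\[
K'=G(g_1)^{-1}G(g_2)^{-1}R_A(g_1,g_2)T(B)^{-1}G(g_1).
\]
Multiplying, the inner factors $G(g_1)G(g_1)^{-1}$ cancel, which gives exactly the operator in \eqref{E2.6}. Thus $\operatorname{tr}(K'K)=E_A(g_1,g_2)$, and \eqref{E2.8} follows.

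The main obstacle is justifying the trace manipulations: each of $R_A(1,g_1)T(A)^{-1}$ and $R_A(g_1,g_2)T(B)^{-1}$ must be Hilbert--Schmidt on a common space $\boldsymbol H_n(D_+)$, so that $E_A$ is a genuine trace of a trace-class operator. This follows from the estimate in Lemma~\ref{l9}, applied once to $A$ and once to $B\in\mathcal A_{N+2}(C)$ (whose invertibility comes from (i)), since $G(g_1)^{\pm1}$ are bounded multipliers. Independence of $n$ is then guaranteed by Lemma~\ref{l10}.
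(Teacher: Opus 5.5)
Your proposal is correct and follows the paper's proof. For (i) you use that $I+K_n$ with $K_n$ Hilbert--Schmidt is invertible exactly when $\det_2(I+K_n)\neq0$. For (ii) you use the same factorization \eqref{E2.9}, conjugation invariance of $\det_2$, and the product rule $\det_2((I+K')(I+K))=\det_2(I+K')\det_2(I+K)e^{-\operatorname{tr}(K'K)}$.

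You also check explicitly two details the paper leaves implicit: that $\operatorname{tr}(K'K)$ is exactly the trace in \eqref{E2.6}, and that $G(g_1)A\in\mathcal A_N^{inv}(C)$ on every weight, via (i) and Lemma~\ref{l10}.
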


\begin{proof}
	For notational convenience, set
	\[
G_{1} = G(g_{1}) \text{, } G_{2} = G(g_{2}) \text{.}
\]
	The identity (\ref{E2.7}) implies %
	\[
G_{2}^{-1} T(G_{1}G_{2}A) T(G_{1}A)^{-1} = I + G_{2}^{-1} R_{A}(g_{1},g_{2}) T(G_{1}A)^{-1}
\]
	where $R_{A}(g_{1},g_{2})$ is Hilbert--Schmidt. The general theory of modified Fredholm determinants shows that $G_{1}^{-1}T(G_{1}A)T(A)^{-1}$ is invertible if and only if its modified determinant is nonzero. Since $G_{1}$ and $T(A)$ are invertible, this is equivalent to the invertibility of $T(G_{1}A)$, proving (i).

	The definition of the tau-function says%
	\[
\tau^{(2)}_{A}(g_{1}g_{2}) = \text{det}_{2} \left( (G_{1}G_{2})^{-1} T(G_{1}G_{2}A) T(A)^{-1} \right) \text{.}
\]
	Moreover,
	\begin{equation}
		\begin{array}
			[c]{l}%
			(G_{1}G_{2})^{-1} T(G_{1}G_{2}A) T(A)^{-1} = \\
			\left( (G_{1}G_{2})^{-1} T(G_{1}G_{2}A) T(G_{1}A)^{-1} G_{1} \right) \left( G_{1}^{-1} T(G_{1}A) T(A)^{-1} \right) \text{.} %
		\end{array} \label{E2.9}
	\end{equation}
	We use the identities
	\[
\left\{
	\begin{array}
		[c]{ll}%
		\text{det}_{2} \left( G^{-1} (I+A) G \right) = \text{det}_{2}(I+A) \\
		\text{det}_{2} \left( (I+A)(I+B) \right) = \text{det}_{2}(I+A) \text{det}_{2}(I+B) e^{-\text{tr}(AB)} %
	\end{array}
	\right.
\]
	of Hilbert--Schmidt operators $A$, $B$ and a bounded operator $G$ having a bounded inverse. Set
	\[
\begin{aligned}
	K_{1}&=(G_{1}G_{2})^{-1}T(G_{1}G_{2}A)T(G_{1}A)^{-1}G_{1}-I,\\
	K_{2}&=G_{1}^{-1}T(G_{1}A)T(A)^{-1}-I.
	\end{aligned}
\]
	Taking modified determinants in (\ref{E2.9}) therefore yields%
	\begin{align*}
		\tau^{(2)}_{A}(g_{1}g_{2}) & = \operatorname{det}_{2}(I+K_{1})\operatorname{det}_{2}(I+K_{2})
		\exp\!\left(-E_A(g_1,g_2)\right) \\
		& = \tau^{(2)}_{A}(g_{1}) \tau^{(2)}_{G_{1}A}(g_{2}) \text{exp} \left( -E_{A}(g_{1},g_{2}) \right) \text{.}%
	\end{align*} \bigskip
\end{proof}

Continuity of $\operatorname{det}_{2}$ in the Hilbert--Schmidt norm reduces continuity of $\tau^{(2)}_{A}$ to that of the normalized perturbation
\[
G(g)^{-1}T(G(g)A)-T(A).
\]
For $g\in\Gamma_N^{\operatorname{real}}(C)$, set
\[
\mathcal K_g(z,\lambda)=\frac{G(g)(z)^{-1}G(g)(\lambda)-I}{\lambda-z}.
\]
For $g_{1},g_{2}\in\Gamma_{N}^{\operatorname{real}}(C)$, define
\[
d_{n}(g_{1},g_{2})^{2}
=\int_{C^{2}}\|\mathcal K_{g_1}(z,\lambda)-\mathcal K_{g_2}(z,\lambda)\|^{2}
|\lambda|^{-2N+2n-4}|z|^{-2n}|d\lambda|\,|dz|.
\]
We then have the following local Lipschitz estimate.

\begin{lemma}
	\label{l12} Let $A\in\mathcal{A}_{N}^{inv}(C)$, $g_{1},g_{2}\in\Gamma_{N}^{\operatorname{real}}(C)$, and $1\leq n\leq N+1$. Assume $d_{n}(g_{j},1)\leq c_{1}$ for $j=1,2$. Then there exists a constant $c_{A}$, depending only on $c_{1}$, $A$, and $n$, such that
	\begin{equation}
		|\tau^{(2)}_{A}(g_{1}) - \tau^{(2)}_{A}(g_{2})| \leq c_{A} d_{n}(g_{1},g_{2}) \text{.} \label{E2.10}
	\end{equation}
\end{lemma}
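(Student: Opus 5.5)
Set $M_g=G(g)^{-1}T(G(g)A)-T(A)$ on $\boldsymbol H_n(D_+)$. The plan is to write $\tau^{(2)}_A(g)=\det_2\bigl(I+M_gT(A)^{-1}\bigr)$ and to use the standard local Lipschitz bound for the modified determinant in the Hilbert--Schmidt norm $\norm{\cdot}_2$:
\[
\bigl|\det_2(I+K_1)-\det_2(I+K_2)\bigr|
\le \norm{K_1-K_2}_2\exp\Bigl(\tfrac12\bigl(\norm{K_1}_2+\norm{K_2}_2+1\bigr)^2\Bigr).
\]
Taking $K_j=M_{g_j}T(A)^{-1}$ gives $\norm{K_1-K_2}_2\le\norm{T(A)^{-1}}\,\norm{M_{g_1}-M_{g_2}}_2$ on $\boldsymbol H_n(D_+)$. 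So everything reduces to a Hilbert--Schmidt estimate of the form
\[
\norm{M_{g_1}-M_{g_2}}_2\le c\,d_n(g_1,g_2),
\]
with $c$ depending only on $A$ and $n$. Applied with $g_2=1$ (where $M_1=0$), the same estimate gives $\norm{K_j}_2\le c\norm{T(A)^{-1}}c_1$. The exponential factor is then bounded by a constant depending only on $c_1,A,n$, which yields \eqref{E2.10}.

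For the kernel estimate, reuse the computation in the proof of Lemma~\ref{l9}. There we found
\[
(M_g\boldsymbol u)(z)=\frac{1}{2\pi i}\int_C\mathcal K_g(z,\lambda)\,E(\lambda)\boldsymbol u(\lambda)\,d\lambda,\qquad E=A-F,
\]
initially for $z\in D_+$ and then on the boundary $C$ by taking boundary values. The map $g\mapsto M_g$ is linear in $\mathcal K_g$, so $M_{g_1}-M_{g_2}$ is the integral operator with kernel $(\mathcal K_{g_1}-\mathcal K_{g_2})(z,\lambda)E(\lambda)$.

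Next compute its Hilbert--Schmidt norm from $\boldsymbol H_n(D_+)$ to $\boldsymbol H_n(D_+)$. Identify $\boldsymbol H_n(D_+)$ isometrically with a closed subspace of $\boldsymbol L^2(C,|\lambda|^{-2n}|d\lambda|)$, so the Hilbert--Schmidt norm is at most the $L^2$ norm of the kernel against the weights $|z|^{-2n}$ in the output variable and $|\lambda|^{2n}$ in the input variable. Since $\|E(\lambda)\|\le C_A|\lambda|^{-N-2}$ by $A\in\mathcal A_{N+2}(C)$, the squared kernel norm is bounded by
\[
C_A^2\int_{C^2}\|\mathcal K_{g_1}-\mathcal K_{g_2}\|^2|\lambda|^{-2N-4+2n}|z|^{-2n}\,|d\lambda|\,|dz|=C_A^2\,d_n(g_1,g_2)^2.
\]
This is exactly the weight built into the definition of $d_n$, as in $\Delta$ of Lemma~\ref{l9}.

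The main obstacle is justifying the Hilbert--Schmidt bound rigorously. The operator takes values in $\boldsymbol H_0\subset\boldsymbol H_n$, and it is given by a Cauchy-type integral whose boundary values on $C$ need not equal the kernel expression pointwise. To handle this, I would realize $M_g$ as the restriction to the Hardy subspace of the integral operator on the weighted $\boldsymbol L^2(C)$ with this kernel, evaluated at boundary points. The difference quotient $\mathcal K_g(z,\lambda)$ is continuous across $z=\lambda$ (it extends by $G^{-1}G'$ on $U$), so no principal value arises. An operator whose range lies in a closed subspace has Hilbert--Schmidt norm equal to that of its kernel operator, and compression to a subspace does not increase the Hilbert--Schmidt norm. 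Finally, the bound $|\lambda|^{2n}\le C|z|^{2n}$ is not needed, because the weights were already split correctly above.
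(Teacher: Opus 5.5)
Your proposal is correct and follows the paper's argument: the local Lipschitz bound for $\det_2$ in the Hilbert--Schmidt norm, applied to $K_j=\bigl(G(g_j)^{-1}T(G(g_j)A)-T(A)\bigr)T(A)^{-1}$, with both $\norm{K_j}_{HS}$ and $\norm{K_1-K_2}_{HS}$ bounded by $\norm{T(A)^{-1}}$ times $d_n$. The weighted kernel computation you write out (weights $|z|^{-2n}$ and $|\lambda|^{2n}$, and $\|E(\lambda)\|\le C_A|\lambda|^{-N-2}$) is the step the paper leaves implicit by referring back to Lemma~\ref{l9}.
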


\begin{proof}
	Recall the definition $\tau^{(2)}_{A}(g) = \text{det}_{2} \left(  G(g)^{-1}T\left(
	G(g)A \right)  T\left(  A \right)^{-1} \right)$ for $g \in \Gamma_{N}^{\text{real}}(C)$, and %
	\[
G(g)^{-1} T(G(g)A) T(A)^{-1} - I = \left( G(g)^{-1} T(G(g)A) - T(A) \right) T(A)^{-1} \text{.}
\]
	If $\|K_{1}\|_{HS},\|K_{2}\|_{HS}\leq c$, then there exists a constant $C_{c}$ such that
	\[
|\operatorname{det}_{2}(I+K_{1})-\operatorname{det}_{2}(I+K_{2})|\leq C_{c}\|K_{1}-K_{2}\|_{HS}.
\]
	Apply this estimate to the two perturbations in (\ref{E2.10}). Their Hilbert--Schmidt norms are bounded in terms of $c_{1}$ and $\|T(A)^{-1}\|$, while their difference is bounded by $d_{n}(g_{1},g_{2})\|T(A)^{-1}\|$. This proves (\ref{E2.10}). \bigskip
\end{proof}

For later use, we give a sufficient condition for convergence of $\tau_{A}^{(2)}(g_{k})$. Let $U$ be a neighborhood of $\overline{D}_{+}$ such that
\begin{equation}
	z,\lambda \in C \text{, } |z-\lambda| \leq \varepsilon |\lambda| \Rightarrow (\lambda-z) t+z \in U \text{ for } t \in [0,1] \label{E2.11}
\end{equation}
with some $0<\varepsilon < 1$.

\begin{lemma}
	\label{l13} Assume the following properties for $g_{k}$, $g \in \Gamma_{N}^{\operatorname{real}}(C) $:%
	\begin{equation}
		\left\{
		\begin{array}
			[c]{ll}%
			\text{(i) there exists $c_{1}>0$ such that, for every $z\in U$ and $k\geq1$,} \\
			\quad \quad |g_{k}(z)|,|g_{k}^{-1}(z)| \leq c_{1} \text{, } |g_{k}'(z)|,|(g_{k}^{-1})'(z)| \leq c_{1}(1+|z|)^{N-1} \\
			\text{(ii) } g_{k}(z) \to g(z) \text{ as } k \to \infty \text{ for any } z\in C %
		\end{array}
		\right. \text{.} \label{E2.12}
	\end{equation}
	Then, for $A\in\mathcal{A}_{N}^{inv}(C)$ and $1\leq n\leq N+1$,
	\[
\tau_{A}^{(2)}(g_{k}) \to \tau_{A}^{(2)}(g).
\]
\end{lemma}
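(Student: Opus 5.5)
The plan is to reduce the convergence of $\tau_A^{(2)}(g_k)$ to the Lipschitz estimate of Lemma~\ref{l12}. That requires two things: a uniform bound $d_n(g_k,1)\le c_1'$, with $c_1'$ depending only on $c_1$, $N$, $n$ and $C$, and the convergence $d_n(g_k,g)\to0$. Lemma~\ref{l12} then gives $|\tau_A^{(2)}(g_k)-\tau_A^{(2)}(g)|\le c_A d_n(g_k,g)\to0$. The limit $g$ belongs to $\Gamma_N^{\mathrm{real}}(C)$ by assumption, so $d_n(g,1)<\infty$ by the computation in Lemma~\ref{l9}. Enlarging $c_1'$ if necessary, both $g_k$ and $g$ satisfy the hypothesis of Lemma~\ref{l12}.

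\emph{Uniform bound.} We repeat the splitting of Lemma~\ref{l9}, now tracking constants. On the near region $\{|z-\lambda|<\varepsilon|\lambda|\}$, condition \eqref{E2.11} says the segment from $z$ to $\lambda$ lies in $U$. Hence
\[
G(g_k)(z)^{-1}\frac{G(g_k)(\lambda)-G(g_k)(z)}{\lambda-z}
\]
has entries $g_k(z)^{\mp1}\int_0^1 (g_k^{\pm1})'((\lambda-z)t+z)\,dt$. By \eqref{E2.12}(i) these are bounded by $c_1^2(1+|\lambda|)^{N-1}$, up to a constant, since $|z|\asymp|\lambda|$ there. On the far region $\{|z-\lambda|\ge\varepsilon|\lambda|\}$ we use only $|g_k^{\pm1}|\le c_1$ on $C\subset U$ together with $|\lambda-z|^{-1}\le(\varepsilon|\lambda|)^{-1}$. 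The integrals $\Delta_1,\Delta_2$ of Lemma~\ref{l9} are then bounded by $c_1^4$ times the same finite integrals as there. This gives a $k$-uniform dominating function $M(z,\lambda)$ with
\[
\|\mathcal K_{g_k}(z,\lambda)\|^2\le M(z,\lambda),\qquad \int_{C^2}M\,|\lambda|^{-2N+2n-4}|z|^{-2n}|d\lambda||dz|<\infty .
\]
The bound involves $\mathcal K_{g_k}$ rather than $\mathcal K_{g_k}-\mathcal K_1$, but $\mathcal K_1=0$, so this is exactly a bound on $d_n(g_k,1)$.

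\emph{Convergence.} By \eqref{E2.12}(ii), for fixed $z\ne\lambda$ on $C$ we have $\mathcal K_{g_k}(z,\lambda)\to\mathcal K_g(z,\lambda)$ pointwise. The exceptional set is the diagonal $z=\lambda$, which has $|d\lambda||dz|$-measure zero. Moreover
\[
\|\mathcal K_{g_k}-\mathcal K_g\|^2\le 2M+2\|\mathcal K_g\|^2,
\]
and the right side is integrable against the weight: $M$ by the previous step, and $\|\mathcal K_g\|^2$ by Lemma~\ref{l9} applied to $g$. Dominated convergence therefore yields $d_n(g_k,g)\to0$.

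\emph{Main obstacle.} The delicate point is the near-diagonal domination. The derivative bound in \eqref{E2.12}(i) is stated on $U$, not on $C$. So we must check that the chords used in the difference quotient stay inside $U$, which is exactly what \eqref{E2.11} provides, and that the factor $(1+|\lambda|)^{N-1}$ is compensated by the weights. The latter is the same exponent count as in Lemma~\ref{l9}: the integrand is $O(|\lambda|^{-6})$ over fibres of length $O(|\lambda|)$. One further point: if $g$ itself does not obey \eqref{E2.12}(i), the pointwise limit alone still suffices, because Fatou's lemma gives $\|\mathcal K_g\|^2\le M$ almost everywhere. With this, the dominated convergence step is clean.
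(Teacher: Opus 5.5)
Your proposal is correct and follows the paper's argument. Both build an integrable majorant from the derivative bounds on the near-diagonal region (where \eqref{E2.11} keeps the chords in $U$) and from boundedness on the far region, use dominated convergence to get $d_n(g_k,g)\to0$, and conclude with Lemma~\ref{l12}. You also explicitly check the uniform bound on $d_n(g_k,1)$ that Lemma~\ref{l12} requires. And you control $\mathcal K_g$ by passing to the pointwise limit in $\|\mathcal K_{g_k}\|^2\le M$ (strictly a pointwise limit, not Fatou), whereas the paper simply asserts that $g$ obeys the same bounds on $U$ even though convergence is only assumed on $C$. Both are small but welcome refinements.
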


\begin{proof}
	Set%
	\[
\Delta_{k} = \dfrac{G(g)(z)^{-1}G(g)(\lambda) - G(g_{k})(z)^{-1}G(g_{k})(\lambda)}{\lambda-z}.
\]
	Since
	\[
d_{n}(g_{k},g)^{2} = \displaystyle\int_{C^{2}} \left\Vert \Delta_{k}(z,\lambda) \right\Vert^{2} |\lambda|^{-2N+2n-4} |z|^{-2n} |d\lambda||dz|,
\]
	it suffices to find a majorant for $\|\Delta_{k}(z,\lambda)\|^{2}$ that is integrable with respect to the measure
	\[
|\lambda|^{-2N+2n-4}|z|^{-2n}|d\lambda|\,|dz|.
\]
	Note%
	\[
\Delta_{k}(z,\lambda) = G(g)(z)^{-1} \dfrac{G(g)(\lambda) - G(g)(z)}{\lambda-z} - G(g_{k})(z)^{-1} \dfrac{G(g_{k})(\lambda) - G(g_{k})(z)}{\lambda-z}.
\]
	The bounds in (\ref{E2.12}), together with the same bounds for the limit $g$, imply that there exists $c>0$ such that
	\[
\left\Vert \dfrac{G(g_{k})(z) - G(g_{k})(\lambda)}{z-\lambda} \right\Vert \leq c \left\{
	\begin{array}
		[c]{ll}%
		|\lambda|^{N-1} \text{ if } |z-\lambda| \leq \varepsilon |\lambda|, \\
		|\lambda|^{-1} \text{ if } |z-\lambda| > \varepsilon |\lambda|. %
	\end{array}
	\right.
\]
	holds. Then there exists $C>0$ such that
	\[
f(z,\lambda)=C|\lambda|^{2N-2}\mathbf{1}_{\{|z-\lambda|\leq\varepsilon|\lambda|\}}+C|\lambda|^{-2}\mathbf{1}_{\{|z-\lambda|>\varepsilon|\lambda|\}}
\]
	is integrable with respect to $|\lambda|^{-2N+2n-4} |z|^{-2n} |d\lambda||dz|$ and satisfies
	\[
\left\Vert \Delta_{k}(z,\lambda) \right\Vert^{2} \leq |f(z,\lambda)|.
\]
	Dominated convergence now gives $d_{n}(g_{k},g)\to0$, and Lemma \ref{l12} yields the desired convergence. \bigskip
\end{proof}

\subsection{Derivation of equations}\label{sec:equation-derivation}
We now derive the equations generated by the group action on $\mathcal{A}_{N}^{inv}(C)$. For $A\in\mathcal{A}_{N}^{inv}(C)$, define the $2\times2$ matrix-valued function $U_{A}^{(n)}$, whose two columns belong to $\boldsymbol{H}_{n+1}(D_{+})$, by
\[
U_{A}^{(n)}(z) = T(A)^{-1}z^{n}I \in \boldsymbol{H}_{n+1}(D_{+}) \times \boldsymbol{H}_{n+1}(D_{+}) \text{ for } 0 \leq n \leq N \text{,}
\]
and set %
\[
\Phi_{A}^{(n)} = AU_{A}^{(n)} - z^{n}I \text{.}
\]
Every entry of $\Phi_{A}^{(n)}$ belongs to $H(D_{-})$, and
\[
\Phi_{A}^{(n)}(z) = \dfrac{1}{2\pi i} \displaystyle\int_{C} \dfrac{(A(\lambda)-F(\lambda)) U_{A}^{(n)}(\lambda)}{z-\lambda} d\lambda \text{.}
\]
The decay $\|A(\lambda)-F(\lambda)\|=O(|\lambda|^{-N-2})$, together with $U_{A}^{(n)}\in\boldsymbol{H}_{n+1}(D_{+})^{2}$, implies
\[
\lambda^{k-1}(A(\lambda)-F(\lambda))U_{A}^{(n)}(\lambda)\in L^{1}(C)^{2\times2}\qquad(1\leq k\leq N+1-n).
\]
Expanding the Cauchy kernel therefore gives
\[
\Phi_{A}^{(n)}(z) = \sum\limits_{k = 1}^{N+1-n} A_{k}^{(n)} z^{-k} + \Psi_{A}^{(n)}(z)z^{n-N-1}
\]
where
\[
\left\{
\begin{array}
	[c]{ll}%
	A_{k}^{(n)} = \dfrac{1}{2\pi i} \displaystyle\int_{C} \lambda^{k-1} (A(\lambda)-F(\lambda)) U_{A}^{(n)}(\lambda) d\lambda \text{ for } 1 \leq k \leq N+1-n \\
	\Psi_{A}^{(n)}(z) = \dfrac{1}{2\pi i} \displaystyle\int_{C} \dfrac{\lambda^{N+1-n} (A(\lambda)-F(\lambda)) U_{A}^{(n)}(\lambda)}{z-\lambda} d\lambda \in H(D_{-})^{2\times2} %
\end{array}
\right.
\]
For brevity, write
\[
A_{k} = A_{k}^{(0)} \text{, } \Phi_{A}(z) = \Phi_{A}^{(0)}(z) \text{.}
\]

\begin{lemma}
	\label{l14}
	 For $A\in\mathcal{A}_{N}^{inv}(C)$ and $0\leq n\leq N$, one has
	\begin{equation}
		\left\{
		\begin{array}
			[c]{ll}%
			z^{n}U_{A}^{(0)}(z) = \sum_{0 \leq k \leq n} U_{A}^{(k)}(z)A_{n-k} \\
			z^{n}\Phi_{A}^{(0)}(z) = \sum_{0 \leq k \leq n-1} z^{k}A_{n-k} + \sum_{0 \leq k \leq n} \Phi_{A}^{(k)}(z)A_{n-k} %
		\end{array}
		\right. , \label{E2.13}%
	\end{equation}
	and for $1 \leq j \leq N+1-n$
	\begin{equation}
		A_{n+j} = \sum_{0 \leq k \leq n} A_{j}^{(k)}A_{n-k}, \qquad A_{0}=I. \label{E2.14} %
	\end{equation}
\end{lemma}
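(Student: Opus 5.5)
The plan is to prove the first identity in \eqref{E2.13} by uniqueness for the invertible Toeplitz operator $T(A)$ on $\boldsymbol H_{n+1}(D_+)$. I will then read off the second identity from it, and obtain \eqref{E2.14} by comparing Laurent coefficients at infinity.

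\emph{Step 1: the $U$-identity.} Set $W=z^nU_A^{(0)}-\sum_{0\le k\le n}U_A^{(k)}A_{n-k}$. Each column of $W$ lies in $\boldsymbol H_{n+1}(D_+)$, since $U_A^{(0)}\in\boldsymbol H_1$ and $z^n:\boldsymbol H_1\to\boldsymbol H_{n+1}$. Because $A\in\mathcal A_N^{inv}(C)$ and $n+1\le N+1$, it suffices to show $T(A)W=0$. By definition $T(A)U_A^{(k)}=z^kI$.

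I will use the following commutation formula, valid for $\boldsymbol u\in\boldsymbol H_1$:
\[
T(A)(z^n\boldsymbol u)=z^nT(A)\boldsymbol u+\mathfrak p_+\bigl(z^n\mathfrak p_-(E\boldsymbol u)\bigr),\qquad E=A-F.
\]
This holds because $F$ commutes with $z^n$, and $\mathfrak p_+(z^nE\boldsymbol u)=z^n\mathfrak p_+(E\boldsymbol u)+\mathfrak p_+(z^n\mathfrak p_-(E\boldsymbol u))$.

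Apply this with $\boldsymbol u=U_A^{(0)}$, so that $\mathfrak p_-(EU_A^{(0)})=\Phi_A$. The expansion of $\Phi_A$ gives
\[
z^n\Phi_A=\sum_{k=1}^{n}A_kz^{n-k}+\Bigl(\sum_{k=n+1}^{N+1}A_kz^{n-k}+\Psi_A^{(0)}z^{n-N-1}\Bigr).
\]
The bracketed term is $o(1)$, lies in $H(D_-)$ and is killed by $\mathfrak p_+$. This requires $n\le N$ and uses the Cauchy-integral representation of $\Psi_A^{(0)}$. The polynomial part is fixed by $\mathfrak p_+$.

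Hence $T(A)(z^nU_A^{(0)})=z^nI+\sum_{k=1}^nz^{n-k}A_k=\sum_{0\le k\le n}z^kA_{n-k}$ with $A_0=I$, which equals $T(A)\sum_k U_A^{(k)}A_{n-k}$. This proves the first line of \eqref{E2.13}.

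\emph{Step 2: the $\Phi$-identity.} Multiply the first line by $A$ and subtract $z^nI=\sum_k z^kA_{n-k}-\sum_{k<n}z^kA_{n-k}$. Since $AU_A^{(k)}=\Phi_A^{(k)}+z^kI$, this gives
\[
z^n\Phi_A^{(0)}=\sum_k\Phi_A^{(k)}A_{n-k}+\sum_{k\le n-1}z^kA_{n-k}.
\]
This is exactly the second line of \eqref{E2.13}.

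\emph{Step 3: the coefficients.} Insert the expansion of $\Phi_A^{(0)}$ on the left of the second line and the expansions of $\Phi_A^{(k)}$, namely $\sum_{j=1}^{N+1-k}A_j^{(k)}z^{-j}+\Psi_A^{(k)}z^{k-N-1}$, on the right. The nonnegative powers $z^k$, $k\le n-1$, cancel on both sides. It remains to compare the coefficients of $z^{-j}$ for $1\le j\le N+1-n$ along high imaginary infinity in $D_-$.

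The step I expect to need the most care is justifying this term-by-term identification, because the remainders are only in $H(D_-)$ and are not known to be analytic at $\infty$. I will avoid pointwise asymptotics and work with moments instead. Multiply the identity by $\lambda^{j-1}$ and integrate with the defining moment formulas, or equivalently use the integral formulas for $A_k^{(n)}$ together with Step 1. Concretely,
\[
A_{n+j}=\frac1{2\pi i}\int_C\lambda^{n+j-1}EU_A^{(0)}\,d\lambda=\frac1{2\pi i}\int_C\lambda^{j-1}E\,(\lambda^nU_A^{(0)})\,d\lambda.
\]
Substituting $\lambda^nU_A^{(0)}=\sum_kU_A^{(k)}A_{n-k}$ from Step 1 gives $\sum_kA_j^{(k)}A_{n-k}$ directly, by the definition of $A_j^{(k)}$. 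This is valid when $j\le N+1-k$ for every $k\le n$, i.e. $j\le N+1-n$, which guarantees the required $L^1$ integrability. This direct route proves \eqref{E2.14} and bypasses the asymptotic comparison entirely.
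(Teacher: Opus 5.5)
Your proposal is correct. For \eqref{E2.13} it follows the paper's route. The paper also computes $T(A)(z^n\boldsymbol u)-z^nT(A)\boldsymbol u$, applies $T(A)^{-1}$ using $T(A)^{-1}z^nB=U_A^{(n)}B$, and then multiplies by $A$. The paper, however, obtains the correction from the kernel identity $\lambda^n/(\lambda-z)=z^n/(\lambda-z)+\sum_{k=0}^{n-1}z^k\lambda^{n-1-k}$, which gives it directly as $\sum_{k<n}z^k\frac1{2\pi i}\int_C\lambda^{n-1-k}E\boldsymbol u\,d\lambda$. For $\boldsymbol u=U_A^{(0)}$ these moments are $A_{n-k}$ by definition.

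That formulation is worth adopting. Your commutation formula applies $\mathfrak p_+$ to $z^n\mathfrak p_+(E\boldsymbol u)$ and to the polynomial part of $z^n\Phi_A$, neither of which lies in $L^2(C)$ in general; a nonzero constant term $A_n$ already fails. So ``fixed by $\mathfrak p_+$'' silently extends the projection to $\boldsymbol H_n(D_+)\oplus\boldsymbol H(D_-)$. The kernel identity avoids this, and also makes the expansion of $\Phi_A$ and the $H(D_-)$ membership of the bracketed tail unnecessary.

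For \eqref{E2.14} your route genuinely differs. The paper inserts the expansions into the second line of \eqref{E2.13} and compares coefficients of $z^{-j}$ at infinity. That comparison tacitly needs the Hardy remainders $\Psi_A^{(k)}$ to be $o(1)$ along some path to infinity in $D_-$, a point-evaluation estimate the paper only makes explicit later. You instead substitute the first line of \eqref{E2.13}, read as an identity of boundary values on $C$, directly into the moment formula for $A_{n+j}$. This needs only the $L^1$ bound you checked ($j+k\le N+1$) and does not use the second line of \eqref{E2.13} at all, so it is the more self-contained argument.
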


\begin{proof}
	For $\boldsymbol{u}\in\boldsymbol{H}_{1}(D_{+})$, observe that
	\begin{align*}
		T(A)(z^{n}\boldsymbol{u}) & = F(z)z^{n}\boldsymbol{u}(z) + \dfrac{1}{2\pi i} \displaystyle\int_{C} \dfrac{\lambda^{n}(A(\lambda) - F(\lambda)) \boldsymbol{u}(\lambda)}{\lambda-z} d\lambda \\
		& = z^{n}F(z)\boldsymbol{u}(z) + \dfrac{z^{n}}{2\pi i} \displaystyle\int_{C} \dfrac{(A(\lambda) - F(\lambda)) \boldsymbol{u}(\lambda)}{\lambda-z} d\lambda \\
		& + \sum_{0 \leq k \leq n-1} \dfrac{z^{k}}{2\pi i} \displaystyle\int_{C} \lambda^{n-1-k} (A(\lambda) - F(\lambda)) \boldsymbol{u}(\lambda) d\lambda \\
		& = z^{n}T(A)\boldsymbol{u} + \sum_{0 \leq k \leq n-1} \dfrac{z^{k}}{2\pi i} \displaystyle\int_{C} \lambda^{n-1-k} (A(\lambda) - F(\lambda)) \boldsymbol{u}(\lambda) d\lambda \text{.}%
	\end{align*}
	Consequently,
	\begin{align*}
		T(A)z^{n}U_{A}^{(0)}(z)
		&=z^{n}T(A)U_{A}^{(0)}(z)\\
		&\quad+\sum_{0 \leq k \leq n-1} \dfrac{z^{k}}{2\pi i} \displaystyle\int_{C} \lambda^{n-1-k} (A(\lambda) - F(\lambda)) U_{A}^{(0)}(\lambda) d\lambda \\
		& = z^{n}I + \sum_{0 \leq k \leq n-1} z^{k} A_{n-k} \text{.}%
	\end{align*}
	Since %
	\[
T(A)^{-1} z^{n} B = U_{A}^{(n)}B
\]
	holds for every constant $2\times2$ matrix $B$, the first identity in (\ref{E2.13}) follows by applying $T(A)^{-1}$. Applying $A$ gives the second identity. Comparing the coefficients in the expansion at infinity then yields (\ref{E2.14}). \bigskip
\end{proof}

For the one-parameter family $g_{t}=e^{ith}\in\Gamma_{N}^{\operatorname{real}}(C)$,
\[
\partial_{t} G(g_{t}) = ih \sigma_{3} G(g_{t}) = ihG(g_{t}) \sigma_{3}
\]
with%
\[
\sigma_{3} = \left( \begin{array}{cc}
	1 & 0 \\
	0 & -1
\end{array} \right)\text{.}%
\]
We have the following differentiability statement.

\begin{lemma}
    \label{l15}
    Let $A\in\mathcal A_N^{inv}(C)$, let $h\in\mathbb R[z]$ have degree
    at most $m\leq N$, and set $g_t=e^{ith}$.
    Assume $G(g_t)A\in\mathcal A_N^{inv}(C)$ for every $t\in\mathbb R$.
    For $1\leq n\leq N+1-m$ and
    $\boldsymbol u\in\boldsymbol H_n(D_+)$, one has
    \[
\begin{aligned}
    \partial_tT(G(g_t)A)\boldsymbol u
      &=i\sigma_3T(hG(g_t)A)\boldsymbol u,\\
    \partial_tT(G(g_t)A)^{-1}\boldsymbol u
      &=-iT(G(g_t)A)^{-1}\sigma_3T(hG(g_t)A)
           T(G(g_t)A)^{-1}\boldsymbol u.
    \end{aligned}
\]
    Both derivatives are continuous with values in
    $\boldsymbol H_{n+m}(D_+)$. More generally,
    $\partial_t^kT(G(g_t)A)^{-1}\boldsymbol u$ exists and is continuous
    in $\boldsymbol H_{n+km}(D_+)$ whenever $n+km\leq N+1$.

    The coefficient $A_\ell^{(n)}(G(g_t)A)$ is $k$ times continuously
    differentiable whenever
    \[
0\leq n\leq N,\qquad \ell\geq1,\qquad n+\ell+km\leq N+1.
\]
    The same conclusions hold for
    \[
g_{\boldsymbol t}=\exp\!\left(i\sum_{\nu=1}^r t_\nu h_\nu\right),
      \qquad \boldsymbol t\in\mathbb R^r,
\]
    where $h_\nu\in\mathbb R[z]$, $\deg h_\nu\le m_\nu$ for integers
    $0\le m_\nu\le N$, and
    $G(g_{\boldsymbol t})A\in\mathcal A_N^{inv}(C)$ for every
    $\boldsymbol t$. For $\beta\in\mathbb Z_{\ge0}^r$, put
    $d(\beta)=\sum_\nu\beta_\nu m_\nu$. If
    $\boldsymbol u\in\boldsymbol H_n(D_+)$ and
    $1\le n$, $n+d(\beta)\le N+1$, then
    $\partial_{\boldsymbol t}^{\beta}T(G(g_{\boldsymbol t})A)^{-1}
    \boldsymbol u$ exists and is jointly continuous in
    $\boldsymbol H_{n+d(\beta)}(D_+)$. The derivative
    $\partial_{\boldsymbol t}^{\beta}A_\ell^{(n)}(G(g_{\boldsymbol t})A)$
    exists and is jointly continuous when
    $n\ge0$, $\ell\ge1$, and $n+\ell+d(\beta)\le N+1$.
\end{lemma}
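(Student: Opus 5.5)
The plan is to work directly from the definition \eqref{E2.2} and reduce everything to a difference-quotient estimate for the symbol $G(g_t)A$ in the weighted spaces. Write $A_t=G(g_t)A$, $E=A-F$. By \eqref{E2.2},
\[
T(A_t)\boldsymbol u=G(g_t)F\boldsymbol u+\mathfrak p_+\bigl(G(g_t)E\boldsymbol u\bigr),
\]
and $\partial_tG(g_t)=ih\sigma_3G(g_t)$, which is diagonal and so commutes with $G(g_t)$. Formally, then, $\partial_tT(A_t)\boldsymbol u=i\sigma_3\bigl(hG(g_t)F\boldsymbol u+\mathfrak p_+(hG(g_t)E\boldsymbol u)\bigr)=i\sigma_3T(hA_t)\boldsymbol u$.

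The first formula needs three checks.
\begin{enumerate}
\item $hA_t$ lies in a symbol class whose Toeplitz operator maps $\boldsymbol H_n$ to $\boldsymbol H_{n+m}$. The analytic part $hG(g_t)F$ is a polynomial of degree $m$ times a bounded analytic function, so it maps $\boldsymbol H_n$ into $\boldsymbol H_{n+m}$, by iterating $z:\boldsymbol H_n\to\boldsymbol H_{n+1}$. The remainder $hG(g_t)E=O(|\lambda|^{m-N-2})$ still gives $hG(g_t)E\boldsymbol u\in\boldsymbol L^2(C)$ as long as $n+m\le N+2$.
\item The difference quotient converges in the norm of $\boldsymbol H_{n+m}$. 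Taylor's formula gives
\[
\frac{g_{t+s}-g_t}{s}-ihg_t=g_t\,\frac{e^{ish}-1-ish}{s},
\]
which is bounded by $C|s|\,|h|^2$ on $C$. That bound is too crude, since it costs $2m$ powers. I would instead use $|e^{ish}-1-ish|\le\min(2|sh|,s^2h^2/2)$ together with dominated convergence. On $C$ we have $|g_t|\le C$, because $\operatorname{Im}h$ is bounded on $\overline D_+$. Dividing by $|\lambda|^{m}$ then gives a pointwise limit $0$ with an integrable majorant.
\item Continuity in $t$ follows from the same dominated convergence argument.
\end{enumerate}

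The formula for the inverse then follows from the standard resolvent identity
\[
T(A_{t+s})^{-1}-T(A_t)^{-1}=-T(A_{t+s})^{-1}\bigl(T(A_{t+s})-T(A_t)\bigr)T(A_t)^{-1}.
\]
Here $T(A_t)^{-1}\boldsymbol u\in\boldsymbol H_n$, and the middle factor divided by $s$ converges in $\mathcal L(\boldsymbol H_n,\boldsymbol H_{n+m})$. For this we need $T(A_{t+s})^{-1}$ to be locally bounded in $t$ on $\boldsymbol H_{n+m}$, which requires $n+m\le N+1$. I would get it from the strong (in fact norm) continuity of $t\mapsto T(A_t)$ on each $\boldsymbol H_k$, the invertibility hypothesis for every $t$, and continuity of inversion. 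The main obstacle is exactly here. The bounds must be uniform in operator norm, not just strong, so the estimate in step 2 has to be shown uniformly over $\|\boldsymbol u\|_n\le1$. I would get this by writing the difference operator as the multiplication part plus the Cauchy-projection part, and bounding the latter via boundedness of $\mathfrak p_+$ on the weighted $L^2$ spaces, which is valid since $|\lambda|^{-2k}$ is comparable to a weight $|\lambda-b|^{-2k}$ with $b\in D_-$.

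Higher derivatives come by induction. Each $t$-derivative produces one more factor of $h$ in a symbol and one more inverse; for example $T(h^jA_t)$ maps $\boldsymbol H_k\to\boldsymbol H_{k+jm}$. By the Leibniz rule the $k$-th derivative of the inverse applied to $\boldsymbol u$ is a finite sum of products of these operators and inverses, landing in $\boldsymbol H_{n+km}$, so $n+km\le N+1$ suffices. For the coefficients, recall
\[
A_\ell^{(n)}(A_t)=\frac1{2\pi i}\int_C\lambda^{\ell-1}G(g_t)E\,U^{(n)}_{A_t}\,d\lambda,
\]
with $U^{(n)}_{A_t}=T(A_t)^{-1}z^nI$ and $z^nI\in\boldsymbol H_{n+1}$. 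Differentiating under the integral, the $k$-th derivative of $U^{(n)}$ lies in $\boldsymbol H_{n+1+km}$, and the factors $(ih)^j$ from $G(g_t)$ cost $\lambda^{jm}$. Cauchy--Schwarz with $E=O(|\lambda|^{-N-2})$ gives integrability when $\ell-1+km+(n+1+km)\cdot$ (a weight $|\lambda|^{n+1+km}$) $-N-2<-1$ after accounting for $L^2$ pairing; I expect this to reduce exactly to $n+\ell+km\le N+1$. The multiparameter case is identical, with $\partial_{t_\nu}G=ih_\nu\sigma_3G$ costing $m_\nu$ per derivative and joint continuity following from the same dominated convergence argument in $\boldsymbol t$.
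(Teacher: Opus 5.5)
There is a genuine gap where you need locally uniform bounds for $T(G(g_{t+s})A)^{-1}$ on $\boldsymbol H_{n+m}(D_+)$. You propose two things: to get these bounds from norm continuity of $t\mapsto T(G(g_t)A)$ together with continuity of inversion, and to prove that the difference quotient converges in $\mathcal L(\boldsymbol H_n,\boldsymbol H_{n+m})$ uniformly over $\norm{\boldsymbol u}_n\le1$. Both claims fail once $\deg h\ge1$.

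The Cauchy-projection part $\mathfrak p_+(G(g_t)E\,\cdot)$ is norm continuous, because $E$ decays. The analytic part, however, is multiplication by $G(g_t)F$, and $F$ does not decay along $C$: it is asymptotic to the symbol, and $F\to I$ for the canonical symbols. Near $C$, $\operatorname{Im}h$ is bounded while $\operatorname{Re}h$ is unbounded. So for every small $s\ne0$ there are far points $\lambda\in C$ with $sh(\lambda)$ close to $\pm\pi$. At such points $|e^{ish}-1|\approx2$, and $|\lambda|^{-m}|e^{ish}-1-ish|/|s|$ is bounded below independently of $s$. Testing on normalized Cauchy kernels with poles just outside $D_+$ near such points shows that neither $T(G(g_{t+s})A)-T(G(g_t)A)$ nor the remainder of the difference quotient tends to zero in operator norm.

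Your dominated-convergence estimate therefore gives only strong convergence, and this cannot be upgraded. Strong continuity plus invertibility at each $t$ does not by itself give locally uniform bounds on the inverses. As a result, the resolvent-identity step, the induction for higher derivatives, and the differentiation of the coefficient integrals all remain unjustified.

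The missing idea is to factor out the oscillating multiplier before inverting, as the paper does. Write $S_t=G(g_t)^{-1}T(G(g_t)A)=T(A)+K_t$.
\begin{itemize}
\item By the computation in Lemma~\ref{l9}, $K_t$ is Hilbert--Schmidt on every $\boldsymbol H_j(D_+)$, $1\le j\le N+1$, with kernel $\bigl(G(g_t)(z)^{-1}G(g_t)(\lambda)-I\bigr)E(\lambda)/(\lambda-z)$.
\item The majorants of Lemma~\ref{l13} are uniform for $t$ in compact sets, and the kernels converge pointwise. Hence $t\mapsto K_t$ is continuous in Hilbert--Schmidt norm.
\item So $S_t$ is norm continuous and invertible, and $S_t^{-1}$ is locally norm continuous.
\item Consequently $T(G(g_t)A)^{-1}=S_t^{-1}G(g_t)^{-1}$ is strongly continuous and locally uniformly bounded on each $\boldsymbol H_j(D_+)$.
\end{itemize}

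That is all the resolvent identity requires. You need strong convergence of the middle difference quotient in $\boldsymbol H_{n+m}(D_+)$, which your step~2 provides. You also need strong continuity and a local bound for the left inverse factor on $\boldsymbol H_{n+m}(D_+)$, which is available because $n+m\le N+1$.

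With this substitution, the rest of your plan agrees with the paper. Iteration costs $m$ weights per derivative. The coefficient integrals are bounded by $\norm{\lambda^{n+\ell+km}E}_{L^2(C)}\norm{\partial_t^{k-a}U^{(n)}_{G(g_t)A}}_{n+1+(k-a)m}$, which is finite exactly when $n+\ell+km\le N+1$. The multiparameter case follows by the same argument applied coordinatewise.
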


\begin{proof}
    Put $G_t=G(g_t)$, $T_t=T(G_tA)$, and $E=A-F$.
    First, the Hilbert--Schmidt estimates in Section~\ref{sec:tau-function}, including their
    extension to $\boldsymbol H_{N+1}(D_+)$, show that
    \[
S_t=G_t^{-1}T_t=T(A)+\bigl(G_t^{-1}T_t-T(A)\bigr)
\]
    is norm-continuous on every $\boldsymbol H_n(D_+)$,
    $1\leq n\leq N+1$. Indeed, on a compact set of parameters the kernels
    of the parenthesized perturbation have the common integrable
    majorants used in Lemma~\ref{l13}, and converge pointwise.
    Since $S_t$ is invertible, $S_t^{-1}$ is locally norm-continuous.
    Multiplication by $G_t^{-1}$ is strongly continuous and locally
    uniformly bounded, so $T_t^{-1}=S_t^{-1}G_t^{-1}$ has these properties
    on each of the same weighted spaces.

    The difference quotient of $G_t(\lambda)$ is bounded by
    $C|\lambda|^m$ locally uniformly in $t$. Formula (\ref{E2.2}),
    the bound $E(\lambda)=O(|\lambda|^{-N-2})$, and dominated convergence
    therefore give
    \[
\partial_tT_t\boldsymbol u
        =i\sigma_3T(hG_tA)\boldsymbol u
        \quad\hbox{in }\boldsymbol H_{n+m}(D_+).
\]
    Here $T(hG_tA)$ is interpreted by (\ref{E2.2}) as a map from
    $\boldsymbol H_n(D_+)$ to $\boldsymbol H_{n+m}(D_+)$.
    The inverse identity
    \[
\frac{T_{t+\varepsilon}^{-1}-T_t^{-1}}{\varepsilon}
        =-T_{t+\varepsilon}^{-1}
          \frac{T_{t+\varepsilon}-T_t}{\varepsilon}T_t^{-1}
\]
    now proves the asserted formula, using strong continuity and the
    local uniform bound for the inverse on the target space.
    Iteration proves the higher derivative statement.

    It remains to justify differentiation of the asymptotic coefficients.
    Their integral representation is
    \[
A_\ell^{(n)}(G_tA)
        =\frac{1}{2\pi i}\int_C
          \lambda^{\ell-1}G_t(\lambda)E(\lambda)
          U_{G_tA}^{(n)}(\lambda)\,d\lambda.
\]
    In the $k$th derivative, a term with $a$ derivatives on $G_t$ is,
    up to a constant matrix, of the form
    \[
\lambda^{\ell-1}h(\lambda)^aG_t(\lambda)E(\lambda)
        \partial_t^{k-a}U_{G_tA}^{(n)}(\lambda),
        \qquad 0\leq a\leq k.
\]
    On any compact parameter set $K$, its $L^1(C)$ norm is bounded by
    \[
C_K\|\lambda^{n+\ell+km}E\|_{L^2(C)}
        \bigl\|\partial_t^{k-a}U_{G_tA}^{(n)}
        \bigr\|_{n+1+(k-a)m}.
\]
    If $n+\ell+km\leq N+1$, the first factor is finite, since
    \[
\lambda^{n+\ell+km}E(\lambda)=O(|\lambda|^{-1})
        \quad\hbox{or decays faster}.
\]
    These estimates and the weighted-space continuity just established
    justify differentiation in $L^1(C)$ and continuity of the resulting
    integrals. This proves the one-parameter coefficient assertions,
    including their endpoints.

    For the finite-parameter family, the same kernel majorants are
    uniform on compact parameter sets. The normalized Toeplitz operators
    are therefore jointly norm-continuous, and their unnormalized inverses
    are jointly strongly continuous and locally uniformly bounded on each
    weighted space. Apply the inverse difference-quotient identity
    successively in the coordinate directions. Differentiation in
    $t_\nu$ costs $m_\nu$ weighted orders, with locally uniform
    difference-quotient bounds between the corresponding spaces.
    Repeated differentiation gives mixed derivatives with total loss
    $d(\beta)$ and joint continuity in the asserted target space.
    In the coefficient integral, the term with derivatives
    $\gamma\le\beta$ on the multiplier is bounded in $L^1(C)$ by
    \[
C_K\|\lambda^{n+\ell+d(\beta)}E\|_{L^2(C)}
      \bigl\|\partial_{\boldsymbol t}^{\beta-\gamma}
      U_{G(g_{\boldsymbol t})A}^{(n)}
      \bigr\|_{n+1+d(\beta-\gamma)}.
\]
    The first factor is finite under the stated condition. The same
    weighted-space argument gives continuity in $L^1(C)$, proving the
    joint coefficient assertions.
    \bigskip
\end{proof}

We next derive identities for the coefficients $\{A_{j}\}$ of $\Phi_{G(g_{t,x})A}^{(0)}$ when $g_{t,x}(z)=e^{ixz+itz^{2}}$. We suppress the dependence on $(t,x)$ and write
\[
\left\{
\begin{array}
	[c]{ll}%
	G = G(g_{t,x}) \text{, } U^{(n)} = U_{GA}^{(n)} \\
	\Phi = GAU_{GA}^{(0)} - I %
\end{array}
\right.
\]

\begin{lemma}
	\label{l16} Let $N\geq2$ and $A\in\mathcal A_N^{inv}(C)$.
    Assume $G(g_{t,x})A\in\mathcal A_N^{inv}(C)$ for all
    $t,x\in\mathbb R$. Then%
	\[
\left\{
	\begin{array}
		[c]{ll}%
		\partial_{x} A_{1} = i[\sigma_{3},A_{2}] + iA_{1} [A_{1},\sigma_{3}] \\
		\partial_{t} A_{1} = \partial_{x}A_{2} + iA_{1}^{2} [\sigma_{3},A_{1}] - iA_{1} [\sigma_{3},A_{2}] %
	\end{array}
	\right.
\]
\end{lemma}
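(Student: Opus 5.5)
We will differentiate the defining relation of $U^{(0)}$ in $x$ and $t$ using Lemma~\ref{l15}, which applies with $h=z$ ($m=1$) and $h=z^2$ ($m=2$), and then read off coefficients at infinity using Lemma~\ref{l14}. Since $N\ge2$, the coefficients $A_1=A_1^{(0)}$ and $A_2$ are $C^1$ in $x$, and $A_1$ is $C^1$ in $t$: indeed $n+\ell+km\le N+1$ holds with $(n,\ell,k,m)=(0,1,1,2)$ and $(0,2,1,1)$. The quantity $\partial_xA_2$ is handled through the identity \eqref{E2.14} and the $x$-equation below, so it needs no separate differentiation.

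For the $x$-derivative, Lemma~\ref{l15} gives
\[
\partial_xU^{(0)}=-iT(GA)^{-1}\sigma_3T(zGA)U^{(0)}.
\]
We compute $T(zGA)U^{(0)}$ as in the proof of Lemma~\ref{l14}. Since $T(GA)U^{(0)}=I$, one gets $T(zGA)U^{(0)}=zI+A_1$, the residual constant being exactly $A_1$. Hence
\[
\partial_xU^{(0)}=-iT(GA)^{-1}\sigma_3(zI+A_1)=-iU^{(1)}\sigma_3-iU^{(0)}\sigma_3A_1.
\]
Here the constant matrix is moved through $T^{-1}$ on the right, using $T^{-1}z^nB=U^{(n)}B$. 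The first identity of \eqref{E2.13} gives $U^{(1)}=zU^{(0)}-U^{(0)}A_1$, so
\[
\partial_xU^{(0)}=-izU^{(0)}\sigma_3+iU^{(0)}[A_1,\sigma_3].
\]
Applying $GA$ and using $\partial_xG=iz\sigma_3G$ yields, with $\Phi+I=GAU^{(0)}$,
\[
\partial_x\Phi=iz[\sigma_3,\Phi+I]+i(\Phi+I)[A_1,\sigma_3].
\]
We expand $\Phi=A_1z^{-1}+A_2z^{-2}+\cdots$ and compare the $z^{-1}$ coefficients. This gives $\partial_xA_1=i[\sigma_3,A_2]+iA_1[A_1,\sigma_3]$, and the $z^0$ terms cancel consistently. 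The $z^{-2}$ coefficient gives
\[
\partial_xA_2=i[\sigma_3,A_3]+iA_2[A_1,\sigma_3].
\]
However, $A_3$ may not be available when $N=2$. We would therefore avoid using it, as explained below.

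For the $t$-derivative, Lemma~\ref{l15} with $h=z^2$ gives $\partial_tU^{(0)}=-iT(GA)^{-1}\sigma_3T(z^2GA)U^{(0)}$, and by the computation of Lemma~\ref{l14}, $T(z^2GA)U^{(0)}=z^2I+zA_1+A_2$. Therefore
\[
\partial_tU^{(0)}=-i\bigl(U^{(2)}\sigma_3+U^{(1)}\sigma_3A_1+U^{(0)}\sigma_3A_2\bigr).
\]
We substitute $U^{(1)}=zU^{(0)}-U^{(0)}A_1$ and $U^{(2)}=z^2U^{(0)}-U^{(1)}A_1-U^{(0)}A_2$ from \eqref{E2.13}. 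Applying $GA$ and using $\partial_tG=iz^2\sigma_3G$, we read off the $z^{-1}$ coefficient of $\partial_t\Phi$. This expresses $\partial_tA_1$ through $[\sigma_3,A_3]$ and polynomial terms in $A_1,A_2$. We then eliminate $i[\sigma_3,A_3]$ using the $z^{-2}$ coefficient of the $x$-equation, i.e.\ $i[\sigma_3,A_3]=\partial_xA_2-iA_2[A_1,\sigma_3]$. After simplification, which uses only $\sigma_3^2=I$ and commutator algebra, this should give the stated formula
\[
\partial_tA_1=\partial_xA_2+iA_1^2[\sigma_3,A_1]-iA_1[\sigma_3,A_2].
\]

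The main obstacle is regularity and the legitimacy of the coefficient comparison when $N=2$. The expansion of $\Phi$ only extends to $k\le N+1=3$ with a remainder $\Psi z^{-N-1}$, and $z\Phi$ or $z^2\Phi$ must be expanded with controlled remainders. We would carry out the comparison in the weighted Hardy sense: multiply by $z^{j}$ and use the integral formulas for $A_k^{(n)}$ and \eqref{E2.14}, rather than pointwise series. If $A_3$ and $\partial_xA_2$ are not individually justified, we would instead work with the combination $\partial_t\Phi-\partial_x(z\Phi)$ directly. Its $z^{-1}$ coefficient involves only $A_1,A_2$ together with the derivative $\partial_xA_2$, which exists by Lemma~\ref{l15} since $0+2+1\le3$.
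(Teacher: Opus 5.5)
Your proposal is correct and follows the paper's proof: differentiate $U^{(0)}$ via Lemma~\ref{l15}, rewrite with \eqref{E2.13}, apply $GA$, compare the $z^{-1}$ and $z^{-2}$ coefficients, and eliminate $i[\sigma_3,A_3]$ with the $x$-equation at order $z^{-2}$. The paper keeps $\Phi^{(1)},\Phi^{(2)}$ and converts coefficients via \eqref{E2.14}, whereas you substitute back into $zU^{(0)},z^2U^{(0)}$; this difference is cosmetic, and your worry about $A_3$ is unnecessary, since $A_3=A_3^{(0)}$ is defined for $N=2$ (the index range is $1\le k\le N+1$), so the main line suffices without the fallback.
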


\begin{proof}
	Lemma \ref{l15} and (\ref{E2.13}) show that
	\begin{align}
		\partial_{x} U^{(0)} & = -iT(GA)^{-1} \sigma_{3} T(GA) zU^{(0)} \nonumber \\
		& = -iT(GA)^{-1} \sigma_{3} T(GA) \left( U^{(1)} + U^{(0)} A_{1} \right) \nonumber \\
		& = -iT(GA)^{-1} \sigma_{3} \left( zI+A_{1} \right) \nonumber \\
		& = -iU^{(1)}\sigma_{3} - iU^{(0)}\sigma_{3}A_{1} \text{.} \label{E2.15} %
	\end{align}
	Similarly,
	\begin{align*}
		\partial_{t} U^{(0)} & = -iT(GA)^{-1} \sigma_{3} T(GA)\sum_{0 \leq k \leq 2} U^{(k)} A_{2-k} \\
		& = -iT(GA)^{-1} \sigma_{3} \left( z^{2}I + zA_{1} + A_{2} \right) \\
		& = -i\left( U^{(2)}\sigma_{3} + U^{(1)} \sigma_{3} A_{1} + U^{(0)}\sigma_{3}A_{2} \right) \text{.}%
	\end{align*}
	Now we can compute derivatives of $\Phi$. (\ref{E2.15}) shows
	\begin{align}
		\partial_{x}(I+\Phi) & = \partial_{x}(GAU^{(0)}) = iz\sigma_{3}GAU^{(0)} + GA\partial_{x} U^{(0)} \nonumber \\
		& = iz\sigma_{3}\left( I+\Phi \right) - iGA \left( U^{(1)}\sigma_{3} + U^{(0)}\sigma_{3}A_{1} \right) \nonumber \\
		& = iz\sigma_{3}\left( I+\Phi \right) - i\left( \left( zI+\Phi^{(1)} \right) \sigma_{3} + \left( I+\Phi \right) \sigma_{3}A_{1} \right) \nonumber \\
		& = iz\sigma_{3}\Phi - i\Phi^{(1)}\sigma_{3} - i(I+\Phi) \sigma_{3}A_{1} \text{,} \label{E2.16}%
	\end{align}
	which yields %
	\[
\partial_{x} A_{j} = i\sigma_{3}A_{j+1} - iA_{j}^{(1)} \sigma_{3} - iA_{j} \sigma_{3}A_{1}
\]
	for $1\leq j\leq N$ by comparing the coefficients of $z^{-j}$. Applying (\ref{E2.14}) with $n=1$ gives
	\begin{align}
		\partial_{x}A_{j} & = i\sigma_{3}A_{j+1} - i(A_{j+1}-A_{j}A_{1})\sigma_{3} - iA_{j}\sigma_{3}A_{1} \nonumber \\
		& = i[\sigma_{3},A_{j+1}] + iA_{j}[A_{1},\sigma_{3}] \text{.} \label{E2.17}%
	\end{align}
	A similar calculation to (\ref{E2.16}) yields
	\begin{align*}
		\partial_{t}(I+\Phi) & = \partial_{t}\left(GAU^{(0)}\right) = iz^{2}\sigma_{3}GAU^{(0)} + GA\partial_{t}U^{(0)} \\
		& = iz^{2}\sigma_{3}(I+\Phi) - i\left(z^{2}I+\Phi^{(2)}\right)\sigma_{3} \\
		&\qquad - i\left(zI+\Phi^{(1)}\right)\sigma_{3}A_{1} - i(I+\Phi)\sigma_{3}A_{2} \\
		& = iz^{2}\sigma_{3} \Phi - i\Phi^{(2)}\sigma_{3} \\
		&\qquad - i\left( zI+\Phi^{(1)} \right) \sigma_{3}A_{1} - i\left( I+\Phi \right) \sigma_{3}A_{2} \text{,}%
	\end{align*}
	hence (\ref{E2.14}) implies
	\begin{align*}
		\partial_{t}A_{1} & = i\sigma_{3}A_{3} - iA_{1}^{(2)}\sigma_{3} - iA_{1}^{(1)}\sigma_{3}A_{1} - iA_{1}\sigma_{3}A_{2} \\
		& = i\sigma_{3}A_{3} - i\left( A_{3} -A_{1}A_{2} - (A_{2} -A_{1}^{2})A_{1} \right)\sigma_{3} \\
		&\qquad - i(A_{2} - A_{1}^{2})\sigma_{3}A_{1} - iA_{1}\sigma_{3}A_{2} \\
		& = i[\sigma_{3},A_{3}] - iA_{2} [\sigma_{3},A_{1}] + iA_{1}^{2} [\sigma_{3},A_{1}] - iA_{1} [\sigma_{3},A_{2}] \text{.}%
	\end{align*}
	Together with (\ref{E2.17}) for $j=2$, this proves the lemma. \bigskip
\end{proof}

Set %
\[
A_{1} = \left( \begin{array}{cc}
	\alpha_{11} & \alpha_{12} \\
	\alpha_{21} & \alpha_{22}
\end{array} \right) \text{, }
A_{2} = \left( \begin{array}{cc}
	\beta_{11} & \beta_{12} \\
	\beta_{21} & \beta_{22}
\end{array} \right) \text{.}
\]
The first identity in Lemma \ref{l16} gives
\begin{equation}
	\left\{
	\begin{array}
		[c]{ll}%
		\partial_{x}\alpha_{11} = 2i\alpha_{12}\alpha_{21} \\
		\partial_{x}\alpha_{12} = 2i\beta_{12} - 2i\alpha_{11}\alpha_{12} \\
		\partial_{x}\alpha_{21} = -2i\beta_{21} + 2i\alpha_{22}\alpha_{21} \\
		\partial_{x}\alpha_{22} = -2i\alpha_{12}\alpha_{21} %
	\end{array}
	\right. \text{,} \label{E2.18}
\end{equation}
and the off-diagonal entries of the second give
\begin{equation}
	\left\{
	\begin{array}
		[c]{ll}%
		\partial_{t}\alpha_{12} = \partial_{x}\beta_{12} + 2i(\alpha_{11}^{2} + \alpha_{12}\alpha_{21}) \alpha_{12} - 2i\alpha_{11}\beta_{12} \\
		\partial_{t} \alpha_{21} = \partial_{x}\beta_{21} - 2i(\alpha_{12}\alpha_{21} + \alpha_{22}^{2}) \alpha_{21} + 2i\alpha_{22}\beta_{21}
	\end{array}
	\right. \label{E2.19}
\end{equation}
Eliminating $\beta_{12}$ and $\beta_{21}$ from (\ref{E2.18})--(\ref{E2.19}) yields the following closed system.

\begin{lemma}
	\label{l17} The pair $(\alpha_{12},\alpha_{21})$ satisfies
	\begin{equation}
		\left\{
		\begin{array}
			[c]{ll}%
			i\partial_{t}\alpha_{12} = \dfrac{1}{2} \partial_{x}^{2} \alpha_{12} - 4\alpha_{12}^{2} \alpha_{21} \\
			i\partial_{t}\alpha_{21} = -\dfrac{1}{2} \partial_{x}^{2} \alpha_{21} + 4\alpha_{21}^{2} \alpha_{12} %
		\end{array}
		\right. \label{E2.20}
	\end{equation}
\end{lemma}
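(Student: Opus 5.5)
The plan is to eliminate $\beta_{12}$ and $\beta_{21}$ directly, using the first-order system \eqref{E2.18} to express them through $\alpha$ and $\partial_x\alpha$. I would then substitute into \eqref{E2.19} and watch the cubic terms involving the diagonal entries cancel.

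\textbf{Step 1: solve for $\beta_{12}$ and $\beta_{21}$.} From the second and third lines of \eqref{E2.18},
\[
\beta_{12}=-\tfrac{i}{2}\partial_x\alpha_{12}+\alpha_{11}\alpha_{12},
\qquad
\beta_{21}=\tfrac{i}{2}\partial_x\alpha_{21}+\alpha_{22}\alpha_{21}.
\]

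\textbf{Step 2: justify the regularity.} Before differentiating these relations once more in $x$, I need enough smoothness, and this is the only point requiring care. By Lemma~\ref{l15} applied to the two-parameter family $g_{t,x}=e^{ixz+itz^2}$, with $m_x=1$ and $m_t=2$, the coefficient $A_\ell^{(0)}$ has a continuous derivative $\partial^\beta$ whenever $\ell+d(\beta)\le N+1$. For $N\ge2$ this gives:
\begin{itemize}
\item $A_2$ is $C^1$ in $x$, since $2+1\le3$;
\item $A_1$ is $C^1$ in $t$, since $1+2\le3$.
\end{itemize}
These are exactly the derivatives that occur in Lemma~\ref{l16}. Since $\beta_{12}$ is $C^1$ in $x$, the displayed relation shows that $\partial_x\alpha_{12}=2i(\beta_{12}-\alpha_{11}\alpha_{12})$ is $C^1$ in $x$. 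Here $\alpha_{11}$ is $C^1$ by \eqref{E2.18}. Hence $\partial_x^2\alpha_{12}$ exists and is continuous, and likewise for $\alpha_{21}$.

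\textbf{Step 3: substitute.} Differentiating $\beta_{12}$ and using $\partial_x\alpha_{11}=2i\alpha_{12}\alpha_{21}$ gives
\[
\partial_x\beta_{12}=-\tfrac{i}{2}\partial_x^2\alpha_{12}+2i\alpha_{12}^2\alpha_{21}+\alpha_{11}\partial_x\alpha_{12}.
\]
A direct expansion gives
\[
-2i\alpha_{11}\beta_{12}=-\alpha_{11}\partial_x\alpha_{12}-2i\alpha_{11}^2\alpha_{12}.
\]
Inserting both into the first line of \eqref{E2.19}, the terms $\pm\alpha_{11}\partial_x\alpha_{12}$ and $\pm2i\alpha_{11}^2\alpha_{12}$ cancel. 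What remains is
\[
\partial_t\alpha_{12}=-\tfrac{i}{2}\partial_x^2\alpha_{12}+4i\alpha_{12}^2\alpha_{21}.
\]
Multiplying by $i$ yields the first equation of \eqref{E2.20}.

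The second equation follows in the same way. Use $\partial_x\alpha_{22}=-2i\alpha_{12}\alpha_{21}$; then the $\alpha_{22}\partial_x\alpha_{21}$ and $\alpha_{22}^2\alpha_{21}$ terms cancel against $2i\alpha_{22}\beta_{21}$. This leaves
\[
\partial_t\alpha_{21}=\tfrac{i}{2}\partial_x^2\alpha_{21}-4i\alpha_{21}^2\alpha_{12},
\]
which, after multiplying by $i$, is the second line of \eqref{E2.20}.

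The main obstacle is the regularity bookkeeping in Step~2, not the algebra. The second $x$-derivative of $\alpha_{12}$ is not obtained from Lemma~\ref{l15} applied to $A_1$ directly. For $N=2$ that would need $1+2\le3$, which does hold, but the route through $\beta_{12}\in C^1_x$ is the one that matches the identities of Lemma~\ref{l16}, so I would present it that way.
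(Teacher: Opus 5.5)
Your proposal is correct and follows the paper's route: the paper simply states that eliminating $\beta_{12},\beta_{21}$ from \eqref{E2.18}--\eqref{E2.19} yields \eqref{E2.20}. Your explicit substitution carries this out correctly, including the cancellation of the $\alpha_{11}\partial_x\alpha_{12}$ and $\alpha_{11}^2\alpha_{12}$ terms and their $\alpha_{22}$ analogues. Your closing remark on regularity is muddled, though harmlessly so: Lemma~\ref{l15} applied directly to $A_1$ with $d(\beta)=2$ already gives a continuous $\partial_x^2\alpha_{12}$ when $N\ge2$, so either route works.
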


Set
\[
\begin{aligned}
F(x,z)&=G(e_{x})AU_{G(e_{x})A}^{(0)}
=I+\Phi_{G(e_x)A}(z),\\
\Psi(x,z)&=G(e_x)^{-1}F(x,z)
=AU_{G(e_x)A}^{(0)}.
\end{aligned}
\]
Using (\ref{E2.13}) with $n=1$ in the calculation leading to (\ref{E2.16}), one obtains
\[
\partial_xF=iz[\sigma_3,F]+iF[A_1,\sigma_3].
\]
Consequently, the gauge-normalized matrix $\Psi$ satisfies
\[
i\partial_x\Psi-\Psi[\sigma_3,A_1]=z\Psi\sigma_3.
\]
It follows that
\begin{equation}
	i\sigma_{3}\partial_{x} \boldsymbol{f} + \left( \begin{array}{cc}
		0 & 2\alpha_{21} \\
		2\alpha_{12} & 0
	\end{array} \right) \boldsymbol{f} = z\boldsymbol{f} \label{E2.21}
\end{equation}
where $\boldsymbol{f}(x)=\Psi(x)^{T}\boldsymbol{e}_{1}$.
This solution is called the \textbf{Baker--Akhiezer function}.

\begin{lemma}\label{l18}
If $\widetilde D_+\subset D_+$ are admissible strips, restriction is a
bounded map $H_n(D_+)\to H_n(\widetilde D_+)$ for every integer $n\ge0$.
Here $H_0=H$.
\end{lemma}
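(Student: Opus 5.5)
The plan is to reduce to $n=0$ and then realize restriction as a Cauchy integral operator between two Lipschitz graphs.

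\emph{Reduction to $n=0$.} Fix $b\in D_-$. Since $\widetilde D_+\subset D_+$, we have $D_-\subset\widetilde D_-$, so the same $b$ may be used to define $H_n(\widetilde D_+)=(z-b)^nH(\widetilde D_+)$. Any $u\in H_n(D_+)$ can be written $u=(z-b)^nv$ with $v\in H(D_+)$. On both $C$ and $\widetilde C$ we have
\[
|\lambda|^{-n}|\lambda-b|^n\asymp 1,
\]
because $b$ has positive distance from both curves (this holds for $\widetilde C$ since $\widetilde C\subset\overline D_+$ and $b\in D_-$ is open). Hence $\|u\|_n\asymp\|v\|_{L^2}$ on each curve. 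The statement therefore reduces to showing that restriction $H(D_+)\to H(\widetilde D_+)$ is bounded in the $L^2$ norms of $C$ and $\widetilde C$.

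\emph{The case $n=0$.} By definition of $H(D_+)$ as an $L^2(C)$-closure, it suffices to show
\[
\|r\|_{L^2(\widetilde C)}\le K\|r\|_{L^2(C)}
\]
for rational $r$ with no poles in $\overline D_+$ and $r|_C\in L^2(C)$. Such an $r$ is also a rational function with no poles in $\overline{\widetilde D}_+$, so it lies in the dense generating set of $H(\widetilde D_+)$. The bounded extension then maps $H(D_+)$ into $H(\widetilde D_+)$ and agrees with pointwise restriction of the holomorphic functions. For such $r$, and using $\mathfrak p_+r=r$, Cauchy's formula gives
\[
r(z)=\frac1{2\pi i}\int_C\frac{r(\lambda)}{\lambda-z}\,d\lambda
\qquad (z\in D_+).
\]
This is justified because $r$ is bounded at infinity and the contour integral over the vertical cross-sections of the thin strip vanishes in the limit, since $\omega\to0$ when $N\ge2$ and $\omega$ stays bounded when $N=1$. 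Thus restriction to $\widetilde C$ is the operator
\[
f\mapsto \Bigl(\frac1{2\pi i}\int_C\frac{f(\lambda)}{\lambda-z}\,d\lambda\Bigr)\Big|_{z\in\widetilde C}.
\]

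\emph{Boundedness of the Cauchy operator.} The set $\Gamma=C\cup\widetilde C$ is a union of four Lipschitz graphs over $\mathbb R$, Lipschitz because $\omega',\widetilde\omega'\in L^\infty$, up to compact modifications. It is therefore Ahlfors regular, and by David's theorem the maximal Cauchy singular integral is bounded on $L^2(\Gamma,|d\lambda|)$. Apply this to $f\mathbf 1_C$ and evaluate on $\widetilde C$:
\begin{itemize}
\item Where $\widetilde C$ stays at positive distance from $C$, the integral is absolutely convergent. It is dominated by the maximal truncated Cauchy integral, so its $L^2(\widetilde C)$ norm is at most $K\|f\|_{L^2(C)}$.
\item Where the two curves touch, which compact modifications and the closure condition allow, the value on $\widetilde C$ is a nontangential or one-sided boundary value from inside $D_+$. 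It is again controlled by the maximal operator, via the Plemelj--Calder\'on nontangential maximal estimate for Lipschitz graphs.
\end{itemize}
The same estimate can also be obtained without David's theorem by splitting $\widetilde C$ into its upper and lower graphs and comparing each with the corresponding graph of $C$, since they are nested Lipschitz graphs. One then uses Coifman--McIntosh--Meyer on each pair, together with the trivial bound $\int_{C^\pm}|f|/\operatorname{dist}$ for the opposite graph. That trivial bound is an $L^2$-bounded Hilbert--Schmidt-type kernel on the part where the graphs are far apart.

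\emph{Main obstacle.} The hardest point is uniformity at infinity. The strips shrink like $(1+|x|)^{-(N-1)}$, so the upper graph of $C$ and the lower graph of $\widetilde C$ become close and are not separated by a uniform distance. I would handle this by noting that all four graphs have uniformly bounded Lipschitz constants. The Cauchy integral bound from Coifman--McIntosh--Meyer and David depends only on the Lipschitz or Ahlfors-regularity constants, not on the separation, so the cross-graph terms are covered by the single $L^2(\Gamma)$ estimate with no separation hypothesis needed.
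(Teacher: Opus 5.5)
Your reduction to $n=0$ through $u=(z-b)^nv$ and your density-and-closure step coincide with the paper's. The routes differ in the core estimate $\|r\|_{L^2(\widetilde C)}\le K\|r\|_{L^2(C)}$.

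The paper splits the Cauchy integral into the contributions of the upper and lower graphs of $C$. Each is holomorphic in the graph half-plane containing $D_+$. Every point of $\widetilde C$, on either of its graphs, lies on the vertical ray entering that half-plane from the point of the source graph with the same abscissa. It therefore lies in a nontangential cone whose aperture depends only on $\|\omega'\|_\infty$. The $L^2$ nontangential maximal estimate for the Cauchy integral on a single Lipschitz graph then controls $r$ on $\widetilde C$ by two maximal functions. Same-side and opposite-side contributions are treated identically, with no separation hypothesis.

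You instead dominate the Cauchy integral at $z\in\widetilde C\setminus C$ by the maximal truncated Cauchy integral of $f\mathbf 1_C$ on $\Gamma=C\cup\widetilde C$. That step is correct, since truncating below $\operatorname{dist}(z,C)$ removes nothing. On $\widetilde C\cap C$ the restriction is just $f$, so your second bullet needs no nontangential argument at all. This route avoids the nesting geometry, but it requires $L^2(\Gamma)$ boundedness of the maximal Cauchy operator on a union of four Lipschitz graphs. Your justification for that is not valid as written. Ahlfors regularity alone does not give it, since there are Ahlfors-regular purely unrectifiable sets. David's theorem concerns Ahlfors-regular curves, which $\Gamma$ is not. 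The conclusion is still true: a finite union of Lipschitz graphs has big pieces of Lipschitz graphs and is therefore uniformly rectifiable. That is the result you must invoke, whereas the paper needs only single-graph Coifman--McIntosh--Meyer theory.

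Drop the alternative you sketch ``without David's theorem''. The absolute-value bound $\int_{C^\pm}|f(\lambda)|\,|\lambda-z|^{-1}\,|d\lambda|$ for the opposite graph is neither Hilbert--Schmidt nor $L^2$-bounded.
\begin{itemize}
\item When the separation is bounded below (as for $N=1$), the kernel is comparable to $(1+|s-x|)^{-1}$. This is not square integrable on $\R^2$ and loses a logarithm on long intervals.
\item For $N\ge2$ the separation is only $\asymp\langle x\rangle^{-(N-1)}$. Then $f=\mathbf 1_{[X,X+1]}$ already produces values of size about $\log\langle X\rangle$ on $[X,X+1]$.
\end{itemize}
The cross terms genuinely need the cancellation of the Cauchy kernel. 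The nontangential estimate, or the uniform-rectifiability estimate, is what supplies it.
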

\begin{proof}
For $f\in H(D_+)$, split its Cauchy representation into the contributions
of the upper and lower boundary graphs. Each contribution is analytic
in the graph half-plane containing $D_+$. A point on either smaller graph
lies on a vertical interior ray from the source graph at the same real
coordinate. This ray lies in a nontangential cone with aperture depending
only on the Lipschitz constant. The $L^2$ nontangential maximal estimate
for the Cauchy integral on a Lipschitz graph, as in \cite{JV}, yields
\[
\|f|_{\widetilde C}\|_{L^2(\widetilde C)}
 \le C_{C,\widetilde C}\|f|_C\|_{L^2(C)}.
\]
Both target arc measures are comparable with the real coordinate measure.
First apply this estimate to the defining rational functions and then
pass to their closure; the restrictions belong to $H(\widetilde D_+)$.
For $n\ge1$, apply the result to $(z-b)^{-n}u$, with $b\in D_-$.
The weights $|z-b|^{-n}$ and $|z|^{-n}$ are equivalent on each fixed
contour. Multiplying back proves the assertion.
\end{proof}

\begin{lemma}
	\label{l19} Suppose that the contours $C$ and $\widetilde{C}$ satisfy
	$\overline{\widetilde D_+}\subset D_+$. Let
	$A\in\mathcal{A}_{N}^{inv}(C)$ and
	$\widetilde{A}\in\mathcal{A}_{N}^{inv}(\widetilde{C})$.
	Assume that $A$ has the holomorphic continuation and the common analytic
	part $F$ in the analytic-contour condition
	(\ref{E2.5}), and that
	$\widetilde A=A|_{\widetilde C}$.
	Then, for $0\leq n\leq N$,
	$U_{A}^{(n)}=U_{\widetilde{A}}^{(n)}$ on $\widetilde{D}_{+}$ and
	$\Phi_{A}^{(n)}=\Phi_{\widetilde{A}}^{(n)}$ on the common exterior domain.
\end{lemma}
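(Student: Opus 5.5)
We prove the lemma by a uniqueness argument. We take the function $U_A^{(n)}$, built on the larger contour, restrict it to $\widetilde C$, and show that it solves the Toeplitz equation there. The invertibility of $T(\widetilde A)$ on the weighted spaces then identifies the restriction with $U_{\widetilde A}^{(n)}$.

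Fix $0\le n\le N$ and set $\boldsymbol u=U_A^{(n)}$, so that $\boldsymbol u\in\boldsymbol H_{n+1}(D_+)^2$. By Lemma~\ref{l18}, its restriction lies in $\boldsymbol H_{n+1}(\widetilde D_+)^2$.

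The first step is to rewrite $T(A)\boldsymbol u=z^nI$ as a jump relation. By (\ref{E2.2}),
\[
\Phi_A^{(n)}=A\boldsymbol u-z^nI
=(A-F)\boldsymbol u-\mathfrak p_+\bigl((A-F)\boldsymbol u\bigr)
=\mathfrak p_-\bigl((A-F)\boldsymbol u\bigr)\quad\text{on }C .
\]
By the analytic-contour condition, $A$ is holomorphic on $\overline D_+\setminus\mathbb R$. Hence $\Phi_A^{(n)}$, a Cauchy integral holomorphic in $D_-$, continues holomorphically into the region $\Omega=D_+\setminus\overline{\widetilde D_+}$ by the formula $A\boldsymbol u-z^nI$. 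The two pieces agree on $C$ by the displayed identity, so they glue across $C$.

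The key point is then to show that this continued function, restricted to $\widetilde C$, equals $\widetilde{\mathfrak p}_-\bigl((A-F)\boldsymbol u|_{\widetilde C}\bigr)$, i.e. that it lies in $H(\widetilde D_-)$. We would prove this by applying Cauchy's theorem to $(A-F)\boldsymbol u/(\lambda-z)$ on the two strips $\Omega$ lying between $C$ and $\widetilde C$. Deforming the contour $C$ to $\widetilde C$ inside $\Omega$ shows that the Cauchy integrals of $(A-F)\boldsymbol u$ over $C$ and over $\widetilde C$ coincide off $\overline\Omega$. For $z\in\Omega$, the residue at $\lambda=z$ produces exactly the term $(A-F)\boldsymbol u(z)$. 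Adding back $F\boldsymbol u$, which is holomorphic in $D_+$, then gives
\[
T(\widetilde A)\bigl(\boldsymbol u|_{\widetilde C}\bigr)=z^nI .
\]
Since $\widetilde A\in\mathcal A_N^{inv}(\widetilde C)$, we conclude $U_{\widetilde A}^{(n)}=\boldsymbol u|_{\widetilde D_+}$. Substituting back then gives $\Phi_{\widetilde A}^{(n)}=\widetilde A\,U_{\widetilde A}^{(n)}-z^nI=\Phi_A^{(n)}$ on $\widetilde C$. Both sides are Cauchy integrals that are holomorphic in $D_-$ and vanish at infinity, so they coincide on the common exterior domain by uniqueness of the decomposition $\boldsymbol L^2=\boldsymbol H(D_+)\oplus\boldsymbol H(D_-)$.

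The main obstacle is justifying the contour deformation on the unbounded regions. The strips $\Omega$ are thin and extend to infinity, and $\boldsymbol u$ is only an $L^2$-type weighted Hardy function. We plan to truncate the strips at $|\operatorname{Re}\lambda|=R$ and show that the vertical connecting segments contribute nothing as $R\to\infty$. Their length is $O(R^{-(N-1)})$. The bound (\ref{E2.5}) gives $|A-F|=O(|\lambda|^{-N-2})$ uniformly on $\overline D_+\setminus\widetilde D_+$. Growth of $\boldsymbol u$ is controlled via the nontangential maximal function estimate used in Lemma~\ref{l18}, applied to $(z-b)^{-n-1}\boldsymbol u$. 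This yields decay along a sequence $R_k\to\infty$, obtained by averaging in $R$ and using square integrability of the maximal function. The real axis, where $A$ might be singular, lies inside $\widetilde D_+$ and so is never crossed.
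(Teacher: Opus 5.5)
Your proof is correct, but it is organized differently from the paper's.

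\textbf{The paper's route.} For the interior identity the paper never deforms contours with $U_A^{(n)}$ itself. It first proves the intertwining identity $(T_C(A)u)|_{\widetilde D_+}=T_{\widetilde C}(\widetilde A)(u|_{\widetilde D_+})$ for $u=(z-b)^jv$ with $v$ rational. Such $u$ are continuous up to $C$ and satisfy $u=O(|z|^{j-1})$ pointwise, so Cauchy's theorem is classical and the closing pieces are trivially negligible. It then extends the identity to all of $\boldsymbol H_j(D_+)$ by density, using Lemma~\ref{l18} and boundedness of both Toeplitz operators. Finally it takes $u=U_A^{(n)}$, $j=n+1$, and inverts $T_{\widetilde C}(\widetilde A)$. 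Evaluation is at $z\in\widetilde D_+$, so no residue appears, and the exterior statement follows afterwards by gluing.

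\textbf{Your route.} You work directly with the non-rational $\boldsymbol u=U_A^{(n)}$ and pass through the exterior jump relation, with a residue at $\lambda=z$ for $z\in\Omega$. The orientation works out: $\widetilde{\mathfrak p}_-(E\boldsymbol u)=E\boldsymbol u-\mathfrak p_+(E\boldsymbol u)=A\boldsymbol u-z^nI$ on $\Omega$, and $\widetilde{\mathfrak p}_-(E\boldsymbol u)=\Phi_A^{(n)}$ on $D_-$. So one computation gives both conclusions, and your ``gluing across $C$'' is really a consequence of this step rather than an input.

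\textbf{What your route costs.} The deformation must be justified for a general weighted Hardy function. Your maximal-function control of the vertical pieces works: their contribution is at most $C R^{n-2N-1}g(R)$ with $g\in L^2$, so a good sequence $R_k$ exists. You should also say how Cauchy's theorem is applied on the truncated region whose outer boundary lies on $C$, where $\boldsymbol u$ has only nontangential $L^2$ boundary values. Either approximate $C$ from inside using the same maximal-function domination, or use density of rational functions, which is exactly the paper's device.

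\textbf{Comparison.} The paper's route yields a reusable operator identity valid for every $u\in\boldsymbol H_j(D_+)$, with all boundary analysis packaged in Lemma~\ref{l18}. Yours makes explicit the exterior identification that the paper handles with ``the same Cauchy deformation.'' The residue detour is not actually needed: your deformation already shows the two Cauchy integrals coincide for $z\in\widetilde D_+$, and this gives $T(\widetilde A)(\boldsymbol u|_{\widetilde C})=z^nI$ at once.
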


\begin{proof}
Write $E=A-F$.
Take $u=(z-b)^jv$, where $v$ is rational with no poles in $D_+$ and
with $L^2(C)$ boundary values. Then $u(z)=O(|z|^{j-1})$.
For fixed $z\in\widetilde D_+$, the kernel
$E(\lambda)u(\lambda)/(\lambda-z)$ is analytic between the two contours.
On the closing pieces at $\operatorname{Re}\lambda=\pm L$ it is
$O(L^{j-N-4})$, and those pieces have length $O(L^{-(N-1)})$.
Their integrals vanish as $L\to\infty$ for $j\le N+1$. Hence
\[
(T_C(A)u)|_{\widetilde D_+}
 =T_{\widetilde C}(\widetilde A)(u|_{\widetilde D_+}).
\]
Lemma~\ref{l18}, Toeplitz boundedness, and rational density
extend this identity to every $u\in\boldsymbol H_j(D_+)$.
For $u=U_A^{(n)}$, take $j=n+1$ and invert the smaller-contour operator:
\[
U_A^{(n)}=U_{\widetilde A}^{(n)}
 \quad\hbox{on }\widetilde D_+.
\]
Between the contours, $AU_A^{(n)}-z^nI$ is analytic and has the same
Hardy boundary trace as $\Phi_A^{(n)}$ on the old contour. Analytic gluing
and the same Cauchy deformation identify it with
$\Phi_{\widetilde A}^{(n)}$; hence the exterior factors agree.
\end{proof}

\subsection{\texorpdfstring{$m$-function and calculation of $\tau_A(r)$ for rational $r$}{Weyl coordinates and rational tau-functions}}
In this subsection, we use the notation
\[
\left\{
\begin{array}
	[c]{ll}%
	U(z) = \left( T(A)^{-1}I \right)(z) \in \boldsymbol{H}_{1}(D_{+}) \times \boldsymbol{H}_{1}(D_{+}) \\
	F(z) = \left( AU \right) (z) \\
	\Phi(z) = F(z) - I \in \boldsymbol{H}(D_{-}) \times \boldsymbol{H}(D_{-})  %
\end{array}
\right. \text{,}
\]
	thus suppressing the superscript $(0)$ and the subscript $A$. %

Let %
\[
F(z) = \left( \begin{array}{cc}
	f_{11}(z) & f_{12}(z) \\
	f_{21}(z) & f_{22}(z)
\end{array} \right) \text{, }
\Phi(z) = \left( \begin{array}{cc}
	\varphi_{11}(z) & \varphi_{12}(z) \\
	\varphi_{21}(z) & \varphi_{22}(z)
\end{array} \right) \text{,}
\]
and define
\begin{equation}
	\left\{
	\begin{array}
		[c]{ll}%
		m_{1}(z) = \dfrac{f_{12}(z)}{f_{11}(z)} = \dfrac{\varphi_{12}(z)}{1+\varphi_{11}(z)} \\
		m_{2}(z) = \dfrac{f_{21}(z)}{f_{22}(z)} = \dfrac{\varphi_{21}(z)}{1+\varphi_{22}(z)} %
	\end{array}
	\right. \text{,} \label{E2.22}
\end{equation}
and call $(m_{1},m_{2})$ the \textbf{$m$-function} of $A$. Its components are meromorphic on $D_{-}$ and satisfy
\[
m_{j}(z) = O(z^{-1}) \text{,}
\]
uniformly in sufficiently high horizontal half-planes, by the
Cauchy point-evaluation estimate for the Hardy remainder in Section~\ref{sec:equation-derivation}.
In particular the denominators are not identically zero on either
exterior component. We now express the tau-function in terms of the $m$-function.

For $\zeta \in D_{-}$ set
\[
q_{\zeta}(z) = (1-\zeta^{-1}z)^{-1} = \zeta(\zeta-z)^{-1} \in H(D_{+}) \text{.}
\]
Then%
\[
Aq_{\zeta}U = q_{\zeta}I + q_{\zeta} \Phi = q_{\zeta} (I+\Phi(\zeta)) + q_{\zeta} (\Phi - \Phi(\zeta))
\]
is the decomposition into its $\boldsymbol{H}_{1}(D_{+})$ and $\boldsymbol{H}(D_{-})$ matrix columns. Hence
\[
T(A) (q_{\zeta}U) = \mathfrak{p}_{+}(q_{\zeta}AU) = q_{\zeta} (I+\Phi(\zeta)) \text{,}
\]
which implies%
\begin{equation}
	q_{\zeta}U = T(A)^{-1} (q_{\zeta} (I+\Phi(\zeta))) = \left(T(A)^{-1}(q_{\zeta}I)\right)(I+\Phi(\zeta)) \text{.} \label{E2.23}
\end{equation}
Set %
\[
\Delta(\zeta) = (1+\varphi_{11}(\zeta))(1+\varphi_{22}(\zeta)) - \varphi_{12}(\zeta)\varphi_{21}(\zeta) \text{.}
\]

\begin{lemma}
	\label{l20} $\Delta(\zeta) \neq 0$ on $D_{-}$, and%
	\begin{equation}
		T(A)^{-1}(q_{\zeta}I) = q_{\zeta}U(I+\Phi(\zeta))^{-1} \text{.} \label{E2.24}
	\end{equation}
\end{lemma}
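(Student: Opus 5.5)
We prove $\Delta(\zeta)\neq0$ by contradiction, using the invertibility of $T(A)$ on $\boldsymbol H_1(D_+)$, and then read off \eqref{E2.24} from \eqref{E2.23}.

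Fix $\zeta\in D_-$ and suppose $\Delta(\zeta)=0$. Then $I+\Phi(\zeta)=F(\zeta)$ is a singular matrix, so there is a nonzero vector $\boldsymbol c\in\mathbb C^2$ with $(I+\Phi(\zeta))\boldsymbol c=0$. Set $\boldsymbol u=q_\zeta U\boldsymbol c$. Since $U\in\boldsymbol H_1(D_+)^2$ and $q_\zeta$ is a bounded analytic multiplier on $D_+$, we have $\boldsymbol u\in\boldsymbol H_1(D_+)$. The computation preceding \eqref{E2.23} gives
\[
T(A)\boldsymbol u=q_\zeta(I+\Phi(\zeta))\boldsymbol c=0 .
\]
To verify this splitting of $Aq_\zeta U\boldsymbol c$ into $\mathfrak p_\pm$ parts, we check two things. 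First, $q_\zeta(\Phi-\Phi(\zeta))$ lies in $\boldsymbol H(D_-)$: the apparent pole at $\zeta$ is removable, and the function is $o(1)$ at infinity because $q_\zeta=O(1/z)$. Second, $q_\zeta(I+\Phi(\zeta))$ is a constant matrix times $q_\zeta\in H(D_+)$. Injectivity of $T(A)$ on $\boldsymbol H_1(D_+)$ then forces $\boldsymbol u=0$, so $U\boldsymbol c=0$ almost everywhere on $C$, and hence on $D_+$. But $AU\boldsymbol c=(I+\Phi)\boldsymbol c$, which means $(I+\Phi(z))\boldsymbol c=0$ on $C$. This contradicts $\Phi(z)\to0$ at high imaginary infinity, which follows from the $O(z^{-1})$ expansion of $\Phi=\Phi_A^{(0)}$ obtained in Section~\ref{sec:equation-derivation}. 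Indeed, $\Phi\in\boldsymbol H(D_-)$ vanishing identically would require $(I+\Phi)\boldsymbol c$ to be $\boldsymbol c+o(1)$ while also being $0$ on the boundary. More carefully, $(I+\Phi)\boldsymbol c\in\mathfrak p_-$-range plus a constant, and its boundary values vanish, so the constant $\boldsymbol c=-\Phi\boldsymbol c$ lies in $\boldsymbol H(D_-)$. Since constants have no nonzero $\mathfrak p_-$ component, this gives $\boldsymbol c=0$.

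Once $\Delta(\zeta)\neq0$, the matrix $I+\Phi(\zeta)$ is invertible. Multiplying \eqref{E2.23} on the right by $(I+\Phi(\zeta))^{-1}$ gives \eqref{E2.24} directly, using that $T(A)^{-1}$ commutes with right multiplication by constant matrices, which acts column-wise.

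The main obstacle is the step from ``$U\boldsymbol c=0$'' to a contradiction. It rests on the decomposition $\boldsymbol L^2$-type $=\boldsymbol H_1(D_+)\oplus\boldsymbol H(D_-)$ being direct, in particular on a nonzero constant vector not lying in $\boldsymbol H(D_-)$, which holds by the $o(1)$ normalization. A secondary point is checking that $q_\zeta(\Phi-\Phi(\zeta))$ genuinely lies in $\boldsymbol H(D_-)$ with $L^2(C)$ traces; this follows from the Cauchy representation of $\Phi$ and boundedness of $q_\zeta$ on $C$, since $\zeta\notin C$.
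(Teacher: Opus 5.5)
Your proposal is correct and follows the paper's argument. A kernel vector $\boldsymbol c$ of $I+\Phi(\zeta)$ gives $q_\zeta U\boldsymbol c=0$ via \eqref{E2.23} (equivalently, by injectivity of $T(A)$), hence $U\boldsymbol c=0$, and \eqref{E2.24} then follows by right multiplication by $(I+\Phi(\zeta))^{-1}$.

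The only difference is the final contradiction. The paper gets it in one line from $U=T(A)^{-1}I$, namely $\boldsymbol c=T(A)U\boldsymbol c=0$. Your detour through $(I+\Phi)\boldsymbol c=0$ on $C$ and the normalization of $\Phi$ is valid, since a nonzero constant is not even in $\boldsymbol L^2(C)$, but it is unnecessary.
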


\begin{proof}
	Suppose there exists $\boldsymbol{a} \neq \boldsymbol{0} \in \mathbb{C}^{2}$ such that%
	\[
(I+\Phi(\zeta)) \boldsymbol{a} = \boldsymbol{0} \text{.}
\]
	Then (\ref{E2.23}) implies %
	\[
q_{\zeta}U(z) \boldsymbol{a} = \left( T(A)^{-1}(q_{\zeta}I) \right) (I+\Phi(\zeta)) \boldsymbol{a} = \boldsymbol{0} \text{,}
\]
	hence %
	\[
U(z) \boldsymbol{a} = \boldsymbol{0} \text{ identically on } D_{+} \text{,}
\]
	which implies $\boldsymbol{a}=T(A)U\boldsymbol{a}=\boldsymbol{0}$, a contradiction. Thus $\Delta(\zeta)\neq0$, and (\ref{E2.24}) follows from (\ref{E2.23}). \bigskip
\end{proof}

Let $r$ be a rational function with distinct poles $\{\zeta_{j}\}_{j=1}^{m}$ and distinct zeros $\{\eta_{j}\}_{j=1}^{m}$ in $D_{-}$. Then $r$ and $r^{-1}$ have the partial-fraction expansions
\[
\left\{
\begin{array}
	[c]{ll}%
	r(z) = a + \sum\limits_{j=1}^{m} \dfrac{\zeta_{j}a_{j}}{\zeta_{j}-z} = a + \sum\limits_{j=1}^{m} a_{j}q_{\zeta_{j}}(z) \\
	r(z)^{-1} = b+\sum\limits_{j=1}^{m} \dfrac{\eta_{j}b_{j}}{\eta_{j}-z} = b+\sum\limits_{j=1}^{m} b_{j}q_{\eta_{j}}(z) %
\end{array}
\right.
\]
Note%
\[
\left\{
\begin{array}
	[c]{ll}%
	\zeta_{j}a_{j} = \lim\limits_{z \rightarrow \zeta_{j}} (\zeta_{j}-z) r(z) \\
	\eta_{j}b_{j} = \lim\limits_{z \rightarrow \eta_{j}} (\eta_{j}-z) r(z)^{-1} %
\end{array}
\right.
\]
Let%
\[
R(z) = G(r)(z) = \left( \begin{array}{cc}
	r(z) & 0 \\
	0 & r(z)^{-1}
\end{array} \right) \text{.}
\]
We first show that
\[
R^{-1} T(RA)T(A)^{-1} - I
\]
has finite rank. For $\boldsymbol u\in\boldsymbol H_n(D_+)$, put
$\boldsymbol v=T(A)^{-1}\boldsymbol u$.
Choose a bounded analytic part $F_0$ of the symbol $A$ and set
$E=A-F_0$. Since $E\boldsymbol v\in\boldsymbol L^2(C)$,
the exterior remainder is well defined by
\[
\boldsymbol\varphi
 =\mathfrak p_-(E\boldsymbol v)
 =A\boldsymbol v-\boldsymbol u
 \in\boldsymbol H(D_-).
\]
The bounded analytic multiplier $R$ preserves $\boldsymbol H(D_+)$.
Using only Cauchy projections of $L^2$ functions, we obtain
\[
\begin{aligned}
 T(RA)T(A)^{-1}\boldsymbol u
 &=RF_0\boldsymbol v+\mathfrak p_+(RE\boldsymbol v)\\
 &=R\boldsymbol u+\mathfrak p_+(R\boldsymbol\varphi)\\
 &=R\boldsymbol u+
   \frac1{2\pi i}\int_C
   \frac{R(\lambda)\boldsymbol\varphi(\lambda)}{\lambda-z}
   \,d\lambda .
 \end{aligned}
\]

	Writing $\boldsymbol{\varphi} = (\varphi_{1} , \varphi_{2} )$, we obtain, for $z \in D_{+}$,
\begin{align*}
	\dfrac{1}{2\pi i} \displaystyle\int_{C} \dfrac{r(\lambda)\varphi_{1} (\lambda)}{\lambda-z} d\lambda
	&= \dfrac{1}{2\pi i} \displaystyle\int_{C} \dfrac{a\varphi_{1} (\lambda)}{\lambda-z} d\lambda \\
	&\quad+ \sum\limits_{j=1}^{m} \dfrac{1}{2\pi i} \displaystyle\int_{C} \dfrac{a_{j}q_{\zeta_{j}}(\lambda)\varphi_{1} (\lambda)}{\lambda-z} d\lambda \\
	& = \sum\limits_{j=1}^{m} a_{j}q_{\zeta_{j}}(z) \varphi_{1} (\zeta_{j}) \text{,}%
\end{align*}
and, similarly,
\begin{align*}
	\dfrac{1}{2\pi i} \displaystyle\int_{C} \dfrac{r(\lambda)^{-1}\varphi_{2} (\lambda)}{\lambda-z} d\lambda
	&= \dfrac{1}{2\pi i} \displaystyle\int_{C} \dfrac{b\varphi_{2} (\lambda)}{\lambda-z} d\lambda \\
	&\quad+ \sum\limits_{j=1}^{m} \dfrac{1}{2\pi i} \displaystyle\int_{C} \dfrac{b_{j}q_{\eta_{j}}(\lambda)\varphi_{2} (\lambda)}{\lambda-z} d\lambda \\
	& = \sum\limits_{j=1}^{m} b_{j}q_{\eta_{j}}(z) \varphi_{2} (\eta_{j}) \text{,}%
\end{align*}
hence
\[
\begin{aligned}
	(R^{-1} T(RA)T(A)^{-1} - I)\boldsymbol{u}
	&= \sum\limits_{j=1}^{m} a_{j}\varphi_{1} (\zeta_{j})r(z)^{-1}q_{\zeta_{j}}(z)\boldsymbol{e}_{1}\\
	&\quad + \sum\limits_{j=1}^{m} b_{j}\varphi_{2} (\eta_{j})r(z)q_{\eta_{j}}(z)\boldsymbol{e}_{2}.
	\end{aligned}
\]
Therefore, its image is contained in the $2m$-dimensional space spanned by $\{r^{-1}q_{\zeta_{j}}\boldsymbol{e}_{1},rq_{\eta_{j}}\boldsymbol{e}_{2}\}_{j=1}^{m}$. To obtain its matrix in this ordered basis, we compute the functionals $\varphi_{1} (\zeta_{j})$ and $\varphi_{2} (\eta_{j})$ on the basis vectors. First, $r^{-1}q_{\zeta_{k}}$ has poles at $\{\eta_{h}\}$ and
\[
\lim\limits_{z \rightarrow \eta_{h}} (\eta_{h}-z) r(z)^{-1} q_{\zeta_{k}}(z) = \eta_{h}b_{h}q_{\zeta_{k}}(\eta_{h}) \text{,}
\]
hence%
\[
r(z)^{-1} q_{\zeta_{k}}(z) = \sum\limits_{h=1}^{m} \dfrac{\eta_{h}b_{h}q_{\zeta_{k}}(\eta_{h})}{\eta_{h}-z} = \sum\limits_{h=1}^{m} b_{h}q_{\zeta_{k}}(\eta_{h})q_{\eta_{h}}(z) \text{.}
\]
Therefore,
\[
AT(A)^{-1} r^{-1} q_{\zeta_{k}} \boldsymbol{e}_{1} = \sum\limits_{h=1}^{m} b_{h}q_{\zeta_{k}}(\eta_{h}) AT(A)^{-1}q_{\eta_{h}} \boldsymbol{e}_{1} \text{.}
\]
Since (\ref{E2.24}) implies%
\[
\left\{
\begin{array}
	[c]{ll}%
	AT(A)^{-1} q_{\zeta} \boldsymbol{e}_{1} = \dfrac{(1+\varphi_{22}(\zeta))q_{\zeta}\boldsymbol{f}_{1} - \varphi_{21}(\zeta)q_{\zeta}\boldsymbol{f}_{2}}{\Delta(\zeta)} \\
	AT(A)^{-1} q_{\zeta} \boldsymbol{e}_{2} = \dfrac{(1+\varphi_{11}(\zeta))q_{\zeta}\boldsymbol{f}_{2} - \varphi_{12}(\zeta)q_{\zeta}\boldsymbol{f}_{1}}{\Delta(\zeta)} %
\end{array}
\right. \text{,}
\]
and for $\zeta \in D_{-}$ %
\[
\mathfrak{p}_{-} q_{\zeta} \boldsymbol{f}_{\nu} = \mathfrak{p}_{-} q_{\zeta} (\boldsymbol{e}_{\nu} + \boldsymbol{\varphi}_{\nu}) = q_{\zeta} (\boldsymbol{\varphi}_{\nu} - \boldsymbol{\varphi}_{\nu}(\zeta)),
\]
we obtain
\[
\left\{
	\begin{array}
		[c]{ll}%
		\mathfrak{p}_{-} AT(A)^{-1} q_{\zeta} \boldsymbol{e}_{1} = \dfrac{(1+\varphi_{22}(\zeta))(\boldsymbol{\varphi}_{1} - \boldsymbol{\varphi}_{1}(\zeta)) - \varphi_{21}(\zeta)(\boldsymbol{\varphi}_{2} - \boldsymbol{\varphi}_{2}(\zeta))}{\Delta(\zeta)}q_{\zeta} \\
		\mathfrak{p}_{-} AT(A)^{-1} q_{\zeta} \boldsymbol{e}_{2} = \dfrac{(1+\varphi_{11}(\zeta))(\boldsymbol{\varphi}_{2} - \boldsymbol{\varphi}_{2}(\zeta)) - \varphi_{12}(\zeta)(\boldsymbol{\varphi}_{1} - \boldsymbol{\varphi}_{1}(\zeta))}{\Delta(\zeta)}q_{\zeta} %
	\end{array}
	\right.
\]
Evaluating these functionals on $r^{-1}q_{\zeta_{k}}\boldsymbol e_{1}$ gives
\begin{align*}
	a_{j} \varphi_{1}[r^{-1}q_{\zeta_{k}}\boldsymbol e_{1}](\zeta_{j})
	&=\sum\limits_{h=1}^{m} b_{h} q_{\zeta_{k}}(\eta_{h})a_{j}q_{\eta_h}(\zeta_j)\\
	&\quad\times\left[
	\dfrac{(1+\varphi_{22}(\eta_{h}))(1+\varphi_{11}(\zeta_{j})) - \varphi_{21}(\eta_{h}) \varphi_{12}(\zeta_{j})}{\Delta(\eta_{h})}
	-1
	\right]\\
	&=\sum\limits_{h=1}^{m} b_{h} q_{\zeta_{k}}(\eta_{h}) \left( b_{hj}^{(1)} - a_{j}q_{\eta_{h}}(\zeta_{j}) \right),
\end{align*}
where
\begin{align*}
	b_{hj}^{(1)} & = a_{j} \dfrac{(1+\varphi_{22}(\eta_{h}))(1+\varphi_{11}(\zeta_{j})) - \varphi_{21}(\eta_{h}) \varphi_{12}(\zeta_{j})}{\Delta(\eta_{h})} q_{\eta_{h}}(\zeta_{j}) \\
	& = a_{j} \dfrac{(1+\varphi_{22}(\eta_{h}))(1+\varphi_{11}(\zeta_{j})) (1-m_{1}(\zeta_{j})m_{2}(\eta_{h}))}{\Delta(\eta_{h})} q_{\eta_{h}}(\zeta_{j}) \text{.}%
\end{align*}
Note
\[
\sum_{h=1}^{m} a_{j}q_{\eta_{h}}(\zeta_{j})b_{h}q_{\zeta_{k}}(\eta_{h}) = a_{j}r(\zeta_{j})^{-1}q_{\zeta_{k}}(\zeta_{j}) = \delta_{jk},
\]
hence
\[
a_{j}\varphi_{1}[r^{-1}q_{\zeta_{k}}\boldsymbol e_{1}](\zeta_{j}) = \sum_{h=1}^{m} b_{hj}^{(1)}b_{h}q_{\zeta_{k}}(\eta_{h}) - \delta_{jk}.
\]
The computation for $\varphi_{2}(\eta_{j})$ is similar and gives
\[
b_{j}\varphi_{2}[r^{-1}q_{\zeta_{k}}\boldsymbol e_{1}](\eta_{j}) = \sum_{h=1}^{m} b_{h}q_{\zeta_{k}}(\eta_{h})b_{hj}^{(2)} \text{,}
\]
with
\[
b_{hj}^{(2)} = b_{j}\dfrac{(1 + \varphi_{22}(\eta_{h}))(1 + \varphi_{22}(\eta_{j}))(m_{2}(\eta_{j}) - m_{2}(\eta_{h}))}{\Delta(\eta_{h})}q_{\eta_{h}}(\eta_{j})
\]
with the diagonal convention
\[
(m_{2}(\eta_{j}) - m_{2}(\eta_{h}))q_{\eta_{h}}(\eta_{j}) = -\eta_{j}m_{2}^{\prime}(\eta_{j}) \text{.}
\]
We next compute the same functionals on $rq_{\eta_{k}}\boldsymbol{e}_{2}$. The function $rq_{\eta_{k}}$ has poles at $\{\zeta_{h}\}$, and
\[
\lim_{z \to \zeta_{h}} (\zeta_{h} - z) r(z)q_{\eta_{k}}(z) = \zeta_{h}a_{h}q_{\eta_{k}}(\zeta_{h}),
\]
hence
\[
r(z)q_{\eta_{k}}(z) = \sum_{h=1}^{m} \dfrac{\zeta_{h}a_{h}q_{\eta_{k}}(\zeta_{h})}{\zeta_{h} - z} = \sum_{h=1}^{m} a_{h}q_{\eta_{k}}(\zeta_{h})q_{\zeta_{h}}(z) \text{.}
\]
Therefore,
\[
AT(A)^{-1}rq_{\eta_{k}}\boldsymbol{e}_{2} = \sum_{h=1}^{m} a_{h}q_{\eta_{k}}(\zeta_{h})AT(A)^{-1}q_{\zeta_{h}}\boldsymbol{e}_{2} \text{.}
\]
Then
\[
a_{j}\varphi_{1}[rq_{\eta_{k}}\boldsymbol e_{2}](\zeta_{j}) = \sum_{h=1}^{m} b_{hj}^{(3)}a_{h}q_{\eta_{k}}(\zeta_{h})
\]
where
\[
b_{hj}^{(3)} = a_{j}\dfrac{(1 + \varphi_{11}(\zeta_{h}))(1 + \varphi_{11}(\zeta_{j}))(m_{1}(\zeta_{j}) - m_{1}(\zeta_{h}))}{\Delta(\zeta_{h})}q_{\zeta_{h}}(\zeta_{j})
\]
with the diagonal convention
\[
(m_{1}(\zeta_{j}) - m_{1}(\zeta_{h}))q_{\zeta_{h}}(\zeta_{j}) = -\zeta_{j}m_{1}^{\prime}(\zeta_{j}).
\]
For the second component, one has
\[
b_{j}\varphi_{2}[rq_{\eta_{k}}\boldsymbol e_{2}](\eta_{j}) = \sum_{h=1}^{m} a_{h}q_{\eta_{k}}(\zeta_{h})(b_{hj}^{(4)} - b_{j}q_{\zeta_{h}}(\eta_{j}))
\]
with
\[
b_{hj}^{(4)} = b_{j}\dfrac{(1 + \varphi_{11}(\zeta_{h}))(1 + \varphi_{22}(\eta_{j}))(1 - m_{1}(\zeta_{h})m_{2}(\eta_{j}))}{\Delta(\zeta_{h})}q_{\zeta_{h}}(\eta_{j}).
\]
Here again
\[
\sum_{h=1}^{m} b_{j}a_{h}q_{\eta_{k}}(\zeta_{h})q_{\zeta_{h}}(\eta_{j}) = b_{j}r(\eta_{j})q_{\eta_{k}}(\eta_{j}) = \delta_{jk},
\]
hence
\[
b_{j}\varphi_{2}[rq_{\eta_{k}}\boldsymbol e_{2}](\eta_{j}) = \sum_{h=1}^{m} a_{h}q_{\eta_{k}}(\zeta_{h})b_{hj}^{(4)} - \delta_{jk}.
\]
Taking the transpose of the matrix of $R^{-1}T(RA)T(A)^{-1}$ on this subspace, which does not change its determinant, gives
\[
\begin{pmatrix}
	B_{11} & B_{12} \\
	B_{21} & B_{22}
\end{pmatrix}
\]
with
\[
B_{11} = \left(\sum_{h=1}^{m} b_{h}q_{\zeta_{k}}(\eta_{h})b_{hj}^{(1)}\right) \text{, } B_{12} = \left(\sum_{h=1}^{m} b_{h}q_{\zeta_{k}}(\eta_{h})b_{hj}^{(2)}\right),
\]
\[
B_{21} = \left(\sum_{h=1}^{m} a_{h}q_{\eta_{k}}(\zeta_{h})b_{hj}^{(3)}\right) \text{, } B_{22} = \left(\sum_{h=1}^{m} a_{h}q_{\eta_{k}}(\zeta_{h})b_{hj}^{(4)}\right),
\]
hence
\[
\tau_{A}(r) = \det\left(R^{-1}T(RA)T(A)^{-1}\right) = \det\begin{pmatrix}
	B_{11} & B_{12} \\
	B_{21} & B_{22}
\end{pmatrix}.
\]
Set
\[
Q_{1} = (b_{h}q_{\zeta_{k}}(\eta_{h}))_{k,h=1}^{m}, \quad
Q_{2} = (a_{h}q_{\eta_{k}}(\zeta_{h}))_{k,h=1}^{m}.
\]
Hence%
\[
\tau_{A}(r) = \det\begin{pmatrix}
	Q_{1}\left(b_{hj}^{(1)}\right) & Q_{1}\left(b_{hj}^{(2)}\right) \\
	Q_{2}\left(b_{hj}^{(3)}\right) & Q_{2}\left(b_{hj}^{(4)}\right)
\end{pmatrix} = \det(Q_{1})\det(Q_{2})\det\begin{pmatrix}
	b_{hj}^{(1)} & b_{hj}^{(2)} \\
	b_{hj}^{(3)} & b_{hj}^{(4)}
\end{pmatrix} \text{.}
\]
Substituting the formulas for $b_{hj}^{(1)},\ldots,b_{hj}^{(4)}$ above,
using $q_{\eta_h}(z)=\eta_h/(\eta_h-z)$ and
$q_{\zeta_h}(z)=\zeta_h/(\zeta_h-z)$, and extracting the row factors
yields the following formula.

\begin{lemma}
	\label{l21} Let $r$ be a rational function with distinct poles $\{\zeta_{j}\}_{j=1}^{m}$ and distinct zeros $\{\eta_{j}\}_{j=1}^{m}$ in $D_{-}$.
    In each block of the determinant below, $h$ is the row index and $j$
    is the column index. The identity
	\begin{equation}
		\tau_{A}(r) = \Lambda_{1}\Lambda_{2}\det\begin{pmatrix}
			\dfrac{1 - m_{1}(\zeta_{j})m_{2}(\eta_{h})}{\eta_{h} - \zeta_{j}} & \dfrac{m_{2}(\eta_{j}) - m_{2}(\eta_{h})}{\eta_{h} - \eta_{j}} \\
			\dfrac{m_{1}(\zeta_{j}) - m_{1}(\zeta_{h})}{\zeta_{h} - \zeta_{j}} & \dfrac{1 - m_{1}(\zeta_{h})m_{2}(\eta_{j})}{\zeta_{h} - \eta_{j}}
		\end{pmatrix} \label{E2.25}
	\end{equation}
	holds with%
	\[
\left\{
	\begin{array}
		[c]{ll}%
		\Lambda_{1} = \left(\prod\limits_{h=1}^{m} a_{h}b_{h}\right)^{2}
		\left(\prod\limits_{h=1}^{m}\zeta_h\eta_h\right)
		\det(q_{\zeta_{k}}(\eta_{h}))\det(q_{\eta_{k}}(\zeta_{h})) \\
		\Lambda_{2} = \prod\limits_{h=1}^{m} \dfrac{(1 + \varphi_{11}(\zeta_{h}))^{2}(1 + \varphi_{22}(\eta_{h}))^{2}}{\Delta(\zeta_{h})\Delta(\eta_{h})} %
	\end{array}
	\right.
\]
	In particular, if $m=1$,
	\[
\tau_{A}\left(q_{\zeta}q_{\eta}^{-1}\right) = \dfrac{1 + \varphi_{11}(\zeta)}{1 + \varphi_{11}(\eta)}\dfrac{1 + \varphi_{22}(\eta)}{1 + \varphi_{22}(\zeta)}\dfrac{(1 - m_{1}(\zeta)m_{2}(\eta))^{2} + (\zeta - \eta)^{2}m_{1}^{\prime}(\zeta)m_{2}^{\prime}(\eta)}{(1 - m_{1}(\zeta)m_{2}(\zeta))(1 - m_{1}(\eta)m_{2}(\eta))}.
\]
The expressions involving $m_1,m_2$ are initially evaluated where
their coordinate denominators are nonzero. At all other choices
of the poles and zeros, the right-hand sides are interpreted by
their unique analytic continuation from this set. The continuation
exists because the finite-dimensional matrix computed above can
be written in the entries of $I+\Phi_A$ and their divided
differences, with nonzero denominators $\Delta(\zeta_h)$ and
$\Delta(\eta_h)$ by Lemma~\ref{l20}; differences between distinct
pole and zero parameters also stay nonzero.
This convention includes the one-factor formula.
\end{lemma}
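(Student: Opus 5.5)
The identity (\ref{E2.25}) will follow from the block determinant
\[
\tau_A(r)=\det(Q_1)\det(Q_2)\det\begin{pmatrix} b^{(1)}_{hj}&b^{(2)}_{hj}\\ b^{(3)}_{hj}&b^{(4)}_{hj}\end{pmatrix}
\]
derived just before the lemma. The work is to factor each entry $b^{(\nu)}_{hj}$ into a row factor depending only on $h$, a column factor depending only on $j$, and the kernel displayed in (\ref{E2.25}). I would then pull these factors out of the determinant.

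In the first block row (index $h$ running over the poles $\eta_h$), each of $b^{(1)}_{hj}$ and $b^{(2)}_{hj}$ contains the row factor $(1+\varphi_{22}(\eta_h))/\Delta(\eta_h)$ and a factor $\eta_h$ coming from $q_{\eta_h}(\cdot)=\eta_h/(\eta_h-\cdot)$.

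The column factors differ between the two block columns:
\begin{itemize}
\item in the first block column, $b^{(1)}_{hj}$ carries $a_j(1+\varphi_{11}(\zeta_j))$;
\item in the second block column, $b^{(2)}_{hj}$ carries $b_j(1+\varphi_{22}(\eta_j))$;
\item in the second block row, $b^{(3)}$ and $b^{(4)}$ carry the row factor $\zeta_h(1+\varphi_{11}(\zeta_h))/\Delta(\zeta_h)$;
\item the columns of the second block row carry $a_j(1+\varphi_{11}(\zeta_j))$ and $b_j(1+\varphi_{22}(\eta_j))$ respectively, matching the block columns above.
\end{itemize}
After extraction, the remaining kernel is exactly the matrix in (\ref{E2.25}). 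The product of the extracted factors is $\prod a_hb_h\,\prod\zeta_h\eta_h\,\Lambda_2$, up to the grouping that places one factor $\prod a_hb_h$ into $\Lambda_1$. Multiplying by $\det Q_1\det Q_2$ gives $\Lambda_1$, once $\det(b_hq_{\zeta_k}(\eta_h))=\prod b_h\det(q_{\zeta_k}(\eta_h))$ and the analogous identity for $Q_2$ are used. This accounts for the square $(\prod a_hb_h)^2$.

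The step needing most care is bookkeeping: matching sign conventions and the orientation of the factors $q_{\eta_h}(\zeta_j)=\eta_h/(\eta_h-\zeta_j)$, and checking that the diagonal convention $-\eta_jm_2'(\eta_j)$ agrees with the limit of $\eta_h(m_2(\eta_j)-m_2(\eta_h))/(\eta_h-\eta_j)$. Both are immediate from the definitions, since both sides are limits of the same difference quotient.

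For the case $m=1$, take $r=q_\zeta q_\eta^{-1}$, so that the pole is $\zeta$ and the zero is $\eta$. Write $r=a+a_1q_\zeta$ and $r^{-1}=b+b_1q_\eta$, and compute
\[
a_1=(\zeta-\eta)/\zeta\cdot\zeta/\ldots
\]
directly from the residues. The $2\times2$ kernel determinant is then
\[
\frac{(1-m_1(\zeta)m_2(\eta))^2}{(\eta-\zeta)(\zeta-\eta)}-m_1'(\zeta)m_2'(\eta).
\]
I would combine this with $\Lambda_1,\Lambda_2$ and rewrite $\Delta(\zeta)=(1+\varphi_{11}(\zeta))(1+\varphi_{22}(\zeta))(1-m_1(\zeta)m_2(\zeta))$, and similarly for $\eta$. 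Most factors cancel and leave the stated formula. The residual constants from $a_1b_1$ and from $q_\zeta(\eta)q_\eta(\zeta)$ must combine to $-(\zeta-\eta)^2$, and verifying this is the main computational check.

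Finally, the analytic-continuation clause follows because the matrix entries before extraction are analytic in the parameters: they involve only the entries of $I+\Phi_A$, their divided differences, and $\Delta^{-1}$, and $\Delta\neq0$ by Lemma~\ref{l20}. The identity between two meromorphic expressions that holds on the open dense set where $1+\varphi_{11}$ and $1+\varphi_{22}$ are nonzero therefore extends by uniqueness of analytic continuation.
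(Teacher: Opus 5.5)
Your proposal is correct and follows the paper's own argument: the paper likewise substitutes $q_{\eta_h}(z)=\eta_h/(\eta_h-z)$ and $q_{\zeta_h}(z)=\zeta_h/(\zeta_h-z)$ into $b^{(1)},\dots,b^{(4)}$. It then pulls out the row factors $\eta_h(1+\varphi_{22}(\eta_h))/\Delta(\eta_h)$, $\zeta_h(1+\varphi_{11}(\zeta_h))/\Delta(\zeta_h)$ and the column factors $a_j(1+\varphi_{11}(\zeta_j))$, $b_j(1+\varphi_{22}(\eta_j))$, and absorbs $\prod b_h$, $\prod a_h$ from $Q_1,Q_2$ into $\Lambda_1$. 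In the one-factor case, your unfinished residue should read $a_1=(\eta-\zeta)/\eta$, with $b_1=(\zeta-\eta)/\zeta$; this gives $\Lambda_1=-(\zeta-\eta)^2$ and hence the stated formula.
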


Combining this formula with Lemma \ref{l19} gives contour independence.

\begin{lemma}
	\label{l22} Under the assumptions of Lemma \ref{l19}, let $r\in\Gamma_{0}(C)$ be rational, with distinct poles $\{\zeta_{j}\}_{j=1}^{m}$ and distinct zeros $\{\eta_{j}\}_{j=1}^{m}$ in the common exterior domain. Then $\tau_{A}(r)=\tau_{\widetilde{A}}(r)$.
\end{lemma}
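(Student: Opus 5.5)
The plan is to read both sides through the explicit formula of Lemma~\ref{l21}. Because $\tau_A(r)$ and $\tau_{\widetilde A}(r)$ are finite determinants, each side is a function only of the poles $\zeta_j$, the zeros $\eta_j$, and the values, at those points, of the entries of $I+\Phi_A$ and $I+\Phi_{\widetilde A}$ together with their divided differences.

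First, check that the setup of Lemma~\ref{l21} is available on both contours. The poles and zeros lie in the common exterior domain, that is, in $\widetilde D_-\cap D_-=D_-$. The element $r$ has no zeros or poles in $\overline D_+$, and $r^{\pm1}$ are bounded on $C$. Since $\overline{\widetilde D_+}\subset D_+$ and $r$ is rational with its poles and zeros in $D_-$, the same holds on $\widetilde C$. So $r\in\Gamma_0(\widetilde C)$, and the finite-rank computation preceding Lemma~\ref{l21} applies equally to $\widetilde A$ on $\widetilde C$.

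Next, apply Lemma~\ref{l19} with $n=0$. It gives $\Phi_A=\Phi_{\widetilde A}$ on the common exterior domain $D_-$. Hence all quantities entering the determinant agree at each $\zeta_j$ and $\eta_j$:
\begin{itemize}
\item the entries $\varphi_{\mu\nu}$;
\item the determinants $\Delta(\zeta_h)$ and $\Delta(\eta_h)$;
\item the $m$-functions $m_1,m_2$, together with their derivatives on the diagonal.
\end{itemize}
The prefactor $\Lambda_1$ depends only on $r$, through $a_h,b_h,\zeta_h,\eta_h$, and is therefore the same on both sides. Consequently both determinants, and so both tau-functions, agree.

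One point needs care. Formula (\ref{E2.25}) is written with $m_1,m_2$, whose denominators $1+\varphi_{11}$ and $1+\varphi_{22}$ may vanish. To avoid this, I would work directly with the finite matrix $(b^{(i)}_{hj})$ as derived before Lemma~\ref{l21}. That matrix is expressed through $I+\Phi(\zeta_h)$, $I+\Phi(\eta_h)$, divided differences of $\Phi$, and $\Delta^{-1}$. By Lemma~\ref{l20}, $\Delta$ is nonzero on $D_-$ for both symbols. So the identity holds pointwise, with no appeal to analytic continuation.

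The main obstacle is the consistency of the normalizations. The derivation of Lemma~\ref{l21} used $U=T(A)^{-1}I$ on the given contour. Lemma~\ref{l19} identifies $U_A$ with $U_{\widetilde A}$ on $\widetilde D_+$, so the finite-rank operators on the two contours have corresponding bases $\{r^{-1}q_{\zeta_j}\boldsymbol e_1,\,rq_{\eta_j}\boldsymbol e_2\}$ and coincide after restriction. I would remark that, via the restriction identity in the proof of Lemma~\ref{l19}, restriction intertwines $R^{-1}T(RA)T(A)^{-1}-I$ with its counterpart on $\widetilde C$ on this span. This gives a second, basis-level proof of the equality of determinants.
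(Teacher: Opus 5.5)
Your proposal is correct and follows the paper's route. The paper proves this lemma in one line: the finite-determinant formula of Lemma~\ref{l21}, whose entries depend only on $r$ and on $I+\Phi_A$ (with its divided differences) at the poles and zeros, is combined with the identification $\Phi_A=\Phi_{\widetilde A}$ on the common exterior from Lemma~\ref{l19}. Your handling of vanishing coordinate denominators, by writing the matrix in the entries of $I+\Phi$ with $\Delta\neq0$, matches the paper's remark after Lemma~\ref{l21}, and your restriction-intertwining argument is a valid alternative check.
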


We will also use the corresponding equality for the modified determinant. Write
\[
K_{A,C}(r)=G(r)^{-1}T_C(G(r)A)T_C(A)^{-1}-I.
\]
The finite-dimensional matrix computed above has entries depending only on
$r$ and the normalized exterior factor $I+\Phi_A$. By
Lemma~\ref{l19}, this matrix is identical on the two compatible contours.
In particular,
$\operatorname{tr}K_{A,C}(r)=
 \operatorname{tr}K_{\widetilde A,\widetilde C}(r)$.
Together with Lemma~\ref{l22} and
$\det_2(I+K)=\det(I+K)e^{-\operatorname{tr}K}$, this proves the equality
of the modified tau-functions for rational $r$.
For $g=re^{ih}\in\Gamma_N^{\mathrm{real}}(C)$, choose the same rational
approximants to $e^{ih}$ on both contours, as in Lemma~\ref{l23}.
Lemma~\ref{l13}, applied separately on each contour, then gives
\begin{equation}
 \tau_{A,C}^{(2)}(g)
 =\tau_{\widetilde A,\widetilde C}^{(2)}(g).
 \label{E2.26}
\end{equation}
Repeated rational zeros and poles are handled by continuity. This argument
compares the original invertible symbols; it does not assume invertibility
of $T_{\widetilde C}(G(g)\widetilde A)$ in advance.

\subsection{\texorpdfstring{One-factor transformations of the $m$-function}{One-factor transformations of the Weyl coordinates}}

In this section, we compute the complete one-factor transformation.
Our purpose is to show that the transformed $m$-function depends only on the
original $m$-function and the rational multiplier.

Fix $A\in\mathcal A_N^{inv}(C)$ and distinct $\zeta,\eta\in D_-$. Set
\[
r=q_\zeta q_\eta^{-1},\qquad R=G(r),\qquad
    a=r(\infty)=\frac{\zeta}{\eta},\qquad b=a^{-1}.
\]
Then
\[
r=a+a_1q_\zeta,\qquad r^{-1}=b+b_1q_\eta,\qquad
    a_1=1-a,\quad b_1=1-b.
\]
Assume $\tau_A^{(2)}(r)\ne0$, so $T(RA)$ is invertible by
Lemma~\ref{l11}. Write
\[
\begin{gathered}
    U=T(A)^{-1}I,\qquad F=AU=I+\Phi_A=(f_{jk}),\\
    \widetilde U=T(RA)^{-1}I,\qquad
    \widetilde F=RA\widetilde U=I+\widetilde\Phi,\qquad
    D_0=\operatorname{diag}(b,a).
    \end{gathered}
\]
Let $P_j=\boldsymbol e_j\boldsymbol e_j^{\mathsf T}$.
The function $R^{-1}$ is bounded and analytic in $D_+$, and
$R^{-1}\widetilde\Phi\in\boldsymbol L^2(C)$ columnwise.
Using the weighted Toeplitz definition and
$\mathfrak p_+(q_\alpha\widetilde\Phi)
=q_\alpha\widetilde\Phi(\alpha)$ for $\alpha\in D_-$, we have
\[
T(A)\widetilde U
    =R^{-1}+\mathfrak p_+(R^{-1}\widetilde\Phi)
    =R^{-1}
     +b_1q_\eta P_1\widetilde\Phi(\eta)
     +a_1q_\zeta P_2\widetilde\Phi(\zeta).
\]
Applying (\ref{E2.24}) and combining $I+\widetilde\Phi$ gives
\begin{equation}
    \begin{aligned}
    \widetilde U&=UH,\qquad \widetilde F=RFH,\\
    H(z)&=D_0
      +b_1q_\eta(z)F(\eta)^{-1}P_1\widetilde F(\eta)
      +a_1q_\zeta(z)F(\zeta)^{-1}P_2\widetilde F(\zeta).
    \end{aligned}
    \label{E2.27}
\end{equation}
Thus only two constant rows, rather than an unknown matrix function, remain
to be determined.

Initially suppose $f_{11}(\zeta)f_{22}(\eta)\ne0$, and put
\[
a_\zeta=m_1(\zeta),\qquad b_\eta=m_2(\eta),\qquad
    \Delta_{\zeta,\eta}=1-a_\zeta b_\eta,\qquad d=\zeta-\eta.
\]
Normalize the two unknown rows by
\[
\boldsymbol x=
      \frac{b f_{22}(\eta)}{\det F(\eta)}
      \boldsymbol e_1^{\mathsf T}\widetilde F(\eta),\qquad
    \boldsymbol y=
      \frac{a f_{11}(\zeta)}{\det F(\zeta)}
      \boldsymbol e_2^{\mathsf T}\widetilde F(\zeta).
\]
Using the inverse of $F$ and the identities
$(b_1/b)q_\eta=-d/(z-\eta)$ and
$(a_1/a)q_\zeta=d/(z-\zeta)$, (\ref{E2.27}) becomes
\[
H(z)=D_0
      -\frac{d}{z-\eta}\binom{1}{-b_\eta}\boldsymbol x
      +\frac{d}{z-\zeta}\binom{-a_\zeta}{1}\boldsymbol y.
\]
We determine $\boldsymbol x,\boldsymbol y$ by the absence of poles in
$\widetilde F=RFH$. Multiplication by the two rows of $F$ gives
\[
\begin{aligned}
    \frac{\boldsymbol e_1^{\mathsf T}F(z)H(z)}{f_{11}(z)}
      &=(1,m_1(z))D_0-\frac{d(1-b_\eta m_1(z))}{z-\eta}\boldsymbol x
       +d\frac{m_1(z)-a_\zeta}{z-\zeta}\boldsymbol y,\\
    \frac{\boldsymbol e_2^{\mathsf T}F(z)H(z)}{f_{22}(z)}
      &=(m_2(z),1)D_0-d\frac{m_2(z)-b_\eta}{z-\eta}\boldsymbol x
       +\frac{d(1-a_\zeta m_2(z))}{z-\zeta}\boldsymbol y.
    \end{aligned}
\]
The difference quotients have removable singularities at $\zeta$ and
$\eta$, with limits $m_1'(\zeta)$ and $m_2'(\eta)$, respectively.
Since $r$ has a simple pole at $\zeta$ and $r^{-1}$ at $\eta$, the
respective rows of $FH$ must vanish there due to $\widetilde F = RFH$. Taking these limits and
using $d=\zeta-\eta$, we obtain
\[
\begin{aligned}
    0&=(1,a_\zeta)D_0-\Delta_{\zeta,\eta}\boldsymbol x
         +d\,m_1'(\zeta)\boldsymbol y,\\
    0&=(b_\eta,1)D_0-d\,m_2'(\eta)\boldsymbol x
         -\Delta_{\zeta,\eta}\boldsymbol y.
    \end{aligned}
\]
Rearranging gives the single matrix system
\begin{equation}
	\mathcal K_{\zeta,\eta}\begin{pmatrix}\boldsymbol x\\ \boldsymbol y\end{pmatrix} :=
    \begin{pmatrix}
        \Delta_{\zeta,\eta}&-d\,m_1'(\zeta)\\
        d\,m_2'(\eta)&\Delta_{\zeta,\eta}
    \end{pmatrix}
    \begin{pmatrix}\boldsymbol x\\ \boldsymbol y\end{pmatrix}
    =
    \begin{pmatrix}1&a_\zeta\\ b_\eta&1\end{pmatrix}D_0.
    \label{E2.28}
\end{equation}

For rational multipliers, $\tau_A$ and $\tau_A^{(2)}$ have the same
zero set. The one-factor case of Lemma~\ref{l21} reads
\[
\begin{aligned}
    \tau_A(r)
    &=\frac{f_{11}(\zeta)^2f_{22}(\eta)^2}
      {\det F(\zeta)\det F(\eta)}
      \det\mathcal K_{\zeta,\eta},\\
    \det\mathcal K_{\zeta,\eta}
    &=\Delta_{\zeta,\eta}^{\,2}
      +d^2m_1'(\zeta)m_2'(\eta).
    \end{aligned}
\]
Since $\det F\ne0$ by Lemma~\ref{l20}, our assumptions imply
$\det\mathcal K_{\zeta,\eta}\ne0$. No separate condition
$\Delta_{\zeta,\eta}\ne0$ is needed.
Solving (\ref{E2.28}) and substituting in $H$ yields
\begin{equation}
    H(z)=D_0+
    d\begin{pmatrix}
        -\dfrac1{z-\eta}&-\dfrac{a_\zeta}{z-\zeta}\\[
2mm]
        \dfrac{b_\eta}{z-\eta}&\dfrac1{z-\zeta}
    \end{pmatrix}
    \mathcal K_{\zeta,\eta}^{-1}
    \begin{pmatrix}1&a_\zeta\\ b_\eta&1\end{pmatrix}D_0.
    \label{E2.29}
\end{equation}
Every entry of this rational matrix depends only on $m_A$, $\zeta$,
and $\eta$. Finally, left multiplication by the diagonal matrix $R$
does not change projective row coordinates. Thus, writing $H=(H_{jk})$,
\begin{equation}
    \begin{aligned}
    m_{1,RA}
    &=\frac{H_{12}+m_1H_{22}}{H_{11}+m_1H_{21}},\\
    m_{2,RA}
    &=\frac{m_2H_{11}+H_{21}}{m_2H_{12}+H_{22}}.
    \end{aligned}
    \label{E2.30}
\end{equation}
These are identities of meromorphic projective coordinates; common
factors in a row are cancelled before evaluating an apparent $0/0$.
If $f_{11}(\zeta)f_{22}(\eta)=0$, perturb the two parameters to avoid
these isolated zeros. The finite-rank Toeplitz corrections vary
continuously with the parameters, and the assumed invertibility of
$T(RA)$ persists for sufficiently small perturbations. The inverses converge in operator norm, and the Cauchy representation
then gives locally uniform convergence of the normalized exterior factors
on exterior compact sets. Passing to the limit in the cross-multiplied
identities extends the conclusion to the exceptional parameters.
For a real factor the perturbations may be chosen with
$\eta=\overline\zeta$. For two symbols, the pole sets of both
$m$-functions can be avoided simultaneously. Consequently, on one
contour or on two contours with $\zeta,\eta$ in their common exterior,
\[
    m_A=m_B,\qquad
    \tau_A^{(2)}(r)\tau_B^{(2)}(r)\ne0
    \quad\Longrightarrow\quad
    m_{G(r)A}=m_{G(r)B}.
\]
Each tau-function here is computed on its symbol's original contour.
No reality reduction was used in this calculation.

For a constant $c$ with $|c|=1$, the identity
$F_{G(c)A}=G(c)F_AG(c)^{-1}$ gives
\begin{equation}
    m_{G(c)A}=(c^2m_1,c^{-2}m_2).
    \label{E2.31}
\end{equation}
Taking $\eta=\overline\zeta$ in (\ref{E2.29})--(\ref{E2.30}) therefore
defines the elementary real transformation
\begin{equation}
    m_{G(r_\zeta)A}=\mathcal R_\zeta m_A,\qquad
    r_\zeta=q_\zeta q_{\overline\zeta}^{-1}
    =\frac{\zeta}{\overline\zeta}
      \frac{z-\overline\zeta}{z-\zeta}.
    \label{E2.32}
\end{equation}
For $r=c\prod_{\nu=1}^{\ell}r_{\zeta_\nu}$, this gives
\[
m_{G(r)A}
    =\mathcal C_c\mathcal R_{\zeta_\ell}\cdots
      \mathcal R_{\zeta_1}m_A,\qquad
    \mathcal C_c(u_1,u_2)=(c^2u_1,c^{-2}u_2),
\]
provided the transformed Toeplitz operator is invertible after every
partial product. Repeated factors are allowed. The non-vanishing
hypothesis in Lemma~\ref{l24} ensures these intermediate
invertibilities.

\begin{lemma}
	\label{l23} Let $0\leq m\leq N$ and
	$g=e^{ih}\in\Gamma_{m}^{\operatorname{real}}$. Then, on an admissible
	contour $C$, there is a neighborhood $U$ satisfying (\ref{E2.11}) and a
	sequence of rational functions $\{r_{k}\}_{k\geq1}\subset
	\Gamma_{0}^{\operatorname{real}}(C)$, with distinct zeros and poles,
	such that $r_{k}\to g$ in the sense of (\ref{E2.12}).
	The sequence may be chosen so that the zeros and poles escape to
	infinity and their distance from $\overline D_{+}$ tends to infinity.
\end{lemma}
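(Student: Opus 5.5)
The plan is an explicit Cayley-type rational approximation of $e^{ih}$, with a slight perturbation of parameters to make zeros and poles distinct. If $m=0$ then $g$ is a unimodular constant, which already lies in $\Gamma_0^{\mathrm{real}}(C)$; so assume $1\le\deg h=m'\le m$. For $k\ge1$ choose distinct reals $s_{k,1},\dots,s_{k,k}$ with $\sum_j s_{k,j}^{-1}=1$ and $\min_j s_{k,j}\to\infty$; for instance $s_{k,j}=k(1+\varepsilon_{k,j})$ with small distinct $\varepsilon_{k,j}$, renormalized. Then put
\[
r_k(z)=\prod_{j=1}^{k}\frac{1+ih(z)/(2s_{k,j})}{1-ih(z)/(2s_{k,j})}.
\]
Since $h$ has real coefficients, $\overline{r_k}(z)=\overline{r_k(\overline z)}=r_k(z)^{-1}$, so $r_k\overline r_k=1$. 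The poles of $r_k$ are the roots of $h=-2is_{k,j}$ and the zeros are the roots of $h=2is_{k,j}$. These value sets are pairwise distinct, so no pole coincides with a zero, and poles or zeros belonging to different $j$ are distinct. Within one $j$ the roots are simple unless $\pm2is_{k,j}$ is one of the finitely many critical values of $h$. Avoiding these values is an open-dense condition, so it can be imposed by the choice of the $\varepsilon_{k,j}$.

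Next I would locate the poles and zeros. At such a point $\operatorname{Re}h(z)=0$ and $|h(z)|=2s_{k,j}\to\infty$, so $|z|\asymp s_{k,j}^{1/m'}\to\infty$. For large $|z|$ we have $h(z)=h_{m'}z^{m'}(1+O(|z|^{-1}))$, so $\operatorname{Re}h=0$ forces $\arg z$ to lie within $O(|z|^{-1})$ of one of the finitely many directions where $h_{m'}z^{m'}\in i\mathbb R$. None of these directions is real, because $h_{m'}$ is real and nonzero. Hence the distance of every pole and zero from $\overline D_+$ is $\gtrsim|z|\to\infty$. This step needs separate care: a Euclidean-neighborhood argument alone fails, because $\operatorname{Im}h$ is unbounded on bounded-width neighborhoods of the real axis when $m'\ge2$. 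In particular, all poles and zeros lie in $D_-$ for large $k$.

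For the uniform bounds (i) of (\ref{E2.12}), I take $U$ as in the proof of Lemma~\ref{l9}: a neighborhood of $\overline D_+$ satisfying (\ref{E2.11}) on which $|\operatorname{Im}h|\le M$. Write $w=h(z)=a+ib$ with $|b|\le M$. Each factor satisfies
\[
\Bigl|\frac{1+iw/2s}{1-iw/2s}\Bigr|^{\pm1}\le\frac{1+|b|/2s}{1-|b|/2s}\le e^{C|b|/s}
\]
for $s\ge M$. Multiplying and using $\sum_j s_{k,j}^{-1}=1$ gives $|r_k|,|r_k^{-1}|\le e^{CM}$ on $U$. For the derivative,
\[
\frac{r_k'}{r_k}=ih'(z)\sum_j\frac{1}{s_{k,j}}\Bigl(1+\frac{h(z)^2}{4s_{k,j}^2}\Bigr)^{-1}.
\]
Here $\operatorname{Re}\bigl(1+w^2/4s^2\bigr)\ge1-M^2/4s^2\ge\tfrac12$, so $|r_k'|,|(r_k^{-1})'|\le C|h'(z)|\le c_1(1+|z|)^{N-1}$, using $m'\le N$.

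Finally, pointwise convergence (ii) on $C$. At a fixed $z$, use $\log\frac{1+iw/2s}{1-iw/2s}=iw/s+O(|w|^3/s^3)$. This gives
\[
\log r_k(z)=ih(z)\sum_j s_{k,j}^{-1}+O\Bigl(|h(z)|^3\sum_j s_{k,j}^{-3}\Bigr)=ih(z)+O(k^{-2}),
\]
so $r_k\to e^{ih}=g$. The same bounds show that $r_k$ and $r_k^{-1}$ are bounded on $C$, so $r_k\in\Gamma_0^{\mathrm{real}}(C)$.

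The main obstacle is the geometric step: showing that the poles and zeros lie in $D_-$ and escape from $\overline D_+$ despite the polynomial growth of $\operatorname{Im}h$ near the real axis. The resolution sketched above is the argument-of-$z$ argument, which uses that $\operatorname{Re}h=0$ at every pole and zero.
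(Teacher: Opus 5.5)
Your proposal is correct and follows the paper's proof. Both use:
\begin{itemize}
\item the same product of Cayley factors $\bigl(1+ih/a\bigr)/\bigl(1-ih/a\bigr)$ with $a\approx 2k$, chosen distinct and off the critical values of $h$;
\item uniform bounds on $r_k$, $r_k^{-1}$ and their derivatives coming from $|\operatorname{Im}h|\le c$ on the shrinking neighborhood $U$;
\item the logarithmic expansion for pointwise convergence;
\item the observation that a real leading coefficient keeps the arguments of the roots of $h=\pm ia$ away from $0$ and $\pi$.
\end{itemize}
The only differences are cosmetic. You normalize $\sum_j s_{k,j}^{-1}=1$ exactly, which gives an $O(k^{-2})$ error instead of the paper's $O(k^{-1})$. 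You should also, as the paper does, discard the finitely many initial $k$ for which $\min_j s_{k,j}<M$, since your bounds need $s\ge M$.
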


\begin{proof}
	If $h$ is constant, the assertion is immediate. Otherwise, choose
	\[
U=\{z:|\operatorname{Im}z|<
	C_U\langle\operatorname{Re}z\rangle^{-(N-1)}\},
\]
	where $C_U$ is large enough that $U$ contains $\overline D_+$ and
	satisfies (\ref{E2.11}). Since $\deg h\leq N$,
	\[
c_{2}=\sup\limits_{z \in U} |\operatorname{Im}h(z)| < \infty.
\]
	For each $k$, choose distinct numbers
	\[
a_{j,k}\in(2k-\tfrac12,2k+\tfrac12),\qquad 1\leq j\leq k,
\]
	so that none of the values $\pm ia_{j,k}$ is a critical value of $h$, and set
	\[
r_{k}(z)=\prod_{j=1}^{k}
	\frac{1+ih(z)/a_{j,k}}{1-ih(z)/a_{j,k}}.
\]
	We work with sufficiently large $k$ that $a_{j,k}>2c_2$ for all $j$;
	deleting finitely many initial terms gives the required sequence.
	Each factor satisfies the required reality symmetry and has no zeros or
	poles in $U$, and hence $r_{k}\in\Gamma_{0}^{\operatorname{real}}(C)$. The zeros and poles solve $h(z)=\pm ia_{j,k}$. Our choice of the $a_{j,k}$ makes them simple and pairwise distinct. Since the leading coefficient of $h$ is real, these roots have modulus tending to infinity and their arguments remain bounded away from $0$ and $\pi$; in particular, they escape from $\overline D_{+}$.

	On every compact subset of $\mathbb C$,
	\[
\log r_{k}(z)
	=\sum_{j=1}^{k}\left(\frac{2ih(z)}{a_{j,k}}+O(k^{-2})\right)
	=ih(z)+O(k^{-1}),
\]
	so $r_{k}\to e^{ih}=g$ locally uniformly. Since $|\operatorname{Im}h|\leq c_{2}$ on $U$, the elementary identity
	\[
\left|\frac{1+w/a}{1-w/a}\right|^{2}
	=\frac{(a+\operatorname{Re}w)^{2}+(\operatorname{Im}w)^{2}}
	{(a-\operatorname{Re}w)^{2}+(\operatorname{Im}w)^{2}}
\]
	with $w=ih(z)$ gives a constant $c_{3}>1$, independent of $k$, such that
	\[
c_{3}^{-1}\leq |r_{k}(z)|,|r_{k}(z)^{-1}|\leq c_{3},
	\qquad z\in U.
\]
	Finally,
	\[
\frac{r_{k}'(z)}{r_{k}(z)}
	=\sum_{j=1}^{k}\frac{2ia_{j,k}h'(z)}{a_{j,k}^{2}+h(z)^{2}}.
\]
	The same bounds and $h'(z)=O((1+|z|)^{m-1})$ on $U$ show that, for some $c_{4}$ independent of $k$,
	\[
|r_{k}'(z)|+|(r_{k}^{-1})'(z)|\leq c_{4}(1+|z|)^{m-1},
	\qquad z\in U.
\]
	These estimates verify (\ref{E2.12}). \bigskip
\end{proof}

\begin{lemma}
    \label{l24}
    Let $C_A$ and $C_B$ be admissible contours, with corresponding
    domains $D_{A,\pm}$ and $D_{B,\pm}$, and let
    \[
A\in\mathcal A_N^{inv}(C_A),\qquad
        B\in\mathcal A_N^{inv}(C_B).
\]
    Suppose that $A$ and $B$ satisfy the same reality reduction:
    \[
\sigma_j^{-1}\overline A\sigma_j=A,\qquad
        \sigma_j^{-1}\overline B\sigma_j=B
        \quad\text{for some }j\in\{2,4\}.
\]
    Assume that $m_A=m_B$ on
    $\Omega=D_{A,-}\cap D_{B,-}$, where $m$ denotes the pair of
    projective coordinates in (\ref{E2.22}). Fix
    \[
g\in\Gamma_N^{\operatorname{real}}(C_A)
        \cap\Gamma_N^{\operatorname{real}}(C_B).
\]
    Suppose that
    \begin{equation}
        \tau_{A,C_A}^{(2)}(g)\ne0,\qquad
        \tau_{B,C_B}^{(2)}(g)\ne0,
        \label{E2.33}
    \end{equation}
    and that
    \begin{equation}
        \tau_{A,C_A}^{(2)}(r)\ne0,\qquad
        \tau_{B,C_B}^{(2)}(r)\ne0
        \quad\text{for every }
        r\in\Gamma_0^{\operatorname{real}}(C_A)
        \cap\Gamma_0^{\operatorname{real}}(C_B).
        \label{E2.34}
    \end{equation}
    Here each tau-function is computed on the indicated contour.
    Then
    \begin{equation}
        m_{G(g)A}=m_{G(g)B}
        \quad\text{on }\Omega.
        \label{E2.35}
    \end{equation}
\end{lemma}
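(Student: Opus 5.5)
The plan is to reduce the general multiplier $g$ to rational real multipliers, where the one-factor transformation (\ref{E2.29})--(\ref{E2.30}) already shows that the new $m$-function depends only on the old $m$-function. Then we pass to the limit using the approximation of Lemma~\ref{l23} and the continuity of Lemma~\ref{l13}. Write $g=re^{ih}$ with $r$ rational and real. The rational factor is a finite product $c\prod_\nu r_{\zeta_\nu}$ of elementary real factors; under the reality reduction, the zeros and poles of a real rational multiplier come in pairs $\zeta,\overline\zeta$ (this is where the hypothesis that $A,B$ share the reduction $\sigma_j$ is used). For the exponential factor we take the sequence $r_k$ of Lemma~\ref{l23}, a product of such elementary factors with distinct zeros and poles escaping to infinity. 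Because these zeros and poles leave $\overline D_{A,+}\cup\overline D_{B,+}$, we have $r_k\in\Gamma_0^{\mathrm{real}}(C_A)\cap\Gamma_0^{\mathrm{real}}(C_B)$, at least for large $k$. Hence $g_k=r\,r_k$ lies in this intersection as well.

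\emph{Rational case.} Fix a product $\rho=c\,r_{\zeta_1}\cdots r_{\zeta_\ell}$ in $\Gamma_0^{\mathrm{real}}(C_A)\cap\Gamma_0^{\mathrm{real}}(C_B)$. Each partial product $\rho_j=r_{\zeta_1}\cdots r_{\zeta_j}$ is again in this intersection, so by (\ref{E2.34}) both $\tau^{(2)}_{A}(\rho_j)$ and $\tau^{(2)}_{B}(\rho_j)$ are nonzero. Lemma~\ref{l11}(i) then gives invertibility of $T(G(\rho_j)A)$ and $T(G(\rho_j)B)$ on their own contours. The cocycle identity (\ref{E2.8}) transfers this to nonvanishing of $\tau^{(2)}_{G(\rho_{j-1})A}(r_{\zeta_j})$, and likewise for $B$. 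Now induct on $j$, applying the one-factor implication stated after (\ref{E2.30}), $m_A=m_B\Rightarrow m_{G(r)A}=m_{G(r)B}$. Here $\zeta_j,\overline\zeta_j$ must lie in $\Omega$, which holds because these points are exterior to both strips. Finally apply (\ref{E2.31}) for the constant $c$. This gives $m_{G(\rho)A}=m_{G(\rho)B}$ on $\Omega$.

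\emph{Limit.} Apply the rational case to $\rho=g_k$ to get $m_{G(g_k)A}=m_{G(g_k)B}$. The sequence $g_k\to g$ in the sense of (\ref{E2.12}) on each contour, so the argument of Lemma~\ref{l15} (common integrable kernel majorants) shows that $G(g_k)^{-1}T(G(g_k)A)\to G(g)^{-1}T(G(g)A)$ in norm on $\boldsymbol H_1(D_{A,+})$. By (\ref{E2.33}) and Lemma~\ref{l11}(i) the limit is invertible, so for large $k$ the inverses converge in norm. Since $G(g_k)^{-1}\to G(g)^{-1}$ strongly and boundedly, $U_{G(g_k)A}=T(G(g_k)A)^{-1}I$ converges in $\boldsymbol H_1$. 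The Cauchy representation of $\Phi$ then gives locally uniform convergence of $I+\Phi_{G(g_k)A}$ on compact subsets of the exterior. The same holds for $B$ on $C_B$. Pass to the limit in the cross-multiplied identities $f^{A,k}_{12}f^{B,k}_{11}=f^{B,k}_{12}f^{A,k}_{11}$ (and similarly for $m_2$). The denominators of the limits are not identically zero on any exterior component, so (\ref{E2.35}) follows.

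\emph{Main obstacle.} I expect the hardest step to be making the convergence $T(G(g_k)A)^{-1}\to T(G(g)A)^{-1}$ rigorous and uniform enough to pass the projective coordinates to the limit. The modified determinant's continuity (Lemma~\ref{l13}) controls only $\det_2$, whereas we need operator convergence of the normalized perturbations. I will get this from the dominated-convergence estimate of the Hilbert--Schmidt norm by $d_1(g_k,g)\to0$, as in Lemmas~\ref{l12}--\ref{l13}. A secondary point is checking that the pole and zero pairs of $g_k$ stay in $\Omega$ and avoid poles of the $m$-functions. Lemma~\ref{l23} allows the $a_{j,k}$ to be perturbed slightly, so the finitely many coincidences on the relevant sets can be avoided; the perturbation argument after (\ref{E2.30}) handles the rest.
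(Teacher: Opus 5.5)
Your proposal is correct and follows the paper's proof essentially step for step. Both arguments first handle the rational case by iterating the one-factor transformation \eqref{E2.32} along partial products, whose invertibility comes from \eqref{E2.34} and Lemma~\ref{l11}, together with \eqref{E2.31}; then take approximants $g_k=r\,r_k$ from Lemma~\ref{l23}, use Hilbert--Schmidt convergence of the normalized Toeplitz operators via the Lemma~\ref{l13} majorants and norm convergence of their inverses via \eqref{E2.33}, and pass to the limit in the cross-multiplied row quotients through the exterior Cauchy formula.

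There are two cosmetic differences. The paper applies Lemma~\ref{l23} on an auxiliary contour enclosing both strips, so that one neighborhood serves both contours at once; this is what your ``for large $k$'' step implicitly needs. Also, the pairing $\zeta\leftrightarrow\overline\zeta$ of zeros and poles follows from $r\overline r=1$, not from the shared reduction on $A,B$.
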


\begin{proof}
    First let $r$ be real rational, with all its zeros and poles in
    $\Omega$, and write
    \[
r=c\prod_{\nu=1}^{L}r_{\zeta_\nu},\qquad |c|=1.
\]
    Every partial product
    \[
r^{(0)}=1,\qquad
        r^{(j)}=\prod_{\nu=1}^{j}r_{\zeta_\nu},
        \quad 1\le j\le L,
\]
    belongs to both rational multiplier groups. By
    (\ref{E2.34}) and Lemma~\ref{l11}, its transformed
    Toeplitz operator is invertible on each original contour.
    The one-factor calculation (\ref{E2.32}), applied separately on
    $C_A$ and $C_B$, depends only on the preceding $m$-function and
    the parameter $\zeta_\nu$, not on the contour. It therefore
    preserves equality of the two projective coordinates at each
    step. The constant-factor formula (\ref{E2.31}) gives
    \begin{equation}
        m_{G(r)A}=m_{G(r)B}\quad\text{on }\Omega.
        \label{E2.36}
    \end{equation}

    For the general multiplier, write $g=r_0e^{ih}$, where
    $h\in\mathbb R[z]$, $\deg h\le N$, and $r_0$ is real rational
    with all its zeros and poles in $\Omega$.
    Choose an auxiliary admissible contour whose interior
    contains those of $C_A$ and $C_B$. Applying
    Lemma~\ref{l23} to $e^{ih}$ on this auxiliary contour gives
    real rational functions $s_k$, with zeros and poles escaping
    to infinity in $\Omega$, such that
    \[
r_k=r_0s_k\longrightarrow g
\]
    satisfies (\ref{E2.12}) on both original contours.
    Indeed, the estimates for $s_k$ hold on suitable neighborhoods
    of both contours; these neighborhoods can be chosen to
    avoid the finitely many zeros and poles of $r_0$.
    Multiplication by $r_0$ preserves the bounds in (\ref{E2.12}).
    The auxiliary contour is used only to choose the approximants;
    neither symbol is transported to it.
    By (\ref{E2.36}),
    \[
m_{G(r_k)A}=m_{G(r_k)B}\quad\text{on }\Omega.
\]

    We pass to the limit separately on the two contours.
    For $D=A,B$, use its original contour $C_D$ and put
    \[
\begin{aligned}
        \mathcal S_{D,k}
        &=G(r_k)^{-1}T_{C_D}(G(r_k)D)T_{C_D}(D)^{-1},\\
        \mathcal S_D
        &=G(g)^{-1}T_{C_D}(G(g)D)T_{C_D}(D)^{-1}.
        \end{aligned}
\]
    On $\boldsymbol H_1(D_{D,+})$, the dominated-convergence
    estimate in Lemma~\ref{l13} gives
    \[
\|\mathcal S_{D,k}-\mathcal S_D\|_{HS}\longrightarrow0.
\]
    By (\ref{E2.33}), $\mathcal S_D$ is invertible. Hence
    $\mathcal S_{D,k}^{-1}\to\mathcal S_D^{-1}$ in operator norm.
    The bounds in (\ref{E2.12}) also give
    \[
G(r_k)^{-1}I\longrightarrow G(g)^{-1}I
        \quad\text{in }\boldsymbol H_1(D_{D,+})^2.
\]
    Consequently,
    \[
\begin{aligned}
        U_{G(r_k)D}^{(0)}
        &=T_{C_D}(D)^{-1}\mathcal S_{D,k}^{-1}G(r_k)^{-1}I\\
        &\longrightarrow
        T_{C_D}(D)^{-1}\mathcal S_D^{-1}G(g)^{-1}I
        =U_{G(g)D}^{(0)}
        \end{aligned}
\]
    in this space.

    Let $F$ be an analytic part of $D$ in
    (\ref{E2.2}). The analytic part of $G(r_k)D$ is
    $G(r_k)F$, so the exterior Cauchy formula is
    \[
\Phi_{G(r_k)D}(z)
        =\frac{1}{2\pi i}\int_{C_D}
        \frac{G(r_k)(\lambda)
        (D(\lambda)-F(\lambda))
        U_{G(r_k)D}^{(0)}(\lambda)}
        {z-\lambda}\,d\lambda.
\]
    Since $\lambda(D-F)$ is bounded on $C_D$,
    convergence in $\boldsymbol H_1^2$, together with the
    uniform multiplier bounds and dominated convergence,
    implies convergence of these densities in $L^2(C_D)^{2\times2}$.
    The Cauchy kernel has uniformly bounded $L^2(C_D)$ norm
    when $z$ ranges over a compact subset of $D_{D,-}$.
    It follows that
    \[
I+\Phi_{G(r_k)D}\longrightarrow I+\Phi_{G(g)D}
        \quad\text{locally uniformly on }D_{D,-}.
\]
    Passing to the limit in the cross-multiplied identities
    for the two row quotients proves (\ref{E2.35}).
    Lemma~\ref{l20} ensures that no row of either limiting
    exterior factor vanishes, and its normalization at infinity
    ensures that the diagonal entries are not identically zero.
    Thus the conclusion is an equality of meromorphic
    projective coordinates on the common exterior. \bigskip
\end{proof}

\subsection{Self-adjointness and skew-self-adjointness}
The system (\ref{E2.20}) reduces to defocusing NLS when $\alpha_{21}=\overline{\alpha}_{12}$ and to focusing NLS when $\alpha_{21}=-\overline{\alpha}_{12}$. We examine how $\Phi_{A}$ transforms under
\[
\sigma_{2}^{-1}\overline{A}\sigma_{2} \text{, } \sigma_{4}^{-1}\overline{A}\sigma_{4}
\]
with $\overline{A}(\lambda) = \overline{A(\overline{\lambda})}$. Recall the matrices
\[
\sigma_{2} = \begin{pmatrix}
	0 & 1 \\
	1 & 0
\end{pmatrix} \text{, } \sigma_{4} = \begin{pmatrix}
	0 & 1 \\
	-1 & 0
\end{pmatrix} \text{.}
\]

\begin{lemma}\label{l25}
For $A\in\mathcal A_N^{inv}(C)$, put
$A^{[j]}=\sigma_j^{-1}\overline A\sigma_j$, $j=2,4$.
Then
\[
\Phi_{A^{[j]}}=\sigma_j^{-1}\overline{\Phi_A}\sigma_j,
 \qquad
 \tau_{A^{[j]}}^{(2)}(g)=
 \overline{\tau_A^{(2)}(\overline g^{-1})}.
\]
For rational $g$, the same identity holds for the ordinary tau-function.
\end{lemma}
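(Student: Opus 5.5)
The plan is to transport the whole Toeplitz construction through the antilinear involution $\mathcal J_j\boldsymbol u=\sigma_j^{-1}\overline{\boldsymbol u}$, where $\overline{\boldsymbol u}(z)=\overline{\boldsymbol u(\overline z)}$. The proof has four steps.

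\emph{Step 1: $\mathcal J_j$ is an isometry commuting with the projections.} The contour $C$ is symmetric under $z\mapsto\overline z$, and so are $D_\pm$. Hence $u\mapsto\overline u$ is an antilinear isometry of $L^2(C)$ and of each weighted space $H_n(D_+)$, because $|\overline z|=|z|$. It maps rational functions without poles in $D_\pm$ to functions of the same kind, and preserves decay at infinity. It commutes with $\mathfrak p_\pm$. To see this, conjugate the Cauchy integral: reflection $\lambda\mapsto\overline\lambda$ reverses the orientation of $C$, and complex conjugation of $d\lambda$ together with $\overline{2\pi i}=-2\pi i$ cancels this reversal, so $\overline{\mathfrak p_\pm f}=\mathfrak p_\pm\overline f$. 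Since $\sigma_j$ is a constant invertible matrix, $\mathcal J_j$ is an antilinear bijection of $\boldsymbol H_n(D_\pm)$ onto itself, with $\mathcal J_j^{-1}\boldsymbol u=\sigma_j\overline{\boldsymbol u}$.

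\emph{Step 2: conjugation identity for Toeplitz operators.} If $F$ is an analytic part of $A$, then $\sigma_j^{-1}\overline F\sigma_j$ is an analytic part of $A^{[j]}$, so $A^{[j]}\in\mathcal A_{N+2}(C)$. From \eqref{E2.2},
\[
T(A^{[j]})\,\mathcal J_j=\mathcal J_j\,T(A)\quad\text{on }\boldsymbol H_n(D_+).
\]
It follows that $T(A^{[j]})$ is invertible with $T(A^{[j]})^{-1}=\mathcal J_jT(A)^{-1}\mathcal J_j^{-1}$, so $A^{[j]}\in\mathcal A_N^{inv}(C)$. Applying this columnwise to the constant matrix $I$, where $\mathcal J_j$ acts on the matrix $I$ as $\sigma_j^{-1}\overline I\sigma_j=I$, gives
\[
U_{A^{[j]}}=\sigma_j^{-1}\overline{U_A}\,\sigma_j .
\]
Here the right factor $\sigma_j$ comes from the identity $T(B)^{-1}(M)=U_BM$ for constant $M$. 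Multiplying by $A^{[j]}$ then gives $\Phi_{A^{[j]}}=\sigma_j^{-1}\overline{\Phi_A}\sigma_j$.

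\emph{Step 3: the tau-function.} Note that $\sigma_j^{-1}\overline{G(g)}\sigma_j=G(\overline g^{-1})$, since conjugating a diagonal matrix by $\sigma_j$ swaps its entries, and the sign in $\sigma_4$ cancels. Consequently
\[
G(\overline g^{-1})A^{[j]}=\bigl(G(g)A\bigr)^{[j]} .
\]
Applying Step 2 to $G(g)A$ gives
\[
G(\overline g^{-1})^{-1}T\bigl(G(\overline g^{-1})A^{[j]}\bigr)T(A^{[j]})^{-1}=\mathcal J_j\,\mathcal S\,\mathcal J_j^{-1},
\qquad \mathcal S=G(g)^{-1}T(G(g)A)T(A)^{-1}.
\]
This holds as long as $\overline g^{-1}$ lies in the multiplier group; for real-coefficient $h$ and rational $r$ this is immediate, because the defining conditions are conjugation-invariant. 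The identity must be read carefully on the Toeplitz side: $T(G(g)A)$ is defined by \eqref{E2.2} even without invertibility, and the intertwining relation of Step 2 requires no invertibility. Replacing $g$ by $\overline g^{-1}$ gives the stated form.

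\emph{Step 4: determinants under antiunitary conjugation.} The step requiring the most care is the behaviour of determinants under an antilinear similarity. I will use that $\mathcal J_j$ is a bounded antilinear bijection, and that $K'=\mathcal JK\mathcal J^{-1}$ is Hilbert--Schmidt (respectively trace class, finite rank for rational $g$) whenever $K$ is. In addition, $K'$ has nonzero eigenvalues $\overline\lambda$ with the same algebraic multiplicities: if $(K-\lambda)^k\boldsymbol u=0$, then $(K'-\overline\lambda)^k\mathcal J\boldsymbol u=0$. The trace is also conjugated in the finite-rank case. Then the canonical product formula $\det_2(I+K)=\prod(1+\lambda)e^{-\lambda}$, already used in Lemma~\ref{l10}, gives $\det_2(I+K')=\overline{\det_2(I+K)}$, and likewise for $\det$ when $g$ is rational. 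This proves both the modified and the ordinary identities.
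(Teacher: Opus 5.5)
Your proposal is correct and follows the paper's proof. Like the paper, you commute the reflection $\mathcal J_j$ with the Cauchy projection to get $T(A^{[j]})=\mathcal J_jT(A)\mathcal J_j^{-1}$ and apply the inverse to $I$; you then use $\sigma_j^{-1}\overline{G(g)}\sigma_j=G(\overline g^{-1})$ to conjugate the normalized perturbations, and conclude from eigenvalue conjugation with algebraic multiplicity via the canonical product for $\det_2$ (and the finite-rank determinant for rational $g$), only spelling out details the paper leaves implicit, such as the orientation check and the right factor $\sigma_j$ coming from $T(B)^{-1}M=U_BM$.
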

\begin{proof}
Let $\mathcal C_jf=\sigma_j^{-1}\overline{f(\bar z)}$.
Conjugating the Cauchy formula and reversing the reflected boundary
orientation gives $\mathfrak p_+\overline f=\overline{\mathfrak p_+f}$.
Consequently
\[
T(A^{[j]})=\mathcal C_jT(A)\mathcal C_j^{-1},\qquad
 T(A^{[j]})^{-1}=\mathcal C_jT(A)^{-1}\mathcal C_j^{-1}.
\]
Apply the second identity to the constant matrix $I$ to obtain the
formula for $\Phi$. The identity
\[
\sigma_j^{-1}\overline{G(\overline g^{-1})}\sigma_j=G(g)
\]
gives the same anti-linear conjugation relation for the normalized
Toeplitz perturbations. Their eigenvalues, with algebraic multiplicity,
are complex conjugated. The canonical product for $\det_2$, and the
finite-rank determinant in the rational case, give the assertions.
\end{proof}

The same conjugation argument shows that, under either reduction,
\[
E_{A}(g_{1},g_{2})\in\mathbb R
\]
whenever $g_{1},g_{2}$ belong to the real subgroup and all terms in the cocycle identity are defined. Thus the exponential correction in (\ref{E2.8}) is positive in every application below.

Thus, if $A=\sigma_{2}^{-1}\overline{A}\sigma_{2}$, then
\[
\Phi_{A} = \sigma_{2}^{-1}\overline{\Phi}_{A}\sigma_{2},
\]
hence the first coefficient $A_{1}$ of $\Phi_{A}(z)$ and the $m$-functions satisfy
\[
\alpha_{21} = \overline{\alpha}_{12}, \quad \alpha_{22} = \overline{\alpha}_{11}, \quad m_{1} = \overline{m}_{2}.
\]
If instead $A=\sigma_{4}^{-1}\overline{A}\sigma_{4}$, then $\Phi_{A}=\sigma_{4}^{-1}\overline{\Phi}_{A}\sigma_{4}$ and
\[
\alpha_{21} = -\overline{\alpha}_{12}, \quad \alpha_{22} = \overline{\alpha}_{11}, \quad m_{1} = -\overline{m}_{2}.
\]

We call $A\in\mathcal{A}_{N}^{inv}(C)$ \textbf{self-adjoint} (respectively, \textbf{skew-self-adjoint}) if $A=\sigma_{2}^{-1}\overline{A}\sigma_{2}$ (respectively, $A=\sigma_{4}^{-1}\overline{A}\sigma_{4}$). These symmetries are preserved by multiplication by $G(g)$ whenever
\[
g(z)\overline{g}(z) = 1,
\]
or equivalently $g \in \Gamma_{m}^{\text{real}}(C)$, since
\[
\sigma_{2}^{-1}\overline{G(g)A}\sigma_{2} = \sigma_{2}^{-1}\overline{G(g)}\overline{A}\sigma_{2} = \sigma_{2}^{-1}G(\overline{g})\sigma_{2}\sigma_{2}^{-1}\overline{A}\sigma_{2} = G(g)A,
\]
\[
\sigma_{4}^{-1}\overline{G(g)A}\sigma_{4} = \sigma_{4}^{-1}\overline{G(g)}\overline{A}\sigma_{4} = \sigma_{4}^{-1}G(\overline{g})\sigma_{4}\sigma_{4}^{-1}\overline{A}\sigma_{4} = G(g)A.
\]
Lemmas \ref{l17} and \ref{l25} now give the NLS reductions.

\begin{proposition}
	\label{p26} Let $N\geq2$ and $A\in\mathcal{A}_{N}^{inv}(C)$. Assume that $G(e^{ixz+itz^{2}})A\in\mathcal{A}_{N}^{inv}(C)$ for all $x,t\in\mathbb{R}$. Let $A_{1}=\begin{pmatrix}
		\alpha_{11} & \alpha_{12} \\
		\alpha_{21} & \alpha_{22}
	\end{pmatrix}$ be the first coefficient of $\Phi_{G(e^{ixz+itz^{2}})A}$. Then
	\[
\begin{array}
		[c]{ll}%
		\text{if } A = \sigma_{2}^{-1}\overline{A}\sigma_{2} \implies i\partial_{t}\alpha_{21} = -\dfrac{1}{2}\partial_{x}^{2}\alpha_{21} + 4|\alpha_{21}|^{2}\alpha_{21} \quad \text{(defocusing NLS)} \\
		\text{if } A = \sigma_{4}^{-1}\overline{A}\sigma_{4} \implies i\partial_{t}\alpha_{21} = -\dfrac{1}{2}\partial_{x}^{2}\alpha_{21} - 4|\alpha_{21}|^{2}\alpha_{21} \quad \text{(focusing NLS)} %
	\end{array} \text{.}
\]
\end{proposition}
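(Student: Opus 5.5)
The plan is to combine the closed system of Lemma~\ref{l17} with the reality relations that follow from Lemma~\ref{l25}. The hypothesis that $G(e^{ixz+itz^2})A\in\mathcal A_N^{inv}(C)$ for all $(x,t)$, together with $N\ge2$, is exactly what Lemma~\ref{l16} (via Lemma~\ref{l15}) needs. Those lemmas say that $\alpha_{12},\alpha_{21}$ have the required $x$- and $t$-derivatives and satisfy (\ref{E2.20}). The only extra input is the relation between $\alpha_{12}$ and $\alpha_{21}$ at every $(x,t)$, not only at the base symbol $A$.

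First, I would check that the reduction is preserved along the two-parameter family. Put $g_{t,x}=e^{ixz+itz^2}$. Since $h(z)=xz+tz^2$ is a real polynomial, $\overline{g_{t,x}}(z)=\overline{g_{t,x}(\bar z)}=e^{-ih(z)}=g_{t,x}(z)^{-1}$, so $g_{t,x}\overline{g_{t,x}}=1$. The computation displayed just before the proposition then gives $\sigma_j^{-1}\overline{G(g_{t,x})A}\,\sigma_j=G(g_{t,x})A$ for $j=2$ or $j=4$, matching whichever symmetry $A$ has. Applying Lemma~\ref{l25} to the symbol $G(g_{t,x})A$, which lies in $\mathcal A_N^{inv}(C)$ by hypothesis, yields
\[
\Phi_{G(g_{t,x})A}=\sigma_j^{-1}\overline{\Phi_{G(g_{t,x})A}}\,\sigma_j .
\]
Comparing the coefficients of $z^{-1}$ gives $A_1=\sigma_j^{-1}\overline{A_1}\sigma_j$. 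Here the bar on the expansion at infinity becomes entrywise complex conjugation of the coefficient, because $\overline{z^{-1}}$ evaluated at $\bar z$ equals $z^{-1}$. For $j=2$ this reads $\alpha_{12}=\overline{\alpha_{21}}$. For $j=4$, a direct check of $\sigma_4^{-1}M\sigma_4$ gives $\alpha_{12}=-\overline{\alpha_{21}}$. These are the relations already recorded after Lemma~\ref{l25}, now valid pointwise in $(x,t)$.

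Finally, I would substitute into the second equation of (\ref{E2.20}),
\[
i\partial_t\alpha_{21}=-\tfrac12\partial_x^2\alpha_{21}+4\alpha_{21}^2\alpha_{12}.
\]
With $\alpha_{12}=\pm\overline{\alpha_{21}}$ the last term becomes $\pm4|\alpha_{21}|^2\alpha_{21}$, which is the asserted pair of equations.

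The main point requiring care is the derivation of Lemma~\ref{l17} itself, that is, eliminating $\beta_{12},\beta_{21}$ from (\ref{E2.18})--(\ref{E2.19}), since its statement precedes any written proof. For the $\alpha_{21}$ equation I would do the following. Solve the third line of (\ref{E2.18}) for $\beta_{21}=\tfrac{i}{2}\partial_x\alpha_{21}+\alpha_{22}\alpha_{21}$ and differentiate in $x$. The existence of $\partial_x\beta_{21}$ is guaranteed by Lemma~\ref{l15} with $N\ge2$. Then use $\partial_x\alpha_{22}=-2i\alpha_{12}\alpha_{21}$ and substitute into (\ref{E2.19}). The $\alpha_{22}$ terms should cancel: the contribution $\alpha_{22}\partial_x\alpha_{21}$ cancels against $2i\alpha_{22}\beta_{21}-2i\alpha_{22}^2\alpha_{21}$. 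What remains is
\[
\partial_t\alpha_{21}=\tfrac i2\partial_x^2\alpha_{21}-4i\alpha_{12}\alpha_{21}^2,
\]
and multiplying by $i$ gives the stated form.
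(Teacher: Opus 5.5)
Your proposal is correct and follows the paper's route. The paper gets the proposition from three ingredients: $g\overline g=1$ preserves the $\sigma_2$/$\sigma_4$ reduction along $G(e^{ixz+itz^2})A$, Lemma~\ref{l25} then gives $\alpha_{12}=\pm\overline{\alpha_{21}}$ pointwise in $(x,t)$, and this is substituted into Lemma~\ref{l17}. Your explicit elimination of $\beta_{21}$ from (\ref{E2.18})--(\ref{E2.19}) correctly supplies the computation the paper leaves unwritten, and Lemma~\ref{l15} with $N\ge2$ covers the derivatives it needs.
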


\section{Construction of the defocusing NLS flow}\label{sec:defocusing-construction}
We first impose the reduction $A=\sigma_{2}^{-1}\overline{A}\sigma_{2}$, for which $\alpha_{21}$ satisfies the defocusing NLS equation. The underlying operator is then a self-adjoint Dirac operator, so Weyl--Titchmarsh theory is available. When it is useful to display the dependence on the symbol, we write $m_{A}=m_{1}$.

\subsection{\texorpdfstring{A subclass of $\mathcal A_{N,d}^{inv}$}{Canonical defocusing symbols}}
Recall that $\mathcal A_{N,d}^{inv}$ is defined by (\ref{E2.4}), including
the holomorphic continuation and the common analytic part specified in
(\ref{E2.5}). In particular, its symbols form
compatible families on all admissible contours inside a witnessing contour.
Define
\[
\mathcal{M}_{N,d} = \left\{ M = \begin{pmatrix}
	1 & m_{1} \\
	\overline{m}_{1} & 1
\end{pmatrix} \text{; } m_{1} \text{ satisfies (D.1) and (D.2)}
\right\}
\]
where the following conditions hold.

\smallskip
\noindent\textup{(D.1)} The function $m_{1}$ is analytic on $\mathbb{C}\setminus\mathbb{R}$ and satisfies
\begin{align*}
|m_{1}(z)|&<1 &&\text{on }\mathbb{C}\setminus\mathbb{R},\\
m_{1}(z)&=O(z^{-1}) &&\text{on every admissible exterior domain.}
\end{align*}
\smallskip
\noindent\textup{(D.2)} For every admissible contour $C$, the function $m_{1}$ has the asymptotic expansion
\[
m_{1}(z) = \sum_{1 \leq k \leq N+1} a_{k}z^{-k} + O(z^{-N-2}) \quad\text{on } D_{-}.
\]
The symmetry $G(g)M=\sigma_{2}^{-1}\overline{G(g)M}\sigma_{2}$ holds for $M\in\mathcal{M}_{N,d}$ and $g\in\Gamma_{N}^{\operatorname{real}}$, although invertibility of $T(G(g)M)$ has not yet been established.

If $A\in\mathcal{A}_{N}^{inv}(C)$ satisfies $A=\sigma_{2}^{-1}\overline{A}\sigma_{2}$, then
\[
\varphi_{11} = \overline{\varphi}_{22} \text{, } \varphi_{12} = \overline{\varphi}_{21} \text{, } m_{1} = \overline{m}_{2}
\]
and Lemma \ref{l20} gives $\Delta(\zeta)\neq0$ on $D_{-}$. Lemma \ref{l21} further yields
\begin{align}
	\tau_{A}(r_{\zeta}) & = \left| \dfrac{1+\varphi_{11}(\zeta)}{1+\overline{\varphi}_{11}(\zeta)} \right|^{2} \dfrac{\left( 1-|m_{1}(\zeta)|^{2} \right)^{2} - 4\left( \operatorname{Im} \zeta \right)^{2} |m_{1}'(\zeta)|^{2}}{|1-m_{1}(\zeta)\overline{m}_{1}(\zeta)|^{2}} \nonumber \\
	& = \dfrac{|1+\varphi_{11}(\zeta)|^{4}}{|\Delta(\zeta)|^{2}} \left( \left( 1-|m_{1}(\zeta)|^{2} \right)^{2} - 4\left( \operatorname{Im} \zeta \right)^{2} |m_{1}'(\zeta)|^{2} \right) \text{.} \label{E3.1}%
\end{align}
Here the $\varphi_{ij}$ are holomorphic on $D_{-}$ and satisfy
$\varphi_{ij}(z)=O(z^{-1})$ uniformly in sufficiently high horizontal
half-planes. Lemma \ref{l25} also shows that $\tau_{A}^{(2)}(g)\in\mathbb{R}$ for $g\in\Gamma_{N}^{\operatorname{real}}$.

\begin{lemma}
	\label{l27} Let $M\in\mathcal{M}_{N,d}$. For every contour $C$ satisfying (\ref{E2.1}) and every $1\leq n\leq N+1$, the operator $T(M)$ has a bounded inverse on $\boldsymbol{H}_{n}(D_{+})$. Moreover, $\Phi_{M}=M-I$, $m_{M}=m_{1}$, and $\mathcal{M}_{N,d}\subset\mathcal{A}_{N,d}^{inv}$.
\end{lemma}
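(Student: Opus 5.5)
The plan is to exhibit the factorization of $M$ explicitly and read off everything from it. Split
\[
M=\begin{pmatrix}1&m_1\\ \overline m_1&1\end{pmatrix}=I+(M-I),
\]
and note that $M-I$ is off-diagonal with entries $m_1$ and $\overline m_1$. Both are analytic on $\mathbb C\setminus\mathbb R\supset D_-$ and are $O(z^{-1})$ there by (D.1). The candidate factorization is therefore $F\equiv I$ for the analytic part and $U=I$, $\Phi=M-I\in\boldsymbol H(D_-)$. Here we use that $m_1|_C\in L^2(C)$ and that $m_1$ is analytic and decaying in $D_-$, so $\mathfrak p_+m_1=0$.

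\textbf{Step 1 (symbol class).} We must check $M\in\mathcal A_{N+2}(C)$. We cannot take $F=I$ literally, since $M-I$ decays only like $z^{-1}$. Instead, choose $F$ bounded analytic on $D_+$ matching the expansion (D.2), for example
\[
F=\begin{pmatrix}1&p\\ \overline p&1\end{pmatrix},\qquad p(z)=\sum_{k\le N+1} a_k (z-b)^{-k}\ \text{ re-expanded},\quad b\in D_-,
\]
where $p$ is a rational function with poles only in $D_-$ and $p-m_1=O(z^{-N-2})$. For $\overline p$ we use a pole at $\overline b$. This gives $\sup_C\|z^{N+2}(M-F)\|<\infty$. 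The same $F$ works uniformly for every admissible $\widetilde C\subset\overline D_+$, which gives the analytic-contour condition (\ref{E2.5}). Since $|m_1|<1$, we get $\det M=1-|m_1|^2>0$, so $M\in GL(2,\mathbb C)$. The symmetry (i) of (\ref{E2.4}) holds by construction.

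\textbf{Step 2 (invertibility).} This is the main obstacle. First we work on $\boldsymbol H(D_+)$, that is, $n=0$. For $\boldsymbol u\in\boldsymbol H(D_+)$ we have $T(M)\boldsymbol u=\mathfrak p_+(M\boldsymbol u)$, and $M$ is a Hermitian-type symbol on $\mathbb R$. The plan is to deform to the real axis, or to use the strict contraction $|m_1|<1$ directly: write $T(M)=I+\mathfrak p_+ (M-I)$, where the off-diagonal part has operator norm governed by $\sup|m_1|$. However, $\mathfrak p_+$ is not an orthogonal projection on $C$, so a Neumann series is not immediate. I would therefore try a Wiener–Hopf argument instead, with the explicit factorization
\[
M=M_-M_+,\qquad M_-=\begin{pmatrix}1&m_1\\0&1\end{pmatrix},\quad M_+=\begin{pmatrix}1&0\\ \overline m_1&1-|m_1|^2\end{pmatrix}\ \text{(schematically)}.
\]
This fails because the entries are not analytic on the correct sides. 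The better route is injectivity plus Fredholm index zero. Injectivity: if $\mathfrak p_+(M\boldsymbol u)=0$, then $M\boldsymbol u=\boldsymbol v\in\boldsymbol H(D_-)$. Form the pairing $\int \boldsymbol u^*\sigma\,M\boldsymbol u$ after shrinking the contour to $\mathbb R$, which is legitimate by Lemma~\ref{l19} and Lemma~\ref{l18} applied to nested contours. On $\mathbb R$ the matrix $M$ is positive definite with $M\ge(1-|m_1|)I>0$ pointwise. Cauchy's theorem kills the mixed term $\int \boldsymbol u^*\boldsymbol v$ by opposite analyticity, which forces $\boldsymbol u=0$. The delicate point is the boundary behaviour of $m_1$ on $\mathbb R$, since only $|m_1|<1$ off the axis is known. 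I would handle this by using the horizontal lines $\operatorname{Im}z=\pm\epsilon$ inside $D_+$ near infinity, together with compactness. The Fredholm property with index zero follows from homotopy $M_s=I+s(M-I)$, $s\in[0,1]$, each $M_s$ being of the same type, because $|sm_1|<1$. The same injectivity argument applies to every $M_s$, so invertibility propagates from $s=0$.

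\textbf{Step 3 (weighted spaces and conclusions).} For $1\le n\le N+1$, invertibility on $\boldsymbol H_n$ follows from the $n=0$ case. Surjectivity on $\boldsymbol H_n$ uses Lemma~\ref{l14}-type commutation: $T(M)(z^k\boldsymbol u)$ differs from $z^kT(M)\boldsymbol u$ by polynomials of degree less than $k$, and polynomials are in the range because $T(M)I=I$. Injectivity on $\boldsymbol H_n$ is as in Lemma~\ref{l10}. Then $U=T(M)^{-1}I=I$, so $\Phi_M=M-I$ and $m_M=m_1/1=m_1$ by (\ref{E2.22}). For condition (iii), apply (\ref{E3.1}). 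With $\varphi_{11}=0$ and $\Delta=1-|m_1|^2$ it gives
\[
\tau_M(r_\zeta)=\frac{(1-|m_1(\zeta)|^2)^2-4(\operatorname{Im}\zeta)^2|m_1'(\zeta)|^2}{(1-|m_1(\zeta)|^2)^2}.
\]
This is nonnegative by the Schwarz–Pick lemma applied to $m_1$ on the half-plane containing $\zeta$: there $|m_1'(\zeta)|\le(1-|m_1(\zeta)|^2)/(2|\operatorname{Im}\zeta|)$. Hence $M\in\mathcal A_{N,d}^{inv}$.
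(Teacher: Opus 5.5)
\paragraph{Main gap: Step 2 does not prove invertibility.}
Your Step~2 does not establish that $T(M)$ is invertible, and the argument you sketch would fail.

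\emph{The positivity pairing.} It treats $M$ as Hermitian, but in this paper the bar includes reflection: $\overline{m_1}(z)=\overline{m_1(\bar z)}$. Hence $\det M=1-m_1(z)\overline{m_1(\bar z)}$ is a nonzero complex number, not $1-|m_1(z)|^2>0$. In particular $M$ is not Hermitian on the contour. Near $\mathbb R$ its two off-diagonal entries involve boundary values of $m_1$ from opposite half-planes; in the application of Proposition~\ref{p32} these are $m_-$ and $m_+$. So no pointwise bound $M\ge(1-|m_1|)I$ is available.

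\emph{The fallback contours.} Lines $\operatorname{Im}z=\pm\epsilon$ inside $D_+$ near infinity do not exist for $N\ge2$, because $\omega(x)\asymp(1+|x|)^{-(N-1)}\to0$.

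\emph{The homotopy.} The path $M_s=I+s(M-I)$ transports the index only if every $T(M_s)$ is already known to be Fredholm, and you never prove this. Injectivity along a norm-continuous path does not give surjectivity.

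\paragraph{The missing idea: $M$ is itself an exterior factor.}
No nontrivial Wiener--Hopf factorization is needed. Because $|m_1|<1$ on both half-planes, $\det M\ne0$ on $\mathbb C\setminus\mathbb R$. Therefore both $M$ and
\[
M^{-1}=\frac{1}{1-m_1\overline m_1}\begin{pmatrix}1&-m_1\\-\overline m_1&1\end{pmatrix}
\]
are analytic on $D_-$ and equal to $I+O(z^{-1})$ there. By (D.2), both lie in $\mathcal A_{N+2}(C)$. Let $F$ and $\widetilde F$ be analytic parts for $M$ and $M^{-1}$. A direct computation from (\ref{E2.2}) gives
\[
\boldsymbol u-T(M)T(M^{-1})\boldsymbol u
=\mathfrak p_-\bigl((M-F)T(M^{-1})\boldsymbol u\bigr)
+M\,\mathfrak p_-\bigl((M^{-1}-\widetilde F)\boldsymbol u\bigr).
\]
The left side lies in $\boldsymbol H_n(D_+)$. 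The right side lies in $\boldsymbol H(D_-)$, since multiplication by $M$ preserves $\boldsymbol H(D_-)$. The two spaces intersect only in $0$, so $T(M)T(M^{-1})=I$. Interchanging the roles gives $T(M^{-1})T(M)=I$.

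\paragraph{Consequence for Step 3, and the remaining steps.}
The identity above works directly on each $\boldsymbol H_n(D_+)$, $1\le n\le N+1$. It therefore replaces your Step~3 reduction from $n=0$, which is incomplete as stated. Indeed, $\boldsymbol H_n(D_+)$ is strictly larger than $\boldsymbol H(D_+)$ plus polynomials of degree less than $n$. For example, $(z-b)^{1/3}$, with its branch cut in $D_-$, lies in $H_1(D_+)$ but not in $H(D_+)+\mathbb C$.

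Your Steps~1 and~4, the construction of the analytic part and the Schwarz--Pick positivity, agree with the paper. One correction there: the denominator in your formula for $\tau_M(r_\zeta)$ should be $|1-m_1(\zeta)\overline{m_1}(\zeta)|^2$. This does not affect the sign.
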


\begin{proof}
	Condition (D.2) implies $M\in\mathcal{A}_{N+2}(C)$. Since $|m_{1}|<1$ in both half-planes,
	\[
M^{-1} = \dfrac{1}{1 - m_{1}\overline{m}_{1}} \begin{pmatrix}
		1 & -m_{1} \\
		-\overline{m}_{1} & 1
	\end{pmatrix}.
\]
	Both $M$ and $M^{-1}$ satisfy
	\[
M(z)\text{, } M(z)^{-1} = I + O(z^{-1}) \text{ on } D_{-} \text{,}
\]
	uniformly on admissible exterior contours. In fact, (D.2) gives $M^{-1}\in\mathcal{A}_{N+2}(C)$. Let $F$ and $\widetilde F$ be bounded analytic functions on $D_{+}$ associated with $M$ and $M^{-1}$, respectively, so that
	\[
\begin{aligned}
	\sup_{z\in\overline D_{+}}\bigl(\|F(z)\|+\|\widetilde F(z)\|\bigr)&<\infty,\\
	\sup_{z\in C}|z|^{N+2}\bigl(\|M(z)-F(z)\|+\|M(z)^{-1}-\widetilde F(z)\|\bigr)&<\infty.
	\end{aligned}
\]
	Multiplication by either $M$ or $M^{-1}$ preserves $\boldsymbol{H}(D_{-})$. Therefore, for $\boldsymbol{u}\in\boldsymbol{H}_{n}(D_{+})$, a direct calculation from (\ref{E2.2}) gives
	\[
\boldsymbol{u} - T(M)T(M^{-1})\boldsymbol{u} = \mathfrak{p}_{-} \left( (M-F)T(M^{-1})\boldsymbol{u} \right) + M\mathfrak{p}_{-} \left( (M^{-1}-\widetilde{F})\boldsymbol{u} \right) \text{.}
\]
	The left-hand side belongs to $\boldsymbol{H}_{n}(D_{+})$, whereas the right-hand side belongs to $\boldsymbol{H}(D_{-})$; their intersection is $\{0\}$. Hence $T(M)T(M^{-1})=I$. Interchanging $M$ and $M^{-1}$ gives $T(M^{-1})T(M)=I$, and therefore $T(M)^{-1}=T(M^{-1})$. Finally, $m_{1}\boldsymbol{e}_{1},\overline m_{1}\boldsymbol{e}_{2}\in\boldsymbol{H}(D_{-})$, so $T(M)I=I$. Consequently,
	\[
\Phi_{M} = MT(M)^{-1}I - I = M - I
\]
	and $m_{M} = m_{1}$. %

	We also verify the analytic-contour condition in the definition.
	The entries of $M$ are holomorphic off $\mathbb R$, and
	$1-m_1\overline m_1\ne0$ there. Fix a witnessing contour $C$ and a point
	$b\in D_-$. A matrix function of the form
	$F(z)=I+\sum_{k=1}^{N+1} B_k(z-b)^{-k}$ can be chosen to match the first
	$N+1$ coefficients of $M$ at infinity. It is bounded and analytic on
	$\overline D_+$. Condition (D.2), applied to each smaller contour,
	together with analyticity on compact subsets off $\mathbb R$, gives
	(\ref{E2.5}) with this same $F$.

	It remains to verify positivity. Fix $\zeta\in\mathbb{C}_{+}$. The holomorphic function
	\[
f(z) = \dfrac{m_{1}\left(\dfrac{\zeta + \overline{\zeta}z}{1 + z}\right) - m_{1}(\zeta)}{1 - m_{1}\left(\dfrac{\zeta + \overline{\zeta}z}{1 + z}\right)\overline{m_{1}(\zeta)}}
\]
	satisfies $|f(z)| < 1$ for $|z| < 1$, $f(0) = 0$, and
	\[
f'(0) = \dfrac{-2i(\operatorname{Im}\zeta)m_{1}^{\prime}(\zeta)}{1 - |m_{1}(\zeta)|^{2}}.
\]
	Schwarz's lemma gives $|f'(0)|\leq1$, which is precisely the non-negativity of the last factor in (\ref{E3.1}). Thus $\tau_{M}(r_{\zeta})\geq0$. The lower half-plane follows by conjugation, and hence $M\in\mathcal{A}_{N,d}^{inv}$. \bigskip
\end{proof}

\subsection{Non-vanishing of the defocusing tau-function}
We now prove that $\tau_{A}^{(2)}(g)>0$ for $A\in\mathcal{A}_{N,d}^{inv}$ and $g\in\Gamma_{N}^{\operatorname{real}}$. For a fixed admissible contour, set
\[
\mathcal{Z} = \{z \in D_{-}:1 + \varphi_{11}(z) = 0\}.
\]
The high-horizontal-half-plane normalization shows that
$1+\varphi_{11}$ is not identically zero on either component. Thus
$\mathcal Z$ is relatively closed and discrete in $D_-$. The function $m_{A}$ is holomorphic on $D_{-}\setminus\mathcal{Z}$ and may have poles only at points of $\mathcal{Z}$; by Lemma \ref{l20}, $1+\varphi_{11}$ and $\varphi_{12}$ cannot vanish simultaneously.

\begin{lemma}\label{l28}
Let $\mathcal Z$ be relatively closed and discrete in $D_-$, and let $f$
be holomorphic on $D_-\setminus\mathcal Z$. Suppose
\begin{equation}\label{E3.2}
 \bigl|1-|f(z)|^2\bigr|-2|\operatorname{Im}z|\,|f'(z)|\ge0.
\end{equation}
Assume that each component of $D_-$ contains a point outside $\mathcal Z$
where $|f|<1$. Then $f$ extends holomorphically to $D_-$ and $|f|<1$ there.
\end{lemma}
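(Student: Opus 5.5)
The idea is to read the inequality (\ref{E3.2}), on the set where $|f|<1$, as a contraction property for hyperbolic distance. It says that $f$ does not increase distance from the metric $|dz|/|\operatorname{Im}z|$ on $D_-$ to the hyperbolic metric $2|dw|/(1-|w|^2)$ of the unit disc. Since $D_-$ stays a positive distance from $\mathbb R$ on compact sets, the hyperbolic distance $\rho(f(z),0)$ can grow only boundedly along paths of bounded length. This will prevent $|f|$ from ever reaching $1$, and it will also prevent $f$ from blowing up at points of $\mathcal Z$.

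Fix a component $W$ of $D_-$ and put $S=\{z\in W\setminus\mathcal Z:\ |f(z)|<1\}$. By hypothesis $S$ is nonempty, and it is open. Write
\[
\rho(w)=\log\frac{1+|w|}{1-|w|},\qquad |w|<1 .
\]
Along any $C^1$ path $\gamma\subset S$, the function $\rho(f(\gamma(s)))$ is Lipschitz in arclength (a.e.\ differentiable, even where $f=0$). On $S$ the inequality (\ref{E3.2}) reads $1-|f|^2\ge 2|\operatorname{Im}z|\,|f'|$, so
\[
\Bigl|\frac{d}{ds}\rho(f(\gamma(s)))\Bigr|\le\frac{2|f'(\gamma)|}{1-|f(\gamma)|^2}\le\frac1{|\operatorname{Im}\gamma(s)|}.
\]
Hence, if $\gamma$ has length $L$ and $|\operatorname{Im}\gamma|\ge\delta>0$, we get $\rho(f(\gamma(\text{end})))\le\rho(f(\gamma(0)))+L/\delta$.

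The main step is to show that $S$ is relatively closed in $W$, including at points of $\mathcal Z$. Take $z_0\in W$ in the closure of $S$. Because $\mathcal Z$ is discrete, there is a closed disc $B\subset W$ centered at $z_0$ with $(B\setminus\{z_0\})\cap\mathcal Z=\emptyset$. On $B$ we have $|\operatorname{Im}z|\ge\delta>0$, since $\overline{D_-}$ restricted to a compact set avoids $\mathbb R$.

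Fix $z'\in S\cap B$ with $z'\ne z_0$. Every $z\in B\setminus\{z_0\}$ can be joined to $z'$ by a path in $B\setminus\{z_0\}$ of length at most $L=4\pi\,\mathrm{rad}(B)$. Set $K=\rho(f(z'))+L/\delta$. Suppose such a path leaves $S$, and let $s_*$ be the first exit time. By continuity of $f$ on $B\setminus\{z_0\}$, we have $|f(\gamma(s_*))|=1$. On the other hand, the estimate on $[0,s_*)$ gives $\rho\le K$, hence $|f(\gamma(s_*))|\le\tanh(K/2)<1$, a contradiction.

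Therefore $B\setminus\{z_0\}\subset S$ and $|f|\le\tanh(K/2)$ there. By Riemann's removable singularity theorem, $f$ extends holomorphically to $z_0$ (this is automatic if $z_0\notin\mathcal Z$), and $|f(z_0)|\le\tanh(K/2)<1$. So $z_0$ lies in $S$ together with the extension.

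Thus $S$ is open and closed in the connected set $W$ and nonempty, so $S\cup(\mathcal Z\cap W)=W$. Every point of $\mathcal Z\cap W$ has been shown to be removable with $|f|<1$. Applying the same argument to each component of $D_-$ gives the lemma.

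The only delicate point is the use of a lower bound on $|\operatorname{Im}z|$. It is available locally because $D_-=\{|\operatorname{Im}z|>\omega(\operatorname{Re}z)\}$ with $\omega>0$, and only local bounds are needed.
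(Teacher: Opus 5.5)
Your proof is correct, but it takes a different route from the paper's. The paper's argument is qualitative. On the level set $\{|f|=1\}$, inequality \eqref{E3.2} forces $f'=0$. If a component of $D_-\setminus\mathcal Z$ (which is still connected) contained points with $|f|<1$ and with $|f|>1$, this level set would separate them and so could not be discrete. The identity theorem would then make $f$ constant, which is impossible. This gives $|f|\le1$ off $\mathcal Z$; the singularities are then removable by boundedness, and the maximum-modulus principle gives strict inequality.

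You use \eqref{E3.2} only on $\{|f|<1\}$. There it says that $f$ is a contraction from the metric $|dz|/|\operatorname{Im}z|$ to the hyperbolic metric of the disc. Integrating this along paths of bounded length in a disc where $|\operatorname{Im}z|\ge\delta$ gives the explicit bound $|f|\le\tanh(K/2)$. That single bound does three jobs: it prevents $|f|$ from reaching $1$, it gives removability at points of $\mathcal Z$, and it gives the strict inequality. You never need the identity theorem, the fact that discrete sets do not separate planar domains, or maximum modulus.

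Comparing the two: the paper's proof is shorter and uses the hypothesis precisely on $\{|f|=1\}$. Yours is quantitative and needs the hypothesis only where $|f|<1$. It is essentially the Schwarz--Pick mechanism that the paper invokes later in Lemma~\ref{l29}.

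One bookkeeping point. Since you defined $S\subset W\setminus\mathcal Z$, the set that is actually open and closed in $W$ is $S$ together with the points of $\mathcal Z\cap\overline S$, which your argument shows are removable with $|f|<1$. With that reading, your final clopen step is exactly right.
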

\begin{proof}
Deleting a relatively closed discrete set leaves each exterior component
connected. If one such component contained both $|f|<1$ and $|f|>1$,
the separating level set $|f|=1$ could not be discrete. On that set
(\ref{E3.2}) gives $f'=0$, and the identity theorem would make $f$
constant, a contradiction. Hence $|f|\le1$ off $\mathcal Z$.
Its isolated singularities are removable. The maximum-modulus principle
and the point where $|f|<1$ give strict inequality on each component.
\end{proof}

\begin{lemma}
	\label{l29} If $A\in\mathcal{A}_{N,d}^{inv}$, then $\Phi_{A}$ is holomorphic on $\mathbb{C}\setminus\mathbb{R}$, $m_{A}$ satisfies (D.1), and $\tau_{A}(r_{\zeta})>0$ for every $\zeta\in\mathbb{C}\setminus\mathbb{R}$.
\end{lemma}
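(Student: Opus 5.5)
The plan is to show that condition (iii) in the definition of $\mathcal A_{N,d}^{inv}$ forces the inequality (\ref{E3.2}) for $f=m_A$. Lemma~\ref{l28} then upgrades this to (D.1). The contour-independence assumption (ii) is what lets us reach all of $\mathbb C\setminus\mathbb R$.

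\emph{Step 1: the exterior inequality.} Fix an admissible contour $\widetilde C$ inside the witnessing contour, with domains $\widetilde D_\pm$. By (ii), $A|_{\widetilde C}\in\mathcal A_N^{inv}(\widetilde C)$. By Lemma~\ref{l19}, $\Phi_A$ and $m_A$ computed on $\widetilde C$ continue those computed on $C$. Take $\zeta\in\widetilde D_-$ with $1+\varphi_{11}(\zeta)\ne0$. The real reduction gives $\eta=\overline\zeta$ and $m_2=\overline m_1$. The one-factor formula of Lemma~\ref{l21} then reduces to (\ref{E3.1}). By Lemma~\ref{l22}, the value $\tau_A(r_\zeta)$ does not depend on the contour used, and (iii) says it is $\ge0$. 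The prefactor $|1+\varphi_{11}(\zeta)|^4/|\Delta(\zeta)|^2$ is positive by Lemma~\ref{l20}. Hence
\[
(1-|m_1(\zeta)|^2)^2-4(\operatorname{Im}\zeta)^2|m_1'(\zeta)|^2\ge0
\]
on $\widetilde D_-\setminus\mathcal Z$, which is (\ref{E3.2}). Since $m_1=O(z^{-1})$ in high half-planes, $|m_1|<1$ somewhere in each component.

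\emph{Step 2: extension to $\mathbb C\setminus\mathbb R$.} Lemma~\ref{l28} now shows that $m_1$ is holomorphic on $\widetilde D_-$ with $|m_1|<1$ there. Let $\widetilde C$ shrink toward $\mathbb R$; the compact modifications allowed in (\ref{E2.1}) let the strip come arbitrarily close to any given point off $\mathbb R$. The continuations agree by Lemma~\ref{l19}, so $m_A$ is holomorphic on $\mathbb C\setminus\mathbb R$ with $|m_A|<1$. The bound $m_A=O(z^{-1})$ on each admissible exterior comes from the Cauchy representation on that contour. This gives (D.1).

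For $\Phi_A$, use the same exhaustion: on each $\widetilde D_-$ it is holomorphic, and the pieces glue by Lemma~\ref{l19}. Thus $\Phi_A$ is holomorphic off $\mathbb R$, with no condition on the diagonal entries. Consequently $1+\varphi_{11}$ is holomorphic everywhere off $\mathbb R$, and $\mathcal Z$ becomes simply its zero set.

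\emph{Step 3: strict positivity.} Strictness has two parts.

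\begin{itemize}
\item First, $\mathcal Z=\emptyset$. Lemma~\ref{l20} says $1+\varphi_{11}$ and $\varphi_{12}$ have no common zeros. A zero of $1+\varphi_{11}$ would therefore be a pole of $m_1=\varphi_{12}/(1+\varphi_{11})$, contradicting the boundedness from Step 2. So $1+\varphi_{11}\neq0$.
\item Second, the Schwarz-lemma inequality is strict. Apply the Möbius construction of Lemma~\ref{l27} to $m_1$ at $\zeta$, producing $f$ with $f(0)=0$. Equality $|f'(0)|=1$ would make $f$ a rotation, i.e.\ $|f|\to1$ at the boundary. That forces $m_1$ to be an automorphism of the half-plane onto the disc, so $|m_1(z)|\to1$ as $z\to\infty$. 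This contradicts $m_1=O(z^{-1})$.
\end{itemize}

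With $1+\varphi_{11}\ne0$ and the strict inequality, the value $\tau_A(r_\zeta)$ in (\ref{E3.1}) is strictly positive.

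I expect Step 2 to be the main obstacle: justifying that admissible contours reach every point off $\mathbb R$, and that the exhaustion is compatible. That is exactly what the analytic-contour condition (\ref{E2.5}) and Lemma~\ref{l19} provide.
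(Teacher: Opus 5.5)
Your proposal follows the paper's proof essentially step by step: condition (ii) with Lemma~\ref{l19} for holomorphy of $\Phi_A$ off $\mathbb R$, condition (iii) with (\ref{E3.1}) feeding Lemma~\ref{l28}, Lemma~\ref{l20} to show $\mathcal Z$ is empty, exhaustion by shrinking contours, and the Schwarz--Pick equality case ruled out by $m_A(iy)\to0$. The one step to tighten is the uniform bound $m_A=O(z^{-1})$ on a given exterior $D_{0,-}$: the Cauchy expansion on $C_0$ itself leaves a Hardy remainder whose point-evaluation bound degenerates like $\operatorname{dist}(z,C_0)^{-1/2}$ near $C_0$, so instead expand, as the paper does, on a compatible contour $C_1$ of at most half the height, where $\operatorname{dist}(z,C_1)\ge c\langle z\rangle^{-(N-1)}$ on $D_{0,-}$ and the remainder becomes $O(|z|^{-(N+3)/2})$.
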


\begin{proof}
	Condition (ii) in the definition of $\mathcal{A}_{N,d}^{inv}$ and Lemma \ref{l19} show that $\Phi_{A}$ is holomorphic on $\mathbb{C}\setminus\mathbb{R}$. Condition (iii) and (\ref{E3.1}) imply
	\begin{equation}
		\left|1 - |m_{A}(\zeta)|^{2} \right| - 2|\operatorname{Im}\zeta||m_{A}'(\zeta)| \geq 0 \label{E3.3}
	\end{equation}
	for $\zeta\in D_{-}\setminus\mathcal{Z}$, where $\mathcal{Z}=\{\zeta\in D_{-}:1+\varphi_{11}(\zeta)=0\}$. Lemma \ref{l28} shows that $m_{A}$ is holomorphic on $D_{-}$ and that
	\[
|m_{A}(z)| < 1 \quad \text{on } D_{-}, \quad \text{and } \mathcal{Z} = \emptyset.
\]
	Here $\mathcal Z=\emptyset$: if a zero of $1+\varphi_{11}$ were removable in the quotient defining $m_A$, then $\varphi_{12}$ would vanish at the same point, contrary to Lemma \ref{l20}. Because the contour may be chosen arbitrarily close to the real axis,
these conclusions hold on all of $\mathbb C\setminus\mathbb R$.
To verify the uniform exterior estimate in (D.1), fix a target contour
$C_0$ and take a compatible contour $C_1$ whose height is at most half
that of $C_0$. Apply the Cauchy expansion defining the coefficients
$A_k$ on $C_1$, and identify its exterior factor with $\Phi_A$ by
Lemma~\ref{l19}. This gives
\[
\Phi_A(z)=\sum_{k=1}^{N+1}A_kz^{-k}+z^{-N-1}\Psi(z),
 \qquad \Psi\in H(D_{1,-})^{2\times2}.
\]
For $z\in D_{0,-}$, Lipschitz geometry and Cauchy point evaluation give
\[
\operatorname{dist}(z,C_1)\ge c\langle z\rangle^{-(N-1)},\qquad
 |\Psi(z)|\le C\operatorname{dist}(z,C_1)^{-1/2}\|\Psi\|_2.
\]
The remainder is therefore $O(|z|^{-(N+3)/2})$, so
$\Phi_A=O(z^{-1})$ and $m_A=O(z^{-1})$ on the target exterior.
This proves (D.1). Finally, (\ref{E3.3}) is the Schwarz--Pick inequality for the map from a half-plane to the unit disk used in the proof of Lemma \ref{l27}. Equality at one point would force that map to be a conformal automorphism. Such an automorphism has a unimodular limit at infinity, contradicting $m_A(iy)\to0$ along the corresponding imaginary ray. Thus (\ref{E3.3}) is strict, and (\ref{E3.1}) gives $\tau_{A}(r_{\zeta})>0$. \bigskip
\end{proof}

\begin{lemma}
	\label{l30} Let $\zeta_{1},\zeta_{2}\in\mathbb{C}\setminus\mathbb{R}$ and $A\in\mathcal{A}_{N,d}^{inv}$. Then
	\[
\tau_{A}(r_{\zeta_{1}}r_{\zeta_{2}}) \geq 0.
\]
\end{lemma}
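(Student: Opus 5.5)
The natural route is the multiplicativity of $\tau$ for rational multipliers combined with Lemma~\ref{l29}. For rational $g_1,g_2$ the corrections are finite rank, so the cocycle identity (\ref{E2.8}) can be run with ordinary determinants:
\[
\tau_A(r_{\zeta_1}r_{\zeta_2})=\tau_A(r_{\zeta_1})\,\tau_{G(r_{\zeta_1})A}(r_{\zeta_2}),
\]
valid whenever $\tau_A(r_{\zeta_1})\neq0$. The exponential correction disappears because $\det$ is exactly multiplicative on $I+$finite rank, via the factorization (\ref{E2.9}) and the similarity invariance of $\det$.

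\textbf{Step 1: the first factor.} By Lemma~\ref{l29}, $\tau_A(r_{\zeta_1})>0$. Hence $T(G(r_{\zeta_1})A)$ is invertible by Lemma~\ref{l11}, and the factorization above is legitimate.

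\textbf{Step 2: the second factor.} Put $B=G(r_{\zeta_1})A$. Since $r_{\zeta_1}\overline{r_{\zeta_1}}=1$, $B$ is still self-adjoint. I then want to show $\tau_B(r_{\zeta_2})\ge0$. By (\ref{E3.1}) applied to $B$, this reduces to the inequality
\[
(1-|m_B|^2)^2-4(\operatorname{Im}\zeta_2)^2|m_B'|^2\ge0 \quad\text{at }\zeta_2,
\]
that is, to $m_B$ being a Schur-type function on $\mathbb C\setminus\mathbb R$. By the one-factor formula (\ref{E2.32}), $m_B=\mathcal R_{\zeta_1}m_A$, where $m_A$ satisfies (D.1) by Lemma~\ref{l29}. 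So the key claim is that $\mathcal R_{\zeta_1}$ maps functions satisfying (D.1) to functions bounded by $1$ in modulus. Given this, the Schwarz-lemma argument from the proof of Lemma~\ref{l27} yields the inequality.

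To prove the key claim, I would avoid computing with $\mathcal R_\zeta$ directly. Instead, introduce $M=\begin{pmatrix}1&m_A\\ \overline m_A&1\end{pmatrix}\in\mathcal M_{N,d}$ and note that $m_M=m_A$ (Lemma~\ref{l27}). Then $m_{G(r_{\zeta_1})M}=\mathcal R_{\zeta_1}m_A=m_B$, so it suffices to analyse $G(r_{\zeta_1})M$. For this explicit symbol one can compute $\Phi$ from (\ref{E2.27})--(\ref{E2.29}). The contractive property should then follow from a $J$-inner / Potapov-type argument: $G(r_\zeta)$ is $J$-unitary on $\mathbb R$ and $J$-contractive on one side, with $J=\sigma_3$. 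Equivalently, it should follow from the positivity of the Weyl disk under a Darboux transformation, which preserves the Schur class.

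\textbf{Fallback.} If the direct Schur argument resists, the fallback is continuity. The set of $\zeta_2$ with $\tau_B(r_{\zeta_2})$ of constant sign, together with the analytic dependence of $\tau_A(r_{\zeta_1}r_{\zeta_2})$ on $(\zeta_1,\zeta_2)$ via Lemma~\ref{l21}, allows a deformation. Letting $\zeta_2\to\infty$ gives $r_{\zeta_2}\to1$ in the sense of (\ref{E2.12}), so the product tends to $\tau_A(r_{\zeta_1})>0$. Non-vanishing along the path, from $B$ remaining in $\mathcal A_{N,d}^{inv}$, would then give positivity.

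\textbf{Degenerate cases.} The cases $\zeta_2=\overline{\zeta_1}$ (product equal to $1$, so $\tau=1$) and $\zeta_2=\zeta_1$ (repeated factors) are handled by continuity in the parameters, using Lemma~\ref{l21} and its analytic-continuation convention.

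The main obstacle is Step 2: proving that the one-factor transform $\mathcal R_{\zeta_1}$ preserves $|m|<1$, equivalently that $B$ inherits condition (iii).
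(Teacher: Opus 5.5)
\textbf{There is a genuine gap: Step 2 is left unproved, and it is equivalent to the lemma itself.}

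Your Step 1 is correct. It is exactly the factorization the paper uses in Proposition~\ref{p31}: with $B=G(r_{\zeta_1})A$, one has $\tau_A(r_{\zeta_1}r_{\zeta_2})=\tau_A(r_{\zeta_1})\tau_B(r_{\zeta_2})$ and $\tau_A(r_{\zeta_1})>0$. Your degenerate cases also match the paper.

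Step 2, however, carries the whole content of the lemma. By your own factorization, the lemma for fixed $\zeta_1$ says that $\tau_B(r_{\zeta_2})\ge0$ for all $\zeta_2$. That is condition (iii) for $B$. Via (\ref{E3.1}), Lemma~\ref{l28} and the Schwarz--Pick argument, this is equivalent to your ``key claim'' that $|m_B|<1$ on $\mathbb C\setminus\mathbb R$. So you have reformulated the lemma rather than reduced it. In the paper, contractivity of $m_{G(r_\zeta)A}$ is a \emph{consequence} of Lemma~\ref{l30}, obtained through Proposition~\ref{p31} and Lemma~\ref{l29}; using it here would be circular.

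The fallback has the same defect. Non-vanishing of $\tau_A(r_{\zeta_1}r_{\zeta_2})$ along a path means invertibility of $T(G(r_{\zeta_1}r_{\zeta_2})A)$, which is exactly what is unknown. The statement ``$B$ remains in $\mathcal A^{inv}_{N,d}$'' is the conclusion of Proposition~\ref{p31}, which rests on this lemma. Moreover, $\zeta_2\mapsto\tau_A(r_{\zeta_1}r_{\zeta_2})$ is real-valued and only real-analytic, since $r_\zeta$ involves $\overline\zeta$. Its zero set can therefore be a curve, so positivity near $\zeta_2=\infty$ excludes nothing.

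\textbf{The $J$-inner heuristic targets the wrong factor.} In $\widetilde F=RFH$, the diagonal factor $R=G(r_{\zeta_1})$ does not change row quotients, so its $\sigma_3$-properties are irrelevant. The function $m_B$ is the row transform (\ref{E2.30}) by the matrix $H$ of (\ref{E2.29}), which depends on $m_A(\zeta_1)$ and $m_A'(\zeta_1)$. This $H$ is not a universal contraction: for $m_A\equiv0$ it equals $G(r_{\zeta_1})^{-1}$, whose row action $m\mapsto r_{\zeta_1}^2m$ has a double pole at $\zeta_1$. Completing your route would require two separate arguments, neither of which you supply:
\begin{itemize}
\item On the half-plane containing $\zeta_1$, a Schur-algorithm-type step that uses contractivity of $m_A$ on that entire half-plane.
\item On the opposite half-plane, where $H$ acts on the unrelated Weyl function ($m_+$ versus $m_-$), a proof that $H$ itself is $\sigma_3$-contractive, derived from the strict one-point inequality at $\zeta_1$.
\end{itemize}

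\textbf{How the paper avoids this.} It never transports the Schur property. Lemma~\ref{l21} with $\eta_h=\overline{\zeta_h}$ gives $\tau_A(r_{\zeta_1}r_{\zeta_2})=16\Lambda_1\Lambda_2\det\Xi$, where $\Lambda_1,\Lambda_2>0$ and $\Xi$ is a Hermitian $4\times4$ matrix built only from $\zeta_j$, $m_A(\zeta_j)$ and $m_A'(\zeta_j)$.
\begin{itemize}
\item If $\zeta_1,\zeta_2$ lie in the same half-plane, the Herglotz representation of $i(1-m_A)/(1+m_A)$ exhibits $\pm\Xi$ as a Gram matrix, so $\det\Xi\ge0$.
\item If they lie in opposite half-planes, the strict one-point inequalities of Lemma~\ref{l29} make the diagonal blocks of the permuted $\Xi$ definite of opposite signs, and a Schur complement gives $\det\Xi>0$.
\end{itemize}
These are, in effect, the two positivity statements a completed Step 2 would have to reproduce.

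A smaller point: your detour through $M\in\mathcal M_{N,d}$ presupposes (D.2) for $m_A$, which Lemma~\ref{l29} does not give. The detour is also unnecessary, since (\ref{E2.29})--(\ref{E2.30}) already express $m_B$ directly in terms of $m_A$.
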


\begin{proof}
Write $m=m_A$, $F=I+\Phi_A=(f_{jk})$, and $\Delta=\det F$.
By Lemmas~\ref{l20} and~\ref{l29}, $m$ is holomorphic on $\mathbb C\setminus\mathbb R$,
$|m|<1$, and $f_{11}$ has no zeros there. Indeed, a zero of $f_{11}$
would also be a zero of $f_{12}=mf_{11}$, contrary to $\det F\ne0$.
The defocusing reduction gives
\[
f_{22}(\overline z)=\overline{f_{11}(z)},\qquad
 \Delta(\overline z)=\overline{\Delta(z)}.
\]
All conjugations of matrices and evaluated scalar quantities below are
ordinary complex conjugations.

If $\zeta_2=\overline{\zeta_1}$, then
$r_{\zeta_1}r_{\zeta_2}=1$, so the tau-function equals one.
For the moment, suppose also that $\zeta_1\ne\zeta_2$.
Thus the four points $\zeta_1,\zeta_2,\overline{\zeta_1},
\overline{\zeta_2}$ are distinct. Choose a compatible admissible contour
with all four points in its exterior, and put $\mu_j=m(\zeta_j)$.
Define
\[
C_{jh}=\frac{1-\mu_j\overline{\mu_h}}
 {2i(\overline{\zeta_h}-\zeta_j)},\qquad j,h\in\{1,2\},
\]
\[
D_{jh}=\begin{cases}
 \displaystyle\frac{\mu_j-\mu_h}{2i(\zeta_h-\zeta_j)},&j\ne h,\\[2mm]
 \displaystyle-\frac{m'(\zeta_j)}{2i},&j=h,
 \end{cases}
 \qquad j,h\in\{1,2\},
\]
and set
\[
\Xi=\begin{pmatrix}C&D\\ \overline D&\overline C\end{pmatrix}.
\]
Since $C=C^*$ and $D=D^{\mathsf T}$, the matrix $\Xi$ is Hermitian.

We first identify its determinant with the tau-function, keeping the
indices in (\ref{E2.25}) explicit. In that formula $h$ is the row index and $j$
is the column index in every block. Substituting
$\eta_h=\overline{\zeta_h}$ and
$m_2(\eta_h)=\overline{\mu_h}$ shows that the matrix in (\ref{E2.25}) is
\[
K=2i\begin{pmatrix}
 \overline C&-\overline D\\ D&-C
 \end{pmatrix}
 =2i\,\overline\Xi\begin{pmatrix}I_2&0\\0&-I_2\end{pmatrix}.
\]
Consequently,
\[
\det K=16\det\Xi,\qquad
 \tau_A(r_{\zeta_1}r_{\zeta_2})
 =16\Lambda_1\Lambda_2\det\Xi.
\]
Both scalar factors are strictly positive. Indeed, the reality condition
on $r=r_{\zeta_1}r_{\zeta_2}$ gives $b_h=\overline{a_h}$ in the
partial-fraction notation of Lemma~\ref{l21}. Thus, with
$Q=(q_{\zeta_k}(\overline{\zeta_h}))_{k,h=1}^2$,
\[
\begin{split}
 \Lambda_1&=
 \left(\prod_{h=1}^2|a_h|^2\right)^2
 \left(\prod_{h=1}^2|\zeta_h|^2\right)|\det Q|^2>0,\\
 \Lambda_2&=\prod_{h=1}^2
 \frac{|f_{11}(\zeta_h)|^4}{|\Delta(\zeta_h)|^2}>0.
 \end{split}
\]
Here the residues $a_h$ are nonzero, and $Q$ is a nonsingular Cauchy
matrix because the four points are distinct. It remains to prove
$\det\Xi\ge0$.

\medskip
\noindent\emph{Case 1: $(\operatorname{Im}\zeta_1)
(\operatorname{Im}\zeta_2)>0$.}
First suppose $\zeta_1,\zeta_2\in\mathbb C_+$.
The function
\[
\varphi(z)=i\,\frac{1-m(z)}{1+m(z)}
\]
is Herglotz on $\mathbb C_+$, so
\[
\varphi(z)=\alpha+\beta z+
 \int_{\mathbb R}\left(\frac1{\lambda-z}
 -\frac{\lambda}{1+\lambda^2}\right)\sigma(d\lambda),
 \qquad \alpha\in\mathbb R,\quad\beta\ge0,
\]
where $\sigma$ is positive and
$\int_{\mathbb R}(1+\lambda^2)^{-1}\sigma(d\lambda)<\infty$.
In $\mathcal H=\mathbb C\oplus L^2(\sigma)$, with inner product linear
in its first argument, put
\[
f_j=\left(
 \frac{\sqrt\beta}{i+\varphi(\zeta_j)},\quad
 \frac1{(\lambda-\zeta_j)(i+\varphi(\zeta_j))}
 \right),\qquad j=1,2.
\]
Subtracting the Herglotz representations and using
$m=(i-\varphi)/(i+\varphi)$ gives
\begin{align*}
 C_{jh}
 &=\frac{\displaystyle\beta+
 \int_{\mathbb R}\frac{\sigma(d\lambda)}
 {(\lambda-\zeta_j)(\lambda-\overline{\zeta_h})}}
 {(i+\varphi(\zeta_j))\overline{(i+\varphi(\zeta_h))}}
 =(f_j,f_h),\\
 D_{jh}
 &=\frac{\displaystyle\beta+
 \int_{\mathbb R}\frac{\sigma(d\lambda)}
 {(\lambda-\zeta_j)(\lambda-\zeta_h)}}
 {(i+\varphi(\zeta_j))(i+\varphi(\zeta_h))}
 =(f_j,\overline{f_h}).
\end{align*}
The diagonal formula follows by differentiation. Hence $\Xi$ is the
Gram matrix of
$f_1,f_2,\overline{f_1},\overline{f_2}$, and $\det\Xi\ge0$.
If both points lie in $\mathbb C_-$, apply the preceding argument to
$\widetilde m(z)=m(-z)$ at $-\zeta_1,-\zeta_2\in\mathbb C_+$.
Its matrices are $-C,-D$, so its Gram matrix is $-\Xi$.
Since $\Xi$ has size four, this again gives $\det\Xi\ge0$.

\medskip
\noindent\emph{Case 2: $(\operatorname{Im}\zeta_1)
(\operatorname{Im}\zeta_2)<0$.}
Relabel the points so that $\operatorname{Im}\zeta_1>0$ and
$\operatorname{Im}\zeta_2<0$, and write
\[
C=\begin{pmatrix}a&b\\\overline b&c\end{pmatrix},\qquad
 D=\begin{pmatrix}r&s\\s&t\end{pmatrix}.
\]
By Lemma~\ref{l29} and (\ref{E3.1}),
\[
a>0,\qquad c<0,\qquad |r|<a,\qquad |t|<-c.
\]
Indeed,
\[
a=\frac{1-|m(\zeta_1)|^2}{4\operatorname{Im}\zeta_1},\quad
 c=\frac{1-|m(\zeta_2)|^2}{4\operatorname{Im}\zeta_2},\quad
 |r|=\frac{|m'(\zeta_1)|}{2},\quad
 |t|=\frac{|m'(\zeta_2)|}{2},
\]
and (\ref{E3.1}), together with $\tau_A(r_{\zeta_j})>0$, gives the strict
inequalities above. Simultaneously permute the rows and columns of
$\Xi$ into the order $(1,3,2,4)$. The resulting matrix is
\[
\begin{pmatrix}P&B\\B^*&-Q_0\end{pmatrix},\qquad
 B=\begin{pmatrix}b&s\\\overline s&\overline b\end{pmatrix},
\]
where
\[
P=\begin{pmatrix}a&r\\\overline r&a\end{pmatrix}>0,\qquad
 Q_0=\begin{pmatrix}-c&-t\\-\overline t&-c\end{pmatrix}>0.
\]
Its Schur complement therefore gives
\[
\begin{aligned}
 \det\Xi
 &=\det P\,\det(-Q_0-B^*P^{-1}B)\\
 &=\det P\,\det(Q_0+B^*P^{-1}B)>0,
 \end{aligned}
\]
because the second block has size two and
$Q_0+B^*P^{-1}B$ is positive definite.

Finally, the case $\zeta_2=\zeta_1$ follows by continuity on a fixed
compatible contour. The finite-rank Toeplitz corrections converge in
Hilbert--Schmidt norm by the kernel estimates used in Lemma~\ref{l13}.
Their differences have uniformly bounded rank, so the convergence is
also in trace norm and the ordinary determinants converge.
This proves the stated non-negativity in every case. \bigskip
\end{proof}

We can now prove non-vanishing along the full defocusing action.

\begin{proposition}
	\label{p31} For every $A\in\mathcal{A}_{N,d}^{inv}$ and $g\in\Gamma_{N}^{\operatorname{real}}$, one has $\tau_{A}^{(2)}(g)>0$ and $G(g)A\in\mathcal{A}_{N,d}^{inv}$.
\end{proposition}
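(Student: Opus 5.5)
The plan is to prove positivity first for real rational multipliers, then pass to general $g=r_0e^{ih}$ by the rational approximation of Lemma~\ref{l23}.

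\emph{Rational multipliers.} Take a real rational $r=c\prod_{\nu=1}^{L}r_{\zeta_\nu}$ with $|c|=1$. By (\ref{E2.31}), a constant factor changes neither the tau-function nor membership in the class, so assume $c=1$. Argue by induction on $L$ that $\tau_A(r)>0$ and $G(r)A\in\mathcal A_{N,d}^{inv}$.

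\begin{itemize}
\item The case $L=1$ is Lemma~\ref{l29}.
\item For the inductive step, write $r=r'r_{\zeta}$ with $G(r')A\in\mathcal A_{N,d}^{inv}$, and apply the cocycle identity (\ref{E2.8}) in ordinary-determinant form: $\tau_A(r)=\tau_A(r')\,\tau_{G(r')A}(r_\zeta)$. The exponential correction is positive, since $E_A$ is real by the remark after Lemma~\ref{l25}.
\item Lemma~\ref{l29}, applied to $G(r')A$, gives $\tau_{G(r')A}(r_\zeta)>0$. Hence $\tau_A(r)>0$, and $T(G(r)A)$ is invertible by Lemma~\ref{l11}.
\end{itemize}

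To close the induction, verify that $G(r)A\in\mathcal A_{N,d}^{inv}$.
\begin{itemize}
\item Condition (i) holds because $r\overline r=1$.
\item The analytic-contour condition holds because $G(r)$ is holomorphic and bounded near $\overline D_+$, with common analytic part $G(r)F$. Invertibility on every smaller admissible contour follows from the contour independence (\ref{E2.26}) and Lemma~\ref{l11} on each $\widetilde C$.
\item Condition (iii), $\tau_{G(r)A}(r_\xi)\ge0$ for all $\xi$, uses the cocycle identity again: $\tau_{G(r)A}(r_\xi)$ equals $\tau_A(rr_\xi)/\tau_A(r)$ times a positive factor. So it suffices that $\tau_A(rr_\xi)\ge0$.
\end{itemize}

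The main obstacle is this last step: it is a non-negativity statement for products with $L+1$ factors. Lemma~\ref{l30} supplies exactly the two-factor case. To cover longer products, I would reorganize the induction to apply Lemma~\ref{l30} to the symbol $G(r'')A$, where $r''$ is the product of all but two factors. This needs $G(r'')A\in\mathcal A_{N,d}^{inv}$, which holds by induction on $L$. Then
\[
\tau_A(r''r_{\zeta}r_{\xi})=\tau_A(r'')\,\tau_{G(r'')A}(r_\zeta r_\xi)\,e^{-E}\ge0 .
\]
Structuring the induction as ``$\tau_A(r)>0$ and $G(r)A\in\mathcal A_{N,d}^{inv}$ for all real rational $r$ with at most $L$ factors'' makes (iii) follow at length $L+1$ from Lemma~\ref{l30} applied at length $L-1$. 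Repeated or conjugate parameters are handled by the continuity in Lemma~\ref{l13} together with the finite-rank trace-norm convergence used in Lemma~\ref{l30}.

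\emph{General multipliers.} For $g=r_0e^{ih}\in\Gamma_N^{\mathrm{real}}$, Lemma~\ref{l23} gives real rational $s_k$ such that $r_0s_k\to g$ in the sense of (\ref{E2.12}). Lemma~\ref{l13} then yields $\tau_A^{(2)}(r_0s_k)\to\tau_A^{(2)}(g)$. For rational multipliers, $\tau^{(2)}=\tau e^{-\mathrm{tr}K}$, and the trace is real by Lemma~\ref{l25}; hence $\tau^{(2)}_A(r_0s_k)>0$, and the limit gives $\tau_A^{(2)}(g)\ge0$.

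Limits alone do not give strict positivity, so I would use a connectedness argument along $g_t=r_0e^{ith}$, $t\in[0,1]$.
\begin{itemize}
\item The map $t\mapsto\tau^{(2)}_A(g_t)$ is continuous by Lemma~\ref{l13}, and it is positive at $t=0$.
\item If $t_*$ were the first zero, then for $t<t_*$ the symbols $G(g_t)A$ lie in the class: (iii) passes to the limit by the same approximation.
\item The cocycle identity (\ref{E2.8}) gives $\tau^{(2)}_A(g_{t_*})=\tau^{(2)}_A(g_t)\,\tau^{(2)}_{G(g_t)A}(g_{t_*}g_t^{-1})\,e^{-E}$. The middle factor tends to $\det_2(I)=1$ as $t\to t_*$, using Lemma~\ref{l12} with constants uniform in $t$, since $\|T(G(g_t)A)^{-1}\|$ is locally bounded.
\item This contradicts $\tau^{(2)}_A(g_{t_*})=0$.
\end{itemize}
Once $\tau_A^{(2)}(g)>0$, membership $G(g)A\in\mathcal A_{N,d}^{inv}$ follows as in the rational case: (iii) is obtained as a limit of the rational statements, via Lemma~\ref{l13}.
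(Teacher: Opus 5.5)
\paragraph{Verdict.} Your rational induction and the non-negativity $\tau_A^{(2)}(g)\ge0$ for general $g$ essentially match the paper's proof. The genuine gap is in the strict-positivity step, i.e.\ the first-zero argument along $g_t=r_0e^{ith}$.

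\paragraph{Why the first-zero argument is circular.} You need $\tau^{(2)}_{G(g_t)A}(g_{t_*}g_t^{-1})\to1$ as $t\uparrow t_*$. You justify this by Lemma~\ref{l12} with constants uniform in $t$, on the grounds that $\|T(G(g_t)A)^{-1}\|$ is locally bounded. Near $t_*$, that local boundedness is equivalent to what you are trying to prove.

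\begin{itemize}
\item The normalized operators $S_t=G(g_t)^{-1}T(G(g_t)A)T(A)^{-1}$ converge in norm to $S_{t_*}$.
\item If $\|S_t^{-1}\|$ stayed bounded as $t\uparrow t_*$, then $S_{t_*}$ would be invertible.
\item But $\tau_A^{(2)}(g_{t_*})=0$ means, by Lemma~\ref{l11}, that it is not.
\end{itemize}

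More concretely, $I+K_t=G(g_t)^{-1}\cdots$ reduces to the similarity $I+K_t=G(g_t)\,S_{t_*}S_t^{-1}\,G(g_t)^{-1}$. So if $t_*$ were a zero, the middle factor would vanish for every $t<t_*$. The cocycle identity, which is valid there since $\tau_A^{(2)}(g_t)>0$, says the same thing. No contradiction arises. A continuity or connectedness argument alone cannot rule out a zero of a continuous non-negative function.

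\paragraph{How to repair it.} The missing ingredient is a source of strict positivity that does not come from a limit. The paper gets it by splitting $g=e^{ih}$ as $g=r_k\cdot(r_k^{-1}g)$, with $r_k$ the rational approximants of Lemma~\ref{l23}.

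\begin{itemize}
\item Now $r_k^{-1}g\to1$ in the sense of (\ref{E2.12}), and the reference operator is the fixed $T(A)^{-1}$. So Lemma~\ref{l13} legitimately gives $\tau_A^{(2)}(r_k^{-1}g)\to1$. Fix $k$ with $\tau_A^{(2)}(r_k^{-1}g)>0$.
\item Your non-negativity result, the cocycle identity, and (\ref{E2.26}) for smaller contours show $B=G(r_k^{-1}g)A\in\mathcal A_{N,d}^{inv}$.
\item The rational result applied to $B$ gives the strict inequality $\tau_B^{(2)}(r_k)>0$. Ultimately this comes from the strict Schwarz--Pick inequality in Lemma~\ref{l29}.
\item Then
\[
\tau_A^{(2)}(g)=\tau_A^{(2)}(r_k^{-1}g)\,\tau_B^{(2)}(r_k)\,e^{-E_A(r_k^{-1}g,\,r_k)}>0 .
\]
\end{itemize}

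The rational factor $r_0$ costs one more cocycle step. With this replacement, the path $g_t$ is unnecessary, and your final membership argument for $G(g)A$ goes through as you describe.
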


\begin{proof}
	We begin with the rational subgroup. By Lemma \ref{l29}, $\tau_{A}(r_{\zeta})>0$, so $G(r_{\zeta})A\in\mathcal{A}_{N}^{inv}$ on every compatible admissible
	contour having $\zeta,\overline\zeta$ in its exterior. Choose a new
	witnessing contour inside the original one and avoiding these two
	points; the same conclusion then holds on all smaller contours. The ordinary determinant is multiplicative for finite-rank perturbations, and hence
	\[
\tau_{G(r_{\zeta})A}(r_{\eta})
	=\frac{\tau_{A}(r_{\zeta}r_{\eta})}{\tau_{A}(r_{\zeta})}.
\]
	Lemma \ref{l30} gives $\tau_{G(r_{\zeta})A}(r_{\eta})\geq0$, and therefore $G(r_{\zeta})A\in\mathcal{A}_{N,d}^{inv}$. Lemma \ref{l29}, applied to the transformed symbol, upgrades the inequality to strict positivity. Under the reduction, the trace correction relating $\tau$ and $\tau^{(2)}$ is real, so the two determinants have the same sign. Induction therefore yields
	\begin{equation}
	G(r)A\in\mathcal{A}_{N,d}^{inv},\qquad \tau_{A}^{(2)}(r)>0 \label{E3.4}
	\end{equation}
	for every rational $r\in\Gamma_{0}^{\operatorname{real}}$; a unimodular constant factor has tau-function equal to one.

	Next let $g=e^{ih}\in\Gamma_{N}^{\operatorname{real}}$. Fix a witnessing
	contour $C$ for $A$, and take the approximants $r_k$ from
	Lemma~\ref{l23} on this contour. Lemma \ref{l13} and (\ref{E3.4}) give
	\[
\tau_{A}^{(2)}(g)=\lim_{k\to\infty}\tau_{A}^{(2)}(r_{k})\geq0.
\]
	If $g=re^{ih}$ is a general element of $\Gamma_{N}^{\operatorname{real}}$, apply this conclusion to $G(r)A$ and use the cocycle identity together with (\ref{E3.4}). Since its exponential correction is positive, this shows that
	\[
\tau_A^{(2)}(g)\geq0
	\qquad(g\in\Gamma_N^{\operatorname{real}}).
\]

	We now prove strict positivity, first for $g=e^{ih}$. Since $r_{k}^{-1}g\to1$ and $\tau_{A}^{(2)}(1)=1$, continuity gives $\tau_{A}^{(2)}(r_{k}^{-1}g)>0$ for all sufficiently large $k$. Fix such a $k$. Choose a witnessing contour $C$
	on which $r_k^{-1}g$ is admissible. Formula (\ref{E2.26}),
	which follows from Lemma~\ref{l22} and the finite-rank trace calculation,
	gives
	\[
\tau_{A|_{\widetilde C},\widetilde C}^{(2)}(r_k^{-1}g)
	 =\tau_{A,C}^{(2)}(r_k^{-1}g)>0
\]
	for every admissible $\widetilde C\subset\overline D_+$.
	Hence $T_{\widetilde C}(G(r_k^{-1}g)A)$ is invertible on all the weighted
	spaces in (\ref{E2.3}). Multiplication by $G(r_k^{-1}g)$ also preserves the
	analytic-contour condition, with analytic part $G(r_k^{-1}g)F$.
	The preceding non-negativity and the cocycle identity, computed on a
	sufficiently small such contour for each $\zeta$, imply
	\[
\tau_{G(r_k^{-1}g)A}^{(2)}(r_{\zeta})
	=\frac{\tau_{A}^{(2)}(r_k^{-1}gr_{\zeta})}{\tau_{A}^{(2)}(r_k^{-1}g)}
	\exp\!\left(E_{A}(r_k^{-1}g,r_{\zeta})\right)\geq0.
\]
	The ordinary determinant has the same sign, so $G(r_k^{-1}g)A\in\mathcal{A}_{N,d}^{inv}$. Applying (\ref{E3.4}) to this symbol and using the cocycle identity once more, we obtain
	\[
\tau_{A}^{(2)}(g)
	=\tau_{A}^{(2)}(r_k^{-1}g)\tau_{G(r_k^{-1}g)A}^{(2)}(r_k)
	\exp\!\left(-E_A(r_k^{-1}g,r_k)\right)>0.
\]
	Similarly,
	\[
\tau_{G(g)A}^{(2)}(r_{\zeta})
	=\frac{\tau_A^{(2)}(gr_{\zeta})}{\tau_A^{(2)}(g)}
	\exp\!\left(E_A(g,r_{\zeta})\right)\geq0,
\]
	and (\ref{E2.26}) transfers
	$\tau_A^{(2)}(g)>0$ to every smaller compatible contour.
	Thus $G(g)A\in\mathcal{A}_{N,d}^{inv}$.

	Finally, write an arbitrary $g$ as $g=re^{ih}$. By (\ref{E3.4}), $B=G(r)A$ belongs to $\mathcal{A}_{N,d}^{inv}$ and $\tau_A^{(2)}(r)>0$. Applying the exponential case to $B$ and then the cocycle identity gives $\tau_A^{(2)}(g)>0$ and $G(g)A=G(e^{ih})B\in\mathcal{A}_{N,d}^{inv}$. \bigskip
\end{proof}

\subsection{Proof of theorems}

\begin{proposition}
	\label{p32} Suppose $A \in \mathcal{A}_{N,d}^{inv}$. Then
	\[
m_{A}(z) = \begin{cases}
		m_{-}(z) & \text{for } z \in \mathbb{C}_{+}, \\
		m_{+}(z) & \text{for } z \in \mathbb{C}_{-},
	\end{cases}
\]
	where $m_{\pm}$ are the Weyl functions at $\pm\infty$.
\end{proposition}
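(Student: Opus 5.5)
Our plan is to identify the projective coordinate $m_A$ with the Weyl functions through the Baker--Akhiezer function of (\ref{E2.21}) and the Weyl limit-point criterion. Fix $A\in\mathcal A_{N,d}^{inv}$ and put $q(x)=2\alpha_{21}(G(e_x)A)$. By Proposition~\ref{p31}, $G(e_x)A\in\mathcal A_{N,d}^{inv}$ for every $x$. By Lemma~\ref{l15}, $q$ is $C^N$, and by Lemma~\ref{l25} the potential in (\ref{E2.21}) is $\begin{pmatrix}0&q\\ \overline q&0\end{pmatrix}$, so the Dirac operator is self-adjoint. Set $\Psi(x,z)=AU^{(0)}_{G(e_x)A}(z)$. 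Its rows are solutions of (\ref{E2.21}), up to transposition. The first row of $F(x,z)=G(e_x)\Psi(x,z)$ is $e^{ixz}(f_{11},f_{12})(x,z)$, and $m_{G(e_x)A}=f_{12}/f_{11}$.

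The key structural identity concerns translation. Since $e_{x+y}=e_xe_y$, the cocycle identity gives $m_{G(e_x)A}$ as the $m$-function of $G(e_y)$ applied to $G(e_x)A$. We will show that $x\mapsto m_{G(e_x)A}(z)$ satisfies the Riccati equation associated with (\ref{E2.21}). To do this we differentiate $m=f_{12}/f_{11}$ using $\partial_xF=iz[\sigma_3,F]+iF[A_1,\sigma_3]$ from the derivation preceding (\ref{E2.21}). This equation says exactly that the row vector $(1,m_{G(e_x)A}(z))$ is projectively the first row of $\Psi(x,z)$, i.e.\ a solution $\boldsymbol f(x,z)$ of the Dirac system with Weyl ratio $m_{G(e_x)A}(z)$ at each $x$.

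Next, for $z\in\mathbb C_+$ we must show that this solution is square integrable at $-\infty$; for $z\in\mathbb C_-$ the corresponding statement is at $+\infty$. The sign bookkeeping is fixed by $|e^{ixz}|=e^{-x\operatorname{Im}z}$. Lemma~\ref{l29} gives $|m_{G(e_x)A}|<1$ uniformly in $x$ by (D.1). Also, $f_{11}(x,z)=1+\varphi_{11}$ is bounded uniformly in $x$ for $|z|$ large, by the Cauchy remainder estimate in the proof of Lemma~\ref{l29}. Hence $|\boldsymbol f(x,z)|\lesssim e^{-x\operatorname{Im} z}$, which is $L^2$ at $-\infty$ when $\operatorname{Im}z>0$.

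Uniformity in $x$ is the main obstacle. The remainder bound depends on $\|\Psi\|_2$ for the symbol $G(e_x)A$. We will first try the following alternative. From the Riccati equation one derives the identity
\[
\partial_x\bigl(|f_1|^2-|f_2|^2\bigr)=2\operatorname{Im}z\,|\boldsymbol f|^2,
\]
a Green's-type identity for the Dirac system. Together with $|m|<1$, i.e.\ $|f_2|<|f_1|$ along the whole orbit, integrating from $-\infty$ to $0$ yields $\int_{-\infty}^0|\boldsymbol f|^2<\infty$ once $|f_1|^2-|f_2|^2$ is shown to have a finite nonnegative limit. Monotonicity together with positivity gives exactly this limit.

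Finally, limit-point uniqueness identifies $m_A(z)$ with $m_-(z)$ for $z\in\mathbb C_+$. This holds since $q$ is locally bounded, so the system is in the limit-point case, and the $L^2$ solution at $-\infty$ is unique up to scaling. The case $\mathbb C_-$ follows by the conjugation symmetry $m_1=\overline m_2$, or by repeating the argument with $\operatorname{Im}z<0$ at $+\infty$. Matching the normalization conventions for $m_\pm$ is a routine check.
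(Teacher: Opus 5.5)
Your final argument is the paper's proof: for each $x$, Proposition~\ref{p31} and Lemma~\ref{l29} give $J(x)=|f_{11}|^2-|f_{12}|^2=|f_{11}|^2\bigl(1-|m_{G(e_x)A}(z)|^2\bigr)>0$, the self-adjoint reduction gives $J'=2\operatorname{Im}z\,|\boldsymbol f|^2$, and monotone integration toward $-\infty$ (for $\operatorname{Im}z>0$) or toward $+\infty$ (for $\operatorname{Im}z<0$) produces a solution that is square integrable at that end and whose ratio is $m_A$; limit-point uniqueness then identifies $m_A$ with $m_-$ on $\mathbb C_+$ and with $m_+$ on $\mathbb C_-$. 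Three side remarks do not affect the argument you actually use. The translation/Riccati detour is unnecessary. The abandoned first attempt has a sign slip: since $|e^{-ixz}|=e^{x\operatorname{Im}z}$, the bound should read $e^{x\operatorname{Im}z}$. Finally, the limit-point property holds for Dirac systems with any locally integrable potential, not because $q$ is locally bounded.
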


\begin{proof}
	Recall $e_{x}(z)=e^{ixz}\in\Gamma_{1}^{\operatorname{real}}$. We identify $m_{A}$ with the Weyl--Titchmarsh function of the self-adjoint Dirac operator by an $L^{2}$ estimate for the associated Baker--Akhiezer solution.

	In the notation preceding (\ref{E2.21}), write
\[
\Psi(x,z)=G(e_x)^{-1}F(x,z)=AU_{G(e_x)A}^{(0)}=
\begin{pmatrix}
	f_{11} & f_{12} \\
	f_{21} & f_{22}
\end{pmatrix},
\]
and set $\boldsymbol f=\Psi(x,z)^T\boldsymbol e_1$. Since multiplication by $G(e_x)^{-1}$ multiplies the first row by the scalar $e^{-ixz}$, it does not change the quotient of its two entries. Hence
\[
m_{G(e_x)A}(z)=\frac{f_{12}(x,z)}{f_{11}(x,z)}.
\]
Equation (\ref{E2.21}) gives
\[
\left\{
\begin{array}
	[c]{ll}%
	i\partial_{x}f_{11} = zf_{11}-2\alpha_{21}f_{12} \\
	i\partial_{x}f_{12} = -zf_{12} + 2\alpha_{12}f_{11} %
\end{array}
\right. \text{.}
\]
For every $x$, Proposition~\ref{p31} and Lemma~\ref{l29} give
$|m_{G(e_x)A}(z)|<1$. Lemma~\ref{l20} ensures that the row
$(f_{11},f_{12})$ does not vanish. Thus, writing
$m_1(x,z)=m_{G(e_x)A}(z)$, we have
\[
J(x)=|f_{11}(x,z)|^2-|f_{12}(x,z)|^2
     =|f_{11}(x,z)|^2(1-|m_1(x,z)|^2)>0.
\]
Since $\alpha_{12}=\overline{\alpha}_{21}$, the displayed Dirac system yields
\[
J'(x)=2\operatorname{Im}z\,
       \bigl(|f_{11}(x,z)|^2+|f_{12}(x,z)|^2\bigr).
\]
For $z\in D_-\cap\mathbb C_-$ and $R>0$, therefore,
\[
\int_0^R\!\bigl(|f_{11}|^2+|f_{12}|^2\bigr)\,dx
 =\frac{J(0)-J(R)}{-2\operatorname{Im}z}
 \le\frac{J(0)}{-2\operatorname{Im}z}.
\]
Hence $\boldsymbol f\in L^2(\mathbb R_+;\mathbb C^2)$.
For $z\in D_-\cap\mathbb C_+$, integration over $[-R,0]$ gives
\[
\int_{-R}^0\!\bigl(|f_{11}|^2+|f_{12}|^2\bigr)\,dx
 =\frac{J(0)-J(-R)}{2\operatorname{Im}z}
 \le\frac{J(0)}{2\operatorname{Im}z},
\]
so $\boldsymbol f\in L^2(\mathbb R_-;\mathbb C^2)$.
If $m_{\pm}$ denote the Weyl functions at $\pm\infty$, respectively, then
\[
m_{A}(z)=m_{1}(0,z) = \frac{f_{12}(0,z)}{f_{11}(0,z)} = m_{+}(z) \quad \text{on } D_{-} \cap \mathbb{C}_{-}.
\]
	The left-half-line estimate similarly gives
\[
m_{A}(z)=m_{1}(0,z) = \frac{f_{12}(0,z)}{f_{11}(0,z)} = m_{-}(z) \quad \text{on } D_{-} \cap \mathbb{C}_{+}.
\]
	By Lemma \ref{l29}, $m_A$ is holomorphic on $\mathbb C\setminus\mathbb R$, while $m_{-}$ and $m_{+}$ are holomorphic in the corresponding half-planes. The identity theorem therefore extends the two identities from $D_{-}\cap\mathbb C_{+}$ and $D_{-}\cap\mathbb C_{-}$ to all of $\mathbb C_{+}$ and $\mathbb C_{-}$, respectively. \bigskip
\end{proof}

\begin{proof}[Proof of Theorem \ref{t1} (defocusing case)]
	Let $q\in\mathcal Q_{N,d}$ and choose $A\in\mathcal A_{N,d}^{inv}$ such that
	\[
q(x)=2\alpha_{21}(G(e_x)A).
\]
	Proposition \ref{p31} shows that
	$G(g)A\in\mathcal A_{N,d}^{inv}$ for every
	$g\in\Gamma_N^{\operatorname{real}}$.  Consequently, the formula defining
	$\operatorname{dNLS}(g)q$ is meaningful for every $g$, and its value again
	belongs to $\mathcal Q_{N,d}$.

	We next show that this value is independent of the representing symbol.
	Suppose that $B\in\mathcal A_{N,d}^{inv}$ is another symbol satisfying
	\[
q(x)=2\alpha_{21}(G(e_x)B).
\]
	By Proposition \ref{p32},
	\[
m_A=m_B.
\]
	Fix $g\in\Gamma_N^{\operatorname{real}}$. Choose a contour inside
	witnessing contours for $A$ and $B$, with all rational zeros and poles of
	$g$ in its exterior. The defocusing definition and Lemma~\ref{l19}
	allow restriction to this contour without changing the exterior data.
	Proposition \ref{p31}
	guarantees the invertibility required in Lemma \ref{l24}; the rational
	one-factor iteration and its limiting argument therefore give
	\[
m_{G(g)A}=m_{G(g)B}.
\]
	Applying Proposition \ref{p32} once more, these are the Weyl functions of
	the two transformed potentials
	\[
q_g^A(x)=2\alpha_{21}(G(e_x)G(g)A),
		\qquad
		q_g^B(x)=2\alpha_{21}(G(e_x)G(g)B).
\]
	The uniqueness of the correspondence between the Weyl functions and the potentials yields
	$q_g^A=q_g^B$.  Thus $\operatorname{dNLS}(g)q$ does not depend on the
	choice of $A$.

	If $g_1,g_2\in\Gamma_N^{\operatorname{real}}$, then $G(g_2)A$ represents
	$\operatorname{dNLS}(g_2)q$, and hence
	\begin{align*}
		\bigl(\operatorname{dNLS}(g_1)
		\operatorname{dNLS}(g_2)q\bigr)(x)
		&=2\alpha_{21}\bigl(G(e_x)G(g_1)G(g_2)A\bigr)\\
		&=2\alpha_{21}\bigl(G(g_1g_2e_x)A\bigr)\\
		&=\bigl(\operatorname{dNLS}(g_1g_2)q\bigr)(x).
	\end{align*}
	Since $G(1)A=A$, the identity element acts trivially.  This proves the
	flow assertion.

	It remains to identify the distinguished time flow.  For $N\geq2$, put
	\[
a(t,x)=\alpha_{21}\bigl(G(e^{ixz+itz^{2}})A\bigr),
		\qquad q(t,x)=2a(t,x).
\]
	The functions $e^{ixz}$ and $e^{itz^{2}}$ belong to the required groups.
	Proposition~\ref{p31} supplies the invertibility hypothesis of
	Proposition~\ref{p26} for every $(t,x)\in\mathbb R^2$.
	Its defocusing conclusion gives
	\[
i\partial_ta=-\dfrac{1}{2}\partial_x^2a+4|a|^2a.
\]
	Multiplication by $2$ therefore gives
	\[
i\partial_tq=-\dfrac{1}{2}\partial_x^2q+|q|^2q.
\]
	At $t=0$, the definition of $\mathcal Q_{N,d}$ gives $q(0,x)=q(x)$.
	Finally, apply the finite-parameter assertion of Lemma~\ref{l15}
    with $(h_1,h_2)=(z,z^2)$, $n=0$, and $\ell=1$.
    It gives jointly continuous derivatives
    $\partial_x^j\partial_t^kq$ whenever $j+2k\leq N$.
    In particular, $q(t,\cdot)\in C^N(\mathbb R)$ and
    $q(\cdot,x)\in C^{\lfloor N/2\rfloor}(\mathbb R)$.
    Since $N\geq2$, both $q_{xx}$ and $q_t$ are jointly continuous,
    so the equation just obtained holds classically, including at $t=0$. \bigskip
\end{proof}

\begin{proof}[Proof of Theorem \ref{t2} (defocusing inclusion)]
	Let $q\in W^{2N+1,\infty}(\mathbb R)$ and consider the self-adjoint Dirac operator
	\[
D_q^{d}=i\sigma_3\partial_x+
	\begin{pmatrix}0&q\\ \overline q&0\end{pmatrix}.
\]
	Write $m_{\pm}^{q}(x,z)$ for its Weyl coordinates at $\pm\infty$, with the convention used in Proposition \ref{p32}, and set
	\[
m^{d}(x,z)=
	\begin{cases}
		m_-^{q}(x,z),&z\in\mathbb C_+,\\
		m_+^{q}(x,z),&z\in\mathbb C_-.
	\end{cases}
\]
	The Weyl-disk property gives $|m^{d}(x,z)|<1$. Apply \cite[Theorem~3]{SZ} with its regularity index equal to $2N+1$ and its boundary index equal to $N-1$. The expansion has $N+1$ terms and remainder of order $N+2$, since
    \[
(2N+1)-(N-1)-1=N+1,\qquad
        (N-1)-(2N+1)=-N-2.
\]
    For every admissible exterior domain,
	\[
m^{d}(x,z)=\sum_{k=1}^{N+1}c_k(x)z^{-k}+O(z^{-N-2}),
	\qquad c_1(x)=\frac{\overline{q(x)}}{2}.
\]
	Consequently,
	\[
M_d(z)=\begin{pmatrix}1&m^{d}(0,z)\\
	\overline{m^{d}}(0,z)&1\end{pmatrix}
\]
	belongs to $\mathcal M_{N,d}$. Lemma \ref{l27} therefore gives $M_d\in\mathcal A_{N,d}^{inv}$ and $m_{M_d}=m^{d}(0,\cdot)$.

	Let $q_{M_d}(x)=2\alpha_{21}(G(e_x)M_d)$. By Proposition \ref{p32}, the two half-line Weyl functions of $q_{M_d}$ are precisely the two components of $m_{M_d}$. They coincide with those of $q$ by construction. The uniqueness of the correspondence between the Weyl functions and the potentials now yields $q_{M_d}=q$ almost everywhere, and hence everywhere after choosing the continuous representatives. Thus $q\in\mathcal Q_{N,d}$. \bigskip
\end{proof}

\section{Construction of the focusing NLS flow}\label{sec:focusing-construction}
In this section, we consider the reduction $A = \sigma_{4}^{-1}\overline{A}\sigma_{4}$, under which $\alpha_{21}$ satisfies the focusing NLS equation. In contrast to the defocusing case, the associated Dirac operator is non-self-adjoint, and its spectral theory is correspondingly more delicate; see, for example, \cite{FKRS}.

\subsection{Non-vanishing of the tau-function}
Unlike in the defocusing case, the argument here is based primarily on intrinsic properties of the tau-function rather than on positivity properties of the $m$-function.

\begin{lemma}\label{l33}
Let $A\in\mathcal A_N^{inv}(C)$ satisfy
$A=\sigma_4^{-1}\overline A\sigma_4$. Write
\[
I+\Phi_A=\begin{pmatrix}u&v\\-\overline v&\overline u\end{pmatrix},
 \qquad \Delta=u\overline u+v\overline v.
\]
For every $\zeta\in D_-$,
\begin{equation}\label{E4.1}
 \tau_A(r_\zeta)=
 \frac{\bigl(|u(\zeta)|^2+|v(\zeta)|^2\bigr)^2
 +4(\operatorname{Im}\zeta)^2|uv'-u'v|^2(\zeta)}
 {|\Delta(\zeta)|^2}>0.
\end{equation}
Consequently $\tau_A(r)>0$ for every $r\in\Gamma_0^{\mathrm{real}}(C)$.
\end{lemma}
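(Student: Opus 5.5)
The plan is to specialize the one-factor formula of Lemma~\ref{l21} to $\eta=\overline\zeta$ and then use the skew reduction. Under $A=\sigma_4^{-1}\overline A\sigma_4$, the remarks after Lemma~\ref{l25} give $f_{22}=\overline{f_{11}}$, $f_{21}=-\overline{f_{12}}$ and $m_2=-\overline{m_1}$. Write $u=f_{11}$, $v=f_{12}$, so that $m_1=v/u$, and set $\eta=\overline\zeta$. Then
\[
m_2(\overline\zeta)=-\overline{m_1(\zeta)},\qquad m_2'(\overline\zeta)=-\overline{m_1'(\zeta)}.
\]
Since $\zeta-\overline\zeta=2i\operatorname{Im}\zeta$, we have $(\zeta-\eta)^2=-4(\operatorname{Im}\zeta)^2$. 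Substituting, the numerator in the one-factor formula becomes
\[
(1+|m_1(\zeta)|^2)^2+4(\operatorname{Im}\zeta)^2|m_1'(\zeta)|^2 .
\]
This is where the sign flips relative to (\ref{E3.1}).

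Next I clear denominators. The prefactor is
\[
\frac{1+\varphi_{11}(\zeta)}{1+\varphi_{11}(\overline\zeta)}\cdot\frac{1+\varphi_{22}(\overline\zeta)}{1+\varphi_{22}(\zeta)}=\frac{u(\zeta)\,\overline{u(\zeta)}}{u(\overline\zeta)\,\overline{u(\overline\zeta)}}.
\]
Here I use $\overline u(z)=\overline{u(\overline z)}$, so $\overline{u}(\overline\zeta)=\overline{u(\zeta)}$. The denominator factors are
\[
1-m_1(\zeta)m_2(\zeta)=\frac{\Delta(\zeta)}{u(\zeta)\overline u(\zeta)},\qquad 1-m_1(\overline\zeta)m_2(\overline\zeta)=\frac{\Delta(\overline\zeta)}{u(\overline\zeta)\overline u(\overline\zeta)}.
\]
The $u(\overline\zeta)$ factors cancel, and the identity $\Delta(\overline\zeta)=\overline{\Delta(\zeta)}$ (from the reduction) turns the denominator into $|\Delta(\zeta)|^2$. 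Multiplying the numerator by $|u(\zeta)|^4$ turns $1+|m_1|^2$ into $|u|^2+|v|^2$ and $u^2m_1'$ into $uv'-u'v$, which gives (\ref{E4.1}).

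This computation is valid where $u(\zeta)u(\overline\zeta)\ne0$. Both sides are real-analytic in $\zeta$, and the right side only involves $u,v$ and $\Delta\neq0$ (Lemma~\ref{l20}). So (\ref{E4.1}) extends to all of $D_-$ by the continuation convention of Lemma~\ref{l21}, or by continuity of the finite-rank determinant in the parameters. Positivity is then immediate: the row $(u,v)(\zeta)$ cannot vanish, since otherwise $\Delta(\zeta)=0$, contradicting Lemma~\ref{l20}. Hence the numerator is at least $(|u|^2+|v|^2)^2>0$.

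For a general $r\in\Gamma_0^{\mathrm{real}}(C)$, write $r=c\prod_\nu r_{\zeta_\nu}$ with $|c|=1$, as in Lemma~\ref{l24}, and induct on the number of factors. By Lemma~\ref{l11}(i), $\tau_A(r_{\zeta_1})>0$ gives invertibility of $T(G(r_{\zeta_1})A)$. The transformed symbol lies in $\mathcal A_N^{inv}(C)$ and remains skew-self-adjoint because $r_{\zeta_1}\overline{r_{\zeta_1}}=1$. Multiplicativity of ordinary determinants for finite-rank perturbations gives
\[
\tau_A(r_{\zeta_1}r')=\tau_A(r_{\zeta_1})\,\tau_{G(r_{\zeta_1})A}(r'),
\]
and both factors are positive by the base case and the induction hypothesis. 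A unimodular constant contributes a factor of $1$.

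I expect the main obstacle to be the exceptional configurations. One is $\zeta,\overline\zeta$ not both lying in the exterior of $C$, which is forced since $r_\zeta\in\Gamma_0(C)$ requires its zero and pole to lie in $D_-$. The other is repeated factors $\zeta_\nu$. I would treat both by the continuity argument already used at the end of Lemma~\ref{l30}: trace-norm convergence of uniformly bounded-rank corrections. Positivity is not lost in the limit, because at every stage the limit is itself an instance of the closed formula (\ref{E4.1}) applied to the current symbol.
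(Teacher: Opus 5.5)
Your proposal is correct and follows the paper's proof essentially step for step. You substitute $\eta=\overline\zeta$, $m_1=v/u$, $m_2=-\overline{m_1}$ into the one-factor case of Lemma~\ref{l21}, extend across the zeros of $u$ by continuity using $\Delta\ne0$, and induct with the ordinary determinant cocycle $\tau_A(r_\zeta r')=\tau_A(r_\zeta)\,\tau_{G(r_\zeta)A}(r')$. The paper also records that $u\not\equiv0$ on either exterior component, which is what makes these exceptional points isolated.

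The obstacles you anticipate at the end do not actually arise. Since $D_-$ is invariant under conjugation, $\zeta\in D_-$ forces $\overline\zeta\in D_-$. Repeated factors are handled directly by your cocycle induction, because each step applies (\ref{E4.1}) to the current focusing-reduced symbol, so no limiting argument is needed.
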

\begin{proof}
Where $u(\zeta)u(\bar\zeta)\ne0$, insert $m_1=v/u$,
$m_2=-\overline{m_1}$ and $\eta=\bar\zeta$ in the one-factor case of
Lemma~\ref{l21}. The scalar prefactor cancels to
$|u(\zeta)|^4/|\Delta(\zeta)|^2$ and the remaining factor is
\[
(1+|m_1(\zeta)|^2)^2
 +4(\operatorname{Im}\zeta)^2|m_1'(\zeta)|^2.
\]
Using $m_1'=(uv'-u'v)/u^2$ gives
(\ref{E4.1}). The high-half-plane normalization
shows that $u$ is not identically zero on either exterior component.
On compact subsets of $D_-$, the rational finite-rank corrections vary
continuously in trace norm with $\zeta$. Since $\Delta\ne0$ by
Lemma~\ref{l20}, continuity extends the identity across all exceptional
coordinate points. Its numerator is strictly positive because the row
$(u,v)$ cannot vanish.

The determinant criterion makes $B=G(r_\zeta)A$ invertible on every
required weight, with the same reduction. Apply the same formula to $B$.
The ordinary determinant cocycle then gives
\[
\tau_A(r_\zeta r_\eta)=\tau_A(r_\zeta)\tau_B(r_\eta)>0.
\]
Every real rational multiplier is a finite product of these factors and
a unimodular constant, whose tau-function is one. Induction proves the
claim, including repeated and cancelling factors.
\end{proof}

\begin{proposition}\label{p34}
Let $A\in\mathcal A_N^{inv}(C)$ satisfy the focusing reduction.
For every $g\in\Gamma_N^{\mathrm{real}}(C)$,
\[
\tau_A^{(2)}(g)>0,
 \qquad G(g)A\in\mathcal A_{N,f}^{inv}.
\]
\end{proposition}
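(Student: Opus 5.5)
The plan follows the structure of Proposition~\ref{p31}, with Lemma~\ref{l33} replacing the positivity of Lemmas~\ref{l29}--\ref{l30}. The focusing case is simpler in one respect: Lemma~\ref{l33} gives strict positivity $\tau_B(r)>0$ for \emph{every} symbol $B\in\mathcal A_N^{inv}(C)$ with the $\sigma_4$ reduction and every $r\in\Gamma_0^{\mathrm{real}}(C)$. No extra class condition has to be propagated, so there is no analogue of condition (iii) in \eqref{E2.4}. All computations stay on the fixed contour $C$.

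\emph{Rational case.} For $r\in\Gamma_0^{\mathrm{real}}(C)$, Lemma~\ref{l33} gives $\tau_A(r)>0$. I then relate this to $\tau_A^{(2)}(r)=\tau_A(r)e^{-\operatorname{tr}K}$. By Lemma~\ref{l25} with $j=4$, and using $\overline r^{-1}=r$, the trace of the finite-rank correction is real, so $\tau_A^{(2)}(r)>0$. Lemma~\ref{l11}(i) then makes $T(G(r)A)$ invertible on every $\boldsymbol H_n(D_+)$ with $1\le n\le N+1$. Since $r\overline r=1$, the reduction is preserved, so $G(r)A\in\mathcal A_{N,f}^{inv}$.

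\emph{Exponential factor $g=e^{ih}$.} First take the approximants $r_k\in\Gamma_0^{\mathrm{real}}(C)$ from Lemma~\ref{l23}. Lemma~\ref{l13} gives $\tau_A^{(2)}(g)=\lim_k\tau_A^{(2)}(r_k)\ge0$. To get strict positivity, I use the same device as in Proposition~\ref{p31}. Because $r_k^{-1}g\to1$ in the sense of \eqref{E2.12}, Lemma~\ref{l13} gives $\tau_A^{(2)}(r_k^{-1}g)\to\tau_A^{(2)}(1)=1$, so this quantity is positive for some fixed large $k$. Then $B=G(r_k^{-1}g)A$ is invertible by Lemma~\ref{l11} and still satisfies the focusing reduction, since $r_k^{-1}g\in\Gamma_N^{\mathrm{real}}(C)$. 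The cocycle identity \eqref{E2.8} then gives
\[
\tau_A^{(2)}(g)=\tau_A^{(2)}(r_k^{-1}g)\,\tau_B^{(2)}(r_k)\exp\!\bigl(-E_A(r_k^{-1}g,r_k)\bigr).
\]
The first factor is positive by the choice of $k$, and the second by the rational case applied to $B$. The exponential is positive because $E_A$ is real under the reduction, as noted after Lemma~\ref{l25}. Hence $\tau_A^{(2)}(g)>0$.

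\emph{General $g=re^{ih}$.} I first pass to $G(r)A$, which lies in $\mathcal A_{N,f}^{inv}$ by the rational case, and apply the exponential case to it. The cocycle identity together with reality of $E$ then gives $\tau_A^{(2)}(g)>0$. Lemma~\ref{l11}(i) yields invertibility of $T(G(g)A)$ on all required weights, and preservation of the reduction under real multipliers gives $G(g)A\in\mathcal A_{N,f}^{inv}$.

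\emph{Main obstacle.} The delicate point is the reality of the exponential corrections $E_A(g_1,g_2)$ and of the trace relating $\tau$ and $\tau^{(2)}$ for non-rational $g$. I would justify this carefully from Lemma~\ref{l25}, using the antilinear conjugation $\mathcal C_4$, which maps each operator in \eqref{E2.6} to itself for real $g_1,g_2$, so that its trace is real. A secondary check is that Lemma~\ref{l13} applies to $r_k^{-1}g$, i.e.\ that the bounds \eqref{E2.12} hold uniformly for these products on a common neighborhood $U$. This follows from the uniform bounds in Lemma~\ref{l23}.
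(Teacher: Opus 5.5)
Your proposal is correct and follows the paper's proof essentially step for step. You split $g=(g/r_k)\,r_k$ using the Lemma~\ref{l23} approximants, apply Lemma~\ref{l13} to make $\tau_A^{(2)}(g/r_k)$ positive, use Lemma~\ref{l33} for the rational factor on the transformed focusing-reduced symbol, get reality of the trace corrections from the antilinear $\sigma_4$-conjugation, and close with the cocycle identity, treating general $g=re^{ih}$ by first passing to $G(r)A$ (your preliminary bound $\tau_A^{(2)}(g)\ge0$ is harmless but not needed).
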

\begin{proof}
First let $g=e^{ih}$. Choose the rational approximants $r_k$ of
Lemma~\ref{l23} and put $s_k=g/r_k$. The functions $s_k,s_k^{-1}$
are uniformly bounded on a neighborhood of the contour, their
derivatives have the bounds in (\ref{E2.12}), and $s_k\to1$.
Lemma~\ref{l13} therefore gives $\tau_A^{(2)}(s_k)\to1$.
Choose $k$ for which this real number is positive, and put $B=G(s_k)A$.
Then $B$ is invertible and focusing-reduced, so Lemma~\ref{l33}
gives $\tau_B(r_k)>0$.

All trace corrections are real. Indeed, the antiunitary map
$Jf(z)=\sigma_4\overline{f(\bar z)}$ commutes with the normalized
Toeplitz operators under this reduction. For a trace-class combination
$K$ of their perturbations, $JKJ^{-1}=K$ implies
$\operatorname{tr}K=\overline{\operatorname{tr}K}$. This applies both to
the ordinary-to-modified determinant correction and to the product of
the two Hilbert--Schmidt perturbations in (\ref{E2.6}). Thus
\[
\tau_A^{(2)}(g)
 =\tau_A^{(2)}(s_k)\tau_B^{(2)}(r_k)
   e^{-E_A(s_k,r_k)}>0.
\]
For $g=re^{ih}$, first apply Lemma~\ref{l33} to $r$, then the
exponential argument to $G(r)A$. The transformed symbol is invertible
and focusing-reduced; Lemma~\ref{l33} gives its defining single-factor
positivity. Hence it belongs to $\mathcal A_{N,f}^{inv}$.
\end{proof}

Define
\[
\mathcal{M}_{N,f} = \left\{ M = \begin{pmatrix}
	1 & m_{1} \\
	-\overline{m}_{1} & 1
\end{pmatrix} \text{; } m_{1} \text{ satisfies (F.1) and (F.2)}
\right\}
\]
where the following conditions hold.

\smallskip
\noindent\textup{(F.1)} The function $m_1$ is analytic on an open neighborhood of
$\overline D_-$ for some admissible contour $C$, and
$1+m_1\overline m_1$ has no zeros on $\overline D_-$.

\smallskip
\noindent\textup{(F.2)} The function $m_{1}$ has the asymptotic expansion
\[
m_{1}(z) = \sum_{1 \leq k \leq N+1} a_{k}z^{-k} + O(z^{-N-2}) \quad\text{on } D_{-}.
\]
For $M \in \mathcal{M}_{N,f}$ and $g \in \Gamma_{N}^{\operatorname{real}}(C)$, the symmetry
\[
G(g)M = \sigma_{4}^{-1}\overline{G(g)M}\sigma_{4}
\]
is preserved. The analogue of Lemma \ref{l27} is as follows.

\begin{lemma}
	\label{l35} Let $M \in \mathcal{M}_{N,f}$. Then $T(M)$ has a bounded inverse on $\boldsymbol{H}_{n}(D_{+})$ for $1\leq n \leq N+1$. Moreover, $\Phi_{M}=M-I$, $m_{M}=m_{1}$, and $\mathcal{M}_{N,f} \subset \mathcal{A}_{N,f}^{inv}$.
\end{lemma}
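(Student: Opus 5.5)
The plan follows the proof of Lemma~\ref{l27}, with the sign changes dictated by $\sigma_4$. By (F.1) we have $\det M=1+m_1\overline m_1\ne0$ on a neighborhood of $\overline D_-$, so
\[
M^{-1}=\frac{1}{1+m_1\overline m_1}\begin{pmatrix}1&-m_1\\ \overline m_1&1\end{pmatrix}.
\]
Since $m_1(z)=O(z^{-1})$ on $D_-$, condition (F.2) gives an expansion of $M^{-1}$ with $N+1$ terms and remainder $O(z^{-N-2})$. Hence both $M$ and $M^{-1}$ lie in $\mathcal A_{N+2}(C)$, with bounded analytic parts $F$ and $\widetilde F$. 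As in Lemma~\ref{l27}, I take these to be of the form $I+\sum_{k=1}^{N+1}B_k(z-b)^{-k}$ with $b\in D_-$.

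Both $M$ and $M^{-1}$ equal $I$ plus entries analytic in $D_-$ that decay at infinity. Therefore multiplication by either matrix preserves $\boldsymbol H(D_-)$. The algebraic identity of Lemma~\ref{l27} then applies unchanged:
\[
\boldsymbol u-T(M)T(M^{-1})\boldsymbol u=\mathfrak p_-\bigl((M-F)T(M^{-1})\boldsymbol u\bigr)+M\mathfrak p_-\bigl((M^{-1}-\widetilde F)\boldsymbol u\bigr).
\]
The left side lies in $\boldsymbol H_n(D_+)$ and the right side lies in $\boldsymbol H(D_-)$. Since these two spaces intersect only in $\{0\}$, we get $T(M)T(M^{-1})=I$. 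Symmetrically $T(M^{-1})T(M)=I$, so $T(M)^{-1}=T(M^{-1})$ on each $\boldsymbol H_n(D_+)$, $1\le n\le N+1$.

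Next, the columns of $M-I$ are $m_1\boldsymbol e_1$ and $-\overline m_1\boldsymbol e_2$, and both belong to $\boldsymbol H(D_-)$. Hence $T(M)I=I$, which gives $U=I$, $\Phi_M=M-I$ and $m_M=m_1$ by (\ref{E2.22}). The reduction $\sigma_4^{-1}\overline M\sigma_4=M$ follows by direct computation, using $\overline{\overline m_1}=m_1$. Thus $M\in\mathcal A_N^{inv}(C)$ with the focusing symmetry, which is all that (\ref{E2.4}) requires for $\mathcal A_{N,f}^{inv}$. Unlike the defocusing case, no positivity condition is needed here, because Lemma~\ref{l33} supplies it automatically.

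The main obstacle is the step justifying that $\overline m_1$ is analytic and decaying on $D_-$, and that it has boundary values in $L^2(C)$. This step is what makes multiplication by $M^{\pm1}$ preserve $\boldsymbol H(D_-)$. Since $D_-$ is symmetric under conjugation, $\overline m_1(z)=\overline{m_1(\bar z)}$ inherits analyticity on a neighborhood of $\overline D_-$, together with the expansion (F.2). Boundedness of $(1+m_1\overline m_1)^{-1}$ up to $C$ follows from non-vanishing on $\overline D_-$ together with the limit $1$ at infinity. I would also check that the $O(z^{-1})$ decay makes $m_1$, restricted to $C$, belong to $L^2(C)$, and that it lies in $H(D_-)$ by approximating it with the rational truncations of its Cauchy integral.
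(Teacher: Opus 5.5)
Your proposal is correct and follows the paper's proof essentially step for step. Both arguments show $M^{\pm1}\in\mathcal A_{N+2}(C)$ using (F.1)--(F.2), reuse the Toeplitz factorization identity of Lemma~\ref{l27} to obtain $T(M)^{-1}=T(M^{-1})$, and read off $T(M)I=I$, $\Phi_M=M-I$ and $m_M=m_1$. The paper additionally cites Lemma~\ref{l33} for single-factor positivity, but as you observe, the focusing class in (\ref{E2.4}) imposes only the reduction and invertibility, so that step is not needed.
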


\begin{proof}
	Condition (F.2) gives $M\in\mathcal{A}_{N+2}(C)$, while (F.1) gives
	\[
M^{-1}=\frac{1}{1+m_{1}\overline m_{1}}
	\begin{pmatrix}
		1 & -m_{1}\\
		\overline m_{1} & 1
	\end{pmatrix}
	\in\mathcal{A}_{N+2}(C).
\]
	Both $M$ and $M^{-1}$ preserve $\boldsymbol H(D_{-})$. Repeating the Toeplitz-factorization argument in the proof of Lemma \ref{l27} therefore yields
	\[
T(M)T(M^{-1})=T(M^{-1})T(M)=I.
\]
	Since $T(M)I=I$, we obtain
	\[
\Phi_{M}=MT(M)^{-1}I-I=M-I,
	\qquad m_{M}=m_{1}.
\]
The defining symmetry of $M$ is the focusing reduction.
Lemma~\ref{l33} now gives $\tau_M(r_\zeta)>0$ for every
$\zeta\in D_-$, verifying condition (iii) in (\ref{E2.4}).
Thus $M\in\mathcal A_{N,f}^{inv}$.
\end{proof}

\subsection{Proof of theorems}

Throughout this subsection, each focusing symbol is kept on its own
fixed admissible contour. Symbols on different contours will be compared
through their $m$-functions on the common exterior domain, without
restricting either symbol to a smaller contour.

For $A\in\mathcal A_{N,f}^{inv}$, put
\[
q_A(x)=2\alpha_{21}(G(e_x)A),
\]
and, for every $x\in\mathbb R$, set
\[
F_A(x,z)=G(e_x)A U_{G(e_x)A}^{(0)}=I+\Phi_{G(e_x)A}(z),
	\qquad
	\Psi_A(x,z)=G(e_x)^{-1}F_A(x,z).
\]
Proposition~\ref{p34} makes these functions well defined for all $x$.
The coefficient assertion of Lemma~\ref{l15}, with $h(z)=z$ and
$n=0$, $\ell=1$, gives $q_A\in C^N(\mathbb R)$, and the calculation leading
to (\ref{E2.21}) gives
\begin{equation}
	i\partial_x\Psi_A-\Psi_A B_{q_A}=z\Psi_A\sigma_3,
	\qquad
	B_{q_A}=\begin{pmatrix}0&-\overline{q_A}\\-q_A&0\end{pmatrix}.
	\label{E4.2}
\end{equation}
The focusing reduction gives
\[
F_A(x,z)=
 \begin{pmatrix}
 F_{A,11}(x,z)&F_{A,12}(x,z)\\
 -\overline{F_{A,12}}(x,z)&\overline{F_{A,11}}(x,z)
 \end{pmatrix}.
\]
We therefore retain only the first row quotient and write
\[
m_A(x,z)=\frac{F_{A,12}(x,z)}{F_{A,11}(x,z)},
 \qquad m_A(z)=m_A(0,z).
\]
Here, as throughout the paper, the bar includes reflection of the spectral
parameter:
\[
\overline m_A(x,z)=\overline{m_A(x,\overline z)},\qquad
 \frac{F_{A,21}(x,z)}{F_{A,22}(x,z)}=-\overline m_A(x,z).
\]
For each fixed $x$, restriction of the exterior Hardy functions to a
sufficiently high horizontal half-plane, followed by the usual Hardy-space
point-evaluation estimate, gives
\begin{equation}
	F_A(x,z)=I+O(z^{-1}),
	\qquad m_A(x,z)=O(z^{-1}),
	\label{E4.3}
\end{equation}
uniformly in both sufficiently high horizontal half-planes. To justify
the uniformity, use the expansion of $\Phi_{G(e_x)A}$ from Section~\ref{sec:equation-derivation}.
Its remainder is $z^{-N-1}$ times an exterior Hardy function; above a
horizontal line at positive distance from the contour, the Cauchy
point-evaluation estimate bounds that Hardy function uniformly.
In particular, the diagonal entries of $F_A$ are nonzero when
$|\operatorname{Im}z|$ is sufficiently large.

We use the finite-interval Weyl disks of
\cite[Definition~2.1, Remarks~2.3--2.4, and Definition~3.1]{FKRS}.
More explicitly, in the scalar case, for
\[
u'(s,\zeta)=\bigl(i\zeta\sigma_3+\sigma_3V(s)\bigr)u(s,\zeta),
	\qquad u(0,\zeta)=I,
\]
the disk $\mathcal N_V(\ell,\zeta)$ consists of the values
\[
\frac{(0\ \ 1)u(\ell,\zeta)^{-1}P(\zeta)}
	{(1\ \ 0)u(\ell,\zeta)^{-1}P(\zeta)},
	\qquad
	P(\zeta)^*P(\zeta)>0,
	\quad P(\zeta)^*\sigma_3P(\zeta)\geq0.
\]
An analytic selection of these disks on a half-plane
$\mathbb C_M=\{\operatorname{Im}\zeta>M\}$ is a Weyl function of the system
on $[0,\ell]$; see also \cite[Remark~2.4]{FKRS}.

\begin{proposition}
	\label{p36}
	Let $A\in\mathcal A_{N,f}^{inv}$.  For every $a\in\mathbb R$ and
	$\ell>0$, the functions
	\[
m_A(a,-\zeta),
		\qquad -\overline m_A(a,-\zeta)
\]
	are, on a sufficiently high half-plane $\mathbb C_M$, finite-interval
	Weyl functions for the restrictions of $q_A$ to $[a,a+\ell]$ and
	$[a-\ell,a]$, respectively, the latter after reflection.  Moreover, for
	$A,B\in\mathcal A_{N,f}^{inv}$,
	\[
q_A=q_B\quad\Longleftrightarrow\quad m_A=m_B,
\]
	where equality of the $m$-functions is understood on their common exterior
	domain (and hence by meromorphic continuation).
\end{proposition}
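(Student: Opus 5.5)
The plan has two parts. First we show that $m_A(a,\cdot)$ lies in the finite-interval Weyl disks. Then we deduce the equivalence $q_A=q_B\Leftrightarrow m_A=m_B$ from the local Borg--Marchenko theorem in one direction and from the one-factor machinery (Lemma~\ref{l24}) in the other.

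\emph{Step 1 (Weyl disk membership).} By translation covariance, replacing $A$ by $G(e_a)A$, which belongs to $\mathcal A_{N,f}^{inv}$ by Proposition~\ref{p34}, we may take $a=0$. Equation (\ref{E4.2}) shows that $x\mapsto\Psi_A(x,z)$ solves the Dirac system with potential $q_A$. After the substitution $z=-\zeta$ and transposition, the first row $(\Psi_{11},\Psi_{12})$ becomes a solution $u(s,\zeta)P$ of the FKRS system $u'=(i\zeta\sigma_3+\sigma_3V)u$. Here $V$ is built from $q_A$, and $P=P(\zeta)$ is the column $(F_{A,11}(0,z),F_{A,12}(0,z))^{\mathsf T}$ up to normalization. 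Then
\[
m_A(0,-\zeta)=\frac{(0\ 1)P}{(1\ 0)P}=\frac{(0\ 1)u(\ell)^{-1}\widetilde P}{(1\ 0)u(\ell)^{-1}\widetilde P},\qquad \widetilde P=u(\ell,\zeta)P,
\]
where $\widetilde P$ is, up to the scalar $e^{\pm i\ell z}$, the first row of $F_A(\ell,z)$. It therefore suffices to verify $\widetilde P^*\sigma_3\widetilde P\ge0$, that is, $|F_{A,11}(\ell,z)|\ge|F_{A,12}(\ell,z)|$, on a high half-plane. This follows from (\ref{E4.3}) applied at $x=\ell$: $F_A(\ell,z)=I+O(z^{-1})$ uniformly for $|\operatorname{Im}z|$ large. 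The half-plane may depend on $\ell$, which the statement allows. Nondegeneracy $\widetilde P^*\widetilde P>0$ follows from Lemma~\ref{l20}, and analyticity of the selection is clear. The reflected interval $[-\ell,0]$ is handled the same way using the second row, whose quotient is $-\overline m_A$ by the focusing symmetry, after reversing $s\mapsto -s$.

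\emph{Step 2 ($m_A=m_B\Rightarrow q_A=q_B$).} By Step 1, on every interval $[0,\ell]$ the two potentials share an analytic selection of Weyl functions on some $\mathbb C_M$. The local Borg--Marchenko theorem for skew-self-adjoint Dirac systems in FKRS then gives $q_A=q_B$ on $[0,\ell]$, and on $[-\ell,0]$ via the reflected functions. Letting $\ell\to\infty$ proves the implication. The main obstacle is making Step 1 precise enough to meet the hypotheses of that uniqueness theorem. FKRS formulates uniqueness through asymptotic agreement of Weyl functions up to $e^{2i\ell\zeta}$-type errors inside the disks, so we must check that our sign conventions match FKRS's $\sigma_3V$ normalization. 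We must also check that the growth of $u(\ell,\zeta)$ is compatible with the uniform bound (\ref{E4.3}).

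\emph{Step 3 ($q_A=q_B\Rightarrow m_A=m_B$).} If $q_A=q_B$, then $\Psi_A(x,z)$ and $\Psi_B(x,z)$ solve the same system. We show their first rows are proportional for $z$ in a high half-plane; by Step 1 each is, up to normalization, the direction singled out by the nested disks as $\ell\to\infty$. Rather than proving disk shrinkage, we use exterior Hardy normalization. Write $\Psi_B=W(z)\Psi_A$ row-wise, which is possible since both fundamental matrices solve the same equation. Then $W(z)=F_B(x,z)G(e_x)^{-1}\,\cdot\,G(e_x)F_A(x,z)^{-1}$ is independent of $x$. Using $F_{A},F_B=I+O(z^{-1})$ uniformly in $x$ on a high half-plane, we get that $e^{\mp 2ixz}W_{12}$ and $e^{\mp 2ixz}W_{21}$ remain bounded for all $x$, which forces the off-diagonal entries of $W$ to vanish. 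Hence $W$ is diagonal and the row quotients coincide: $m_A=m_B$ on a high half-plane, and therefore on the common exterior by meromorphic continuation. The delicate point here is that the $O(z^{-1})$ bound in (\ref{E4.3}) must be uniform in $x$. If only fixed-$x$ bounds are available, we instead compare at two points $x=0,\ell$ and take $\ell\to\pm\infty$ along a half-plane where $e^{2ixz}$ decays, which needs only boundedness at each fixed $x$ together with the growth of $W$ being $x$-independent.
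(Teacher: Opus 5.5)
Your Steps 1 and 2 follow the paper's argument. Property-$\sigma_3$ of the endpoint vector comes from (\ref{E4.3}) at $x=a+\ell$, the left interval is handled through the second row and reflection, and inverse uniqueness is the zero-difference case of \cite[Theorem~3.15]{FKRS}. The compatibility worries you raise in Step 2 do not arise, because the two Weyl functions coincide identically. The paper only adds the requirement $M>\|q_A\|_{L^\infty(a,a+\ell)}$ from the disk construction. Two small corrections:
\begin{itemize}
\item Lemma~\ref{l24} plays no role in either direction, and your Step 3 indeed does not use it.
\item The connection matrix is $W=\Psi_B\Psi_A^{-1}=G(e_x)^{-1}F_B(x,z)F_A(x,z)^{-1}G(e_x)$, so that $e^{2ixz}W_{12}=(F_BF_A^{-1})_{12}(x,z)$.
\end{itemize}

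The genuine gap is in Step 3. Your main version assumes $F_A(x,z)=I+O(z^{-1})$ uniformly in $x$, and this is not available. Estimate (\ref{E4.3}) is proved only for each fixed $x$, on a half-plane that depends on $x$. Uniformity would in particular force $\sup_x|q_A(x)|<\infty$, since the $z^{-1}$ coefficient of $F_{A,21}(x,\cdot)$ is $q_A(x)/2$. Nothing gives such a bound for a general $A\in\mathcal A_{N,f}^{inv}$.

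Your fallback (compare $x=0$ with $x=\ell$ and let $\ell\to\pm\infty$) has the right ingredients but skips the decisive point. With $x=-r$ you get $|W_{12}(z)|\le C_r|z|^{-1}e^{-2r\operatorname{Im}z}$ only on $\mathbb C_{M_r}$. Since $M_r$ may tend to infinity with $r$, you cannot fix $z$ and let $r\to\infty$. What is missing is an argument anchored on a fixed half-plane. Either of the following closes the gap:
\begin{itemize}
\item \emph{Paley--Wiener (the paper's route).} Use $x=0$ to place $W_{12},W_{21}$ in $H^2(\mathbb C_{M_0})$. Then $e^{-2irz}W_{12}\in H^2(\mathbb C_{M_r})$ forces the Fourier representative of $W_{12}$ to be supported in $[2r,\infty)$. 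This support condition is unaffected by raising the boundary line, and it holds for every $r>0$.
\item \emph{Phragm\'en--Lindel\"of.} The function $e^{-2irz}W_{12}$ is bounded on all of $\mathbb C_{M_0}$: on $\mathbb C_{M_r}$ by the estimate above, and trivially on the strip $M_0<\operatorname{Im}z\le M_r$. Phragm\'en--Lindel\"of then gives $|W_{12}(z)|\le\sup_{\mathbb C_{M_0}}|W_{12}|\,e^{-2r(\operatorname{Im}z-M_0)}$ on $\mathbb C_{M_0}$. Now let $r\to\infty$ at fixed $z$.
\end{itemize}
You need the conclusion in both components of the exterior. For the lower component, use $x=+r$, or the focusing symmetry as the paper does. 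Meromorphic continuation from one high half-plane does not reach the other component. With this step supplied, your Step 3 coincides with the paper's proof.
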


\begin{proof}
	Fix $a\in\mathbb R$ and $\ell>0$, and set
	\[
Q_{q_A}^f(x)=\begin{pmatrix}0&q_A(x)\\-\overline{q_A(x)}&0\end{pmatrix},
		\qquad
		V_{A,a}(s)=iQ_{q_A}^f(a+s).
\]
	The transpose of the first row of $\Psi_A(a+s,-\zeta)$, denoted by
	$y_1(s,\zeta)$, satisfies
	\[
y_1'=\bigl(i\zeta\sigma_3+\sigma_3V_{A,a}(s)\bigr)y_1.
\]
	Let $u_{A,a}(s,\zeta)$ be the normalized fundamental matrix of this
	system and put $P_1(\zeta)=y_1(\ell,\zeta)$.  Since row quotients are not
	affected by the scalar exponential in $G(e_x)^{-1}$,
	\[
u_{A,a}(\ell,\zeta)^{-1}P_1(\zeta)
		=c_1(\zeta)\binom{1}{m_A(a,-\zeta)},
		\qquad c_1(\zeta)\ne0.
\]
	At the other endpoint,
	\[
P_1(\zeta)^*\sigma_3P_1(\zeta)
		=|(P_1)_1(\zeta)|^2
		\bigl(1-|m_A(a+\ell,-\zeta)|^2\bigr).
\]
	By (\ref{E4.3}), this expression is positive throughout
	$\mathbb C_M$ once $M$ is sufficiently large.  We also choose
	$M>\|q_A\|_{L^\infty(a,a+\ell)}$, as required in the construction of the
	finite Weyl disk.  Thus $P_1$ has property-$\sigma_3$, and the displayed
	linear-fractional formula gives
	\[
m_A(a,-\zeta)\in
		\mathcal N_{V_{A,a}}(\ell,\zeta),
		\qquad \zeta\in\mathbb C_M.
\]
	The dependence on $\zeta$ is analytic there, because the diagonal entries
	of $F_A$ are $1+O(\zeta^{-1})$.

	For the left interval, let
	\[
S=\begin{pmatrix}0&1\\1&0\end{pmatrix},
		\qquad
		q_A^{a,\ell}(s)=-\overline{q_A(a-s)},
		\qquad 0\leq s\leq\ell.
\]
	If
	\[
y_2(s,\zeta)=S\Psi_A(a-s,-\zeta)^T\binom{0}{1},
\]
	then direct substitution in (\ref{E4.2}) gives
	\[
y_2'=\bigl(i\zeta\sigma_3+
		\sigma_3 iQ_{q_A^{a,\ell}}^f(s)\bigr)y_2.
\]
	Its projective coordinate at $s=0$ is
	$-\overline m_A(a,-\zeta)$ and at $s=\ell$ is
	$-\overline m_A(a-\ell,-\zeta)$. In particular, the latter has modulus
	$|m_A(a-\ell,-\overline\zeta)|$, which is less than one on a sufficiently
	high $\mathbb C_M$ by the upper-half-plane part of
	(\ref{E4.3}). Choose also
	$M>\|q_A\|_{L^\infty(a-\ell,a)}$.
	The same endpoint calculation therefore proves the asserted
	finite-disk property for the reflected potential.

	We next prove direct uniqueness.  Suppose $q_A=q_B=q$.  The matrices
	$\Psi_A$ and $\Psi_B$ solve the same equation
	(\ref{E4.2}), and hence
	\[
C(z)=\Psi_A(x,z)\Psi_B(x,z)^{-1}
\]
	is independent of $x$.  On a common sufficiently high half-plane,
	\[
F_A(x,z)F_B(x,z)^{-1}=G(e_x)(z)C(z)G(e_x)(z)^{-1}.
\]
	Writing $C=(c_{jk})$, (\ref{E4.3}) gives, for every fixed
	$x\in\mathbb R$,
	\[
e^{2ixz}c_{12}(z)=O(z^{-1}),
		\qquad e^{-2ixz}c_{21}(z)=O(z^{-1}).
\]
	On a fixed sufficiently high half-plane $\mathbb C_{M_0}$,
	$F_A(0,\cdot)-I$ and $F_B(0,\cdot)-I$ belong to $H^2$,
	and $F_B(0,\cdot)^{-1}$ is bounded. Hence
	$c_{12},c_{21}\in H^2(\mathbb C_{M_0})$.
	For every $r>0$, take $x=-r$ in the first estimate and $x=r$ in the
	second. On a possibly higher half-plane $\mathbb C_{M_r}$ this gives
	\[
e^{-2irz}c_{12}(z),\quad e^{-2irz}c_{21}(z)
	 \in H^2(\mathbb C_{M_r}).
\]
	The Hardy-space Fourier representation writes each $c=c_{12},c_{21}$ as
	\[
c(z)=\frac1{\sqrt{2\pi}}\int_0^\infty
	 e^{i(z-iM_0)t}a(t)\,dt,\qquad a\in L^2(0,\infty).
\]
	Thus the representing functions of $c_{12}$ and $c_{21}$ are supported in
	$[0,\infty)$. Raising the boundary line multiplies these functions by
	$e^{-(M_r-M_0)t}$ and does not change their supports; multiplication
	by $e^{-2irz}$ translates their supports by $-2r$.
	The last display therefore forces the original supports to be contained
	in $[2r,\infty)$. Since $r>0$ is arbitrary, both representing functions
	vanish. Thus $c_{12}=c_{21}=0$ in the upper component of the common
	exterior. The focusing symmetry gives the same conclusion in the lower
	component. Consequently $C$ is diagonal, so
	$\Psi_A=C\Psi_B$ implies equality of the first row quotients:
	$m_A=m_B$.

	Conversely, suppose $m_A=m_B$.  Fix $\ell>0$ and choose a common
	$\mathbb C_M$ in the first part of the proof.  The common function
	$m_A(0,-\zeta)=m_B(0,-\zeta)$ is a finite-interval Weyl function
	for both $q_A$ and $q_B$ on $[0,\ell]$.  Its difference from itself is
	zero, so the hypothesis of the finite-interval inverse uniqueness theorem
	\cite[Theorem~3.15]{FKRS} is satisfied for every
	$0<r<\ell$.  It follows that $q_A=q_B$ almost everywhere on
	$[0,\ell]$.  Applying the same argument to the second coordinate and the
	reflected potentials gives equality on $[-\ell,0]$.  Since $\ell$ is
	arbitrary and the two Sato potentials are continuous, $q_A=q_B$ on
	$\mathbb R$. \bigskip
\end{proof}

\begin{proof}[Proof of Theorem \ref{t1} (focusing case)]
    Let $q\in\mathcal Q_{N,f}$ and choose a representing symbol
    $A\in\mathcal A_{N,f}^{inv}$ on an admissible contour $C_A$.
    For $g=e^{ih}\in\Gamma_N^{\operatorname{sub}}$, both $g$ and $g^{-1}$
    are bounded and analytic on the interior of $C_A$.
    Proposition~\ref{p34}, applied on this fixed contour, gives
    \[
G(g)A\in
        \mathcal A_N^{inv}(C_A)\cap\mathcal A_{N,f}^{inv}.
\]
    Thus $q_{G(g)A}\in\mathcal Q_{N,f}$.

    Let $B\in\mathcal A_{N,f}^{inv}$, on a possibly different
    contour $C_B$, represent the same potential.
    Proposition~\ref{p36} gives $m_A=m_B$ on their common
    exterior. The focusing reduction determines the second
    projective coordinate from the first by
    $-\overline m_A$ and $-\overline m_B$, respectively.
    Thus equality of the scalar $m$-functions used in this
    subsection gives equality of the pairs required in
    Lemma~\ref{l24}.

The polynomial multiplier $g$ is admissible on both original contours.
Proposition~\ref{p34} gives positivity of the modified tau-functions
of $A$ and $B$ for $g$ and for every real rational multiplier admissible
on both contours. Both non-vanishing hypotheses of Lemma~\ref{l24}
therefore hold, and that lemma gives $m_{G(g)A}=m_{G(g)B}$ on the
common exterior. Proposition~\ref{p36} then yields
$q_{G(g)A}=q_{G(g)B}$. Thus $\operatorname{fNLS}(g)q$ is independent
of the representing symbol and contour.

	If $g_1,g_2\in\Gamma_N^{sub}$, all multipliers can be applied on
	the original contour of $A$. Their commutativity gives
	\begin{align*}
	 \bigl(\operatorname{fNLS}(g_1)
	 \operatorname{fNLS}(g_2)q\bigr)(x)
	 &=2\alpha_{21}\bigl(G(e_x)G(g_1)G(g_2)A\bigr)\\
	 &=\bigl(\operatorname{fNLS}(g_1g_2)q\bigr)(x).
	\end{align*}
	The identity acts trivially, and $e^{-ih}$ is the inverse of $e^{ih}$.
	This proves the group-action assertion.

	For $N\geq2$, put
	\[
a(t,x)=\alpha_{21}\bigl(G(e^{ixz+itz^2})A\bigr),
		\qquad q(t,x)=2a(t,x).
\]
	Proposition~\ref{p34} supplies the invertibility hypothesis in
	Proposition~\ref{p26} for every $(t,x)\in\mathbb R^2$.
	Its focusing conclusion gives
	\[
i\partial_ta=-\frac12\partial_x^2a-4|a|^2a,
\]
	and hence
	\[
i\partial_tq=-\frac12\partial_x^2q-|q|^2q.
\]
	At $t=0$ we recover the prescribed initial value. Finally, the
    finite-parameter assertion of Lemma~\ref{l15}, with
    $(h_1,h_2)=(z,z^2)$, $n=0$, and $\ell=1$,
    gives jointly continuous derivatives $\partial_x^j\partial_t^kq$
    for $j+2k\leq N$. Thus
    $q(t,\cdot)\in C^N(\mathbb R)$ and
    $q(\cdot,x)\in C^{\lfloor N/2\rfloor}(\mathbb R)$.
    In particular, for $N\geq2$, the equation holds classically on
    $\mathbb R^2$. \bigskip
\end{proof}

\begin{proof}[Proof of Theorem \ref{t2} (focusing inclusion)]
	Let $q\in W^{2N+1,\infty}(\mathbb R)$ and consider
	\[
D_q^f=i\sigma_3\partial_x+
		Q_q^f,
		\qquad
		Q_q^f=\begin{pmatrix}0&q\\-\overline q&0\end{pmatrix}.
\]
	Because $q$ is globally bounded, the right and left half-line Weyl
	coordinates $m_+^q$ and $m_-^q$ exist in sufficiently low and high
	half-planes, respectively, by
	\cite[Proposition~2.2 and Definition~2.5]{FKRS}, after the change of
	spectral variable $\zeta=-z$.  Define
	\[
m^f(x,z)=
		\begin{cases}
			m_-^q(x,z),&z\in D_-\cap\mathbb C_+,\\
			m_+^q(x,z),&z\in D_-\cap\mathbb C_-.
		\end{cases}
\]
	By \cite[Theorem~4]{SZ}, used with regularity index $2N+1$ and boundary
	index $N-1$, the number of terms is $(2N+1)-(N-1)-1=N+1$
    and the remainder is $O(z^{-N-2})$. An admissible contour can be chosen,
    strictly inside the
	analyticity domains furnished by that theorem, such that
	\[
m^f(x,z)=\sum_{k=1}^{N+1}c_k(x)z^{-k}+O(z^{-N-2}),
		\qquad c_1(x)=-\frac{\overline{q(x)}}2,
\]
	uniformly on $D_-$.  In particular,
	\[
d(z)=1+m^f(0,z)\overline{m^f}(0,z)=1+O(z^{-2}).
\]
	Thus $d$ has no zeros for sufficiently large $|z|$. Its zeros in the
	initial closed exterior lie in a compact subset of the common
	analyticity domain, so there are only finitely many. A compact
	enlargement of $D_+$ excludes these zeros while leaving the
	high-energy tails unchanged. Choosing the new boundary away from
	the zeros, we obtain $d\ne0$ on the new $\overline D_-$.
	Consequently,
	\[
A(z)=M_f(0,z):=
		\begin{pmatrix}
			1&m^f(0,z)\\
			-\overline{m^f}(0,z)&1
		\end{pmatrix}
\]
	belongs to $\mathcal M_{N,f}$.  Lemma~\ref{l35} gives
	$A\in\mathcal A_{N,f}^{inv}$ and
	\[
m_A=m^f(0,\cdot).
\]
	Let $q_A(x)=2\alpha_{21}(G(e_x)A)$.

	Fix $\ell>0$ and choose $M$ larger than the bounds required for $q_A$ on
	$[-\ell,\ell]$ and for $q$ on the whole line.  For
	$\zeta\in\mathbb C_M$ we have $-\zeta\in\mathbb C_-$ and hence
	\[
m_A(0,-\zeta)=m^f(0,-\zeta)=m_+^q(0,-\zeta).
\]
	By Proposition~\ref{p36}, the left-hand side is a finite-interval
	Weyl function for $q_A$ on $[0,\ell]$.  The right-hand side is the genuine
	right half-line Weyl function for $q$; Proposition~2.2 of \cite{FKRS}
	places it in every finite Weyl disk, in particular in the disk for
	$[0,\ell]$.  Thus the same analytic Weyl function belongs to the
	finite-interval disks of $q_A$ and $q$.  The zero-difference case of
	\cite[Theorem~3.15]{FKRS} gives
	\[
q_A=q\quad\text{almost everywhere on }[0,\ell].
\]

	For the other half-line, set $q^{\rm ref}(s)=-\overline{q(-s)}$.
Its right Weyl coordinate obeys
\[
m_+^{q^{\rm ref}}(0,z)
 =-\overline{m_-^q(0,\bar z)}.
\]
Indeed, if $y_-$ is a left Weyl solution, then
$y_{\rm ref}(s,z)=\sigma_3\overline{y_-(-s,\bar z)}$ is the right
Weyl solution for the reflected potential. The corresponding Sato coordinate is
$-\overline m_A(0,z)$ by Proposition~\ref{p36}. Canonical equality
of these coordinates and the same finite-interval inverse theorem give
equality of the reflected potentials on $[0,\ell]$, hence
$q_A=q$ almost everywhere on $[-\ell,0]$.  Since $\ell$ is arbitrary,
	$q_A=q$ almost everywhere on $\mathbb R$, and hence everywhere after
	choosing the continuous representative of $q$.  Thus $A$ represents $q$
	and $q\in\mathcal Q_{N,f}$. \bigskip
\end{proof}

\section{Global dynamics in a non-decaying phase space}\label{sec:phase}

\subsection{Spectral notation and established identities}

In this section we set $N=2$. We recall the notation needed
from the preceding spectral construction.
An admissible contour is
\[
C=\{a\pm i\omega(a):a\in\R\},
 \quad \omega>0,\quad \omega'\in L^\infty,
 \quad \omega(a)\asymp\br{a}^{-1}.
\]
Its interior strip and two-component exterior are denoted by $D_+$ and
$D_-$. Compact enlargements of the strip are allowed. Write
$H_j=\boldsymbol H_j(D_+)$, $1\le j\le3$, for the weighted vector Hardy
spaces defined above, with norms equivalent to
$\norm{\abs z^{-j}u}_{L^2(C)}$. The componentwise Cauchy projection
$\mathfrak p_+$ is bounded on $L^2(C)$.

For a symbol $A=F+E$, where $F$ is bounded and analytic in $D_+$ and
$E(z)=O(z^{-4})$ on $C$, the Toeplitz operator is
\[
T(A)u=Fu+\mathfrak p_+(Eu).
\]
For $g(z)$ bounded and boundedly invertible in $D_+$ put
$G(g)=\diag(g,g^{-1})$. If $T(A)$ is invertible, put
\[
U_A=T(A)^{-1}I,
 \quad F_A=AU_A=I+\Phi_A,
 \quad \Phi_A(z)=A_1z^{-1}+\cdots.
\]
Matrix columns are treated separately in these formulas.

We use the following consequences of the preceding results.
\begin{enumerate}[label=\textup{(S\arabic*)},leftmargin=*]
\item
The canonical Weyl symbol of $v\in W^{5,\infty}(\mathbb R)$
is
\[
M_\kappa(v,z)=
 \begin{pmatrix}1&m_v(z)\\\kappa \overline{m_v}(z)&1\end{pmatrix},
 \qquad \overline{m}(z)=\overline{m(\bar z)}.
\]
Here $m_v=m_-^v$ in the upper component and $m_v=m_+^v$ in the lower
component, using the preceding half-line conventions.
Lemmas~\ref{l27} and~\ref{l35}, with the Weyl
expansions in the proof of Theorem~\ref{t2}, give invertibility
on the admissible contours used below. It represents $v$:
$2(A_1(G(e^{ixz})M_\kappa(v)))_{21}=v(x)$.
\item\label{input:nonzero}
By Propositions~\ref{p31} and~\ref{p34},
for canonical symbols and all real $s,t$, the operators
\[
T(G(e^{i(sz+tz^2)})M_\kappa(v))
\]
are invertible on $H_1,H_2,H_3$.
\item
The function
\[
Q_\kappa[v](t,x)
 =2\bigl(A_1(G(e^{i(xz+tz^2)})M_\kappa(v))\bigr)_{21}
\]
is a global classical solution of (\ref{EE2.1}) by
Theorem~\ref{t1} and Lemma~\ref{l15}. It is independent of
the representing symbol and contour. Its mixed derivatives of total
spectral degree at most $2$ are given by differentiating the coefficient
integrals in the weighted spaces. The group construction gives
\begin{equation}\label{E5.1}
 Q_\kappa[\T_yv](t,x)=Q_\kappa[v](t,x+y),
 \qquad (\T_yv)(x)=v(x+y).
\end{equation}
\end{enumerate}
The representation and contour independence in Theorem~\ref{t1}
justify the translation identity before any global spatial bound is used.

\subsection{Uniform spectral estimates on bounded initial-data sets}

\begin{proposition}\label{p37}
For every $R,T<\infty$ there is a finite $B_T(R)$ such that, for both
signs and every $v\in W^{5,\infty}(\R)$ with $p_5(v)\le R$,
\begin{equation}\label{E5.2}
 \sup_{|t|\le T}p_2(Q_\kappa[v](t,\cdot))\le B_T(R).
\end{equation}
The function $B_T(R)$ may be chosen nondecreasing in $T$ and $R$.
For each such $v$, the reconstructed solution belongs to
$C(\R;W^{2,\infty})\cap C^1(\R;L^\infty)$.
\end{proposition}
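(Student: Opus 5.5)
The bound (\ref{E5.2}) will follow from a compactness argument in the symbol topology, transported from one spatial point to all of $\R$ by translation covariance (\ref{E5.1}). By (\ref{E5.1}), $\partial_x^aQ_\kappa[v](t,x)=\partial_x^aQ_\kappa[\T_xv](t,0)$. The ball $\mathcal B_R=\{v:p_5(v)\le R\}$ is invariant under all $\T_y$. It therefore suffices to show
\[
\sup_{v\in\mathcal B_R}\ \sup_{|t|\le T}\ \max_{a\le2}\abs{\partial_x^aQ_\kappa[v](t,0)}<\infty .
\]
By Lemma~\ref{l15} with $(h_1,h_2)=(z,z^2)$ and $N=2$, each such derivative is a finite sum of coefficient integrals over a fixed contour $C$. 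The integrands involve $\lambda^{j}G(e^{i(xz+tz^2)})E$ and derivatives of $T(G(e^{i(xz+tz^2)})M_\kappa(v))^{-1}I$ in $H_1,H_2,H_3$. The task is thus to bound, uniformly over $v\in\mathcal B_R$ and $|t|\le T$ (at $x=0$, with the $x$-derivatives only producing polynomial weights), the norms of $E_v=M_\kappa(v)-F_v$ in the weighted $L^2(C)$ norms and of the inverse Toeplitz operators on $H_1,H_2,H_3$.

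\textbf{Step 1: a common contour and a compact symbol set.} The Weyl asymptotics of \cite{SZ} (Theorems~3 and~4, with regularity index $5$ and boundary index $1$) have constants depending only on $p_5(v)$. From them I would extract one admissible contour $C_R$, with $\omega(a)\asymp\br a^{-1}$, and a single analytic part $F_v=I+\sum_{k\le3}B_k(v)(z-b)^{-k}$. These should satisfy $\sup_{C_R}\abs z^4\norm{E_v(z)}\le K(R)$ uniformly on $\mathcal B_R$. In the focusing case the contour must also avoid zeros of $1+m_v\overline{m_v}$ uniformly. Only finitely many such zeros occur, in a compact region controlled by $R$, and I would handle this near $|z|\lesssim K(R)$ by a uniform compact enlargement.

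Next I would equip $\mathcal B_R$ with the $C^4_{\mathrm{loc}}$ topology; it is compact there by Arzel\`a--Ascoli. Since $\|q\|_\infty\le R$, the Weyl functions depend continuously on the potential locally uniformly off the real axis, by Gronwall on finite intervals and nesting of Weyl disks. Together with the uniform tails, this makes $v\mapsto E_v$ continuous into the weighted space $L^2(C_R;\abs z^{2(3)}\abs{dz})$ with an extra decay margin. Hence $v\mapsto G(g)^{-1}T(G(g)M_\kappa(v))$ is norm continuous on $H_1,H_2,H_3$, jointly in $(v,t)\in\mathcal B_R\times[-T,T]$, using the Hilbert--Schmidt kernel bounds of Lemmas~\ref{l9} and~\ref{l13}.

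\textbf{Step 2: uniform inverse bounds.} By \ref{input:nonzero} every operator in this family is invertible. Inversion is continuous in norm, and a continuous function on the compact set $\mathcal B_R\times[-T,T]$ is bounded. This gives $\sup\norm{T(\cdot)^{-1}}_{H_j\to H_j}<\infty$. The derivative formulas of Lemma~\ref{l15} then bound $\partial_x^a\partial_t^kU$ in the appropriate $H_{1+a+2k}$, and the coefficient integrals are bounded via Cauchy--Schwarz against $\norm{\lambda^{3}E_v}_{L^2}$. Taking $a\le2$, $k=0$ gives (\ref{E5.2}). Monotonicity is obtained by replacing $B_T(R)$ with its supremum over smaller $T,R$.

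For the regularity claims, the same argument with $(a,k)=(0,1)$ bounds $\partial_tQ$ in $L^\infty$. Joint continuity from Lemma~\ref{l15}, combined with uniform continuity in $x$ (obtained from compactness via the translated family), gives $C(\R;W^{2,\infty})\cap C^1(\R;L^\infty)$. This requires the modulus of continuity to be uniform in $x$, which again comes from compactness of $\{\T_xv\}$ in $C^4_{\mathrm{loc}}$.

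\textbf{Main obstacle.} The hard part will be Step~1: proving that $v\mapsto E_v$ is continuous in the weighted $L^2(C_R)$ norm when $v$ converges only locally. Local convergence controls $m_v$ on compact spectral sets, but the tail $|z|\to\infty$ along a contour approaching $\R$ like $\br a^{-1}$ needs the uniform $O(z^{-4})$ remainder from \cite{SZ}, with constants depending only on $p_5$. I would first get tail smallness uniform over $\mathcal B_R$ from the quantitative remainder (with exponent one better than needed, so dominated convergence applies), then use local convergence on the compact part. The coefficients $B_k(v)$ in $F_v$ are polynomials in $v$ and its derivatives at $0$, so they converge as well.
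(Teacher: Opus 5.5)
Your proposal follows the paper's proof essentially step for step. The shared steps are: compactness of $\mathcal B_R$ in $C^4_{\mathrm{loc}}$; one common contour on which $|m_v|$ is uniformly small (the paper takes $|m_v|\le 1/4$ on the closed exterior, so $|\det M_\kappa(v)|\ge 15/16$ and no zeros need separate avoidance); continuity of $E_v$ in a $|z|^3$-weighted norm, with tails controlled by the uniform $O(z^{-4})$ remainder; norm continuity of the normalized Toeplitz operators; uniform inverse bounds from input~\ref{input:nonzero} and compactness; and translation covariance \eqref{E5.1}.

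The one sub-step the paper handles differently is continuity of the Weyl data. It uses a Riccati contraction in a far half-plane, followed by Montel and the identity theorem on the connected common domains. In the focusing case your Weyl-disk nesting is likewise only available in a far half-plane. So there you need that same Montel/identity-theorem step, using the uniform bound $|m_v|\le 1/4$, to reach the compact part of the contour.
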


\begin{proof}
We prove the common-contour continuity needed for a compact maximum.
Set
\[
\mathcal B_R=\{v\in W^{5,\infty}(\R):p_5(v)\le R\},
 \qquad
 d(v,w)=\sum_{a=1}^\infty2^{-a}
 \min\{1,\norm{v-w}_{C^4([-a,a])}\}.
\]
Representatives through the fourth derivative are continuous.
Arzel\`a--Ascoli gives sequential precompactness in this metric.
The fifth distributional derivative of a limit remains bounded by $R$;
thus $\mathcal B_R$ is closed and compact. It is translation invariant.

\smallskip\noindent\textit{A common Weyl domain.}
The near-real-axis Dirac estimates of \cite[Theorems 3--4]{SZ}, with
regularity index $5$ and boundary index $1$, give
\begin{equation}\label{E5.3}
 m_v(x,z)=\sum_{a=1}^3 c_a[v](x)z^{-a}+O_R(|z|^{-4}).
\end{equation}
The constants and high-energy threshold can be chosen uniformly on
$\mathcal B_R$: the finite diagonalization uses bounded derivatives
through order five, and its Volterra remainder is controlled by
$C_R/(|\operatorname{Im} z||z|^5)$ with a contraction constant at most $1/4$ after a
common enlargement of the threshold. More explicitly, with
$\eta=|\operatorname{Im}z|$, the diagonalized system satisfies
\[
|\operatorname{Re}(D_5)_{jj}|\le C_R\eta|z|^{-2},\qquad
 \|R_5\|_\infty\le C_R|z|^{-5},\qquad
 \eta^{-1}\|R_5\|_\infty\le C'_R|z|^{-4}.
\]
The last inequality uses $\eta>a_R\langle\operatorname{Re}z\rangle^{-1}$.
The diagonal gap and the contraction threshold are thus uniform on the
initial-data ball. The coefficients $c_a$ are
differential polynomials through order $a-1$. Consequently there are
common constants $a_R,L_R$ such that the two Weyl coordinates are
analytic on
\[
\mathcal U_\pm
 =\{z:\ \pm\operatorname{Im} z>a_R\br{\operatorname{Re} z}^{-1},\ |z|>L_R\},
\]
and \eqref{E5.3} is uniform there, for all basepoints $x$.
Increase $L_R$ so that $|m_v(x,z)|\le1/4$ on these domains.

Choose one smooth height $\omega_R$ strictly above
$a_R\br{a}^{-1}$, with $\omega_R(a)\ge2L_R$ for $|a|\le L_R$, and
$\omega_R(a)=2a_R\br{a}^{-1}$ for sufficiently large $|a|$.
Its closed exterior lies in $\mathcal U_+\cup\mathcal U_-$.
For the canonical symbols on this contour,
\[
|\det M_\kappa(v,z)|=|1-\kappa m_v(z)\overline{m_v}(z)|\ge15/16.
\]
Thus the same contour is valid for every $v\in\mathcal B_R$, for either
sign. No individual selection of spectral zeros is involved.

\smallskip\noindent\textit{Continuity of the Weyl data.}
Suppose $v_n\to v$ in $\mathcal B_R$. In a sufficiently low half-plane,
the right Weyl quotient is the bounded small solution of
\[
m'=2izm-i\kappa\bar v-i v m^2.
\]
It is the fixed point of
\begin{equation}\label{E5.4}
 (\mathcal V_v m)(x)
 =-\int_x^\infty e^{2iz(x-y)}
 \bigl(-i\kappa\overline{v(y)}-iv(y)m(y)^2\bigr)\dd y.
\end{equation}
For $-\operatorname{Im} z>4R+1$, this is a common strict contraction on
$\norm m_\infty\le1/2$. Indeed its image norm is at most
$5R/(8|\operatorname{Im} z|)$ and its Lipschitz constant at most
$R/(2|\operatorname{Im} z|)$. Picard iterates starting at zero converge geometrically,
uniformly in $v$. At each finite iteration, local convergence of $v_n$
and the common exponential tail in \eqref{E5.4} give local uniform
convergence of the iterates. Hence $m_{v_n}\to m_v$ locally uniformly in
that half-plane. Reversing the integration direction proves the left
half-line assertion in a sufficiently high half-plane.

On $\mathcal U_\pm$ the family is bounded by $1/4$. Montel's theorem and
the identity theorem therefore extend the convergence to both common
domains: every subsequential analytic limit agrees with $m_v$ on the
distant half-plane, and each $\mathcal U_\pm$ is connected. In particular
there is uniform convergence on every compact portion of the fixed
contour. The coefficients in \eqref{E5.3} at $x=0$ converge
as well, since they depend on finitely many local derivatives.

\smallskip\noindent\textit{Continuity in a weighted symbol norm.}
Fix $b\in D_-$ at positive distance from $\overline D_+$. Choose matrices
$B_a(v)$ so that
\[
F_v(z)=I+\sum_{a=1}^3 B_a(v)(z-b)^{-a}
\]
matches the three Laurent coefficients of $M_\kappa(v,z)$. This is a fixed
triangular linear system. Thus $F_v$ depends continuously on $v$ in
$L^\infty(D_+)$, and, with $E_v=M_\kappa(v)-F_v$,
\begin{equation}\label{E5.5}
 \sup_{v\in\mathcal B_R,z\in C}|z|^4\norm{E_v(z)}<\infty.
\end{equation}
Define
\[
\norm{(F,E)}_{\mathrm{sym}}
 =\norm F_{L^\infty(D_+)}
  +\norm{z^3E}_{L^\infty(C)}+\norm{z^3E}_{L^2(C)}.
\]
Compact-contour convergence and \eqref{E5.5} imply
\begin{equation}\label{E5.6}
 (F_{v_n},E_{v_n})\longrightarrow(F_v,E_v)
 \quad\hbox{in this norm}.
\end{equation}
Indeed, on the tail of the fixed contour,
\[
\sup_{|z|>A}|z|^3\norm{E_{v_n}(z)-E_v(z)}\le C_R A^{-1},
 \qquad
 \norm{\mathbf1_{\{|z|>A\}}z^3(E_{v_n}-E_v)}_{L^2(C)}
 \le C_R A^{-1/2}.
\]
The second bound uses the bounded slope of the contour.
On its compact part, uniform convergence gives convergence in both
norms. First let $n\to\infty$ and then $A\to\infty$.
Each symbol still has the $O(z^{-4})$ remainder required by the
$N=2$ spectral construction. This weaker continuity topology controls
both the operator norm and the differentiated coefficient integrals.

\smallskip\noindent\textit{Normalized Toeplitz operators.}
Put $G_{s,t}=\diag(e^{i(sz+tz^2)},e^{-i(sz+tz^2)})$ and
\[
S(v,s,t)=G_{s,t}^{-1}T(G_{s,t}M_\kappa(v)).
\]
Both multipliers are uniformly bounded in $D_+$ when $(s,t)$ stays in
a compact rectangle. Since $\inf_C|z|>0$, for $1\le j\le3$,
\begin{equation}\label{E5.7}
 \norm{\mathfrak p_+(Eu)}_{H_j}
 \le C_j\norm{Eu}_{L^2(C)}
 \le C_j\sup_C|z|^j\norm E\,\norm u_{H_j}.
\end{equation}
It follows that $S$ is norm-continuous in $v$, locally uniformly in
$(s,t)$.

To check parameter continuity without assuming norm continuity of an
oscillatory multiplier, write
\[
S(v,s,t)=T(M_\kappa(v))+
 G_{s,t}^{-1}[\mathfrak p_+,G_{s,t}]E_v.
\]
Truncate $E_v(\lambda)$ to $|\lambda|\le A$. By \eqref{E5.7}, the
discarded commutator has norm tending to zero uniformly on compact
parameter rectangles. The truncated kernel is
\[
\frac{G_{s,t}(z)^{-1}G_{s,t}(\lambda)-I}{\lambda-z}
 E_v(\lambda)\mathbf1_{|\lambda|\le A}.
\]
On compact sets its diagonal singularity is removable. On the $z$ tail,
its weighted magnitude is $O(|z|^{-j-1})$ for bounded $\lambda$.
Dominated convergence therefore gives Hilbert--Schmidt, hence operator
norm, continuity for the truncation. Passing to $A\to\infty$ proves
joint norm continuity of $S$ on each $H_j$.

Every $S$ is invertible by input~\ref{input:nonzero}; focusing positivity
is also given by Lemma~\ref{l33}. Continuity of inversion and
compactness give
\[
\max_{\substack{v\in\mathcal B_R,\ |s|\le1,\ |t|\le T\\1\le j\le3}}
 \norm{S(v,s,t)^{-1}}_{H_j\to H_j}<\infty.
\]
The image is a norm-compact family of invertible bounded operators.
Multiplication by $G_{s,t}^{-1}$ is strongly continuous on $H_j$ and
locally uniformly bounded. Thus
\[
T(G_{s,t}M_\kappa(v))^{-1}=S(v,s,t)^{-1}G_{s,t}^{-1}
\]
is jointly strongly continuous, with the same local uniform bounds.

\smallskip\noindent\textit{Coefficient evaluation and spatial derivatives.}
Set $U=T(G_{s,t}M_\kappa(v))^{-1}I$. The reconstruction coefficient is
\begin{equation}\label{E5.8}
 A_1(v,s,t)=\frac1{2\pi i}\int_C G_{s,t}(\lambda)E_v(\lambda)
 U(v,s,t)(\lambda)\dd\lambda.
\end{equation}
It is jointly continuous. Differentiation in $s$ uses multiplication by
$z$ and raises a Hardy weight by one; differentiation in $t$ uses $z^2$
and raises it by two. More explicitly, with $T=T(G_{s,t}M_\kappa(v))$,
\[
U_s=-T^{-1}T_sU,\qquad
 U_{ss}=2T^{-1}T_sT^{-1}T_sU-T^{-1}T_{ss}U,
\]
where $T_s=i\sigma_3T(zG_{s,t}M_\kappa(v))$ and
$T_{ss}=-T(z^2G_{s,t}M_\kappa(v))$.
These formulas hold successively in $H_2,H_3$. The multiplier terms and
their derivatives are strongly continuous between the indicated
weighted spaces, with uniform operator bounds; the remainders are
controlled by \eqref{E5.6}. Hence $U,U_s,U_{ss}$ depend
jointly continuously on $(v,s,t)$ in $H_1,H_2,H_3$, respectively.

Each term obtained by differentiating \eqref{E5.8} to total
spectral degree $d\le2$ has the form
\[
\int_C \lambda^aG_{s,t}E_v V\dd\lambda,
 \qquad V\in H_{1+d-a},\quad0\le a\le d.
\]
Its absolute value is at most
\[
C\norm{\lambda^{1+d}E_v}_{L^2(C)}\norm V_{H_{1+d-a}}.
\]
The first norm and its tails are controlled uniformly by
\eqref{E5.5}. These estimates also justify differentiation
under the integral. It follows that
\[
a_j(v,t):=\partial_x^jQ_\kappa[v](t,0),\qquad0\le j\le2,
\]
are jointly continuous on $\mathcal B_R\times[-T,T]$.

Finally, translation covariance \eqref{E5.1} gives
\[
\partial_x^jQ_\kappa[v](t,y)=a_j(\T_yv,t).
\]
All translates $\T_yv$ remain in $\mathcal B_R$. The maximum of the
finitely many $|a_j|$ on this compact set is finite, proving
\eqref{E5.2}. Taking maxima over the signs makes the bound
common. Taking integer upper envelopes in $R,T$ makes it nondecreasing.
Uniform continuity of $a_j$ on $\mathcal B_R\times[-T,T]$ and the same
translation identity also give continuity in time in $W^{2,\infty}$.
The equation makes $Q_t$ continuous in $L^\infty$. Integrating the
pointwise equation with this uniform bound proves the asserted
$C^1(\R;L^\infty)$ regularity.
\end{proof}

\subsection{Uniqueness without a decay assumption}

\begin{lemma}\label{l38}
Let $u,v$ be classical solutions of (\ref{EE2.1}) on $[-T,T]\times\R$.
Assume $u,v,u_x,v_x$ are bounded there. If $u(0)=v(0)$, then $u=v$.
\end{lemma}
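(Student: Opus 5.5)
We propose a localized energy (weighted $L^2$) argument for the difference $w=u-v$, which avoids any decay assumption. Since $u,v$ are classical, $w$ is $C^1$ in $t$ and $C^2$ in $x$. It satisfies
\[
 i\partial_t w=-\tfrac12\partial_x^2w+\kappa\bigl(|u|^2u-|v|^2v\bigr),\qquad w(0)=0.
\]
The nonlinearity difference is pointwise bounded by $K|w|$, where $K=C\bigl(\norm{u}_\infty+\norm{v}_\infty\bigr)^2$ on the strip $[-T,T]\times\R$.

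Fix an integrable, rapidly decaying weight with controlled derivative, for instance $\phi(x)=\br{x}^{-2}$, which satisfies $|\phi'|\le 2\phi$. Set
\[
 E(t)=\int_\R \phi(x)|w(t,x)|^2\dd x .
\]
This is finite because $w$ is bounded. Differentiating under the integral, and using $|\partial_tw|\lesssim |w_{xx}|+K|w|$ locally uniformly in $x$ with polynomial-weight domination, gives
\[
 E'(t)=2\operatorname{Re}\int\phi\,\overline w\,\partial_tw
 =\operatorname{Re}\int\phi\,\overline w\,\bigl(i w_{xx}\bigr)+O\bigl(K E\bigr).
\]
Integrating by parts in $x$ moves the derivative, producing $-\operatorname{Re}\,i\int\phi|w_x|^2$, which vanishes, plus $-\operatorname{Re}\,i\int\phi'\overline w w_x$. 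The boundary terms vanish because $\phi$ and $\phi'$ decay while $w,w_x$ are bounded; this is exactly where the hypothesis on $u_x,v_x$ is used.

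The remaining term is bounded by $\int|\phi'||w||w_x|\le 2\norm{w_x}_\infty\int\phi|w|$. This is linear in $|w|$ rather than quadratic, so a Gronwall inequality for $E$ does not close directly.

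This is the main obstacle. The first thing to try is a family of weights $\phi_\varepsilon(x)=e^{-\varepsilon\br{x}}$, for which $|\phi_\varepsilon'|\le\varepsilon\phi_\varepsilon$. Combine the bad term with the available dissipation-free structure by Cauchy--Schwarz:
\[
 \int|\phi'||w||w_x|\le \Bigl(\int\phi|w|^2\Bigr)^{1/2}\Bigl(\int\frac{|\phi'|^2}{\phi}|w_x|^2\Bigr)^{1/2}.
\]
The same energy computation is then run for $w_x$. Differentiate the equation in $x$; the nonlinear term involves $u_x,v_x$ and is again bounded by $C(|w|+|w_x|)$ using the bounds on $u,v,u_x,v_x$. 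The term $\partial_x^3 w$ would be needed in that computation, and since solutions are only classical it is not directly available. So, instead, we use the standard mass-type identity in the form
\[
 \partial_t|w|^2=\partial_x\operatorname{Im}(\overline w w_x)+2\kappa\operatorname{Im}\bigl(\overline w(\cdots)\bigr),
\]
which needs only $C^2$ in $x$.

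Testing this identity against $\phi_\varepsilon$ gives
\[
 E_\varepsilon'\le \varepsilon\, \norm{w_x}_\infty\int\phi_\varepsilon|w|+2KE_\varepsilon .
\]
Now apply Cauchy--Schwarz, $\int\phi_\varepsilon|w|\le E_\varepsilon^{1/2}\norm{\phi_\varepsilon}_{L^1}^{1/2}$ with $\norm{\phi_\varepsilon}_{L^1}\asymp\varepsilon^{-1}$. Writing $y=E_\varepsilon^{1/2}$ and $M=\norm{w_x}_\infty$, this yields
\[
 y'\le C\varepsilon^{1/2}M+Ky .
\]
Since $y(0)=0$, Gronwall gives $y(t)\le C\varepsilon^{1/2}Me^{K|t|}$, that is,
\[
 \int e^{-\varepsilon\br{x}}|w(t,x)|^2\dd x\le C\varepsilon M^2e^{2K|t|}.
\]
On any fixed window $[-L,L]$ the weight satisfies $e^{-\varepsilon\br{x}}\ge e^{-\varepsilon\br{L}}\to1$ as $\varepsilon\to0$. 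Letting $\varepsilon\to0$ therefore forces $\int_{-L}^L|w(t,x)|^2\dd x=0$ for every $L$ and every $|t|\le T$. Hence $w\equiv0$ and $u=v$.

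The justification of differentiating $E_\varepsilon$ under the integral, and of discarding the boundary terms, uses the exponential decay of $\phi_\varepsilon$ together with the boundedness of $w$, $w_x$ and $w_t$. The last of these follows from the equation only locally; if needed, it is replaced by integrating the pointwise identity in time first and then in space with the weight.
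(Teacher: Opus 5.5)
Your argument is correct and is essentially the paper's proof: the paper uses the same weight $\rho_L(x)=e^{-\sqrt{1+x^2}/L}$ (your $\varepsilon=1/L$). It bounds the flux term by $\norm{w_x}_\infty\sqrt{2/L}\,E_L^{1/2}$ via Cauchy--Schwarz against $\int\rho_L\le 2L$, and it justifies the integration by parts in integrated form with a compact cutoff whose discarded terms vanish because $w,w_x$ are bounded.

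The only differences are cosmetic. The paper closes with Young's inequality, $|E_L'|\le(6M^2+1)E_L+C\norm{w_x}_\infty^2/L$, rather than your substitution $y=E^{1/2}$; that substitution should strictly be $(E+\delta)^{1/2}$, since $E^{1/2}$ need not be differentiable where $E=0$. Your initial detour through $\br{x}^{-2}$ and a $w_x$ energy can simply be dropped.
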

\begin{proof}
Put $w=u-v$, $N=|u|^2u-|v|^2v$,
$M=\max(\norm u_\infty,\norm v_\infty)$ and $D=\norm{w_x}_\infty$,
where these norms are on the time strip. Then $|N|\le3M^2|w|$.
For $L\ge1$ set
\[
\rho_L(x)=e^{-\sqrt{1+x^2}/L},\qquad
 E_L(t)=\int_\R\rho_L|w(t,x)|^2\dd x.
\]
We have $|\rho_L'|\le\rho_L/L$ and $\int\rho_L\le2L$.
Integration by parts in $w_t=iw_{xx}/2-i\kappa N$ gives
\begin{align*}
 |E_L'(t)|
 &\le\int|\rho_L'||w_x||w|+6M^2E_L(t)\\
 &\le D\sqrt{2/L}\,E_L(t)^{1/2}+6M^2E_L(t)\\
 &\le(6M^2+1)E_L(t)+C D^2/L.
\end{align*}
To justify this with only local classical regularity, first insert a
compact cutoff. After integration by parts, all discarded terms involve
bounded $w,w_x$ and the exponential tail, so they vanish as the cutoff
radius tends to infinity. The inequality therefore holds in integrated
form. Gronwall forward and backward from $E_L(0)=0$ yields
$E_L(t)\le C_{M,T}D^2/L$. On every bounded interval $I$,
$\inf_I\rho_L\to1$. Letting $L\to\infty$ gives
$\int_I|w(t,x)|^2\dd x=0$. Continuity proves the claim.
\end{proof}

\subsection{Modulation-space estimates and local solutions}
\label{sec:modulation}

\begin{lemma}\label{l39}
For every integer $j\ge0$,
\[
X_j\hookrightarrow C_b^j(\R),\qquad
 W^{j+2,\infty}(\R)\hookrightarrow X_j.
\]
In particular $p_5(f)\le C\norm f_5$ and
$\norm f_0\le C p_2(f)$.
\end{lemma}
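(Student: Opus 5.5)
We prove the two embeddings separately from the block decomposition $f=\sum_k\Box_kf$. The tool is Bernstein's inequality for functions with Fourier support in $[k-1,k+1]$.

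\emph{First embedding.} Since $\widehat{\Box_kf}$ is supported in $(k-1,k+1)$, we can write $\partial_x^a\Box_kf=\mathcal F^{-1}\bigl((i\xi)^a\tilde\chi(\xi-k)\bigr)*\Box_kf$. Here $\tilde\chi\in C_c^\infty((-2,2))$ equals one on $\supp\chi$. Set $\psi_{k,a}(\xi)=(i\xi)^a\tilde\chi(\xi-k)$. Modulating by $e^{ixk}$ reduces the $L^1$ norm of $\mathcal F^{-1}\psi_{k,a}$ to that of $\mathcal F^{-1}\bigl((i(\xi+k))^a\tilde\chi(\xi)\bigr)$. That $L^1$ norm is $O(\br{k}^a)$, since the symbol is a polynomial in $k$ of degree $a$ with fixed Schwartz coefficients. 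Young's inequality then gives
\[
\norm{\partial_x^a\Box_kf}_\infty\le C_a\br{k}^a\norm{\Box_kf}_\infty .
\]
For $a\le j$, summing over $k$ shows that $\sum_k\partial_x^a\Box_kf$ converges uniformly. Its sum is the distributional derivative $\partial_x^af$, and it is bounded by $C\norm f_j$. Uniform limits of continuous functions are continuous, so $f\in C_b^j$ and $p_j(f)\le C\norm f_j$.

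\emph{Second embedding.} Let $f\in W^{j+2,\infty}$. For $k\ne0$ and $m=j+2$, write
\[
\chi(\xi-k)\widehat f=\frac{\chi(\xi-k)}{(i\xi)^m}\,\widehat{\partial_x^mf}.
\]
This is valid on the support, where $|\xi|\ge|k|-1\ge1/2$ once $|k|\ge2$. The finitely many blocks with $|k|\le1$ are handled directly by $\norm{\Box_kf}_\infty\le\norm{\mathcal F^{-1}\chi}_1\norm f_\infty$. Define $\phi_k(\xi)=\chi(\xi-k)(i\xi)^{-m}$. Translating by $k$, the function $\chi(\xi)(i(\xi+k))^{-m}$ has all $\xi$-derivatives of size $O(\br{k}^{-m})$ on a fixed compact support. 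Hence $\norm{\mathcal F^{-1}\phi_k}_1\le C\br{k}^{-m}$, using for instance $L^1\lesssim$ the $L^2$ norm of $(1+x^2)\mathcal F^{-1}\phi_k$.

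This gives $\norm{\Box_kf}_\infty\le C\br{k}^{-j-2}\norm{\partial_x^{j+2}f}_\infty$. Therefore
\[
\sum_k\br{k}^j\norm{\Box_kf}_\infty\le C\bigl(\norm f_\infty+\norm{\partial^{j+2}_xf}_\infty\bigr)\sum_k\br{k}^{-2}<\infty .
\]
One point needs justification: $\widehat{\partial_x^mf}=(i\xi)^m\widehat f$ holds as tempered distributions, and multiplying by the smooth compactly supported $\phi_k$ is legitimate because $\phi_k$ avoids $\xi=0$. The resulting convolution identity $\Box_kf=(\mathcal F^{-1}\phi_k)*\partial_x^mf$ holds pointwise, since $\mathcal F^{-1}\phi_k$ is Schwartz and $\partial_x^mf\in L^\infty$.

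The stated special cases follow by taking $j=5$ in the first embedding and $j=0$ in the second. The only mildly delicate step is this distributional justification; all the rest is uniform kernel bookkeeping.
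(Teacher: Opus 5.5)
Your proof is correct and follows essentially the same route as the paper: Bernstein's inequality with a fattened cutoff $\tilde\chi$ gives $X_j\hookrightarrow C_b^j$, and for the converse you use the representation $\Box_kf=\mathcal F^{-1}\bigl(\chi(\xi-k)(i\xi)^{-m}\bigr)*\partial_x^mf$ with an $O(\br{k}^{-m})$ kernel bound on the high blocks, the trivial $L^\infty$ bound on finitely many low blocks, and $m=j+2$. Your weighted-$L^2$ kernel estimate and explicit distributional justification just spell out the paper's ``two integrations by parts'' step; the slip $|k|-1\ge1/2$ should read $\ge1$, which is harmless.
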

\begin{proof}
Choose $\widetilde\chi\in C_c^\infty((-2,2))$ equal to one on
$\supp\chi$. Bernstein's convolution estimate gives
\[
\norm{\partial_x^a\Box_k f}_\infty
 \le C_a\br{k}^a\norm{\Box_k f}_\infty.
\]
Summing proves the first embedding, including uniform convergence of
all derivative series through order $j$.

For $|k|\ge3$ and integer $m\ge1$,
\[
\Box_k f=\mathcal F^{-1}
 \left(\frac{\chi(\xi-k)}{(i\xi)^m}\right)*\partial_x^m f.
\]
The kernel has $L^1$ norm at most $C_m\br{k}^{-m}$: after shifting the
frequency by $k$, its symbol and its first two derivatives have this
bound on a fixed compact support, and two integrations by parts give
an integrable kernel bound. The finitely many remaining blocks are
bounded by $C\norm f_\infty$. Taking $m=j+2$ and summing
$\br{k}^{j-m}$ proves the second embedding.
\end{proof}

\begin{lemma}\label{l40}
For every $s\ge0$,
\begin{align}
 \norm{fgh}_s
 &\le C_s\bigl(\norm f_s\norm g_0\norm h_0+
 \norm f_0\norm g_s\norm h_0+
 \norm f_0\norm g_0\norm h_s\bigr),\label{E5.9}\\
 \norm{|f|^2f-|g|^2g}_s
 &\le C_s(\norm f_s+\norm g_s)^2\norm{f-g}_s.
 \label{E5.10}
\end{align}
Complex conjugation preserves each $X_s$.
\end{lemma}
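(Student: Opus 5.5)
The plan is to prove \eqref{E5.9} by a frequency-localized paraproduct decomposition, and then to obtain \eqref{E5.10} and the conjugation statement from it by algebra.

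\emph{Support and decay of the blocks.} Write $f=\sum_k\Box_kf$, and similarly for $g$ and $h$. All three series converge uniformly, so
\[
fgh=\sum_{k_1,k_2,k_3}\Box_{k_1}f\,\Box_{k_2}g\,\Box_{k_3}h .
\]
Each summand has Fourier support in $\{|\xi-(k_1+k_2+k_3)|<3\}$. Therefore $\Box_k$ annihilates it unless $|k-k_1-k_2-k_3|\le3$. For the remaining terms, $\Box_k$ is convolution with a kernel of $L^1$ norm $\|\mathcal F^{-1}\chi\|_{L^1}$, which is independent of $k$. This gives
\[
\norm{\Box_k(fgh)}_\infty\le C\sum_{|k-k_1-k_2-k_3|\le3}
\norm{\Box_{k_1}f}_\infty\norm{\Box_{k_2}g}_\infty\norm{\Box_{k_3}h}_\infty .
\]

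\emph{Weight distribution.} Multiply by $\br{k}^s$ and sum over $k$. On the constrained set, $\br{k}\le C\br{k_1+k_2+k_3}\le 3C\max_i\br{k_i}$, so
\[
\br{k}^s\le C_s\bigl(\br{k_1}^s+\br{k_2}^s+\br{k_3}^s\bigr).
\]
Fix one of the three resulting pieces, say the one carrying $\br{k_1}^s$. For fixed $(k_1,k_2,k_3)$ there are at most seven admissible $k$. Summing freely over $k_1,k_2,k_3$ then factors the sum as $7\norm f_s\norm g_0\norm h_0$. The other two pieces are symmetric, and together these give \eqref{E5.9}. No step here is delicate; the only care needed is that the block sums converge absolutely in $L^\infty$, which is given in the definition of $X_s$.

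\emph{Conjugation.} Since $\chi$ is real and even,
\[
\overline{\Box_kf}=\mathcal F^{-1}\bigl(\chi(\xi+k)\widehat{\bar f}(\xi)\bigr)=\Box_{-k}\bar f .
\]
Hence $\norm{\bar f}_s=\norm f_s$, because $\br{-k}=\br{k}$.

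\emph{The difference estimate.} For \eqref{E5.10}, use the algebraic identity
\[
|f|^2f-|g|^2g=(f-g)f\bar f+g(f-g)\bar f+g\bar g\,\overline{(f-g)} .
\]
Apply \eqref{E5.9} to each of the three trilinear terms, together with conjugation invariance and $\norm\cdot_0\le\norm\cdot_s$. This bounds each term by $C_s(\norm f_s+\norm g_s)^2\norm{f-g}_s$.

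I expect no real obstacle. The only point that needs checking is the uniform $L^1$ bound on the localized kernels, which holds by translation in frequency.
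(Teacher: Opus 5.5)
Your proof of \eqref{E5.9} and of conjugation invariance follows the paper's proof. The steps are the same: expand into blocks, use the Fourier support of the trilinear blocks, use the $k$-independent $L^1$ bound for the modulated kernels, split the weight as $\br{k}^s\le C_s(\br{k_1}^s+\br{k_2}^s+\br{k_3}^s)$, and get $\overline{\Box_kf}=\Box_{-k}\bar f$ from the reality and evenness of $\chi$. Your count $|k-k_1-k_2-k_3|\le3$ is a sharper, correct version of the paper's $\le4$.

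\textbf{The identity you use for \eqref{E5.10} is false as written.} Expanding your right-hand side gives
\[
|f|^2f-|g|^2g+g(\bar g-g)\,\overline{(f-g)}.
\]
For example, $f=0$, $g=i$ gives $-i$ on the left and $i$ on the right. The correct telescoping is
\[
|f|^2f-|g|^2g=(f-g)f\bar f+g(f-g)\bar f+g^2\,\overline{(f-g)} ,
\]
so the last term must carry $g\,g$, not $g\,\bar g$. With this correction, each term is a trilinear product with exactly one factor $f-g$ or $\overline{f-g}$. Then \eqref{E5.9}, conjugation invariance and $\norm{\cdot}_0\le\norm{\cdot}_s$ give \eqref{E5.10} as you intended. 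This is what the paper means by expanding the difference of the cubic terms.
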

\begin{proof}
Expand $f,g,h$ into their uniformly absolutely convergent block series.
The Fourier support of $(\Box_k f)(\Box_l g)(\Box_m h)$ is contained
in the interval with center $k+l+m$ and radius $3$. Thus its $j$th block
vanishes unless $|j-k-l-m|\le4$, and each nonzero block has norm at most
$C\norm{\Box_kf}_\infty\norm{\Box_lg}_\infty\norm{\Box_mh}_\infty$.
For these indices,
$\br j^s\le C_s(\br k^s+\br l^s+\br m^s)$.
Summation proves \eqref{E5.9}. Evenness of $\chi$ gives
$\Box_k\bar f=\overline{\Box_{-k}f}$. Expanding the difference of the
cubic terms proves \eqref{E5.10}.
\end{proof}

\begin{lemma}\label{l41}
The Schr\"odinger group $U(t)=e^{it\partial_x^2/2}$ is strongly
continuous on every $X_s$, and
\begin{equation}\label{E5.11}
 \norm{U(t)f}_s\le C(1+|t|)^2\norm f_s.
\end{equation}
\end{lemma}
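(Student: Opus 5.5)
The plan is to work block by block, using the fact that $U(t)$ is a Fourier multiplier with symbol $e^{-it\xi^2/2}$ and therefore commutes with every $\Box_k$. Choose $\widetilde\chi\in C_c^\infty((-2,2))$ equal to one on $\supp\chi$, as in the proof of Lemma~\ref{l39}. Then
\[
\Box_kU(t)f=K_{k,t}*\Box_kf,\qquad
 K_{k,t}=\mathcal F^{-1}\bigl(\widetilde\chi(\xi-k)e^{-it\xi^2/2}\bigr).
\]
If we can show $\norm{K_{k,t}}_{L^1}\le C(1+|t|)^2$ uniformly in $k$, then Young's inequality gives $\norm{\Box_kU(t)f}_\infty\le C(1+|t|)^2\norm{\Box_kf}_\infty$. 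Multiplying by $\br k^s$ and summing over $k$ then yields \eqref{E5.11}.

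The kernel bound comes from modulating away the centre frequency. Substituting $\xi=k+\eta$ gives
\[
e^{-it\xi^2/2}=e^{-itk^2/2}\,e^{-itk\eta}\,e^{-it\eta^2/2}.
\]
The first factor is a unimodular constant. The factor $e^{-itk\eta}$ corresponds to translation of the kernel by $tk$, and multiplication by $e^{ikx}$ corresponds to the shift $\xi\mapsto\xi-k$. Neither operation changes $L^1$ norms. So $\norm{K_{k,t}}_{L^1}=\norm{\mathcal F^{-1}(\widetilde\chi(\eta)e^{-it\eta^2/2})}_{L^1}$, which no longer depends on $k$. Write $\psi_t(\eta)=\widetilde\chi(\eta)e^{-it\eta^2/2}$. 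It is supported in $(-2,2)$, and $\norm{\psi_t}_{C^2}\le C(1+|t|)^2$. Integrating by parts twice gives $|\check\psi_t(x)|\le C(1+|t|)^2\min(1,|x|^{-2})$, so $\norm{\check\psi_t}_{L^1}\le C(1+|t|)^2$. This is the same argument used in Lemma~\ref{l39}.

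For strong continuity, the group property follows from the multiplier representation, and \eqref{E5.11} gives local uniform bounds. It therefore suffices to prove $U(t)f\to f$ in $X_s$ as $t\to0$. Given $\varepsilon>0$, choose $K$ so that $\sum_{|k|>K}\br k^s\norm{\Box_kf}_\infty<\varepsilon$. By \eqref{E5.11}, these tail blocks contribute at most $C\varepsilon$ for $|t|\le1$. For each of the finitely many blocks with $|k|\le K$, we use the representation
\[
\Box_k(U(t)f-f)=\mathcal F^{-1}\bigl(\widetilde\chi(\xi-k)(e^{-it\xi^2/2}-1)\bigr)*\Box_kf.
\]
Here the $C^2$ norm of the symbol is $O(|t|(1+K)^2)$, so the kernel's $L^1$ norm tends to zero as $t\to0$.

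The only delicate point is uniformity in $k$. A direct $C^2$ estimate of $\widetilde\chi(\xi-k)e^{-it\xi^2/2}$ grows like $k^2t^2$, which is not uniform. The modulation and translation reduction to $\eta$ is what removes this dependence on $k$. Everything else is routine.
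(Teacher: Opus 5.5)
Your proposal is correct and follows essentially the same route as the paper's proof. You use commutation with $\Box_k$ and the factorization $e^{-itk^2/2}e^{-itk\eta}e^{-it\eta^2/2}$ to reduce to a $k$-independent kernel with the two-derivative $L^1$ bound. For strong continuity you split the block sum of $\norm{U(t)f-f}_s$ into finitely many blocks and a tail, whereas the paper approximates $f$ by finite block sums and invokes uniform boundedness on compact time intervals; this is only a reorganization of the same $\varepsilon$-argument.
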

\begin{proof}
On a block centered at $k$, factor the multiplier as
\[
e^{-it(k+\eta)^2/2}
 =e^{-itk^2/2}e^{-itk\eta}e^{-it\eta^2/2}.
\]
The first two factors give a scalar of modulus one and a spatial
translation. The last factor times $\widetilde\chi(\eta)$ has an
$L^1$ convolution kernel bounded by $C(1+|t|)^2$, by the same
two-derivative estimate as above. This is uniform in $k$. Commutation
with $\Box_k$ proves \eqref{E5.11}.

For a finite block sum, strong continuity follows by differentiating
the compact-frequency multiplier, or by integrating its bounded second
spatial derivative. For general $f\in X_s$, finite block sums converge
in $X_s$: finite overlap gives
\[
\norm{\sum_{|k|>K}\Box_k f}_s
 \le C_s\sum_{|k|>K}\br{k}^s\norm{\Box_k f}_\infty\longrightarrow0.
\]
Uniform boundedness on compact time intervals then gives strong
continuity for every $f$.
\end{proof}

\begin{proposition}\label{p42}
For $s\ge0$, the integral equation
\begin{equation}\label{E5.12}
 u(t)=U(t)q_0-i\kappa\int_0^tU(t-a)(|u(a)|^2u(a))\dd a
\end{equation}
is locally well posed in $X_s$, with an existence time depending only
on $\norm{q_0}_s$. The maximal solution can end at finite time only
if its $X_s$ norm becomes unbounded. For $s\ge2$, the solution is
classical and belongs to $C^1(X_{s-2})$.
\end{proposition}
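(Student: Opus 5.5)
The plan is the standard contraction-mapping argument for \eqref{E5.12}, built from Lemmas~\ref{l40} and~\ref{l41}. Fix $s\ge0$, $q_0\in X_s$ with $\norm{q_0}_s\le R$, and let $\Gamma$ denote the right-hand side of \eqref{E5.12}. We work in $Y_\tau=C([-\tau,\tau];X_s)$ with $0<\tau\le1$ and use the ball of radius $2C_0R$, where $C_0=4C$ comes from \eqref{E5.11} with $|t|\le1$.

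\textbf{Contraction.} Lemma~\ref{l41} gives strong continuity of $U(t)$ on $X_s$ and the bound $\norm{U(t)}_{X_s\to X_s}\le C_0$ for $|t|\le1$. Combined with \eqref{E5.10} (and the case $g=0$), this yields
\[
\norm{\Gamma u(t)}_s\le C_0R+C_0C_s\tau(2C_0R)^3,
\qquad
\norm{\Gamma u-\Gamma v}_{Y_\tau}\le C_0C_s\tau(4C_0R)^2\norm{u-v}_{Y_\tau}.
\]
Choosing $\tau=\tau(R)$ small makes $\Gamma$ a contraction on this ball.

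The one point that needs care is measurability and continuity of the Duhamel integral. The nonlinearity $a\mapsto|u(a)|^2u(a)$ is continuous into $X_s$ by \eqref{E5.10}. The map $(t,a)\mapsto U(t-a)f(a)$ is then continuous, using strong continuity together with local uniform bounds. Hence the integral is a Riemann integral in $X_s$ and is continuous in $t$.

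\textbf{Uniqueness, maximal solution, blow-up alternative.} Uniqueness in $Y_\tau$ follows by Gronwall applied to the same difference estimate. Since the existence time depends only on $\norm{q_0}_s$, the usual continuation argument works. If $\limsup\norm{u(t)}_s<\infty$ at a finite endpoint $T^*$, restart from a time within $\tau(\sup\norm u_s)/2$ of $T^*$; this extends the solution past $T^*$, which is a contradiction. The restarted solution is glued to the original one via the group property $U(t)U(a)=U(t+a)$.

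\textbf{Regularity for $s\ge2$.} On $X_s$, the generator of $U$ is $\tfrac i2\partial_x^2$, bounded from $X_s$ to $X_{s-2}$ (the block multiplier $\xi^2$ costs $\br{k}^2$ by Bernstein). For $f\in X_s$, the difference quotient $(U(h)f-f)/h$ converges to $\tfrac i2 f''$ in $X_{s-2}$. To see this, write $U(h)f-f=\int_0^hU(a)\tfrac i2f''\,da$, valid on finite block sums and extended by density (finite block sums converge, as in Lemma~\ref{l41}), and then use strong continuity on $X_{s-2}$.

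Applying this to $U(t)q_0$ and to the Duhamel term shows that $u\in C^1(X_{s-2})$ with
\[
u_t=\tfrac i2u_{xx}-i\kappa|u|^2u .
\]
For the Duhamel term, write $\frac{d}{dt}\int_0^tU(t-a)N(a)\,da=N(t)+\int_0^t\tfrac i2\partial_x^2U(t-a)N(a)\,da$, with $N(a)=|u(a)|^2u(a)$ continuous into $X_s$. Lemma~\ref{l39} gives $X_{s-2}\hookrightarrow C_b$ and $X_s\hookrightarrow C_b^2$, so $u_t$ and $u_{xx}$ are bounded and jointly continuous. The equation therefore holds pointwise, i.e.\ the solution is classical.

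The main obstacle I expect is this last step: identifying the $X_{s-2}$-valued derivative of the Duhamel term rigorously. The difficulty is that $U$ is only strongly continuous, not norm continuous, so the derivative formula has to be justified through the density and integral-representation argument above rather than by direct differentiation of $U$.
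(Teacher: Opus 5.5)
Your proposal is correct and matches the paper's proof. Both use a contraction in $C([-\delta,\delta];X_s)$ built from Lemmas~\ref{l40}--\ref{l41}, the restart argument for the blow-up alternative, and, for $s\ge2$, differentiation in $X_{s-2}$ justified through finite block sums and strong continuity, using that $\partial_x^2:X_s\to X_{s-2}$ is bounded. The one item you leave implicit is local Lipschitz dependence on $q_0$, which the paper states explicitly; it follows at once from your contraction estimate applied to two data in the same ball on the common interval $\tau(R)$.
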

\begin{proof}
On $C([-\delta,\delta];X_s)$, Lemmas~\ref{l40}--\ref{l41}
make the right-hand side of \eqref{E5.12} a contraction on a ball
of radius $2C\norm{q_0}_s$ whenever
$\delta C_s(1+\norm{q_0}_s)^2$ is sufficiently small. This gives
existence, uniqueness and local Lipschitz dependence. The argument
restarts at every time with the same norm bound; if a finite maximal
endpoint had bounded norm, restarting sufficiently close to it would
extend the solution past the endpoint. Finally
$\partial_x^2:X_s\to X_{s-2}$ is bounded, so differentiation of the
integral equation, first for finite block sums and then by strong
convergence in $X_{s-2}$, gives the classical equation and the asserted
time regularity.
\end{proof}

\subsection{Global continuation and norm estimates}

\begin{proof}[Proof of Theorem~\ref{t3}]
Let $r=\norm{q_0}_5$. Lemma~\ref{l39} gives
$p_5(q_0)\le C_Er$. Proposition~\ref{p37} supplies a global
spectral solution $Q$ satisfying
\begin{equation}\label{E5.13}
 \sup_{|t|\le T}\norm{Q(t)}_0
 \le C_E B_T(C_Er)=:b_T(r).
\end{equation}
Let $u$ be the maximal local $X_5$ solution. On every compact interval
strictly inside its lifespan, $u,u_x$ are bounded; the same is true of
$Q$ by Proposition~\ref{p37}. Lemma~\ref{l38} therefore
gives $u=Q$ there. In particular \eqref{E5.13} applies to the
local solution without presupposing its global $X_5$ existence.

Write $A_T=C(1+2T)^2$, enlarged so that \eqref{E5.11} holds whenever
$|t|\le2T$. For $0\le t\le T$ within the lifespan,
\[
\norm{u(t)}_5
 \le A_Tr+C_5A_T b_T(r)^2\int_0^t\norm{u(a)}_5\dd a.
\]
Hence, for positive and negative times,
\[
\norm{u(t)}_5
 \le A_Tr\exp\bigl(C_5A_T b_T(r)^2T\bigr)=:F_T(r).
\]
This excludes a finite maximal endpoint in either direction by
Proposition~\ref{p42}. Thus $u=Q$ globally and
\eqref{E1.3} holds.

For two initial values in the $X_5$ ball of radius $R$, apply
\eqref{E5.10} to the difference of their integral equations.
The preceding bound and Gronwall yield
\[
\sup_{|t|\le T}\norm{u(t)-v(t)}_5
 \le A_T\exp\bigl(C_5A_TF_T(R)^2T\bigr)\norm{u(0)-v(0)}_5.
\]
This proves \eqref{EE2.2}. Time translation, spatial
translation and uniqueness give the group law, its inverse $\Phi_{-t}$,
and covariance. Strong time continuity and the locally uniform
Lipschitz estimate give joint continuity of the group action.

If $q_0\in X_s$, $s\ge5$, the local $X_s$ solution agrees with the
global $X_5$ solution. The tame estimate gives, on its lifespan,
\[
\sup_{|t|\le T}\norm{q(t)}_s
 \le A_T\norm{q_0}_s
 \exp\bigl(C_sA_T b_T(\norm{q_0}_5)^2T\bigr).
\]
The same continuation and difference arguments prove the asserted
$X_s$ results.

Finally, the embeddings in Lemma~\ref{l39} give
\[
C_b^\infty(\R)=\bigcap_{j\in\N}X_j
\]
with equivalent Fr\'echet topologies: $p_j\le C_j\norm\cdot_j$ and
$\norm\cdot_j\le C_jp_{j+2}$. The estimates just obtained prove
continuous dependence in every seminorm and preservation of this
intersection. The equation then gives all time derivatives in the same
Fr\'echet space by repeated differentiation. Lemma~\ref{l38}
establishes the stated broader PDE uniqueness class.
\end{proof}

\subsection{Approximation, almost periodicity and frequency modules}

\begin{lemma}\label{l43}
If $f\in X_s\cap\AP(\R)$, then its spatial translation orbit has
compact closure in $X_s$.
\end{lemma}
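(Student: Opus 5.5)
The plan is to show that the orbit $\{\T_yf:y\in\R\}$ is totally bounded in $X_s$, by approximating $f$ with finitely many Littlewood--Paley blocks. Completeness of $X_s$ then gives compact closure. Two facts are used: the blocks of $f$ are themselves almost periodic, and translation commutes with every $\Box_k$ while preserving the sup norm.

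\emph{Step 1: each block is almost periodic.} For each $k$, $\Box_kf=\mathcal F^{-1}(\chi(\cdot-k))*f$ is convolution of $f$ with an $L^1$ (Schwartz) kernel. Convolution with an $L^1$ kernel preserves $\AP(\R)$. Indeed, it is $\norm{\cdot}_\infty$-continuous and commutes with translations, so the translation orbit of $\Box_kf$ is the image of that of $f$ under a uniformly continuous map. That orbit is therefore precompact in $C_b(\R)$, which is Bochner's characterization of almost periodicity.

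\emph{Step 2: the tails are uniformly small.} Since $f\in X_s$, for any $\varepsilon>0$ we can choose $K$ with $\sum_{|k|>K}\br k^s\norm{\Box_kf}_\infty<\varepsilon$. Because $\Box_k\T_y=\T_y\Box_k$ and translation preserves $\norm{\cdot}_\infty$, this tail bound holds for every translate $\T_yf$, uniformly in $y$.

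\emph{Step 3: a finite net for the low blocks.} For the finitely many $|k|\le K$, the map $y\mapsto(\T_y\Box_kf)_{|k|\le K}$ has precompact range in $C_b(\R)^{2K+1}$ by Step 1. This is a finite product of precompact sets, so we get finitely many $y_1,\dots,y_m$ such that every $y$ has some $y_i$ with $\sum_{|k|\le K}\br k^s\norm{\Box_k(\T_yf-\T_{y_i}f)}_\infty<\varepsilon$. Combined with Step 2, this yields $\norm{\T_yf-\T_{y_i}f}_s<3\varepsilon$. The orbit is thus totally bounded, and its closure in the Banach space $X_s$ is compact.

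The main obstacle is Step 1, namely making sure the block operator maps $\AP$ into $\AP$ and that the sup-norm precompactness of each block transfers to the weighted sum norm. With a finite set of blocks the weights $\br k^s$ are bounded constants, so that transfer is immediate; the unbounded weights are absorbed entirely by the uniform tail estimate in Step 2.
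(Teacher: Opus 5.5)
Your proof is correct and follows essentially the same route as the paper. Both arguments use almost periodicity of each block, joint uniform precompactness of finitely many block orbits, translation-invariant smallness of the weighted block tails, and completeness of $X_s$. The only difference is organizational: you split the $X_s$ norm of $\T_yf-\T_{y_i}f$ directly by block index, which removes the need for the paper's comparison of the $X_s$ and uniform norms on a compact frequency support and for the finite-overlap tail estimate of Lemma~\ref{l41}.
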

\begin{proof}
Every block $\Box_kf$ is almost periodic, since convolution by an
$L^1$ kernel preserves uniform limits of trigonometric polynomials.
For a fixed finite set of blocks, their translation orbits are jointly
precompact in the uniform norm. On a fixed compact frequency support
the $X_s$ norm is bounded by a constant times the uniform norm.
Thus each finite block sum has precompact translation orbit in $X_s$.
The estimate for block tails in Lemma~\ref{l41} is invariant
under translation. Approximating the entire orbit by these finite
block-sum orbits proves total boundedness. Completeness of $X_s$
proves the assertion.
\end{proof}

\begin{proof}[Proof of Theorem~\ref{t8}]
Let $\mathcal H_X(q_0)$ be the compact hull given by
Lemma~\ref{l43}. Theorem~\ref{t3} and covariance give
\[
\{\T_yq(t):y\in\R\}
 =\{\Phi_t(\T_yq_0):y\in\R\}
 \subseteq\Phi_t(\mathcal H_X(q_0)).
\]
The right-hand side is compact in $X$, hence in the uniform norm.
The compact-hull characterization of Bohr almost periodicity shows
that $q(t)$ is almost periodic. Continuity and approximation by orbit
points show
\[
\Phi_t(\mathcal H_X(q_0))=\mathcal H_X(q(t)).
\]
The inverse is the restriction of $\Phi_{-t}$.

We include a proof of exact preservation of the frequency module.
For $a\in\R$, define the continuous evaluation
$E_a:\mathcal H_X(q_0)\to\C$ by $E_a(p)=p(a)$.
The unital algebra generated by all $E_a$ and their conjugates separates
points of the compact hull. By the complex Stone--Weierstrass theorem,
it is dense in the continuous functions on that hull. In particular,
the continuous function $p\mapsto\Phi_t(p)(0)$ is uniformly approximable
by polynomials in finitely many $E_a,\overline{E_a}$.

Restricting these polynomials to $p=\T_xq_0$ gives polynomials in
$q_0(x+a)$ and their conjugates. By the Bohr approximation theorem,
$q_0$ is uniformly approximable by trigonometric polynomials with
frequencies in its Bohr spectrum. Products and conjugates therefore
have frequencies in $\Gamma(q_0)$. Each Fourier coefficient is a
continuous functional of the uniform norm; hence uniform limits still
have zero coefficients at frequencies outside $\Gamma(q_0)$.
It follows that $\Gamma(q(t))\subseteq\Gamma(q_0)$. Apply this same
inclusion to $q(t)$ and the inverse flow $\Phi_{-t}$ to obtain equality.
\end{proof}

\begin{corollary}\label{c44}
Let $q_0\in X$, and let $q_{0,n}\to q_0$ in $X$. Then
\[
\sup_{|t|\le T}\norm{\Phi_t(q_{0,n})-\Phi_t(q_0)}_5\to0
 \qquad(T<\infty).
\]
In particular this applies to finite block regularizations
$q_{0,n}=\sum_{|k|\le n}\Box_kq_0$, which belong to $C_b^\infty$.
For almost periodic $q_0$, it also applies to trigonometric polynomial
approximations converging in $X$.
\end{corollary}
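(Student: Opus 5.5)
The first assertion follows directly from the Lipschitz estimate \eqref{EE2.2} of Theorem~\ref{t3}. Since $q_{0,n}\to q_0$ in $X$, the norms $\norm{q_{0,n}}_5$ are eventually bounded by $R:=\norm{q_0}_5+1$. For such $n$, \eqref{EE2.2} gives
\[
\sup_{|t|\le T}\norm{\Phi_t(q_{0,n})-\Phi_t(q_0)}_5\le L_T(R)\norm{q_{0,n}-q_0}_5,
\]
and the right-hand side tends to zero. No spectral input beyond Theorem~\ref{t3} is needed.

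It remains to check that each of the two named approximation schemes converges in $X$.

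For the block regularizations, the tail estimate in the proof of Lemma~\ref{l41} gives
\[
\norm{q_0-q_{0,n}}_5=\norm{\textstyle\sum_{|k|>n}\Box_kq_0}_5\le C_5\sum_{|k|>n}\br{k}^5\norm{\Box_kq_0}_\infty .
\]
This tends to zero because the full series defining $\norm{q_0}_5$ converges. Membership $q_{0,n}\in C_b^\infty$ holds because each $\Box_kq_0$ has compact Fourier support and bounded uniform norm. Bernstein's inequality (Lemma~\ref{l39}) then bounds every derivative of the finite sum. Equivalently, $q_{0,n}$ lies in $X_j$ for all $j$, hence in $\bigcap_jX_j=C_b^\infty$.

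For almost periodic $q_0\in X$, the claim is that trigonometric polynomials converging in $X$ exist, which makes the statement nonvacuous. First approximate $q_0$ by the finite block sum $q_{0,n}$ as above. Each $\Box_kq_0$ is almost periodic with Fourier support in $[k-1,k+1]$, as shown in Lemma~\ref{l43}. Bohr approximation with a Bochner–Fejér kernel whose frequencies stay in $[k-2,k+2]$ then gives trigonometric polynomials $P_{k,m}\to\Box_kq_0$ uniformly, with frequencies in a fixed compact set. On a fixed compact frequency set the $X_5$ norm is dominated by a constant times the uniform norm. This point needs some care: one applies the finitely many $\Box_j$ that meet the set and uses finite overlap. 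Hence $\sum_{|k|\le n}P_{k,m}\to q_{0,n}$ in $X$. A diagonal choice produces trigonometric polynomials converging to $q_0$ in $X$.

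The only step with any subtlety is this frequency-localized Bohr approximation. It is needed so that uniform convergence of the approximants upgrades to $X_5$ convergence. Once that is in place, the continuity statement for such sequences is again just \eqref{EE2.2}.
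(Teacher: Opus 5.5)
Your proposal is correct and follows the paper's proof. The first assertion is read off from \eqref{EE2.2}, and the block regularizations converge by the summable-tail estimate. For almost periodic data you truncate the block series and approximate uniformly by trigonometric polynomials with frequencies in a fixed compact set; there uniform convergence upgrades to $X$ convergence, and a diagonal choice finishes. The only minor difference is how you get the frequency localization. You use Bochner--Fej\'er polynomials, whose frequencies lie in the Bohr spectrum and hence in the support of $\chi(\cdot-k)$. The paper instead applies a slightly larger smooth frequency cutoff to arbitrary uniformly approximating Bohr polynomials. Both devices work equally well.
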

\begin{proof}
The first assertion is \eqref{EE2.2}. Finite block
regularizations converge by the summable-tail estimate and are smooth
with all derivatives bounded. For the last assertion, first truncate
the block series. On its fixed compact frequency support, apply a
slightly larger smooth frequency cutoff to uniformly approximating
Bohr polynomials. This gives trigonometric polynomials converging to
the truncation in $X$. A diagonal choice gives convergence to $q_0$.
\end{proof}

\subsection{Local stability and approximation on expanding intervals}

\begin{proposition}\label{p45}
Let $u,v$ be classical solutions of \eqref{EE2.1} on
$[-T,T]\times\R$, with jointly continuous spatial derivatives through
order two, and suppose
\[
\sup_{|t|\le T}\max\{p_2(u(t)),p_2(v(t))\}\le B.
\]
For $L\ge1$,
\[
\sup_{|t|\le T}
 \norm{u(t)-v(t)}_{H^1([-L/2,L/2])}
 \le C_{B,T}\left(
 \norm{u(0)-v(0)}_{H^1([-L,L])}+L^{-1/2}\right).
\]
For $L\ge2A+2$, the same right-hand side bounds
$\sup_{|t|\le T}\norm{u(t)-v(t)}_{L^\infty([-A,A])}$.
\end{proposition}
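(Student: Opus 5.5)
The plan is a weighted local energy estimate for $w=u-v$ at the $H^1$ level, with a cutoff adapted to the window of size $L$, in the spirit of Lemma~\ref{l38}.

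\emph{Choice of weight.} Fix $\psi\in C_c^\infty((-1,1))$ with $0\le\psi\le1$, $\psi=1$ on $[-1/2,1/2]$, and set $\psi_L(x)=\psi(x/L)^2$. Then
\[
|\psi_L'|\le CL^{-1}\psi(x/L)\le CL^{-1},\qquad \operatorname{supp}\psi_L\subset[-L,L],\qquad \int|\psi_L'|\le C.
\]
Put
\[
E(t)=\int\psi_L\bigl(|w|^2+|w_x|^2\bigr)\dd x,
\]
with $w=u-v$ and $N=|u|^2u-|v|^2v$, so that $w_t=\tfrac{i}{2}w_{xx}-i\kappa N$.

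\emph{The $L^2$ part.} Compute $\partial_t\int\psi_L|w|^2$. The dispersive term contributes after one integration by parts $\operatorname{Im}\int\psi_L'\,w_x\bar w$, bounded by $C L^{-1}\int_{-L}^{L}|w_x||w|\le CB^2$. This is too crude, so I keep the localization and use $|\psi_L'|\le CL^{-1}\psi(x/L)$ with Cauchy--Schwarz. That gives
\[
\Bigl|\int\psi_L'\,w_x\bar w\Bigr|\le CL^{-1}E^{1/2}\Bigl(\int_{-L}^{L}|w|^2\Bigr)^{1/2}\le CL^{-1}E^{1/2}(2L)^{1/2}B\le E+CB^2L^{-1}.
\]
The nonlinear term is $\le 6B^2E$.

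\emph{The $\dot H^1$ part.} Differentiate $\int\psi_L|w_x|^2$. The dispersive part again gives $\operatorname{Im}\int\psi_L' w_{xx}\bar w_x$. This needs a third derivative unless treated carefully, and avoiding it is the main obstacle, since only $p_2$ bounds are assumed. I would write $\psi_L'w_{xx}\bar w_x$ directly: $|w_{xx}|\le 2B$ and $|w_x|\le 2B$ pointwise. Cauchy--Schwarz as above then gives
\[
\Bigl|\int\psi_L' w_{xx}\bar w_x\Bigr|\le CL^{-1}\cdot 2B\cdot\int_{-L}^{L}\psi(x/L)|w_x|\le CBL^{-1/2}E^{1/2},
\]
since $\psi(x/L)\le1$ and $\int_{-L}^{L}\psi(x/L)|w_x|\le(2L)^{1/2}E^{1/2}$. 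This is fine and uses only second derivatives. The term $\int\psi_L\,\partial_xN\,\bar w_x$ is bounded by $C B^2\int\psi_L(|w|+|w_x|)|w_x|\le CB^2E$, because
\[
\partial_x N=\text{(quadratic in }u,v)\cdot w_x+\text{(terms with }u_x,v_x)\cdot w,
\]
so all coefficients are bounded by $CB^2$. Strictly, $w_t$ involves $w_{xxx}$ when differentiated. I would justify the computation in integrated-in-time form: either mollify in $x$, or compute $\partial_t\int\psi_L|w_x|^2$ via the weak formulation, integrating by parts in $x$ before in $t$. This is where joint continuity of derivatives through order two is used.

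\emph{Closing the estimate.} Collecting terms,
\[
E'\le C_B E+C_BL^{-1}.
\]
Gronwall on $[-T,T]$ gives
\[
E(t)\le e^{C_BT}\bigl(E(0)+C_BTL^{-1}\bigr),\qquad E(0)\le\norm{w(0)}_{H^1([-L,L])}^2.
\]
Since $\psi_L=1$ on $[-L/2,L/2]$, taking square roots yields the $H^1([-L/2,L/2])$ bound. For the $L^\infty([-A,A])$ claim, $L\ge2A+2$ gives $[-A-1,A+1]\subset[-L/2,L/2]$, and the one-dimensional Sobolev embedding $H^1([-A-1,A+1])\hookrightarrow L^\infty$, with a constant independent of $A$ because the interval has length at least $2$, finishes the argument.
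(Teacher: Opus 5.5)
Your proposal is correct and follows the paper's proof essentially step by step. It uses the same cutoff $\chi(x/L)^2$ and the same localized $H^1$ energy $\int\rho_L(|w|^2+|w_x|^2)$. Like the paper, you apply Cauchy--Schwarz against the weight to extract the factor $L^{-1/2}$ from the boundary terms using only $p_2$ bounds, justify the differentiated identity by spatial mollification, apply Gronwall in both time directions, and finish with a fixed-length Sobolev inequality for the $L^\infty$ bound. The differences are cosmetic: the paper packages the weight estimate as $\int|\rho_L'|^2/\rho_L\le 4L^{-1}\|\chi'\|_{L^2}^2$, and in the $L^2$-level boundary term it puts the weight on $w$ rather than on $w_x$.
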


\begin{proof}
Put $w=u-v$ and $N=|u|^2u-|v|^2v$. Choose
$\chi\in C_c^\infty((-1,1);\R)$ such that $0\le\chi\le1$ and
$\chi=1$ on $[-1/2,1/2]$, and write
\[
\rho_L(x)=\chi(x/L)^2,\qquad
 F_L(t)=\int_\R\rho_L(|w|^2+|w_x|^2).
\]
The identity
\[
N_x=2|u|^2w_x+u^2\overline{w_x}
 +2(|u|^2-|v|^2)v_x+(u^2-v^2)\overline{v_x}
\]
gives
$|N|\le3B^2|w|$ and
$|N_x|\le3B^2|w_x|+6B^2|w|$. Moreover,
\[
\int_{\{\rho_L>0\}}\frac{|\rho_L'|^2}{\rho_L}
 \le4L^{-1}\norm{\chi'}_{L^2}^2.
\]
In the localized mass identities for $w$ and $w_x$, the term
$2|u|^2w_x\overline{w_x}$ is real. The nonlinear contribution is
therefore bounded pointwise by
\[
B^2\bigl(6|w|^2+2|w_x|^2+12|w||w_x|\bigr)
 \le12B^2(|w|^2+|w_x|^2).
\]
The two identities yield
\begin{align*}
 |F_L'|
 &\le\int|\rho_L'|(|w_x||w|+|w_{xx}||w_x|)
       +12B^2F_L\\
 &\le 2L^{-1/2}\norm{\chi'}_{L^2}
       \bigl(\norm{w_x}_\infty^2+\norm{w_{xx}}_\infty^2\bigr)^{1/2}
       F_L^{1/2}+12B^2F_L\\
 &\le(12B^2+1)F_L+8B^2\norm{\chi'}_{L^2}^2L^{-1}.
\end{align*}
For the differentiated identity, convolve
$(w_x)_t=iw_{xxx}/2-i\kappa N_x$ in space.
Integration by parts leaves only the mollified
$w_x,w_{xx},N_x$. Local uniform convergence on the compact support
of $\rho_L$ gives the integrated identity as the mollifier is removed.
Thus $F_L$ is absolutely continuous and the displayed estimate holds
almost everywhere.

Gronwall in both time directions gives
\[
F_L(t)\le e^{a|t|}F_L(0)
       +\frac{8B^2\norm{\chi'}_{L^2}^2}{aL}(e^{a|t|}-1),
 \qquad a=12B^2+1.
\]
Use $\rho_L=1$ on $[-L/2,L/2]$ and
$F_L(0)\le\norm{w(0)}_{H^1([-L,L])}^2$.
For the pointwise estimate, apply the fixed-interval Sobolev
inequality on $[x-1,x+1]\subset[-L/2,L/2]$.
\end{proof}

\begin{corollary}
\label{c46}
Let $f\in W^{5,\infty}(\R)$ with $p_5(f)\le R$, and choose
$\theta\in C_c^\infty((-2,2))$ equal to one on $[-1,1]$.
Define
\[
f_L^{\rm cut}(x)=\theta(x/L)f(x),
\]
and let $f_L^{\rm per}$ be its $4L$-periodic extension from
$[-2L,2L]$. Let $q_L^{\rm cut}$ and $q_L^{\rm per}$ be the
corresponding finite-mass and periodic cubic NLS solutions.
For either sign and $L\ge\max\{1,2A+2\}$,
\[
\begin{aligned}
 &\sup_{|t|\le T}\left(
 \norm{q_L^\alpha(t)-Q_\kappa[f](t)}_{H^1([-A,A])}
 +\norm{q_L^\alpha(t)-Q_\kappa[f](t)}_{L^\infty([-A,A])}
 \right)
 \le C_{R,T,\theta}L^{-1/2},\\
 &\hfill \alpha\in\{\mathrm{cut},\mathrm{per}\}.
 \end{aligned}
\]
\end{corollary}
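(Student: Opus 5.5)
The plan is to apply Proposition~\ref{p45} with $u=q_L^\alpha$ and $v=Q_\kappa[f]$. That reduces the corollary to three inputs: a uniform $p_2$ bound for both solutions on $[-T,T]$; the classical regularity required by Proposition~\ref{p45}; and control of $\norm{q_L^\alpha(0)-f}_{H^1([-L,L])}$.

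The initial term is the easiest. Since $\theta(x/L)=1$ on $[-L,L]$, we have $f_L^{\rm cut}=f$ there. The same holds for $f_L^{\rm per}$, because $[-L,L]\subset[-2L,2L]$ lies inside the fundamental period window. So the initial $H^1([-L,L])$ difference vanishes, and the bound of Proposition~\ref{p45} becomes $C_{B,T}L^{-1/2}$. To pass from the window $[-L/2,L/2]$ to $[-A,A]$ we need $A\le L/2$, which holds for $L\ge 2A+2$; the same condition gives the $L^\infty([-A,A])$ part.

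The background bound is also direct. Proposition~\ref{p37} gives $\sup_{|t|\le T}p_2(Q_\kappa[f](t))\le B_T(R)$, together with joint continuity of the derivatives through order two.

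The main work is a uniform $p_2$ bound on the approximate solutions, independent of $L\ge1$. Here I would observe that both data lie in $W^{5,\infty}$ with $p_5$ bounded uniformly in $L$.
\begin{itemize}
\item For the cutoff, Leibniz gives $p_5(f_L^{\rm cut})\le C_\theta R$, since $L\ge1$.
\item The periodic extension of $\theta(\cdot/L)f$ from $[-2L,2L]$ is smooth across the seams, because $\theta(x/L)$ vanishes to infinite order near $x=\pm2L$. Hence $p_5(f_L^{\rm per})\le C_\theta R$ as well.
\end{itemize}
So both data lie in $\mathcal B_{C_\theta R}$. I then identify the classical finite-mass and periodic solutions with the spectral solutions $Q_\kappa[f_L^\alpha]$. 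Both are classical with bounded $u,u_x$ on finite time strips. For $H^1$ or periodic $H^5$ data this follows from standard persistence of regularity and Sobolev embedding, which also supplies joint continuity of derivatives through order two. Lemma~\ref{l38} then gives equality with $Q_\kappa[f_L^\alpha]$. Proposition~\ref{p37} consequently bounds $p_2$ of both solutions by $B_T(C_\theta R)$, uniformly in $L$ and $\alpha$. Setting $B=\max\{B_T(R),B_T(C_\theta R)\}$, Proposition~\ref{p45} yields the stated estimate with $C_{R,T,\theta}=C_{B,T}$.

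The main obstacle is this identification step. It needs to be checked that the "corresponding finite-mass and periodic solutions" have the regularity hypotheses of Lemma~\ref{l38} on every finite time strip. Alternatively, one simply defines them as the spectral solutions of $f_L^\alpha$, in which case no identification is needed. The rest is bookkeeping.
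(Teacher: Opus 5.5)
Your proposal is correct and matches the paper's proof essentially step for step. The shared steps are: the uniform bound $p_5(f_L^\alpha)\le C_\theta R$ via Leibniz and the smooth seams; agreement of both data with $f$ on $[-L,L]$; identification of the classical finite-mass and periodic solutions with the canonical spectral solutions through Lemma~\ref{l38}; the $L$-independent $p_2$ bound from Proposition~\ref{p37}; and finally Proposition~\ref{p45}. One point of wording to tighten: the cutoff data lie in $H^5(\R)$, not merely $H^1$, and it is persistence of $H^5$ regularity that gives the classical solutions with bounded $u_x$ required in Lemma~\ref{l38}.
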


\begin{proof}
Leibniz's rule gives
$p_5(f_L^\alpha)\le C_\theta R$ uniformly in $L\ge1$.
The periodic seams are smooth because $\theta$ vanishes in
neighborhoods of $\pm2$. Both data agree with $f$ on $[-L,L]$.
The finite-mass and periodic data belong to $H^5$ on their respective
domains. Their usual global one-dimensional cubic NLS solutions,
for both signs, have bounded first derivatives on every finite
time strip; see \cite{Tsutsumi,Bourgain93} and persistence of regularity.
Lemma~\ref{l38} identifies them with the canonical solutions.
Proposition~\ref{p37}, in its $W^{5,\infty}$ form, supplies
the same $p_2$ bound for these solutions and $Q_\kappa[f]$,
independently of $L$. Apply Proposition~\ref{p45}.
\end{proof}

\subsection{Local topology and translation hulls}

\begin{proposition}
\label{p47}
If $f_n,f\in X_5$, $\sup_n\norm{f_n}_5+\norm f_5<\infty$, and
$f_n\to f$ in $C^4_{\rm loc}(\R)$, then, for every $A,T<\infty$,
\[
\sup_{|t|\le T}
 \norm{\Phi_t(f_n)-\Phi_t(f)}_{C^4([-A,A])}\longrightarrow0.
\]
\end{proposition}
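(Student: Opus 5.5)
The plan is to combine the uniform $X_5$ bound \eqref{E1.3} with a local-in-space stability estimate. Proposition~\ref{p45} gives such an estimate in $H^1$ only; I will upgrade it to $C^4$ on compact sets by interpolation against a uniform higher-order bound.

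\emph{Step 1: uniform bounds.} Set $R=\sup_n\norm{f_n}_5+\norm f_5$. Theorem~\ref{t3} gives $\sup_{|t|\le T}\norm{\Phi_t(f_n)}_5\le F_T(R)$, and the same for $f$. Lemma~\ref{l39} then gives
\[
\sup_{|t|\le T}p_5\bigl(\Phi_t(f_n)\bigr)\le CF_T(R),
\]
uniformly in $n$, and likewise for $f$. In particular $p_2$ is bounded by $B=CF_T(R)$. The solutions are classical with continuous spatial derivatives through order two, so they satisfy the hypotheses of Proposition~\ref{p45}.

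\emph{Step 2: local $H^1$ convergence.} Fix $\varepsilon>0$ and choose $L\ge 2A+2$ so large that $C_{B,T}L^{-1/2}<\varepsilon$. Convergence $f_n\to f$ in $C^4_{\rm loc}$ implies $\norm{f_n-f}_{H^1([-L,L])}\to0$. Proposition~\ref{p45} then yields
\[
\limsup_n\sup_{|t|\le T}\norm{\Phi_t(f_n)-\Phi_t(f)}_{H^1([-L/2,L/2])}\le\varepsilon.
\]
Since $\varepsilon$ is arbitrary, the difference $w_n(t)=\Phi_t(f_n)-\Phi_t(f)$ tends to zero in $L^\infty([-A,A])$ and in $H^1([-A-1,A+1])$, uniformly for $|t|\le T$.

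\emph{Step 3: upgrade to $C^4$.} The functions $w_n(t)$ have $p_5$ uniformly bounded. A Gagliardo--Nirenberg/Landau interpolation on the interval $[-A-1,A+1]$ gives
\[
\norm{\partial_x^a w}_{L^\infty}\le C\norm w_{L^\infty}^{1-a/5}p_5(w)^{a/5}+C\norm w_{L^\infty}\qquad(a\le4).
\]
Applying this with the uniform $L^\infty$ convergence from Step 2 gives convergence in $C^4([-A,A])$, uniformly in $|t|\le T$.

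The main obstacle is Step 3. I would apply the interpolation to the fourth derivative directly; the fifth derivative is only bounded in $L^\infty$ (as a Lipschitz bound on the fourth), and that suffices for the Landau-type inequality. As an alternative, if the interpolation is awkward at $a=4$, I would argue by Arzel\`a--Ascoli. The family $\{w_n(t)\}$ is bounded in $C^4$ with equicontinuous fourth derivatives on $[-A,A]$, so every subsequence has a $C^4$-convergent sub-subsequence. Its limit must be $0$ by Step 2. Uniformity in $t$ follows from the same compactness argument applied to sequences $(n_k,t_k)$.
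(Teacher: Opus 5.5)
Your proposal is correct and follows essentially the same route as the paper: uniform $p_5$ bounds from Theorem~\ref{t3} and Lemma~\ref{l39}, local $H^1$ stability from Proposition~\ref{p45} with $n\to\infty$ taken before $L\to\infty$, and an interpolation step that upgrades uniform convergence to $C^4$ convergence. The only cosmetic difference is in that last step: the paper multiplies the difference by a smooth cutoff supported in $[-A-1,A+1]$ and applies the whole-line inequality $\norm{\partial_x^jh}_\infty\le C_j\norm h_\infty^{1-j/5}\norm{\partial_x^5h}_\infty^{j/5}$, whereas you use the bounded-interval version with an additive lower-order term (or, alternatively, Arzel\`a--Ascoli).
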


\begin{proof}
Theorem~\ref{t3} and Lemma~\ref{l39} give a common
bound for $p_5(\Phi_t(f_n))$ and $p_5(\Phi_t(f))$ on $[-T,T]$.
For a fixed large $L$, Proposition~\ref{p45} applies,
and its initial $H^1([-L,L])$ difference tends to zero.
First let $n\to\infty$, and then $L\to\infty$.
The resulting convergence is uniform on $[-T,T]$ with values
in $L^\infty([-A-1,A+1])$.

For clarity, choose a smooth spatial cutoff equal to one on
$[-A,A]$ and supported in $[-A-1,A+1]$.
Apply the one-dimensional interpolation inequalities to its
product with $\Phi_t(f_n)-\Phi_t(f)$:
\[
\norm{\partial_x^j h}_\infty
 \le C_j\norm h_\infty^{1-j/5}
             \norm{\partial_x^5h}_\infty^{j/5},
 \qquad 1\le j\le4.
\]
The fifth derivatives of these products are uniformly bounded.
This proves the asserted convergence of every derivative through
order four, uniformly in time.
\end{proof}

\begin{proposition}
\label{p48}
For $f\in X_5$, let
\[
\mathcal H_{\rm loc}(f)
 =\overline{\{\T_yf:y\in\R\}}^{\,C^4_{\rm loc}(\R)},
 \qquad \T_yf(x)=f(x+y).
\]
This is a compact subset of
$\{g\in X_5:\norm g_5\le\norm f_5\}$.
For every $t\in\R$, the flow restricts to a homeomorphism
\[
\Phi_t:\mathcal H_{\rm loc}(f)
       \longrightarrow\mathcal H_{\rm loc}(\Phi_t(f))
\]
for the local $C^4$ topologies and commutes with all translations.
\end{proposition}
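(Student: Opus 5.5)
The plan has three steps: prove compactness of the hull and its inclusion in the $X_5$ ball; show that $\Phi_t$ maps the hull into the evolved hull with local continuity; and obtain bijectivity from the inverse flow.

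\emph{Step 1: compactness and the norm bound.} Translations are isometries of $X_5$, because the block projections $\Box_k$ commute with $\T_y$. Hence every $\T_yf$ satisfies $\norm{\T_yf}_5=\norm f_5$, and by Lemma~\ref{l39} its $p_5$ norm is bounded by $C\norm f_5$. The orbit therefore lies in the set $\mathcal B$ of functions with bounded $p_5$. As in the proof of Proposition~\ref{p37}, Arzel\`a--Ascoli makes $\mathcal B$ compact and metrizable in $C^4_{\rm loc}$, so the closure $\mathcal H_{\rm loc}(f)$ is compact.

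It remains to show that limits stay in the $X_5$ ball. Let $g$ be the $C^4_{\rm loc}$ limit of $\T_{y_n}f$. Each block $\Box_k$ is convolution with a Schwartz kernel. Local uniform convergence together with a uniform $L^\infty$ bound gives $\Box_k\T_{y_n}f\to\Box_kg$ pointwise, by dominated convergence in the convolution. Consequently $\norm{\Box_kg}_\infty\le\liminf_n\norm{\Box_k\T_{y_n}f}_\infty=\norm{\Box_kf}_\infty$. Applying Fatou to the weighted block sum gives $\norm g_5\le\norm f_5$. The same argument shows that $C^4_{\rm loc}$ limits of bounded $X_5$ sets remain in $X_5$ with the same bound.

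\emph{Step 2: the flow maps the hull into the evolved hull.} By covariance in Theorem~\ref{t3}, $\Phi_t(\T_yf)=\T_y\Phi_t(f)$. Take $g\in\mathcal H_{\rm loc}(f)$ with $\T_{y_n}f\to g$ in $C^4_{\rm loc}$. Step~1 gives $\sup_n\norm{\T_{y_n}f}_5+\norm g_5<\infty$, so Proposition~\ref{p47} applies and yields
\[
\T_{y_n}\Phi_t(f)=\Phi_t(\T_{y_n}f)\to\Phi_t(g)
\]
in $C^4_{\rm loc}$. Hence $\Phi_t(g)\in\mathcal H_{\rm loc}(\Phi_t(f))$.

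Continuity of the restricted map follows from the same ingredients. The hull is metrizable, Proposition~\ref{p47} gives sequential continuity on sets bounded in $X_5$, and the whole hull is bounded in $X_5$ by Step~1. Commutation with translations is immediate from covariance, and the hulls are translation invariant.

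\emph{Step 3: bijectivity and the inverse.} Apply Step~2 to $\Phi_{-t}$ acting on $\mathcal H_{\rm loc}(\Phi_t(f))$. This gives a continuous map back into $\mathcal H_{\rm loc}(\Phi_{-t}\Phi_t f)=\mathcal H_{\rm loc}(f)$. The group law from Theorem~\ref{t3} makes the two restrictions mutually inverse. Alternatively, a continuous bijection of compact Hausdorff spaces is automatically a homeomorphism.

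I expect the main obstacle to be Step~1's claim that hull elements lie in $X_5$ with norm at most $\norm f_5$. That claim is what allows both Proposition~\ref{p47} and the flow $\Phi_t$ itself to be applied at limit points, so the lower semicontinuity of the block norms under $C^4_{\rm loc}$ convergence has to be checked with care, via the kernel decay and dominated convergence.
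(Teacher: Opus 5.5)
Your proposal is correct and follows the paper's proof essentially step for step. You get compactness from Arzel\`a--Ascoli on a $p_5$-bounded set, and closedness of the $X_5$ ball under $C^4_{\rm loc}$ limits from blockwise convergence of the Schwartz-kernel convolutions together with Fatou. You then combine Proposition~\ref{p47} with translation covariance, and take the inverse to be $\Phi_{-t}$.

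The only cosmetic difference is how you obtain surjectivity onto $\mathcal H_{\rm loc}(\Phi_t(f))$. You apply the inclusion argument to $\Phi_{-t}$ on the evolved hull, whereas the paper notes that the compact image $\Phi_t(\mathcal H_{\rm loc}(f))$ contains the orbit of $\Phi_t(f)$ and therefore its closure; both arguments work.
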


\begin{proof}
The $p_5$ bound makes every bounded $X_5$ ball precompact in
$C^4_{\rm loc}$ by Arzel\`a--Ascoli.
Such a ball is closed for this topology. Indeed, if
$g_n\to g$ locally with $\norm{g_n}_5\le R$, the uniform
$L^\infty$ bound and the Schwartz tails of each block kernel give
$\Box_kg_n\to\Box_kg$ locally uniformly.
Hence
\[
\norm{\Box_kg}_\infty
 \le\liminf_n\norm{\Box_kg_n}_\infty,\qquad
 \norm g_5\le\liminf_n\norm{g_n}_5\le R
\]
by Fatou's lemma.
The translation orbit stays in this ball, proving compactness of
its local hull.

Proposition~\ref{p47} and translation
covariance show that the compact image of this hull under $\Phi_t$
is exactly the local hull of $\Phi_t(f)$: approximate each point
by translates in both the inclusion and closure arguments.
The inverse is the restriction of $\Phi_{-t}$, which is continuous
on the bounded image by the same proposition.
\end{proof}

\subsection{The defocusing sum space}

We prove Theorem~\ref{t7}, using the relative-energy
argument of Klaus--Kunstmann \cite[Section~4]{KlausKunstmann} with the
global background bounds supplied by Theorem~\ref{t3}. First,
\begin{equation}\label{E5.14}
 \norm b_0\le C\norm b_{H^1}.
\end{equation}
Indeed, the unit-band Bernstein inequality and Cauchy--Schwarz give
\[
\sum_k\norm{\Box_kb}_\infty
 \le C\sum_k\norm{\Box_kb}_{L^2}
 \le C\left(\sum_k\br{k}^{-2}\right)^{1/2}
       \left(\sum_k\br{k}^{2}\norm{\Box_kb}_{L^2}^2\right)^{1/2}
 \le C\norm b_{H^1}.
\]
Thus $Z_s$ embeds continuously in $X_0$. It is a Banach space:
the kernel of the addition map
$X_s\times H^1\to X_0$ is closed, and
\eqref{EE2.3} is the corresponding quotient norm.

Fix $a\in X_s$ and put $q(t)=\Phi_t(a)$. Theorem~\ref{t3},
the embeddings of Lemma~\ref{l39}, and the equation imply
\begin{equation}\label{E5.15}
 q\in C(\R;W^{2,\infty}(\R))
       \cap C^1(\R;L^\infty(\R)).
\end{equation}
These norms are bounded on every finite time interval, uniformly
when $a$ ranges over a bounded subset of $X_s$.
Writing $u=q+w$, the equation for the perturbation is
\begin{equation}\label{E5.16}
 iw_t=-\tfrac12w_{xx}+\mathcal N(q,w),\qquad
 \mathcal N(q,w)=|q+w|^2(q+w)-|q|^2q,\qquad w(0)=b.
\end{equation}
Its polynomial expansion is
\[
\mathcal N(q,w)
 =|w|^2w+2|q|^2w+q^2\overline w
                  +2q|w|^2+\overline q\,w^2.
\]
Since $H^1(\R)$ is an algebra and multiplication by a
$W^{1,\infty}$ function is bounded on $H^1$, the Duhamel
map for \eqref{E5.16} gives a unique local
$C_tH^1$ solution, with the $H^1$ norm blowup alternative.
More precisely, for bounded $q,p$ in $W^{1,\infty}$ and
bounded $w,v$ in $H^1$,
\begin{equation}\label{E5.17}
 \norm{\mathcal N(q,w)-\mathcal N(p,v)}_{H^1}
 \le C_B\bigl(\norm{w-v}_{H^1}
                    +\norm{q-p}_{W^{1,\infty}}\bigr),
\end{equation}
where $B$ bounds the four indicated norms. This follows by
subtracting the five monomials and using the same two
multiplication estimates.

We next obtain a bound that holds for arbitrary $\norm b_{H^1}$.
Define
\begin{align}
 M(w)&=\norm w_{L^2}^2,\nonumber\\
 E(q,w)&=\tfrac12\norm{w_x}_{L^2}^2
                     +\tfrac12\int_\R R(q,w)\,\dd x,
 \label{E5.18}\\
 R(q,w)&=|w|^4+4\operatorname{Re}(q\overline w)|w|^2
          +2|q|^2|w|^2
          +4\bigl(\operatorname{Re}(q\overline w)\bigr)^2.
 \nonumber
\end{align}
Every term in the integral is integrable for $w\in H^1$.
Pointwise, $R(q,w)$ equals
$|q+w|^4-|q|^4-4\operatorname{Re}(|q|^2q\overline w)$.
If $w\ne0$, write $q/w=\alpha+i\beta$. Then
\[
\frac{R(q,w)}{|w|^4}
 =1+4\alpha+6\alpha^2+2\beta^2
 =\tfrac13+6(\alpha+\tfrac13)^2+2\beta^2.
\]
Consequently,
\begin{equation}\label{E5.19}
 E(q,w)\ge\tfrac12\norm{w_x}_{L^2}^2
                         +\tfrac16\norm w_{L^4}^4 .
\end{equation}

Fix $T<\infty$ and set
\[
Q_T=\sup_{|t|\le T}\norm{q(t)}_\infty,\qquad
 P_T=\sup_{|t|\le T}\norm{q_t(t)}_\infty.
\]
Initially take $b\in H^2$. The mass identity for
\eqref{E5.16} gives
\begin{equation}\label{E5.20}
 |M'|\le2Q_T^2M+2Q_T\int_\R|w|^3\,\dd x.
\end{equation}
The real variational derivative of $E$ with respect to $w$ is
$-w_{xx}+2\mathcal N(q,w)=2iw_t$. Its contribution to the
time derivative of $E$ has zero real part. Differentiating
the explicit $q$ dependence in \eqref{E5.18}
therefore yields
\begin{align*}
 E'
 &=2\int_\R\operatorname{Re}(q_t\overline w)|w|^2\,\dd x
   +2\int_\R\operatorname{Re}(\overline q\,q_t)|w|^2\,\dd x\\
 &\qquad
   +4\int_\R\operatorname{Re}(q\overline w)
                 \operatorname{Re}(q_t\overline w)\,\dd x,
\end{align*}
and hence
\begin{equation}\label{E5.21}
 |E'|\le2P_T\int_\R|w|^3\,\dd x+6Q_TP_TM.
\end{equation}
These identities are valid for
$w\in C_tH^2\cap C_t^1L^2$ by integration by parts and the
Hilbert-space chain rule; the background regularity in
\eqref{E5.15} suffices.

Let $\mathcal H=M+E$. By Cauchy--Schwarz and
\eqref{E5.19},
\[
\int_\R|w|^3\,\dd x
 \le M^{1/2}\norm w_{L^4}^2
 \le\tfrac12M+3E.
\]
Equations \eqref{E5.20} and
\eqref{E5.21} give
\[
|\mathcal H'|
 \le C(1+Q_T+P_T)^2\mathcal H.
\]
Thus, in both time directions,
\begin{equation}\label{E5.22}
 \sup_{|t|\le T}\norm{w(t)}_{H^1}^2
 \le2\mathcal H(0)
            \exp\!\bigl(CT(1+Q_T+P_T)^2\bigr).
\end{equation}
The initial value satisfies
\[
\mathcal H(0)
 \le C\left[(1+\norm a_\infty^2)\norm b_{H^1}^2
                                      +\norm b_{H^1}^4\right].
\]
This follows from the polynomial in
\eqref{E5.18}, the embedding $H^1\hookrightarrow L^\infty$,
and Young's inequality for $\norm a_\infty\norm b_{H^1}^3$.

To pass to all $b\in H^1$, approximate it in $H^1$ by
$b_m\in H^2$. On bounded $H^1$ sets, the usual product
estimate gives
\[
\norm{\mathcal N(q,w)}_{H^2}
 \le C\bigl(\norm q_{W^{2,\infty}}+\norm w_{H^1}\bigr)^2
                                              \norm w_{H^2}.
\]
Therefore \eqref{E5.22} and Gronwall
extend each $H^2$ solution through $[-T,T]$.
These solutions have a uniform $H^1$ bound. Applying
\eqref{E5.17} with the same background
shows that these solutions are Cauchy in $C([-T,T];H^1)$.
Their limit solves \eqref{E5.16}; the energy
and mass converge, so \eqref{E5.22}
passes to the limit. Arbitrary $T$ and local uniqueness give
a single global $H^1$ perturbation.

The bounds just proved are uniform for $(a,b)$ in a bounded
subset of $X_s\times H^1$. For two such pairs $(a,b)$ and
$(\widetilde a,\widetilde b)$, let
$q=\Phi_t(a)$, $p=\Phi_t(\widetilde a)$ and denote their
perturbations by $w,v$. Duhamel, the $H^1$ isometry of the
free Schr\"odinger group, and
\eqref{E5.17} imply
\[
\norm{w-v}_{C([-T,T];H^1)}
 \le e^{C_BT}\left(
       \norm{b-\widetilde b}_{H^1}
       +C_BT\norm{q-p}_{C([-T,T];W^{1,\infty})}\right).
\]
The stability assertion of Theorem~\ref{t3} therefore gives
\begin{equation}\label{E5.23}
 \norm{q-p}_{C([-T,T];X_s)}
 +\norm{w-v}_{C([-T,T];H^1)}
 \le C_{s,T,R}\bigl(
      \norm{a-\widetilde a}_s+\norm{b-\widetilde b}_{H^1}\bigr)
\end{equation}
when the two pairs have norm at most $R$.

It remains to check that this construction defines the asserted
flow on the sum space. By \eqref{E5.14},
$u=q+w$ belongs to $C(\R;X_0)$. Adding the two Duhamel
equations shows that it is an $X_0$ mild solution of
\eqref{EE2.1}. Proposition~\ref{p42}, at $s=0$, implies
that two decompositions of the same initial value give the same
solution. It also gives uniqueness in the class stated in the
theorem. The equation holds distributionally, since every term
in its Duhamel formula is well defined in $X_0$.

For $\norm{u_0}_{Z_s}\le R$, choose $u_0=a+b$ with
$\norm a_s+\norm b_{H^1}\le R+1$.
Theorem~\ref{t3} and
\eqref{E5.22} give
\eqref{E1.6}. To prove
\eqref{E1.7}, put
$d=\norm{u_0-v_0}_{Z_s}$ and, for $0<\epsilon<1$, choose
\[
u_0=a+b,\qquad
 v_0-u_0=c+e,\qquad
 \norm a_s+\norm b_{H^1}\le R+1,\qquad
 \norm c_s+\norm e_{H^1}\le d+\epsilon.
\]
Since $d\le2R$, the pairs $(a,b)$ and $(a+c,b+e)$ lie
in the product ball of radius $3R+2$. Apply
\eqref{E5.23}, then take the sum-space
norm and let $\epsilon$ decrease to zero. This proves the
claimed Lipschitz estimate in the quotient norm.

Finally, $q(t)\in X_s$ and $w(t)\in H^1$, so the trajectory
is continuous in $Z_s$. Uniqueness after restarting at any
time gives the group law and the inverse $\Phi_{-t}$.
Translation covariance follows from the same uniqueness and the
translation invariance of the equation.
For initial data in $X_s$ the choice $b=0$ recovers the
original flow. Formula \eqref{EE2.4}
and its joint stability follow from the construction and
\eqref{E5.23}. Applying
\eqref{EE2.4} to the inverse flow
gives equality of the two affine spaces in the theorem.

\subsection{Full spectrum and spatial transfer growth}

The uniform spatial bounds on finite time strips also control the
time evolution of the associated spectral equation. We use the
Euclidean operator norm for matrices. For a potential $q(t)$, write
\[
\mathsf Q(t,x)=
 \begin{pmatrix}0&q(t,x)\\
 \kappa\overline{q(t,x)}&0\end{pmatrix},
 \qquad
 D(t)=i\sigma_3\partial_x+\mathsf Q(t,\cdot),
 \qquad
 \operatorname{Dom}D(t)=H^1(\R;\C^2).
\]
These are closed operators on $L^2(\R;\C^2)$, since $\mathsf Q(t)$
is bounded. Let $\mathsf Y(t;x,y,z)$ be the transfer matrix determined by
\[
\partial_x\mathsf Y(t;x,y,z)
 =\mathsf U(t,x,z)\mathsf Y(t;x,y,z),\qquad
 \mathsf Y(t;y,y,z)=I,\qquad
 \mathsf U(t,x,z)=-iz\sigma_3+i\sigma_3\mathsf Q(t,x).
\]
Thus $D(t)f=zf$ is equivalent to $f_x=\mathsf U(t,x,z)f$.
For $\epsilon\in\{-1,1\}$ and a fixed base point $y$, define
\begin{align}
 \overline\lambda_\epsilon(t,z;y)
 &=\limsup_{L\to\infty}\frac1L
       \log\norm{\mathsf Y(t;y+\epsilon L,y,z)},\notag\\
 \underline\lambda_\epsilon(t,z;y)
 &=\liminf_{L\to\infty}\frac1L
       \log\norm{\mathsf Y(t;y+\epsilon L,y,z)}.
 \label{E5.24}
\end{align}
These definitions require no spatial recurrence or ergodic hypothesis.

\begin{theorem}
\label{t49}
Let $q$ be a global solution supplied by Theorem~\ref{t3}, for
either $\kappa=1$ or $\kappa=-1$. Then
\[
\sigma(D(t))=\sigma(D(0)),\qquad t\in\R,
\]
where $\sigma$ denotes the entire complex operator spectrum.
For every $z\in\C$, there is an invertible matrix
$\mathsf B(t,x,z)$ such that
\begin{equation}\label{E5.25}
 \mathsf Y(t;x,y,z)
 =\mathsf B(t,x,z)\mathsf Y(0;x,y,z)\mathsf B(t,y,z)^{-1}.
\end{equation}
For every $T,K<\infty$,
\begin{equation}\label{E5.26}
 \sup_{\substack{|t|\le T,\ x\in\R\\|z|\le K}}
 \bigl(\norm{\mathsf B(t,x,z)}
       +\norm{\mathsf B(t,x,z)^{-1}}
       +\norm{\partial_x\mathsf B(t,x,z)}\bigr)<\infty.
\end{equation}
Consequently, both growth rates in
\eqref{E5.24} are independent of $t$.
In particular, every spatial Lyapunov exponent defined by an existing
limit of these transfer norms is preserved.
Multiplication by $\mathsf B(t,\cdot,z)$ gives bijections between
the corresponding spaces of spectral solutions square integrable
at either end of the line, and between
$\ker(D(0)-z)$ and $\ker(D(t)-z)$.
For $z$ in the common resolvent set, the resolvent norms at times
$0$ and $t$ are comparable, with comparison constants uniform for
$|t|\le T$ and $|z|\le K$.
The same conclusions hold for every global classical solution whose
spatial derivatives through order two are jointly continuous and
bounded on each finite time strip.
\end{theorem}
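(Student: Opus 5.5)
The plan is to use the Lax (zero-curvature) compatibility directly, at the level of the ODE, rather than through the Sato construction. Write $\mathsf V(t,x,z)$ for the time part of the AKNS pair associated with the normalization $\mathsf U=-iz\sigma_3+i\sigma_3\mathsf Q$. It is quadratic in $z$, with coefficients that are polynomials in $q,\overline q,q_x,\overline{q_x}$; schematically,
\[
\mathsf V=c_2z^2\sigma_3+z\,c_1\sigma_3\mathsf Q+c_0\bigl(\sigma_3\mathsf Q^2+\mathsf Q_x\bigr)
\]
with constants fixed by (\ref{EE2.1}). A direct computation shows that (\ref{EE2.1}) is equivalent to
\[
\mathsf U_t-\mathsf V_x+[\mathsf U,\mathsf V]=0 .
\]
This uses only $q_t$ and $q_{xx}$, so classical solutions with two jointly continuous, bounded spatial derivatives suffice; Theorem~\ref{t3} together with Lemma~\ref{l39} provides exactly this, with the $p_2$ bound of Proposition~\ref{p37}.

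\textbf{Step 1: construction of $\mathsf B$.} Define $\mathsf B(t,x,z)$ by
\[
\partial_t\mathsf B=\mathsf V(t,x,z)\mathsf B,\qquad \mathsf B(0,x,z)=I .
\]
Since $\mathsf V$ is bounded on $[-T,T]\times\R\times\{|z|\le K\}$, Gronwall gives bounds on $\mathsf B$ and $\mathsf B^{-1}$ (the latter via $\partial_t\mathsf B^{-1}=-\mathsf B^{-1}\mathsf V$, or via Liouville since $\operatorname{tr}\mathsf V=0$). Next set $W=\partial_x\mathsf B-\mathsf U\mathsf B$. Zero curvature gives
\[
\partial_tW=\mathsf VW,\qquad W(0)=\partial_xI-\mathsf U(0)=-\mathsf U(0,x,z)\neq 0 .
\]
So I instead set $\mathsf Z(t;x,y)=\mathsf B(t,x)\mathsf Y(0;x,y)\mathsf B(t,y)^{-1}$ and check that it satisfies the $x$-ODE with $\mathsf U(t)$ and initial value $I$. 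A cleaner route is to take $\mathsf B(t,x)=\mathsf Y(t;x,0)\,\mathsf S(t)\,\mathsf Y(0;x,0)^{-1}$, where $\mathsf S(t)$ solves the time equation at $x=0$. Then (\ref{E5.25}) holds by construction, and compatibility shows that $\partial_t\mathsf B=\mathsf V\mathsf B-\mathsf B\mathsf V(0,\cdot)$. The justification, with limited regularity, that mixed derivatives commute is routine: differentiate the Volterra integral equation in $t$, using $q\in C^1(\R;L^\infty)$.

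\textbf{Step 2: the uniform bound (\ref{E5.26}), which I expect to be the main obstacle.} The naive formula above involves $\mathsf Y(\cdot;x,0)$, which may grow exponentially in $x$. The remedy is the defining property $\mathsf B\mathsf U(0)-\mathsf U(t)\mathsf B=-\partial_x\mathsf B$, viewed instead as the time ODE
\[
\partial_t\mathsf B=\mathsf V(t,x)\mathsf B-\mathsf B\mathsf V(0,x),\qquad \mathsf B(0)=I,
\]
solved pointwise in $x$. This is linear with coefficients bounded uniformly in $x$, so Gronwall bounds $\norm{\mathsf B}$ and $\norm{\mathsf B^{-1}}$ uniformly. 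Differentiating in $x$ gives a linear inhomogeneous ODE for $\partial_x\mathsf B$ whose forcing involves $\mathsf V_x$, which is bounded by $p_2$. This bounds $\partial_x\mathsf B$. Zero curvature at times $t$ and $0$ then shows that $W=\partial_x\mathsf B-\mathsf U(t)\mathsf B+\mathsf B\mathsf U(0)$ satisfies a homogeneous linear time ODE with $W(0)=0$, hence $W\equiv 0$. This yields (\ref{E5.25}) by uniqueness for the $x$-ODE.

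\textbf{Step 3: consequences.} From (\ref{E5.25}) and (\ref{E5.26}),
\[
\bigl|\log\norm{\mathsf Y(t;y+\epsilon L,y)}-\log\norm{\mathsf Y(0;y+\epsilon L,y)}\bigr|\le C ,
\]
so dividing by $L$ preserves both growth rates. Multiplication by $\mathsf B(t,\cdot,z)$ maps solutions of $f_x=\mathsf U(0)f$ onto solutions of $f_x=\mathsf U(t)f$ and preserves $L^2$ at either end, which gives the kernel bijection. For the spectrum, the intertwining
\[
(D(t)-z)\mathsf B=\mathsf B(D(0)-z)
\]
holds on $H^1$, because $\mathsf B$ and $\partial_x\mathsf B$ are bounded, so $\mathsf B$ and $\mathsf B^{-1}$ are bounded invertible on both $L^2$ and $H^1$. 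The catch is that $\mathsf B$ depends on $z$. For a fixed $z$, however, $D(0)-z$ is invertible if and only if $D(t)-z$ is, via
\[
(D(t)-z)^{-1}=\mathsf B(D(0)-z)^{-1}\mathsf B^{-1}.
\]
This gives equality of spectra and resolvent norm comparability with constants from (\ref{E5.26}). The final sentence of the theorem follows because only $p_2$ bounds and continuity were used.
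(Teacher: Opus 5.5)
Your overall plan (a time ODE for $\mathsf B$ solved pointwise in $x$, Gronwall for uniform bounds, uniqueness to obtain the $x$-relation, then intertwining) is the paper's. However, the ODE you finally adopt in Step~2 is wrong, and (\ref{E5.25}) fails for the $\mathsf B$ it produces.

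For $\partial_t\mathsf B=\mathsf V(t,x)\mathsf B-\mathsf B\,\mathsf V(0,x)$ with $\mathsf B(0)=I$, the defect $W=\partial_x\mathsf B-\mathsf U(t)\mathsf B+\mathsf B\,\mathsf U(0)$ satisfies
\[
\partial_tW=\mathsf V(t)W-W\,\mathsf V(0)-\mathsf B\bigl(\mathsf V_x(0)-[\mathsf U(0),\mathsf V(0)]\bigr)
=\mathsf V(t)W-W\,\mathsf V(0)-\mathsf B\,\mathsf U_t(0,x,z).
\]
Zero curvature at time $0$ does not say $\mathsf V_x=[\mathsf U,\mathsf V]$ for the frozen time-zero matrices. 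It says their difference is $\mathsf U_t=i\sigma_3\mathsf Q_t$, which vanishes only for stationary data. So the defect equation is inhomogeneous and $W\not\equiv0$.

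In fact your $\mathsf B$ is the correct gauge multiplied on the right by $e^{-t\mathsf V(0,x,z)}$. Take constant data $q_0\equiv c\ne0$, which lies in $X$. Then $\mathsf U(0)$ and $\mathsf V(0)$ are independent of $x$ and $[\mathsf U(0),\mathsf V(0)]=-\mathsf U_t(0)\ne0$. Hence $e^{-t\mathsf V(0)}$ does not commute with $\mathsf U(0)$, and (\ref{E5.25}) fails for small $t\ne0$.

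Your ``cleaner route'' has the same error. The object $\mathsf Y(t;x,0)\mathsf S(t)\mathsf Y(0;x,0)^{-1}$ is the right one, but $\mathsf Y(0;x,0)$ does not depend on $t$, so it satisfies $\partial_t\mathsf B=\mathsf V(t,x)\mathsf B$ with no extra term. The right-hand correction you wrote belongs instead to the transfer-matrix evolution $\partial_t\mathsf Y(t;x,0)=\mathsf V(t,x)\mathsf Y(t;x,0)-\mathsf Y(t;x,0)\mathsf V(t,0)$. There it is evaluated at the base point and the current time, not at time zero.

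The repair is the equation you abandoned in Step~1, and your own computation there already finishes the job. With $\partial_t\mathsf B=\mathsf V\mathsf B$ and $\mathsf B(0)=I$, you showed that $W=\partial_x\mathsf B-\mathsf U(t)\mathsf B$ satisfies $\partial_tW=\mathsf VW$ with $W(0)=-\mathsf U(0,x,z)$. That is not a contradiction. Since $\mathsf U(0,x,z)$ does not depend on $t$, the function $-\mathsf B\,\mathsf U(0,x,z)$ solves the same linear initial-value problem. Hence $W=-\mathsf B\,\mathsf U(0)$, i.e. $\mathsf B_x=\mathsf U(t)\mathsf B-\mathsf B\,\mathsf U(0)$, which is (\ref{E5.28}). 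This is exactly the paper's argument, phrased there via $\mathsf C=W+\mathsf B\,\mathsf U(0)$ with $\mathsf C(0)=0$ and $\mathsf C_t=\mathsf V\mathsf C$.

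No modification is needed to obtain (\ref{E5.26}). The coefficient $\mathsf V(t,x,z)$ is already bounded uniformly in $x$, so Gronwall bounds $\mathsf B$ and $\mathsf B^{-1}$. The bound on $\partial_x\mathsf B$ then follows from $\mathsf B_x=\mathsf U(t)\mathsf B-\mathsf B\,\mathsf U(0)$, or from the $x$-differentiated equation using $p_2$. The exponential growth of the transfer matrices cancels in the product.

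With this $\mathsf B$ your Step~3 goes through, with one small correction. Inserting $i\sigma_3\mathsf U=z-\mathsf Q$ gives $(D(t)-z)M_{\mathsf B}=M_{\sigma_3\mathsf B\sigma_3}(D(0)-z)$. The resolvent is therefore $M_{\mathsf B}(D(0)-z)^{-1}M_{\sigma_3\mathsf B^{-1}\sigma_3}$, not $\mathsf B(D(0)-z)^{-1}\mathsf B^{-1}$. The spectral and norm-comparison conclusions are unaffected.
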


\begin{proof}
Define the time matrix
\[
\mathsf V(t,x,z)
 =-iz^2\sigma_3+iz\sigma_3\mathsf Q(t,x)
   -\frac12\partial_x\mathsf Q(t,x)
   -\frac i2\sigma_3\mathsf Q(t,x)^2.
\]
The identities
$\sigma_3\mathsf Q=-\mathsf Q\sigma_3$ and
$\mathsf Q^2=\kappa|q|^2I$ give
\[
\mathsf U_t-\mathsf V_x+[\mathsf U,\mathsf V]
 =i\sigma_3\mathsf Q_t
      +\frac12\mathsf Q_{xx}-\mathsf Q^3=0.
\]
The last equality is precisely equation~\eqref{EE2.1}, together with
its conjugate. This fixes the sign and normalization of the time
matrix for both reductions.

For fixed $x,z$, solve the finite-dimensional equation
\begin{equation}\label{E5.27}
 \partial_t\mathsf B(t,x,z)
   =\mathsf V(t,x,z)\mathsf B(t,x,z),\qquad
 \mathsf B(0,x,z)=I.
\end{equation}
Theorem~\ref{t3} and Lemma~\ref{l39} bound
$q,q_x,q_{xx}$ uniformly in $x$ on every finite time strip.
The coefficients of \eqref{E5.27} and their first
$x$ derivatives are therefore uniformly bounded when $t,z$ stay
in compact sets. The integral equation, differentiation in $x$,
and Gronwall's inequality show that $\mathsf B$ is continuously
differentiable in $x$. Gronwall's inequality for $\mathsf B$ and
for $(\mathsf B^{-1})_t=-\mathsf B^{-1}\mathsf V$ gives uniform
bounds for these two matrices.

Compatibility gives the more precise identity
\begin{equation}\label{E5.28}
 \mathsf B_x
 =\mathsf U(t,x,z)\mathsf B
       -\mathsf B\mathsf U(0,x,z).
\end{equation}
Indeed, the difference of the two sides, written as
$\mathsf C=\mathsf B_x-\mathsf U(t)\mathsf B+
\mathsf B\mathsf U(0)$, satisfies
\[
\mathsf C_t=\mathsf V\mathsf C+
   \bigl(\mathsf V_x-\mathsf U_t-[\mathsf U,\mathsf V]\bigr)
       \mathsf B=\mathsf V\mathsf C,\qquad
 \mathsf C(0,x,z)=0.
\]
Uniqueness for this matrix equation proves
\eqref{E5.28} and the remaining bound in
\eqref{E5.26}.

For a bounded matrix function $C(x)$, let $M_C$ denote multiplication
by $C$ on $L^2(\R;\C^2)$. The bounds just proved, and
$(\mathsf B^{-1})_x=-\mathsf B^{-1}\mathsf B_x\mathsf B^{-1}$,
show that $M_{\mathsf B}$ maps $H^1$ bijectively onto itself.
Using \eqref{E5.28} on this common domain gives
\begin{equation}\label{E5.29}
 (D(t)-z)M_{\mathsf B}
 =M_{\sigma_3\mathsf B\sigma_3}(D(0)-z).
\end{equation}
For example, the terms containing no derivative of the test function
cancel after inserting
$i\sigma_3\mathsf U=zI-\mathsf Q$; the derivative term is
$i\sigma_3\mathsf B\partial_x$ on either side.
All multiplication operators in
\eqref{E5.29} are bounded and invertible
on $L^2$. It follows that $D(t)-z$ has a bounded inverse if and only
if $D(0)-z$ does. More explicitly,
\[
(D(t)-z)^{-1}
 =M_{\mathsf B}(D(0)-z)^{-1}
       M_{\sigma_3\mathsf B^{-1}\sigma_3}.
\]
This proves equality of the full spectra and the upper resolvent
bound. The inverse rearrangement gives the lower bound.

For the transfer matrices, differentiation of the right-hand side
of \eqref{E5.25} with respect to $x$,
using \eqref{E5.28}, gives the spectral equation at
time $t$. Its value at $x=y$ is $I$, so spatial ODE uniqueness
proves \eqref{E5.25}.
If $C\ge1$ bounds $\mathsf B$ and $\mathsf B^{-1}$ for the fixed
$t,z$, then, for all $x,y\in\R$,
\[
C^{-2}\norm{\mathsf Y(0;x,y,z)}
 \le\norm{\mathsf Y(t;x,y,z)}
 \le C^2\norm{\mathsf Y(0;x,y,z)}.
\]
The two logarithms differ by at most $2\log C$, uniformly in both
endpoints. Division by $L$ proves the assertions about
\eqref{E5.24}. The same comparison also applies
if a supremum or infimum over the base point is taken before the
upper or lower limit.

Finally, \eqref{E5.28} maps each spatial solution at
time zero to one at time $t$. The uniform bounds for $\mathsf B$
and its inverse preserve square integrability on either half-line
and on the whole line. A whole-line $L^2$ solution belongs to
$H^1$, because its derivative is $\mathsf U f$ and $\mathsf U$
is bounded. This proves the asserted kernel bijection.
\end{proof}

\begin{corollary}
\label{c50}
Let $q_0\in X_5$, and put
\[
\mathcal H_0=\mathcal H_{\rm loc}(q_0),\qquad
 \mathcal H_t=\mathcal H_{\rm loc}(\Phi_t(q_0)).
\]
Let $F_t=\Phi_t:\mathcal H_0\to\mathcal H_t$ be the homeomorphism
intertwining translations supplied by Proposition~\ref{p48}.
For a translation-invariant probability measure $\mu$ on
$\mathcal H_0$, put $\mu_t=(F_t)_*\mu$. For each $z\in\C$, define
\[
\mathcal L_\mu(z)
 =\lim_{L\to\infty}\frac1L
      \int_{\mathcal H_0}\log\norm{\mathsf Y_p(L,0,z)}
      \dd\mu(p),
\]
where $\mathsf Y_p$ is the transfer matrix of the potential $p$.
This limit exists, and
\[
\mathcal L_{\mu_t}(z)=\mathcal L_\mu(z).
\]
\end{corollary}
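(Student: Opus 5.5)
The plan is to use subadditivity of the averaged logarithm. Translation invariance of $\mu$ lets Fekete's lemma give existence of $\mathcal L_\mu(z)$. Invariance in $t$ then comes from the uniform transfer-matrix conjugacy of Theorem~\ref{t49}, transported through the hull homeomorphism $F_t$.

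\emph{Step 1: well-definedness and measurability.} Every $p\in\mathcal H_0$ satisfies $\norm p_5\le\norm{q_0}_5$ by Proposition~\ref{p48}, so $\norm{p}_\infty\le C\norm{q_0}_5=:K_0$. Gronwall then gives
\[
\bigl|\log\norm{\mathsf Y_p(L,0,z)}\bigr|\le(|z|+K_0)L.
\]
The map $p\mapsto\mathsf Y_p(L,0,z)$ is continuous for the $C^4_{\rm loc}$ topology, since the transfer matrix over $[0,L]$ depends continuously on $p|_{[0,L]}$ in $L^\infty$. Hence $a_L:=\int\log\norm{\mathsf Y_p(L,0,z)}\dd\mu(p)$ is finite and defined.

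\emph{Step 2: subadditivity.} Because $\T_y$ leaves the hull invariant, the cocycle identity reads
\[
\mathsf Y_p(L+M,0,z)=\mathsf Y_{\T_Lp}(M,0,z)\,\mathsf Y_p(L,0,z).
\]
Taking norms, logarithms and $\mu$-integrals, translation invariance of $\mu$ gives $a_{L+M}\le a_L+a_M$. We also have $|a_L|\le(|z|+K_0)L$, and $a_L$ is bounded on compact $L$-intervals, so the continuous-parameter Fekete lemma yields $\lim a_L/L=\inf_{L>0}a_L/L$. The same argument applies to $\mu_t$. That measure is translation invariant because $F_t$ intertwines translations, and it lives on $\mathcal H_t$, which lies in an $X_5$ ball.

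\emph{Step 3: invariance in time.} This is the step that needs the most care, since the comparison constant must be uniform over the whole hull, not merely over a single orbit. By definition of the pushforward,
\[
\int\log\norm{\mathsf Y_{p}(L,0,z)}\dd\mu_t(p)=\int\log\norm{\mathsf Y_{\Phi_t(p)}(L,0,z)}\dd\mu(p).
\]
For each $p\in\mathcal H_0$, the trajectory $s\mapsto\Phi_s(p)$ is a solution from Theorem~\ref{t3} with initial norm at most $R:=\norm{q_0}_5$. I will apply Theorem~\ref{t49} to it and obtain matrices $\mathsf B_p$ satisfying \eqref{E5.25}. To get one constant for all $p$, I will go back to the proof of Theorem~\ref{t49}: there the bounds in \eqref{E5.26} come from Gronwall applied to \eqref{E5.27}. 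That Gronwall estimate depends only on $\sup_{|s|\le|t|}p_2(\Phi_s(p))$ and $|z|$, which is at most $C F_{|t|}(R)$ by \eqref{E1.3} and Lemma~\ref{l39}. Hence a single $C\ge1$ works for all $p$ in the hull, and
\[
\Bigl|\log\norm{\mathsf Y_{\Phi_t(p)}(L,0,z)}-\log\norm{\mathsf Y_p(L,0,z)}\Bigr|\le2\log C.
\]
Integrating against $\mu$, dividing by $L$ and letting $L\to\infty$ gives $\mathcal L_{\mu_t}(z)=\mathcal L_\mu(z)$.
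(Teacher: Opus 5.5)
Your proposal is correct and follows essentially the same route as the paper: continuity of the transfer-matrix observable on the compact hull, subadditivity from the cocycle law plus translation invariance and Fekete's lemma, and a comparison constant from Theorem~\ref{t49} made uniform over the hull through the $X_5$ bound \eqref{E1.3}. The only cosmetic difference is the lower bound on $a_L$: you get it from a two-sided Gronwall estimate, whereas the paper uses $\det\mathsf Y_p=1$ to conclude $a_\mu(L)\ge0$.
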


\begin{proof}
The intertwining identity implies that $\mu_t$ is translation invariant. For fixed $L,z$,
the matrix $\mathsf Y_p(L,0,z)$ depends continuously on $p$ in
$C^0_{\mathrm{loc}}$, by continuous dependence for its matrix
ODE; hence its logarithmic norm is a continuous hull observable.

For fixed $z$, let
$a_\mu(L)=\int\log\norm{\mathsf Y_p(L,0,z)}\dd\mu(p)$.
The transfer composition law and translation invariance give
$a_\mu(L+M)\le a_\mu(L)+a_\mu(M)$ for $L,M\ge0$.
The coefficient matrices are uniformly bounded on the compact
hull, so $a_\mu$ is bounded on every bounded interval.
Also $\det\mathsf Y_p=1$, whence $a_\mu(L)\ge0$.
Writing $L=n\ell+r$, $0\le r<\ell$, for each fixed $\ell>0$,
proves
\[
\lim_{L\to\infty}\frac{a_\mu(L)}L
 =\inf_{\ell>0}\frac{a_\mu(\ell)}\ell.
\]
Thus the displayed limit defining $\mathcal L_\mu$ exists.

The initial hull is contained in a bounded $X_5$ ball by
Proposition~\ref{p48}.
The finite-time estimates of Theorem~\ref{t3} therefore
make \eqref{E5.26} uniform over $p\in\mathcal H_0$.
The transfer comparison in the proof of
Theorem~\ref{t49} consequently bounds
\[
\left|\log\norm{\mathsf Y_{\Phi_t(p)}(L,0,z)}
       -\log\norm{\mathsf Y_p(L,0,z)}\right|
\]
by a constant independent of $p$ and $L$.
Integrate with respect to $\mu$, divide by $L$, and pass to the
limit to obtain the claimed equality.
\end{proof}

\end{document}